\documentclass[12pt, a4paper, usenames]{amsart}

\usepackage{tabularx}
\usepackage[labelsep=period]{caption}
\usepackage{xltabular}
\keepXColumns
\usepackage{booktabs}
\usepackage{longtable}

\usepackage{fullpage}
\usepackage{amsmath,amssymb}
\usepackage[colorlinks=true,citecolor=blue]{hyperref}
\usepackage{xypic}
\usepackage{xfrac}
\usepackage{tikz-cd}
\usepackage{makecell}
\usepackage{color}
\usepackage{flexisym}
\usepackage{float}

\renewcommand{\theenumi}{\alph{enumi}}

\def\ord{\mathop{\mathrm {ord}}\nolimits}

\newcommand{\Z}{\mathbb{Z}} 
\newcommand{\Q}{\mathbb{Q}}

\newcommand{\F}{\mathbb{F}}

\DeclareMathOperator{\Ind}{Ind}

\usepackage[textsize=normal]{todonotes}

\makeatletter
\providecommand\@dotsep{5}
\def\listtodoname{List of Todos}
\def\listoftodos{\@starttoc{tdo}\listtodoname}
\makeatother

\theoremstyle{plain}
\newtheorem{theorem}{Theorem}[section]
\newtheorem{lemma}[theorem]{Lemma}
\newtheorem{corollary}[theorem]{Corollary}
\newtheorem{prop}[theorem]{Proposition}
\newtheorem{prop-def}[theorem]{Proposition / Definition}
\newtheorem{sec:hyp}[theorem]{Hypothesis}
\newtheorem{algorithm}[theorem]{Algorithm}
\newtheorem{conj}[theorem]{Conjecture}

\newtheorem*{problem*}{Problem}
\newtheorem*{theorem*}{Theorem}

\theoremstyle{remark}

\newtheorem{remark}[theorem]{Remark}

\newtheorem{example}[theorem]{Example}

\newtheorem*{note*}{Note}
\newtheorem*{remark*}{Remark}
\newtheorem*{example*}{Example}

\theoremstyle{definition}
\newtheorem*{definition*}{Definition}
\newtheorem*{hypothesis*}{Hypothesis}
\newtheorem*{hypotheses*}{Hypotheses}
\newtheorem*{assumptions*}{Assumptions}
\newtheorem{definition}[theorem]{Definition}

\newcommand{\Gal}{\mathrm{Gal}}

\newcommand{\Span}{\mathrm{Span}}
\newcommand{\rank}{\mathrm{rank}}

\newcommand{\Norm}{\mathrm{Norm}}

\newcommand{\Cl}{\mathrm{Cl}}

\newcommand{\GL}{\mathrm{GL}}

\newcommand{\im}{\mathrm{im}}

\DeclareMathOperator{\poly}{poly}

\title[Algorithms for $p$-rationality and for $p$-saturation of units]{Algorithms for $p$-rationality\\ and for $p$-saturation of units}

\author{Tommy Hofmann}
\address{
Naturwissenschaftlich-Technische Fakult\"at\\
Universit\"at Siegen\\
Walter-Flex-Straße 3\\
57068 Siegen\\
Germany}
\email{tommy.hofmann@uni-siegen.de}

\author{Henri Johnston}
\address{
Department of Mathematics\\
University of Exeter\\
Exeter\\
EX4 4QE\\
United Kingdom
}
\email{H.Johnston@exeter.ac.uk}
\urladdr{https://mathematics.exeter.ac.uk/people/profile/index.php?username=hj241}

\subjclass[2020]{11Y40, 11Y16, 11R18, 11R23, 11R29}
\keywords{$p$-rational fields, $p$-saturation, units of number fields}
\date{Version of 14th September 2026}

\begin{document}

\maketitle

\begin{abstract}
Let $K$ be a number field.
We give new practical algorithms that determine whether
$K$ is $p$-rational. 
For real cyclotomic fields $K=\mathbb{Q}(\zeta_{n})^{+}$,
our method avoids
computing either the class number or the full unit group. 
We also use the notion of $p$-rationality to explain
the practical efficiency of an algorithm
for determining whether a subgroup 
of the unit group $\mathcal{O}_{K}^{\times}$ is $p$-saturated.
This in turn yields a new algorithm for the unconditional verification
of unit groups of number fields that substantially 
outperforms existing unconditional algorithms
in practice.
Finally, by combining 
our algorithms for determining $p$-rationality with work 
of Greenberg, we construct certain Galois representations 
$\rho : \mathrm{Gal}(\overline{\mathbb{Q}}/\mathbb{Q}) 
\rightarrow \mathrm{GL}_{n}(\mathbb{Z}_{p})$
with open image and further prescribed properties.
\end{abstract}

\section{Introduction}

This article is aimed at two main audiences: researchers interested in $p$-rationality of number fields and those interested in the unconditional computation of their unit groups.

Let $K$ be a number field with $c_{K}$ pairs of complex embeddings.
Let $p$ be a prime number and let $S_{p}$ be the set of primes of $K$ lying above $p$.
Let $X_{S_{p}}(K)$ be the Galois group of the maximal abelian pro-$p$-extension
of $K$ unramified outside $S_{p}$. Then by global class field theory,
we have an isomorphism
\[
X_{S_{p}}(K) \simeq \Z_{p}^{c_{K} + 1 + \delta_{K}} \times T_{S_{p}}(K),
\]
where $\delta_{K} \geq 0$ is the Leopoldt defect of $K$ at $p$ and
$T_{S_{p}}(K)$ is a finite torsion $\Z_{p}$-module. 
The field $K$ satisfies Leopoldt's conjecture at $p$ precisely
when $\delta_{K}=0$, and is said to be \emph{$p$-rational}
if both $\delta_{K}=0$ and $T_{S_{p}}(K)=0$. 
Gras \cite{MR3492629} has conjectured that each fixed $K$ is $p$-rational for all but 
finitely many $p$.
A more detailed review and an alternative characterisation of $p$-rationality 
is given in \S \ref{sec:p-rationality}.
For now, let us note that $p$-rational fields have found applications in several
areas of number theory, as illustrated for example by work of
Goren \cite{MR1826497}, Greenberg \cite{MR3512524}, Hajir--Maire \cite{MR3933912}, David--Pries \cite{MR4083092}, Hajir--Maire--Ramakrishna \cite{MR4308183}, and
Bouazzaoui--Lim \cite{MR4973192}.

Now let $U$ be a subgroup of the unit group $\mathcal{O}_{K}^{\times}$. 
Then $U$ is said to be \emph{$p$-saturated in $\mathcal{O}_{K}^{\times}$}
if, whenever $u \in \mathcal{O}_{K}^{\times}$ satisfies $u^{p} \in U$, 
we have $u \in U$; when $U$ has finite index, this is equivalent to 
$p \nmid [\mathcal{O}_{K}^{\times} : U]$.
Algorithms that determine whether $U$ is $p$-saturated in 
$\mathcal{O}_{K}^{\times}$ play a crucial role in more general algorithms that verify 
whether $U$ is equal to the full unit group $\mathcal{O}_{K}^{\times}$. 
Such algorithms can be used to unconditionally verify that a tentative unit group $U$,
obtained by a computation conditional on the generalised Riemann hypothesis (GRH) or by some other heuristic method, is in fact equal to $\mathcal{O}_{K}^{\times}$.

The key insight of the present article is that there is a connection
between the $p$-rationality of $K$
and the $p$-saturation of subgroups $U$ of $\mathcal{O}_{K}^{\times}$. 
This insight relies on a characterisation of $p$-rationality due to 
Nguyen Quang Do \cite{do2025greenbergsgeneralizedconjecturefamilies}
that is closely related to a similar result of Movahhedi \cite{MR1124802}.
This characterisation says that $K$ is $p$-rational if and only if three conditions are
satisfied (see Theorem~\ref{thm:nqd-char-p-rat} for a precise statement). 
The first of these conditions says that the compositum 
of all $\Z_{p}$-extensions of $K$ contains the $p$-Hilbert class field of $K$,
and thus is automatically satisfied if $p$ does not divide the class number $h_{K}$ of $K$.
The second condition involves $p$-th roots of unity in $K$ and its 
$p$-adic completions, and is automatically satisfied if $p$ is odd and unramified in $K/\Q$.
The third condition is that every global unit that is a $p$-th power in the unit group
of every completion of $K$ above $p$ is already a $p$-th power in $\mathcal{O}_{K}^{\times}$.
It is this third condition that provides the aforementioned connection,
which in turn leads to new algorithms for determining whether $K$ is $p$-rational
and for determining whether a subgroup $U$ of $\mathcal{O}_{K}^{\times}$
is $p$-saturated. When $p$ is odd and unramified in $K/\Q$, 
the third condition can be expressed in terms of the 
so-called Schirokauer map \cite{MR1253502} 
(see Definition~\ref{def:Schirokauer}), which is explicit 
and straightforward to compute, and thus
allows for a significant speed-up of these algorithms.

We first discuss previous algorithms for determining the $p$-rationality
of $K$.
Pitoun and Varescon \cite{MR3266966} have given an algorithm that
checks whether $K$ satisfies 
Leopoldt's conjecture at $p$ and then computes $T_{S_{p}}(K)$ as defined above. 
In particular, this can be used
to verify whether $K$ is $p$-rational.
Gras \cite{MR4491965} has given an algorithm to test $K$ for $p$-rationality 
along with an implementation in PARI/GP \cite{PARI2}.
Both of these approaches require computation of certain
ray class groups $\Cl_{K}(p^{k})$, or at least their Sylow-$p$ subgroups, and 
hence in particular 
require information about the Sylow-$p$ subgroup of the class group $\Cl_{K}$.
Barbulescu and Ray \cite{MR4158582} have proposed improvements to 
the method of \cite{MR3266966} for certain families of number fields.

In this article, we give two new algorithms for determining the $p$-rationality of $K$.
The first applies to real cyclotomic fields $K=\Q(\zeta_{n})^{+}=\Q(\zeta_{n}+\zeta_{n}^{-1})$.
Let $E_{n}^{+}$, $C_{n}^{+}$ and $h_{n}^{+}$ denote the unit group,
the subgroup of cyclotomic units, and the class number of $\Q(\zeta_{n})^{+}$, respectively.
Sinnott \cite{MR485778} showed that the index $[E_{n}^{+} : C_{n}^{+}]$ is equal to 
$h_{n}^{+}$ up to an explicit power of $2$.
Combining this result with the above characterisation of $p$-rationality for 
arbitrary number fields,
we give an explicit criterion for $p$-rationality of $\Q(\zeta_{n})^{+}$
that only requires direct knowledge of $C_{n}^{+}$, and not of $h_{n}^{+}$ or $E_{n}^{+}$. 
This leads to an algorithm that avoids computing either $h_{n}^{+}$ or $E_{n}^{+}$, and 
thus vastly outperforms the aforementioned methods that require the computation of certain ray class groups. 
We then use this algorithm to determine the $p$-rationality of $\Q(\zeta_{n})^{+}$ for all $5 \leq n \leq 1000$ with $n \not\equiv 2 \bmod 4$ and all $p < 10^{7}$.
We remark that if $\Q(\zeta_{n})^{+}$ is $p$-rational then $p \nmid h_{n}^{+}$, 
and so in many cases the algorithm can be used to show that $p \nmid h_{n}^{+}$.
In principle, the algorithm also extends to arbitrary real abelian fields, although certain prime numbers must be excluded and the required cyclotomic units may be difficult to compute in practice.

Now let $K$ be an arbitrary number field. We define the notion 
of \emph{quasi-$p$-rationality} for $K$ and show that it is weaker
than the notion of $p$-rationality for $K$ but stronger than 
Leopoldt's conjecture for $K$ at $p$. In fact, $K$
is quasi-$p$-rational if and only if it satisfies the second and third conditions of the characterisation of $p$-rationality discussed above. 
These two conditions do not involve the $p$-Hilbert class field of $K$ or the $p$-part of $h_{K}$.
This enables us to give an algorithm to determine the quasi-$p$-rationality of $K$
that does not require any computation of (ray) class groups.
If $K$ is quasi-$p$-rational, then to determine whether $K$ is in fact $p$-rational,
it usually suffices to determine whether $p$ divides $h_{K}$, though in some edge cases
it is necessary to revert to the aforementioned ray class group algorithm of Gras
\cite{MR4491965}. 
Of course, if $K$ is not quasi-$p$-rational then it is not $p$-rational,
and if it is quasi-$p$-rational then it satisfies Leopoldt's conjecture
at $p$. 

We now discuss algorithms for determining whether a subgroup $U$ of $\mathcal{O}_{K}^{\times}$ is $p$-saturated. For simplicity, we assume here that 
the index of $U$ in $\mathcal{O}_{K}^{\times}$ is finite. 
An impractical approach is to iterate over all non-trivial elements of $U/U^{p}$,
lift them to elements of $U$, and attempt to extract $p$-th roots.
Less naive approaches are based on the following observation.
Assume that $U$ contains the $p$-torsion subgroup of $\mathcal{O}_{K}^{\times}$.
Then $U^{p} \leq U \cap \mathcal{O}_{K}^{\times p}$ with equality if and only 
if $U$ is $p$-saturated in $\mathcal{O}_{K}^{\times}$ (see Proposition~\ref{prop:p-sat}).
Let $X$ be an $\F_{p}$-vector space and let 
$\psi : \mathcal{O}_{K}^{\times} \rightarrow X$ be a group homomorphism. 
Let $\overline{\psi\vert_{U}} : U/U^{p} \rightarrow X$ 
be the map induced by the restriction of $\psi$ to $U$.
Then the image of $U \cap \mathcal{O}_{K}^{\times p}$ under the canonical projection 
$U \rightarrow U/U^{p}$ must lie in $\ker(\overline{\psi\vert_{U}})$.
Therefore if $\ker(\overline{\psi\vert_{U}})$ 
is trivial then $U$ must be $p$-saturated in $\mathcal{O}_{K}^{\times}$.
Based on work of Pohst and Zassenhaus~\cite{MR498486, MR1243639, MR1483321, MR1273458}
and Arenz~\cite{MR1151862},
the idea of taking $\psi$ to be
the canonical map from $\mathcal{O}_{K}^{\times}$ to
a finite direct product of unit groups of residue fields modulo $p$-th powers
was introduced by Biasse and Fieker \cite{MR3207410}, and generalised and analysed by
Biasse, Fieker, Hofmann and Page \cite{MR4440537}. 
Note that the same idea was already used by Adleman~\cite{10.1145/103418.103432} 
in the case $p = 2$, in the context of the number field sieve.
Of course, if the kernel is non-trivial then 
increasing the number of residue fields may make it smaller, and an important 
question is how many residue fields should appear in the direct product.
One disadvantage of this approach is that computing the kernel
requires solving discrete logarithm problems in unit groups
of finite fields modulo $p$-th powers.
Bernard, Fouque and Lesavourey \cite{zbMATH07854947} 
showed that taking $\psi$ to be the aforementioned Schirokauer map
tends to be highly effective in practice, and that in general one can combine the two approaches. In this article, we give a modified version of the approach of 
\cite{zbMATH07854947} that also incorporates some elements of that of \cite{MR4440537},
as well as some new features. 
A key observation is the following: if $K$ is quasi-$p$-rational,
and $U$ is $p$-saturated in $\mathcal{O}_{K}^{\times}$,
then our new algorithm terminates at an early stage; when 
$p$ is odd and unramified in $K/\Q$,
it does so using only the Schirokauer map and thus requires no discrete logarithm computations.
Under the assumption of the aforementioned conjecture of Gras, 
$K$ is quasi-$p$-rational for all but finitely many primes $p$.
Together with the previous observation, 
this helps explain why the algorithm is so effective in practice. 

We apply the algorithm of the previous paragraph to the problem of unconditional verification of tentative unit groups as described above. 
We give a runtime analysis as well as concrete examples with timings that show that this new algorithm substantially outperforms existing unconditional algorithms in practice. 

One reason for the recent interest in the notion of 
$p$-rationality is Greenberg's article \cite{MR3512524},
in which he described an approach to constructing certain continuous representations 
$\Gal(\overline{\Q}/\Q) \longrightarrow \GL_{n}(\Z_{p})$
with open image and further prescribed properties
from totally complex $p$-rational number fields.
He also remarked that an obstacle to applying some of his results is
the practical difficulty of verifying that a number field of large degree is $p$-rational.
We address this obstacle by using our algorithms to establish $p$-rationality for certain explicit number fields, thereby making the corresponding constructions unconditional.

The algorithms in this article have been implemented in \textsc{Hecke}~\cite{MR3703682}; full instructions, examples, and datasets are 
available at \url{github.com/thofma/p-rationality/}.

\subsection*{Organisation}
This article is organised as follows. 
Roughly speaking, the first half develops the theory, and the second half 
presents the resulting algorithms and their applications.
The theory of the $p$-saturation of finitely generated abelian groups is reviewed
in \S \ref{sec:p-sat-fg-ab-groups}.
Section~\ref{sec:p-rationality} reviews basic facts and known results regarding 
$p$-rational fields. 
Section~\ref{sec:real-cyclo} gives explicit criteria for the $p$-rationality
of real cyclotomic fields, and \S \ref{sec:real-abelian-number-fields}
considers the case of arbitrary real abelian fields.
Section~\ref{sec:Schirokauer-map} introduces the Schirokauer map,
and \S \ref{sec:quasi-p-rationality} introduces the notion of
quasi-$p$-rational fields. 
Next, \S \ref{sec:p-rat-p-sat} studies the connection between (quasi)-$p$-rationality
and $p$-saturation of units. Section~\ref{sec:maps-on-S-units-p-rat} considers
the connection between $p$-rationality and $S$-units.
The second half begins with \S \ref{sec:coventions-comp-runtime}, which 
introduces our conventions on complexity and runtime. 
Section~\ref{sec:comp-maps} considers the computation of certain maps, 
including the Schirokauer map. 
Section~\ref{sec:alg-real-cyclo} presents an algorithm for determining the
$p$-rationality of real cyclotomic fields $\Q(\zeta_{n})^{+}$, and gives an 
overview of computational results.
Next, \S \ref{sec:algs-p-sat-units} gives an algorithm for the $p$-saturation of units, and in \S \ref{sec:unconditional-unit-group-comp} this is applied 
to the unconditional computation of unit groups.
Section~\ref{sec:quasi-p-rat-algs} gives algorithms 
for the determination of the (quasi)-$p$-rationality of arbitrary number fields. 
Finally, \S \ref{sec:Galois-reps} constructs certain Galois representations with open image by combining results of Greenberg with an application of the algorithms
of the previous section.

\subsection*{Acknowledgements}
The authors are grateful to Jonathan Bober and Andrew Booker for helpful discussions,
Tim Dokchitser for providing \textsc{Magma} \cite{MR1484478} code to compute Brauer relations, 
Aurel Page for helpful discussions on unconditional effective versions of the
Grunwald--Wang theorem, and Georges Gras and Christian Wuthrich for helpful comments on drafts of this manuscript.
For the purpose of open access, the authors have applied a Creative Commons Attribution (CC BY) license to any author accepted manuscript version arising.

\subsection*{Use of AI in this article}
After an initial draft was written, ChatGPT-5.6 Sol was used to check and polish this article. 
All the new mathematical ideas are due to the authors.

\subsection*{Funding}
Hofmann gratefully acknowledges support from the Deutsche Forschungsgemeinschaft --
Project-ID 286237555 -- TRR 195 and Project-ID 539387714.

\section{$p$-saturation of finitely generated abelian groups}\label{sec:p-sat-fg-ab-groups}
Let $A$ be a finitely generated abelian group, written multiplicatively.
The torsion-free rank of $A$ is $\rank_{\Z}(A) = \dim_\Q(\Q \otimes_{\Z} A)$. 
For a prime number $p$, the $p$-rank of $A$ is 
$\rank_{p}(A) = \dim_{\F_p}(A/A^{p})$ and the $p$-torsion subgroup of $A$ 
is $A[p]=\{ a \in A : a^{p}=1 \}$. Then
\begin{equation}\label{eq:p-rank-vs-Z-rank}
\rank_{p}(A) = \rank_{\Z}(A) + \dim_{\F_{p}}(A[p]).    
\end{equation}

A subgroup $B \leq A$ is called $p$-saturated in $A$ if, whenever $a \in A$ and $a^{p} \in B$, 
we have $a \in B$. 
This is equivalent to $A/B$ having no element of order $p$.
If the index $[A:B]$ is finite, then $B$ is $p$-saturated in $A$
if and only if $p$ does not divide $[A:B]$.

The following results are well known to experts; see~\cite[Lemma~4.3]{MR4440537} for similar results in the context of multiplicative groups in number fields.

\begin{prop}\label{prop:p-sat}
Let $B$ be a subgroup of a finitely generated abelian group $A$.  
Let $p$ be a prime number and assume that $A[p] \leq B$.
Then $B$ is $p$-saturated in $A$ if and only if $B \cap A^{p} = B^{p}$.
\end{prop}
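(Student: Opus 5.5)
We prove the two implications directly from the definition of $p$-saturation; the inclusion $B^{p} \subseteq B \cap A^{p}$ always holds, so in each direction only the reverse inclusion (or its consequence) needs attention.

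For ($\Rightarrow$), suppose $B$ is $p$-saturated in $A$ and let $x \in B \cap A^{p}$. Write $x = a^{p}$ with $a \in A$. Since $a^{p} = x \in B$, saturation gives $a \in B$, hence $x = a^{p} \in B^{p}$. Thus $B \cap A^{p} = B^{p}$. This direction does not use the hypothesis $A[p] \leq B$.

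For ($\Leftarrow$), suppose $B \cap A^{p} = B^{p}$, and let $a \in A$ with $a^{p} \in B$. Then $a^{p} \in B \cap A^{p} = B^{p}$, so $a^{p} = b^{p}$ for some $b \in B$. Since $A$ is abelian, $(ab^{-1})^{p} = a^{p}b^{-p} = 1$, so $ab^{-1} \in A[p] \leq B$. Hence $a = (ab^{-1})\, b \in B$, and $B$ is $p$-saturated.

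There is no real obstacle. The only point requiring care is the use of commutativity in the identity $(ab^{-1})^{p} = a^{p}b^{-p}$, together with the hypothesis $A[p] \leq B$ in the reverse direction. Without that hypothesis the equivalence fails: take $A = \mu_{p}$ and $B = 1$. Finite generation of $A$ is not needed for the argument.
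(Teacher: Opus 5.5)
Your proof is correct and is essentially the paper's argument: the $p$-saturated $\Rightarrow$ equality direction extracts a $p$-th root inside $B$, and the converse writes $a^{p}=b^{p}$ and uses $ab^{-1}\in A[p]\leq B$, exactly as in the paper. Your side remarks are also correct: $A=\mu_{p}$, $B=1$ shows the hypothesis $A[p]\leq B$ cannot be dropped, and finite generation is not used.
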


\begin{proof}
Suppose that $B \cap A^{p} = B^{p}$. 
Let $a \in A$ with $a^{p} \in B$. Thus $a^{p} \in B \cap A^{p} = B^{p}$.
Hence there exists $b \in B$ such that $a^{p}=b^{p}$ and so $(a/b)^{p}=1$.
Hence $a/b \in A[p] \leq B$.
Thus $a \in B$. 
Therefore $B$ is $p$-saturated in $A$. 
Suppose conversely that $B$ is $p$-saturated in $A$.
Let $b\in B \cap A^{p}$. 
Then $b=a^{p}$ for some $a \in A$. Thus $a^{p} \in B$ and so $a \in B$. 
Therefore  $B \cap A^{p} \leq B^{p}$. The reverse inclusion is clear. 
\end{proof}

\begin{corollary}\label{cor:p-sat}
Let $B$ be a subgroup of a finitely generated abelian group $A$.  
Let $p$ be a prime number and assume that $A[p] \leq B$.
Let $X$ be an $\F_p$-vector space and let $\psi \colon A \to X$ be a group homomorphism. 
Let $\psi \vert_{B}$ be the restriction of $\psi$ to $B$.
\renewcommand{\labelenumii}{(\roman{enumii})}
\begin{enumerate}
    \item 
    We have $B^{p} \leq \ker(\psi \vert_{B})$, and hence $\psi \vert_{B}$
    induces an $\F_{p}$-linear map $\overline{\psi \vert_{B}} \colon B/B^{p} \to X$.
    \item The following are equivalent:
    \begin{enumerate}
        \item $B^{p} = \ker(\psi \vert_{B})$,
        \item $\dim_{\F_p}(\psi(B)) = \rank_{p}(B)$,
        \item $\dim_{\F_p}(\overline{\psi \vert_{B}}(B/B^{p})) = \rank_{p}(B)$,
        \item $\overline{\psi \vert_{B}}$ is injective.
    \end{enumerate}
    \item If the equivalent conditions of \textup{(b)} 
    hold then $B$ is $p$-saturated in $A$.
    \item If $\dim_{\F_p}(\psi(B)) = \rank_{p}(A)$ then $B$ is $p$-saturated and of finite index in $A$.
    \item If $b \in B$ and $b = a^{p}$ for some $a \in A \setminus B$, 
    then $bB^{p}$ is a non-trivial element of $\ker(\overline{\psi \vert_{B}})$, and 
    $[\langle B, a \rangle : B]=p$.
\end{enumerate}    
\end{corollary}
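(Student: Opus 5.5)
Each part reduces to elementary linear algebra over $\F_p$ together with Proposition~\ref{prop:p-sat}, so the plan is to take the parts in order, feeding each into the next.

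For (a), since $X$ is an $\F_p$-vector space, every element of $X$ is killed by $p$. Hence $\psi(b^{p}) = p\,\psi(b) = 0$ for all $b \in B$, which gives $B^{p} \leq \ker(\psi\vert_{B})$. The induced map $\overline{\psi\vert_{B}}$ on $B/B^{p}$ is then automatically a homomorphism of $\F_p$-vector spaces, i.e.\ $\F_p$-linear.

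For (b), the map $\overline{\psi\vert_{B}}$ has kernel $\ker(\psi\vert_{B})/B^{p}$ and image $\psi(B)$. Its domain $B/B^{p}$ has dimension $\rank_{p}(B)$, which is finite because $B$ is finitely generated (a subgroup of a finitely generated abelian group). The equivalences then follow:
\begin{itemize}
\item (i)$\Leftrightarrow$(iv): the kernel of $\overline{\psi\vert_{B}}$ is trivial exactly when $\ker(\psi\vert_{B}) = B^{p}$.
\item (ii)$\Leftrightarrow$(iii): the two images coincide, since $\overline{\psi\vert_{B}}(B/B^{p}) = \psi(B)$.
\item (iii)$\Leftrightarrow$(iv): by rank--nullity, the image has full dimension $\rank_{p}(B)$ exactly when the kernel is zero.
\end{itemize}

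For (c), assume (i) holds. Take $x \in B \cap A^{p}$, say $x = a^{p}$. Then $\psi(x) = p\,\psi(a) = 0$, so $x \in \ker(\psi\vert_{B}) = B^{p}$. Thus $B \cap A^{p} \leq B^{p}$; the reverse inclusion is trivial. Proposition~\ref{prop:p-sat}, using the hypothesis $A[p] \leq B$, then gives that $B$ is $p$-saturated in $A$.

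Part (d) is the step needing the most care, because it must also produce finite index; it is still short. First, $\psi(B) \leq \psi(A)$, and $\psi$ factors through $A/A^{p}$, so
\[
\dim_{\F_p}\psi(B) \leq \min\{\rank_{p}(B), \rank_{p}(A)\}.
\]
The hypothesis $\dim_{\F_p}\psi(B) = \rank_{p}(A)$ therefore forces $\rank_{p}(B) \geq \rank_{p}(A)$.
\begin{itemize}
\item \emph{Saturation.} We need $\dim_{\F_p}\psi(B) = \rank_{p}(B)$, i.e.\ the reverse inequality $\rank_{p}(B) \leq \rank_{p}(A)$. Formula \eqref{eq:p-rank-vs-Z-rank} gives $\rank_{p}(B) = \rank_{\Z}(B) + \dim_{\F_p} B[p]$. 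We have $\rank_{\Z}(B) \leq \rank_{\Z}(A)$, and $B[p] = A[p]$ because $A[p] \leq B$. Hence $\rank_{p}(B) \leq \rank_{p}(A)$. So condition (b)(ii) holds, and (c) gives saturation.
\item \emph{Finite index.} Equality $\rank_{p}(B) = \rank_{p}(A)$ together with $B[p] = A[p]$ yields $\rank_{\Z}(B) = \rank_{\Z}(A)$. Then $A/B$ is a finitely generated group of torsion-free rank $0$, so it is finite.
\end{itemize}

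For (e), $b = a^{p}$ lies in $\ker(\psi\vert_{B})$ by the computation in (c). Suppose $b \in B^{p}$, say $b = c^{p}$ with $c \in B$. Then $(a/c)^{p} = 1$, so $a/c \in A[p] \leq B$, giving $a \in B$, a contradiction. So $bB^{p}$ is a non-trivial element of the kernel. For the index, the class of $a$ in $\langle B, a\rangle/B$ is non-trivial (as $a \notin B$) and has order dividing $p$ (as $a^{p} \in B$). Since $\langle B, a\rangle/B$ is cyclic, generated by this class, the index is exactly $p$.
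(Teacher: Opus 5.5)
Your proposal is correct and follows essentially the same route as the paper: (a) and (b) via the $\F_p$-structure of $X$ and rank--nullity, (c) via $B^{p} \leq B \cap A^{p} \leq \ker(\psi\vert_{B})$ together with Proposition~\ref{prop:p-sat}, (d) by squeezing $\dim_{\F_p}\psi(B)$ between $\rank_{p}(B)$ and $\rank_{p}(A)$ and then applying \eqref{eq:p-rank-vs-Z-rank} with $A[p]=B[p]$, and (e) by the same argument using $A[p]\leq B$. You are, if anything, slightly more explicit than the paper. You justify $\rank_{p}(B)\leq\rank_{p}(A)$, which the paper uses without comment, and you note that $B/B^{p}$ is finite-dimensional, which rank--nullity requires.
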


\begin{proof}
(a) This follows from the fact that $X$ is a vector space over $\F_{p}$.

(b) The rank-nullity theorem gives 
$\dim_{\F_{p}}(B/\ker(\psi|_B)) = \dim_{\F_p}(\psi(B))$
and together with part (a) this implies that 
(i) $\Longleftrightarrow$ (ii).
The definition of $\overline{\psi \vert_{B}}$ gives (ii) 
$\Longleftrightarrow$ (iii) and the rank-nullity theorem gives (iii) 
$\Longleftrightarrow$ (iv).

(c) We have $B^{p} \leq B \cap A^{p} \leq B \cap \ker(\psi) = \ker(\psi\vert_B)$.
Suppose that the equivalent assertions of (b) hold. In particular,
$B^{p} = \ker(\psi \vert_{B})$ and so the above containments become equalities.
Thus $B^{p} = B \cap A^{p}$
and so the desired result follows from Proposition~\ref{prop:p-sat}.

(d) 
We have 
$\rank_{p}(B) \geq \dim_{\F_p}(\psi(B)) 
= \rank_{p}(A) \geq \rank_{p}(B)$, and so each inequality is in fact an equality.
Thus $B$ is $p$-saturated in $A$ by part (c).
Moreover, $A[p]=B[p]$ since $A[p] \leq B \leq A$.
Therefore by \eqref{eq:p-rank-vs-Z-rank} we have 
$\rank_{\Z}(A) =\rank_{\Z}(B)$, and so $B$ has finite index in $A$. 

(e) We have that $\psi(b)=\psi(a^{p})$ and so $b \in \ker(\psi)$ since $X$
is a vector space over $\F_{p}$. 
Thus $bB^{p} \in \ker(\overline{\psi \vert_{B}})$.
Moreover, $bB^{p}$ is non-trivial: if $b=c^{p}$ for some $c \in B$,
then $(ac^{-1})^{p}=1$, whence $ac^{-1} \in A[p] \leq B$, contradicting $a \notin B$.
The quotient $\langle B, a \rangle / B$ is generated by the coset $aB$,
which is non-trivial since $a \notin B$.
But $(aB)^{p}=a^{p}B=bB=B$ and so $aB$ has order exactly $p$, which 
gives the desired result. 
\end{proof}

\begin{remark}
Corollary~\ref{cor:p-sat}(c) is useful for determining whether a subgroup 
$B$ is $p$-saturated in $A$. 
By contrast, Corollary~\ref{cor:p-sat}(e)  is useful if $B$ is not
$p$-saturated in $A$ and one wishes to compute a subgroup $C$ such that $B \leq C \leq A$
and $C$ is $p$-saturated in $A$; the process of enlarging $B$ to such a subgroup $C$
is often called $p$-saturation, or saturating $B$ at $p$.
A key point is that non-trivial elements of $\ker(\overline{\psi \vert_{B}})$ 
identify cosets whose representatives $b \in B$ are candidates for having $p$-th 
roots in $A$; thus it is advantageous to 
choose $\psi$ so that $\dim_{\F_{p}}(\ker(\overline{\psi \vert_{B}}))$ is as small as
possible.
\end{remark}

\section{Review of $p$-rational fields}\label{sec:p-rationality}

In this section, we review basic facts and known 
results regarding $p$-rational fields.

\subsection{$p$-rational fields}\label{subsec:p-rational-fields}
The notion of $p$-rationality was first defined in the PhD thesis
of Movahhedi \cite{MovahhediPhD}, where it was used to find non-abelian extensions of 
$\Q$ satisfying Leopoldt's conjecture at $p$. 
The extensive literature on $p$-rationality also includes 
work of Jaulent--Nguyen Quang Do \cite{MR1265910}, Movahhedi--Nguyen Quang Do \cite{MR1042770}, Movahhedi \cite{MR1124802} and Gras--Jaulent \cite{MR1017575}.
A good general reference is the book of Gras
\cite[Chapter~VI, \S 3]{MR1941965}.
We only give a brief overview here.

Let $K$ be a number field. 
Let $c_{K}$ denote the number of pairs of complex embeddings of $K$.
Let $p$ be a prime number and let $S_{p}=S_{p}(K)$ be the set of primes of $K$ 
lying above $p$. 
Let $K_{S_{p}}(p)$ be the maximal pro-$p$-extension of $K$ unramified outside
$S_{p}$ and let $G_{S_{p}}(K)=\Gal(K_{S_{p}}(p)/K)$.
Let $K_{S_{p}}^{\mathrm{ab}}(p)$ be the 
maximal abelian pro-$p$-extension of $K$ unramified outside $S_{p}$, let 
$X_{S_{p}}(K)=\Gal(K_{S_{p}}^{\mathrm{ab}}(p)/K)$, and let 
$T_{S_{p}}(K)$ denote the $\Z_{p}$-torsion subgroup of $X_{S_{p}}(K)$.
The following result is due to Movahhedi \cite[Proposition~1]{MR1124802} (see also \cite[\S 3]{MR3512524}).

\begin{prop}\label{prop:def-char-p-rational}
For a number field $K$, the following conditions are equivalent:
\begin{enumerate}
    \item $G_{S_{p}}(K)$ is a free pro-$p$ group on $c_{K}+1$ generators.
    \item $X_{S_{p}}(K)$ is a free $\Z_{p}$-module of rank $c_{K}+1$.
    \item $K$ satisfies Leopoldt's conjecture at $p$ and $T_{S_{p}}(K)=0$.
\end{enumerate}
\end{prop}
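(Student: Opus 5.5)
We prove the cycle (a) $\Rightarrow$ (b) $\Rightarrow$ (c) $\Rightarrow$ (a), with two standard inputs. The first is the isomorphism $X_{S_{p}}(K) \simeq \Z_{p}^{c_{K}+1+\delta_{K}} \times T_{S_{p}}(K)$ recalled in the introduction. The second is the Euler--Poincaré characteristic formula for $G=G_{S_{p}}(K)$: with $h^{i}=\dim_{\F_{p}} H^{i}(G,\F_{p})$,
\[
h^{0}-h^{1}+h^{2} = -c_{K}-r_{2}',
\]
where $r_2'$ is the appropriate correction. Concretely we use the standard form $\chi(G,\F_{p}) = -c_{K}$ for $p$ odd, with $2$-adic modifications coming from real places when $p=2$. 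We also use that $G$ has cohomological dimension at most $2$ for $p$ odd, or when $p=2$ and $K$ has no real places.

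\emph{(a) $\Rightarrow$ (b).} The abelianisation of a free pro-$p$ group on $c_{K}+1$ generators is $\Z_{p}^{c_{K}+1}$. Since $X_{S_{p}}(K)=G^{\mathrm{ab}}$, this is immediate.

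\emph{(b) $\Rightarrow$ (c).} Compare $X_{S_p}(K) \simeq \Z_{p}^{c_{K}+1}$ with the isomorphism above. The $\Z_{p}$-ranks force $\delta_{K}=0$, and torsion-freeness forces $T_{S_{p}}(K)=0$.

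\emph{(c) $\Rightarrow$ (a).} This is the substantive step. Assuming (c), $X_{S_p}(K)$ is free of rank $c_K+1$, so
\[
h^{1}=\dim_{\F_{p}}(G^{\mathrm{ab}}/p) = c_{K}+1.
\]
By the Burnside basis theorem, $G$ is then a quotient of a free pro-$p$ group $F$ on $c_{K}+1$ generators. It remains to show that $G$ is free, that is $H^{2}(G,\F_{p})=0$, equivalently that the relation module is trivial.

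With $h^{0}=1$, the Euler characteristic formula gives $h^{2}=h^{1}-1-c_{K}=0$ when $p$ is odd. Then $G$ is free pro-$p$ by the cohomological criterion: a pro-$p$ group with $H^{2}(G,\F_{p})=0$ has $\mathrm{cd}\le 1$ and is free.

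An alternative route that avoids the Euler characteristic is the exact sequence relating $H^{2}$ to Leopoldt. In that description, $H^{2}(G,\Q_p/\Z_p)$ has corank $\delta_K$, and its divisibility is linked to $T_{S_p}(K)$ via the dual of $H^{1}(G,\Z_p)$. One then shows $H^{2}(G,\Z_p)_{\mathrm{tors}}\simeq T_{S_p}(K)^{\vee}$-type relations force $H^{2}(G,\F_p)=0$.

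The main obstacle is the prime $p=2$ when $K$ has real places. There $G$ may have $2$-torsion coming from decomposition groups at real places, and the naive Euler characteristic formula must be corrected. For that case I would cite Movahhedi's treatment directly. In the paper's context it is reasonable to present the argument for odd $p$ and attribute the general statement to \cite[Proposition~1]{MR1124802}.
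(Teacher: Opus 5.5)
The paper gives no argument for this proposition; it attributes it to Movahhedi \cite[Proposition~1]{MR1124802} (see also \cite[\S 3]{MR3512524}). Your route is therefore genuinely different, and it is correct for $p$ odd and for $p=2$ with $K$ totally imaginary. The implications (a)$\Rightarrow$(b)$\Rightarrow$(c) are formal: take the abelianisation, then compare with $X_{S_{p}}(K)\simeq \Z_{p}^{c_{K}+1+\delta_{K}}\times T_{S_{p}}(K)$. For (c)$\Rightarrow$(a) you read off $h^{1}=c_{K}+1$. The Euler--Poincar\'e characteristic $h^{0}-h^{1}+h^{2}=-c_{K}$ then gives $h^{2}=0$, so $G_{S_{p}}(K)$ is free on $h^{1}$ generators. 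This buys a self-contained proof, modulo Tate's global formula and the criterion that a pro-$p$ group $G$ is free iff $H^{2}(G,\F_{p})=0$. The citation additionally covers $p=2$ with $K$ not totally imaginary, which you also end up citing.

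Three points should be tightened.

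First, Tate's formula is a statement about $G_{S}=\Gal(K_{S}/K)$ with $S=S_{p}\cup S_{\infty}$, not about its maximal pro-$p$ quotient. You need two facts.
\begin{itemize}
\item $h^{1}(G_{S}(p))=h^{1}(G_{S})$: every homomorphism $G_{S}\to\F_{p}$ factors through $G_{S}(p)$.
\item $h^{2}(G_{S}(p))\le h^{2}(G_{S})$: this follows from the five-term exact sequence, since $\mathrm{Hom}_{G_{S}}(\Gal(K_{S}/K_{S}(p)),\F_{p})=0$ by maximality of $K_{S}(p)$.
\end{itemize}
For odd $p$ you also need $G_{S}(p)=G_{S_{p}}(K)$, which holds because real places cannot ramify in $p$-extensions. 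The stray correction term $r_{2}'$ in your formula should simply be dropped.

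Second, your diagnosis of the difficulty at $p=2$ does not match this paper's convention. Real places must remain unramified in $K_{S_{2}}(2)$. Otherwise $X_{S_{2}}(\Q)\simeq\Gal(\Q(\mu_{2^{\infty}})/\Q)\simeq\Z_{2}^{\times}$ would have torsion, and $\Q$ would not be $2$-rational, contrary to the paper's Example. So there is no torsion coming from complex conjugations. The actual obstruction is that $S_{2}\not\supseteq S_{\infty}$, so neither Tate's formula nor the comparison above applies. Deferring that case to \cite{MR1124802} is no worse than what the paper itself does.

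Third, your alternative route is the standard argument and can be stated precisely for odd $p$. Since $\mathrm{cd}_{p}G\le 2$, the group $H^{2}(G,\Q_{p}/\Z_{p})$ is divisible of corank $\delta_{K}$, hence vanishes under Leopoldt. Then $H^{2}(G,\F_{p})\cong H^{1}(G,\Q_{p}/\Z_{p})/p\cong\mathrm{Hom}(T_{S_{p}}(K),\Q_{p}/\Z_{p})/p$, which is zero iff $T_{S_{p}}(K)=0$.
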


\begin{definition}\label{def:p-rat}
If a number field $K$ satisfies  
the equivalent conditions of Proposition~\ref{prop:def-char-p-rational}
then it is said to be \textit{$p$-rational}.
\end{definition}

\begin{example}
By the Kronecker--Weber theorem, $\Q$ is $p$-rational for all $p$.    
\end{example}

The following useful result is well known (see \cite[IV, 3.4.5]{MR1941965}, for example).

\begin{lemma}\label{lem:p-rat-subfields}
If a number field is $p$-rational, then so are all of its subfields.    
\end{lemma}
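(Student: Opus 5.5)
Let $L$ be a $p$-rational number field and $K \subseteq L$ a subfield. I would use characterisation (a) of Proposition~\ref{prop:def-char-p-rational}: $G_{S_{p}}(L)$ is free pro-$p$ on $c_{L}+1$ generators. The aim is to show that $G_{S_{p}}(K)$ is also free pro-$p$, and then to compute the number of generators by Euler characteristics.

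\textbf{Step 1: pass to the Galois closure.} Replace $L$ by a field where the group theory is clean. Let $M$ be the Galois closure of $L/K$. We cannot simply take $M$, since $M$ need not be $p$-rational. Instead, consider $\mathcal{G}=\Gal(L_{S_{p}}(p)/K)$ when $L/K$ is Galois of $p$-power degree. In that case $L_{S_{p}}(p)$ is the maximal pro-$p$ extension of $K$ unramified outside $p$. This holds because $L/K$ is a $p$-extension unramified outside $p$, which we may assume after the reduction below. Then $G_{S_{p}}(L)$ is an open subgroup of $G_{S_{p}}(K)$. A pro-$p$ group with a free open subgroup is free, by Serre's theorem on cohomological dimension: it is torsion-free since its elements have no torsion in characteristic-$p$ extensions, hence $\mathrm{cd}_p=1$.

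\textbf{Step 2: general subfields.} For general $K \subseteq L$, use the cohomological criterion directly. $K$ is $p$-rational if and only if $H^{2}(G_{S_{p}}(K),\Z/p)=0$, i.e.\ $G_{S_{p}}(K)$ is free, together with the generator count. The generator count follows from Tate's global Euler characteristic formula and Leopoldt. Leopoldt's conjecture for $K$ follows from Leopoldt for $L$, since the $\Z_p$-rank of $X_{S_p}$ can only drop by the defect. Concretely, via corestriction–restriction on the restricted Galois group $G_{S}$ of the maximal extension unramified outside $S_p\cup\infty$, we want $H^{2}(G_{S}(K),\Q_p/\Z_p)$ to inject into $H^{2}(G_{S}(L),\Q_p/\Z_p)$ after taking $p$-parts. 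Equivalently, the vanishing of $T_{S_p}$ together with Leopoldt for $L$ should descend. I would use the alternative formulation of (c): the norm/transfer map $X_{S_{p}}(L)\to X_{S_{p}}(K)$ has finite cokernel of order prime to $p$ times $[L:K]$-controlled terms. A cleaner route is the following. The restriction $X_{S_p}(K)$ receives the transfer, and the composite $X_{S_p}(K)\to X_{S_p}(L)\to X_{S_p}(K)$ (transfer then norm) is multiplication by $[L:K]$. This handles the prime-to-$p$ part of the degree. The $p$-part is treated by Step~1 applied through a tower inside the Galois closure.

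\textbf{Main obstacle.} The main obstacle is that the Galois closure of $L/K$ need not be $p$-rational, so the reduction mixing degree-prime-to-$p$ transfer arguments with $p$-power towers is delicate. I would resolve this by invoking the standard statement $H^{2}(G_{S_{p}}(K),\Q_p/\Z_p)$-vanishing (equivalent to $p$-rationality plus Leopoldt) and the fact that corestriction $H^{2}(G_S(L),\cdot)\to H^{2}(G_S(K),\cdot)$ is surjective on these divisible-coefficient $H^2$'s, since $\mathrm{cd}_p\le 2$. Surjectivity of corestriction in top degree is exactly the mechanism behind the reference to Gras \cite[IV, 3.4.5]{MR1941965}. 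That reference is the result I would ultimately cite to close the argument.
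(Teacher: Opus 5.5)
You end by citing Gras \cite[IV, 3.4.5]{MR1941965}, and that citation is the paper's entire proof of this lemma. However, the argument you sketch before the citation does not prove the lemma, and several of its claims are wrong.

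\textbf{Step~1.} It only treats Galois $p$-extensions $L/K$ unramified outside $p$, and the promised ``reduction below'' that would place you in this case is never supplied. Even in that case, ``no torsion in characteristic-$p$ extensions'' is not a reason for torsion-freeness. The correct input is $\mathrm{cd}_{p}\, G_{S_{p}}(K) \leq 2$ for odd $p$. With that, Serre's theorem is unnecessary: corestriction $H^{2}(G_{S_{p}}(L),\Z/p) \to H^{2}(G_{S_{p}}(K),\Z/p)$ is surjective in top degree, and for odd $p$ the field is $p$-rational exactly when this $H^{2}$ vanishes. Run with the full groups $G_{S}$, the same argument handles every $L$ unramified over $K$ outside $p$ and $\infty$, Galois or not. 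So the real obstruction is not the Galois closure but ramification of $L/K$ at primes not above $p$. In that case $G_{S}(L)$ is not an open subgroup of $G_{S}(K)$, so no corestriction exists. Enlarging $S$ to include the ramified primes does not help in general, because $H^{2}(G_{S'}(L),\Z/p)$ then typically acquires nonzero local terms at primes $w$ with $\mu_{p} \subset L_{w}$.

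\textbf{Step~2.} The identity $N \circ V = [L:K]$ does settle the case $p \nmid [L:K]$: the transfer sends $T_{S_{p}}(K)$ into $T_{S_{p}}(L)=0$, and $[L:K]$ acts injectively on the $p$-group $T_{S_{p}}(K)$. It gives nothing when $p \mid [L:K]$. The tower through the Galois closure fails for the reason you yourself note. Your cohomological repair is also incorrect. The vanishing $H^{2}(G_{S}(K),\Q_{p}/\Z_{p}) = 0$ is equivalent to Leopoldt's conjecture \emph{alone} \cite[(10.3.6)]{MR2392026}, not to $p$-rationality. For $p$-rationality you need $\Z/p$ or $\Z_{p}$ coefficients; under Leopoldt, $H^{2}(G_{S}(K),\Z_{p})$ is Pontryagin dual to $T_{S_{p}}(K)$. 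So surjective corestriction with divisible coefficients would only descend Leopoldt, which you already have, and it again presupposes $L \subseteq K_{S}$.

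\textbf{The missing ingredient.} You need an input that survives ramification outside $p$. This is the non-capitulation property of the torsion module, due to Gras: for any finite extension $L/K$ with $L$ satisfying Leopoldt's conjecture at $p$, the transfer $T_{S_{p}}(K) \to T_{S_{p}}(L)$ is injective. Combine it with descent of Leopoldt's conjecture. That descent holds because $\Z_{p} \otimes_{\Z} \mathcal{O}_{K}^{\times} \to \Z_{p} \otimes_{\Z} \mathcal{O}_{L}^{\times}$ is injective and compatible with the diagonal maps into the local unit groups; this replaces your rank heuristic. Then $T_{S_{p}}(L) = 0$ forces $T_{S_{p}}(K)=0$, and Proposition~\ref{prop:def-char-p-rational}(c) gives the lemma for arbitrary subfields. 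Your prime-to-$p$ norm--transfer argument is the easy special case of this injectivity. The substance of the lemma lies in the case $p \mid [L:K]$ with $L/K$ ramified at primes not above $p$, which your sketch never reaches.
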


Gras \cite[Conjecture~8.11]{MR3492629} has proposed the following conjecture.

\begin{conj}\label{conj:Gras}
A number field is $p$-rational for all but finitely many primes $p$.
\end{conj}

\begin{remark}\label{rmk:Dirichlet-density-0}
Even if Gras's Conjecture~\ref{conj:Gras} is false, it seems plausible that
a number field should be $p$-rational for `most' primes $p$. For example,
Greenberg \cite[\S 4.2]{MR3512524} writes
\begin{quote}
\textit{\ldots it seems reasonable to expect that any fixed number field $K$ should
be $p$-rational for all primes $p$ except for a set of Dirichlet density $0$.}  
\end{quote}
\end{remark}

\begin{remark}\label{rmk:going-up}
Suppose that $K$ is $p$-rational and that $L/K$ is a finite Galois $p$-extension. 
Then
$L$ is $p$-rational if and only if the set of primes ramified in $L/K$
satisfies certain conditions; see
\cite[Th\'eor\`eme $(\beta)$, p.\ 344]{MR1017575}, \cite[Th\'eor\`eme~3]{MR1124802} or \cite[Th\'eor\`eme~3.5]{MR1265910}.
A special case of this ``going-up'' result is given in Corollary~\ref{cor:p-rat-going-up} below.
\end{remark}

\subsection{Necessary and sufficient conditions for $p$-rationality}\label{subsec:nec-suff-p-rat}
We first set up some further notation.
For a finite prime $\mathfrak{p}$ of $K$, let $K_{\mathfrak{p}}$ denote the completion of 
$K$ at $\mathfrak{p}$ and 
let $U_{\mathfrak{p}}$ denote the unit group of the valuation ring
$\mathcal{O}_{K_{\mathfrak{p}}}$.
For a field $\mathbb{F}$, let $\mu_{p}(\mathbb{F})$ and $\mu_{p^\infty}(\mathbb{F})$
denote the groups of $p$-th and $p$-power roots of unity contained in $\mathbb{F}^{\times}$, respectively.
Let $\overline{\mathbb{F}}$ denote an algebraic closure of $\mathbb{F}$ and let 
$\mu_{p} = \mu_{p}(\overline{\mathbb{F}})$.
Let $H_{p}(K)$ denote the $p$-Hilbert class field of $K$.
Let $h_{K}$ and $d_{K}$ denote the class number and absolute discriminant of $K$, 
respectively. 

\begin{definition}\label{def:various-maps}
We define the following diagonal maps:
\begin{enumerate}
\item 
$\lambda_{K,p} : \mu_{p}(K) \longrightarrow \prod_{\mathfrak{p} \in S_{p}(K)} 
\mu_{p}(K_{\mathfrak{p}})$,
\item 
$\eta_{K,p} : \mathcal{O}_{K}^{\times} \longrightarrow 
\textstyle{\prod_{\mathfrak{p} \in S_{p}(K)}} U_{\mathfrak{p}}/ U_{\mathfrak{p}}^{p}$, and
\item
$\overline{\eta}_{K,p} : \mathcal{O}_{K}^{\times} / \mathcal{O}_{K}^{\times p } \longrightarrow 
\textstyle{\prod_{\mathfrak{p} \in S_{p}(K)}} U_{\mathfrak{p}}/ U_{\mathfrak{p}}^{p}$.
\end{enumerate}
\end{definition}

We recall the following necessary and sufficient condition for $p$-rationality due to 
Nguyen Quang Do \cite[Proposition~6.1]{do2025greenbergsgeneralizedconjecturefamilies};
see also \cite[Proposition~2.3]{MR4192837}.
This is similar to a result of Movahhedi \cite[Proposition~2]{MR1124802}.

\begin{theorem}\label{thm:nqd-char-p-rat}
A number field $K$ is $p$-rational precisely when all three of the following conditions are
satisfied:
\begin{enumerate}
    \item The compositum of all $\Z_{p}$-extensions of $K$ contains $H_{p}(K)$.
    \item The diagonal map $\lambda_{K,p} : \mu_{p}(K) \longrightarrow \prod_{\mathfrak{p} \in S_{p}(K)} 
    \mu_{p}(K_{\mathfrak{p}})$ is an isomorphism.
    \item The diagonal map 
    $\overline{\eta}_{K,p} : \mathcal{O}_{K}^{\times}/\mathcal{O}_{K}^{\times p} \longrightarrow 
    \prod_{\mathfrak{p} \in S_{p}(K)} U_{\mathfrak{p}}/ U_{\mathfrak{p}}^{p}$ is injective.
\end{enumerate}
\end{theorem}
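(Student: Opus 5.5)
The plan is to use the class field theoretic description of $X_{S_{p}}(K)$ together with a count of $\F_{p}$-dimensions. By Proposition~\ref{prop:def-char-p-rational}, $K$ is $p$-rational if and only if $X_{S_{p}}(K)$ is free over $\Z_{p}$ of rank $c_{K}+1$. Since $\rank_{\Z_p} X_{S_p}(K) \ge c_K+1$ always holds (Leopoldt defect $\ge 0$), this is equivalent to
\[
\dim_{\F_{p}} X_{S_{p}}(K)/p = c_{K}+1 .
\]
The task is therefore to compute $\dim_{\F_p} X_{S_p}(K)/p$ and to show that the excess over $c_K+1$ vanishes exactly when (a), (b) and (c) hold.

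\textbf{Step 1: exact sequence.} Let $U_{p}=\prod_{\mathfrak{p}\in S_{p}} U_{\mathfrak{p}}^{(1)}$ be the principal units; the prime-to-$p$ parts of the $U_{\mathfrak{p}}$ play no role after tensoring with $\Z_{p}$. Class field theory gives
\[
0 \to U_{p}/\overline{\mathcal{O}_{K}^{\times}\otimes\Z_{p}} \to X_{S_{p}}(K) \to \Cl_{K}\otimes\Z_{p} \to 0 .
\]
Apply $-\otimes\F_{p}$ and take the long exact sequence (a Tor/snake argument). This expresses $X_{S_{p}}(K)/p$ as an extension of a quotient of $\Cl_{K}/p$ by the cokernel of $\mathcal{O}_{K}^{\times}/p \to \prod_{\mathfrak{p}} U_{\mathfrak{p}}/U_{\mathfrak{p}}^{p}$, the latter map being $\overline{\eta}_{K,p}$.

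\textbf{Step 2: dimension count.} We have
\[
\dim \prod_{\mathfrak{p}} U_{\mathfrak{p}}/U_{\mathfrak{p}}^{p} = [K:\Q] + \dim_{\F_p}\textstyle\prod_{\mathfrak{p}} \mu_{p}(K_{\mathfrak{p}}),
\]
and by \eqref{eq:p-rank-vs-Z-rank} together with Dirichlet's unit theorem,
\[
\dim \mathcal{O}_{K}^{\times}/p = r_{1}+c_{K}-1 + \dim_{\F_p}\mu_{p}(K).
\]
Subtracting, and using $[K:\Q]=r_1+2c_K$,
\[
\dim\operatorname{coker}\overline{\eta}_{K,p} = c_{K}+1 + \bigl(\dim\textstyle\prod_{\mathfrak p}\mu_{p}(K_{\mathfrak{p}}) - \dim \mu_{p}(K)\bigr) + \dim\ker\overline{\eta}_{K,p}.
\]
Since $\lambda_{K,p}$ is always injective, the bracketed term is $\ge 0$, and it vanishes exactly when (b) holds. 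The kernel term vanishes exactly when (c) holds.

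\textbf{Step 3: the class group contribution.} The remaining contribution to $\dim X_{S_p}(K)/p$ is the image of $X_{S_p}(K)/p\to \Cl_K/p$. It should be identified with $\Gal(H_{p}(K)\cap \widetilde{K}/K)$-type data: it vanishes modulo the free part exactly when the torsion of $X_{S_p}(K)$ does not surject onto the $p$-class group part, i.e.\ when $H_{p}(K)\subseteq \widetilde{K}$, the compositum of the $\Z_{p}$-extensions. This is the step I expect to be the main obstacle. The torsion $T_{S_p}$ and the Leopoldt defect interact with the class group part, so one cannot read off the answer from a naive count on the cokernel.

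To handle it, I would argue directly. First suppose (a), (b), (c) hold. Then the cokernel of $\overline{\eta}_{K,p}$ has dimension exactly $c_K+1$, and (a) means the inertia subgroups generate $X_{S_p}(K)$ modulo its torsion-free closure. From this, show $\dim X_{S_p}(K)/p \le c_K+1$, hence equality and freeness. Conversely, if $K$ is $p$-rational, then $X_{S_p}(K)$ is free, so the compositum of the $\Z_p$-extensions is all of $K_{S_p}^{\mathrm{ab}}(p)\supseteq H_p(K)$, giving (a). The count then forces both excess terms in Step 2 to vanish, giving (b) and (c). Throughout I would use the inflation/Poitou--Tate identification $\dim H^{2}(G_{S_{p}},\F_{p})=0 \iff$ freeness, as in Movahhedi's proof, as a fallback if the elementary count is messy.
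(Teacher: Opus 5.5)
\paragraph{Verdict: the plan works, but Step 3 has a gap.}
Your Steps 1 and 2 are correct, and so is the converse direction. If $K$ is $p$-rational then $T_{S_p}(K)=0$, so the compositum $\widetilde K$ of the $\Z_p$-extensions is all of $K_{S_p}^{\mathrm{ab}}(p)\supseteq H_p(K)$. Moreover $\dim_{\F_p}X_{S_p}(K)/p=c_K+1$ forces both excess terms of Step 2 to vanish, because the class group contribution is non-negative.

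The gap is Step 3, i.e.\ the direction (a)$+$(b)$+$(c)$\Rightarrow$ $p$-rational. You flag it as the main obstacle but do not carry it out, and your translation of (a) is wrong. Write $X=X_{S_p}(K)$ and $T=T_{S_p}(K)$. Let $\mathcal{I}\le X$ be the subgroup generated by the inertia groups, i.e.\ the image of $\prod_{\mathfrak p}U_{\mathfrak p}$, so $\mathcal{I}=\Gal(K_{S_p}^{\mathrm{ab}}(p)/H_p(K))$. Let $A=X/\mathcal{I}=\Gal(H_p(K)/K)$. The field $\widetilde K$ is the fixed field of $T$: every $\Z_p$-extension lies in $K_{S_p}^{\mathrm{ab}}(p)$, and the common kernel of all maps $X\to\Z_p$ is $T$. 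The field $H_p(K)$ is the fixed field of $\mathcal{I}$. Hence (a) says precisely that $T\subseteq\mathcal{I}$, i.e.\ that $T$ maps to \emph{zero} in $A$. This is not ``$T$ does not surject onto $A$''. Nor is it a statement that inertia generates $X$ modulo something: $X=\mathcal{I}T$ would instead mean $H_p(K)\cap\widetilde K=K$.

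\paragraph{The missing step.}
With the correct translation, Step 3 is a single exactness observation, and no Poitou--Tate input is needed. Tensoring $0\to\mathcal{I}\to X\to A\to 0$ with $\F_p$ gives the exact sequence
\[
X[p]\longrightarrow A[p]\xrightarrow{\;\partial\;}\mathcal{I}/p\longrightarrow X/p\longrightarrow A/p\longrightarrow 0,
\]
with $\mathcal{I}/p\cong\operatorname{coker}\overline{\eta}_{K,p}$. So $X/p$ is an extension of all of $A/p$ by the quotient $\operatorname{coker}(\overline{\eta}_{K,p})/\partial(A[p])$. This is the reverse of what you wrote in Step 1, though the dimension count has the same shape. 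Since $\dim A/p=\dim A[p]$, you get $\dim X/p=\dim\operatorname{coker}\overline{\eta}_{K,p}+\dim\ker\partial$. By exactness, $\ker\partial$ is the image of $X[p]=T[p]$ in $A$. Combined with Step 2,
\[
\dim_{\F_p}X/p-(c_K+1)=\beta+\kappa+\dim_{\F_p}\im\bigl(T[p]\to A\bigr),
\]
where $\beta$ is your bracketed term and $\kappa=\dim\ker\overline{\eta}_{K,p}$. Under (a) the last term is zero, and under (b) and (c) the first two are. So $\dim X/p=c_K+1$, and $X$ is free of rank $c_K+1$.

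You only need that (a) kills the last term. By itself, the vanishing of that term says only $T[p]\subseteq\mathcal{I}$, so the ``exactly when'' in your Step 3 is not what is being used.

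\paragraph{Comparison with the paper's route.}
The paper does not prove this theorem itself; it cites Nguyen Quang Do and Benmerieme--Movahhedi. Its adaptation of their argument in Proposition~\ref{prop:quasi-p-rat-criteria} uses the four-term sequence with the Leopoldt kernel $D_K$, the snake lemma and Nakayama to show that (b) and (c) are equivalent to $\mathcal{I}\simeq\Z_p^{c_K+1}$. Condition (a), i.e.\ $T\subseteq\mathcal{I}$, is then exactly what upgrades this to $T=0$. Once repaired, your count absorbs the Leopoldt defect into the single inequality $\dim X/p\ge c_K+1$, and so avoids $D_K$ and Nakayama altogether.
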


\begin{remark}
Clearly, condition (a) holds if $p \nmid h_{K}$.
By the rank-nullity theorem, condition (c) is equivalent to the assertion
that 
$\dim_{\F_{p}}(\eta_{K,p}(\mathcal{O}_{K}^{\times}))
=\rank_{p}(\mathcal{O}_{K}^{\times})$.
\end{remark}

\begin{remark}\label{rmk:roots-of-unity-condition}
Condition (b) is equivalent to the following pair of assertions:
\[
\begin{cases}
\lvert S_{p}(K) \rvert =1, & \text{ if } \mu_{p} \subset K,  \\
\mu_{p} \not \subset K_{\mathfrak{p}} \text{ for every } \mathfrak{p} \in S_{p}(K),
& \text{ if } \mu_{p} \not \subset K.
\end{cases}
\]
For $\mathfrak{p} \in S_{p}(K)$, let $e_{\mathfrak{p}}$ denote the
ramification index of $\mathfrak{p}$ in $K/\Q$.
If $(p-1) \nmid e_{\mathfrak{p}}$ then $\mu_{p} \not \subset K_{\mathfrak{p}}$ 
(and so $\mu_{p} \not \subset K$) because 
$\Q_{p}(\zeta_{p})/\Q_{p}$ is totally ramified and of degree $p-1$.
In particular, if $p>[K:\Q]+1$ or $p \nmid 2d_{K}$ then condition (b) holds.

If $\mu_{p} \not \subset K$ then condition (b) is equivalent to the following:
\begin{itemize}
    \item no prime $\mathfrak{p} \in S_{p}(K)$ splits completely in $K(\zeta_{p})/K$.
\end{itemize}
In fact, by the above discussion, it is only necessary to check this
condition for $\mathfrak{p} \in S_{p}(K)$ such that $(p-1) \mid e_{\mathfrak{p}}$.
This observation is useful for computational purposes.
\end{remark}

Let $\Cl_{K}$ denote the class group of $K$.
For $k \geq 0$, let $\Cl_{K}(p^{k})$ denote the ray class group
of $K$ of modulus $(p^{k})$.
We recall the following criterion for $p$-rationality due to Gras \cite[\S 2.1]{MR4491965}.

\begin{theorem}\label{thm:Gras-ray-p-rat}
Let $K$ be a number field and let $p$ be a prime number. 
If $p=2$, let $k=3$, and if $p>2$, let $k=2$.
Then $\rank_{p}(\Cl_{K}(p^{k})) \geq c_{K}+1$
and this inequality is an equality if and only if $K$ is $p$-rational.
\end{theorem}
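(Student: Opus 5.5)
The plan is to compare $\Cl_{K}(p^{k})/\Cl_{K}(p^{k})^{p}$ with $X_{S_{p}}(K)/X_{S_{p}}(K)^{p}$ by class field theory. Write $\mathcal{U}=\prod_{\mathfrak{p}\in S_{p}}U_{\mathfrak{p}}$, let $\mathcal{U}^{1}$ denote the principal units, and let $\overline{E}$ denote the closure of the image of $\mathcal{O}_{K}^{\times}$ (equivalently of $\mathcal{O}_{K}^{\times}\otimes\Z_{p}$) in $\mathcal{U}^{1}$. Class field theory gives the exact sequence
\[
0 \to \mathcal{U}^{1}/\overline{E} \to X_{S_{p}}(K) \to \Cl_{K}\otimes \Z_{p} \to 0 .
\]
For each $k$ there is an analogous sequence
\[
0 \to \mathcal{U}/(\mathcal{U}(p^{k})\,\mathcal{O}_{K}^{\times}) \to \Cl_{K}(p^{k}) \to \Cl_{K} \to 0 ,
\]
where $\mathcal{U}(p^{k})$ denotes the units congruent to $1$ modulo $p^{k}$.

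Then $\Cl_{K}(p^{k})\otimes\Z_{p}$ is the Galois group of the maximal $p$-subextension of the ray class field of conductor $p^{k}$. This is a quotient of $X_{S_{p}}(K)$, namely $X_{S_{p}}(K)/\mathrm{image}(\mathcal{U}(p^{k}))$.

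The key step is to show that for $k=2$ (with $p$ odd) or $k=3$ (with $p=2$) we have $\mathcal{U}(p^{k})\subseteq (\mathcal{U}^{1})^{p}$ locally. Indeed, for $p$ odd the $p$-adic logarithm and exponential give $1+p^{2}\mathcal{O}_{\mathfrak{p}} = (1+p\mathcal{O}_{\mathfrak{p}})^{p}$. For $p=2$ one gets $1+8\mathcal{O}=(1+4\mathcal{O})^{2}$, since $(1+4x)^{2}=1+8x+16x^{2}$ and a successive approximation argument applies. Consequently the image of $\mathcal{U}(p^{k})$ in $X_{S_{p}}$ lies in $X_{S_{p}}^{p}$, and the surjection $X_{S_{p}}\to \Cl_{K}(p^{k})\otimes\Z_{p}$ induces an isomorphism modulo $p$-th powers. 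This yields
\[
\rank_{p}(\Cl_{K}(p^{k}))=\rank_{p}(X_{S_{p}}(K)).
\]

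Since $X_{S_{p}}(K)\simeq\Z_{p}^{c_{K}+1+\delta_{K}}\times T_{S_{p}}(K)$, we obtain
\[
\rank_{p}=c_{K}+1+\delta_{K}+\rank_{p}(T_{S_{p}}(K))\ge c_{K}+1,
\]
with equality iff $\delta_{K}=0$ and $T_{S_{p}}(K)=0$. By Proposition~\ref{prop:def-char-p-rational}(c) this is exactly $p$-rationality.

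The main obstacle is the precise identification of $\Cl_{K}(p^{k})\otimes\Z_{p}$ as the quotient of $X_{S_{p}}(K)$ by the image of $\mathcal{U}(p^{k})$. In particular one must check that the inertia subgroups inside $X_{S_{p}}$ are the images of the $U_{\mathfrak{p}}^{1}$, and that the torsion and roots-of-unity part of $U_{\mathfrak{p}}$ of order prime to $p$ does not interfere after tensoring with $\Z_{p}$. The local claim $\mathcal{U}(p^{k})\subseteq(\mathcal{U}^{1})^{p}$ must also hold uniformly in the ramification, which is why the exponent $k$ is taken relative to $p$ rather than to $\mathfrak{p}$. I would verify it with the binomial expansion: for $x\in p\mathcal{O}_{\mathfrak{p}}$ we have $(1+x)^{p}\equiv 1+px \bmod p^{2}x\mathcal{O}_{\mathfrak{p}}$ for $p$ odd, and Hensel-style iteration then gives surjectivity onto $1+p^{2}\mathcal{O}_{\mathfrak{p}}$.
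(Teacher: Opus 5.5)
Your argument is correct and is the standard one. The paper does not reprove this statement; it quotes it from Gras \cite[\S 2.1]{MR4491965}. The result rests on exactly your observation that $1+p^{k}\mathcal{O}_{\mathfrak{p}} \subseteq (U_{\mathfrak{p}}^{1})^{p}$ for every $\mathfrak{p} \mid p$. Hence $X_{S_{p}}(K)$ and $\Cl_{K}(p^{k})$ have the same quotient modulo $p$-th powers, and the count $c_{K}+1+\delta_{K}+\rank_{p}(T_{S_{p}}(K))$ gives both the inequality and the equality case.

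One point is worth stating explicitly. For $p=2$, both $X_{S_{2}}(K)$ and $\Cl_{K}(8)$ must be taken in the ordinary sense (real places unramified), as your two exact sequences implicitly assume. This is the convention under which $\Q$ is $2$-rational.
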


\subsection{$p$-rationality for totally real fields}\label{subsec:p-rat-tot-real}
The following well-known result is an easy consequence of 
Proposition~\ref{prop:def-char-p-rational} and Definition~\ref{def:p-rat}
(see also \cite[\S 3]{MR3512524}).

\begin{prop}\label{prop:tot-real-p-rat-equivs}
Let $F$ be a totally real number field and let $F^{\mathrm{cyc},p}$
denote the cyclotomic $\Z_{p}$-extension of $F$.
Then the following conditions are equivalent:
\begin{enumerate}
    \item $F$ is $p$-rational. 
    \item $G_{S_{p}}(F) \simeq X_{S_{p}}(F) \simeq \Z_{p}$.
    \item $F_{S_{p}}(p)  = F_{S_{p}}^{\mathrm{ab}}(p) = F^{\mathrm{cyc},p}$.
\end{enumerate}
\end{prop}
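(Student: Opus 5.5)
For a totally real field $F$ we have $c_{F}=0$, and the whole proof rests on that. With $c_{F}=0$, Proposition~\ref{prop:def-char-p-rational} says that $F$ is $p$-rational if and only if $G_{S_{p}}(F)$ is free pro-$p$ on one generator, if and only if $X_{S_{p}}(F)$ is free of rank $1$ over $\Z_{p}$. A free pro-$p$ group on one generator is just $\Z_{p}$, and it is abelian. So I would prove (a)$\Rightarrow$(b)$\Rightarrow$(c)$\Rightarrow$(a).

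For (a)$\Rightarrow$(b), assume $F$ is $p$-rational. Proposition~\ref{prop:def-char-p-rational}(a) with $c_{F}=0$ gives $G_{S_{p}}(F)\simeq\Z_{p}$. This group is abelian, so $K_{S_{p}}(p)=K_{S_{p}}^{\mathrm{ab}}(p)$ (with $K=F$). Hence $X_{S_{p}}(F)$, being the maximal abelian quotient of $G_{S_{p}}(F)$, equals $G_{S_{p}}(F)\simeq\Z_{p}$.

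For (b)$\Rightarrow$(c), the cyclotomic $\Z_{p}$-extension $F^{\mathrm{cyc},p}=F\Q^{\mathrm{cyc},p}$ has two relevant properties. It is unramified outside $p$, since $\Q^{\mathrm{cyc},p}\subset\Q(\zeta_{p^{\infty}})$. It is also an abelian pro-$p$ extension of $F$ with Galois group $\Z_{p}$, because $F\cap\Q^{\mathrm{cyc},p}$ is finite, so the group is an open subgroup of $\Z_{p}$, hence $\simeq\Z_{p}$. Infinite primes cause no trouble: $p$-extensions are unramified at real places when $p$ is odd, and when $p=2$ the field $\Q^{\mathrm{cyc},2}$ is totally real. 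Therefore
\[
F^{\mathrm{cyc},p}\subseteq F_{S_{p}}^{\mathrm{ab}}(p)\subseteq F_{S_{p}}(p).
\]
Now assume (b), so $G_{S_{p}}(F)\simeq\Z_{p}$. The subextension $F^{\mathrm{cyc},p}$ corresponds to a closed subgroup $N$ of $\Z_{p}$ with $\Z_{p}/N\simeq\Z_{p}$. Every nonzero closed subgroup of $\Z_{p}$ has finite index, so $\Z_{p}/N$ infinite forces $N=0$. Hence all three fields coincide.

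For (c)$\Rightarrow$(a), assume (c). Then $G_{S_{p}}(F)=\Gal(F^{\mathrm{cyc},p}/F)\simeq\Z_{p}$, which is free pro-$p$ on one generator $=c_{F}+1$ generators. So $F$ is $p$-rational by Proposition~\ref{prop:def-char-p-rational}(a).

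The main obstacle is the step in (b)$\Rightarrow$(c) showing $F^{\mathrm{cyc},p}$ lies in $F_{S_{p}}(p)$: one must make sure it really is a $\Z_{p}$-extension of $F$ (not merely of a finite extension of $F$), which is the linear-disjointness argument above, and that the conventions on real places are respected when $p=2$.
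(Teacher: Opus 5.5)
Your proof is correct and takes the same route as the paper. The paper gives no written proof; it records the result as an easy consequence of Proposition~\ref{prop:def-char-p-rational} with $c_{F}=0$, and your argument supplies exactly those details. You observe that a free pro-$p$ group of rank one is $\Z_{p}$, and that $F^{\mathrm{cyc},p}\subseteq F_{S_{p}}(p)$ is a $\Z_{p}$-quotient, which forces equality. Your check on real places is valid under either convention when $p=2$.
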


\begin{corollary}\label{cor:p-rat-going-up}
Let $F$ be a totally real number field and 
let $K/F$ be a finite Galois $p$-extension that is unramified outside
$S_{p}(F)$. Then $F$ is $p$-rational if and only if $K$ is $p$-rational.
\end{corollary}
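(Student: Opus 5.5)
Note first that $K$ need not be totally real, so I cannot apply Proposition~\ref{prop:tot-real-p-rat-equivs} to $K$ itself. I would therefore work through the pro-$p$ group characterisation for $F$ together with a Galois-group comparison, and use Lemma~\ref{lem:p-rat-subfields} for one direction.

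\emph{Direction ($\Leftarrow$).} Suppose $K$ is $p$-rational. Then its subfield $F$ is $p$-rational by Lemma~\ref{lem:p-rat-subfields}.

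\emph{Direction ($\Rightarrow$).} Suppose $F$ is $p$-rational. By Proposition~\ref{prop:tot-real-p-rat-equivs}, $G_{S_p}(F)\simeq \Z_p$ and $F_{S_p}(p)=F^{\mathrm{cyc},p}$. Since $K/F$ is a finite $p$-extension unramified outside $S_p(F)$, we have $K\subseteq F_{S_p}(p)=F^{\mathrm{cyc},p}$, so $K=F_n$ is a layer of the cyclotomic $\Z_p$-extension. In particular $K$ is totally real, as $F^{\mathrm{cyc},p}$ is, and $c_K=0$.

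The key step is the equality $K_{S_p}(p)=F_{S_p}(p)$. Let $L$ be any finite pro-$p$-subextension of $K_{S_p}(p)/K$ and let $L'$ be its Galois closure over $F$. Each $F$-conjugate of $L$ is again a $p$-extension of $K$ unramified outside $S_p(K)$, since $K/F$ is Galois. Hence $L'/K$ is a $p$-extension, and therefore $L'/F$ is a $p$-extension, being a tower of $p$-extensions. Primes of $K$ above $p$ are exactly the primes above $S_p(F)$, so $L'/F$ is unramified outside $S_p(F)$. Thus $L'\subseteq F_{S_p}(p)$.

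Consequently $G_{S_p}(K)$ is the open subgroup $\Gal(F_{S_p}(p)/K)$ of $\Z_p$, which is isomorphic to $\Z_p$. This is a free pro-$p$ group on $1=c_K+1$ generators, so $K$ is $p$-rational by Proposition~\ref{prop:def-char-p-rational}(a).

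The main point requiring care is the Galois-closure argument: checking that the compositum of conjugates stays a $p$-extension unramified outside $p$. This uses only that $K/F$ is Galois and that $S_p$ is stable under conjugation, both of which hold here.
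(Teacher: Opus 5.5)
Your proof is correct and follows the paper's argument: the converse via Lemma~\ref{lem:p-rat-subfields}, and the forward direction via $K\subseteq F_{S_p}(p)=F^{\mathrm{cyc},p}$ together with $K_{S_p}(p)=F_{S_p}(p)$. You supply a Galois-closure justification of that equality, which the paper states without proof; only the trivial inclusion $F_{S_p}(p)\subseteq K_{S_p}(p)$ is left unstated. You also make explicit that $K$ is totally real, which the paper uses implicitly when it applies Proposition~\ref{prop:tot-real-p-rat-equivs} to $K$, and your finish via Proposition~\ref{prop:def-char-p-rational}(a) with $c_K=0$ is equivalent to the paper's.
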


\begin{proof}
By hypothesis, $K$ is contained in $F_{S_{p}}(p)$ and so $K_{S_{p}}(p) = F_{S_{p}}(p)$.
If $F$ is $p$-rational then $F_{S_{p}}(p) = F^{\mathrm{cyc},p}$ by 
Proposition~\ref{prop:tot-real-p-rat-equivs}, and therefore 
$K_{S_{p}}(p) = K^{\mathrm{cyc},p}$; thus $K$ is $p$-rational, again by Proposition~\ref{prop:tot-real-p-rat-equivs}.
The converse is Lemma~\ref{lem:p-rat-subfields}.
\end{proof}

Let $v_{p}$ denote the $p$-adic valuation function normalised by $v_{p}(p)=1$.

\begin{corollary}\label{cor:tot-real-p-rat-class-number}
Let $F$ be a totally real $p$-rational number field.
Let $m \geq 0$ be the largest integer such that $F_{m}/F$ is unramified,
where $F_{m}$ denotes the $m$-th layer of the cyclotomic $\Z_{p}$-extension $F^{\mathrm{cyc},p}$ of $F$. Then $v_{p}(h_{F})=m$.    
\end{corollary}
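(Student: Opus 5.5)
By Proposition~\ref{prop:tot-real-p-rat-equivs}, the $p$-rationality of $F$ means $F_{S_{p}}^{\mathrm{ab}}(p) = F^{\mathrm{cyc},p}$. The plan is to compare the $p$-Hilbert class field $H_{p}(F)$ with this extension. Since $H_{p}(F)/F$ is an abelian $p$-extension that is unramified everywhere, it is in particular unramified outside $S_{p}$. Hence $H_{p}(F) \subseteq F_{S_{p}}^{\mathrm{ab}}(p) = F^{\mathrm{cyc},p}$. By class field theory, $[H_{p}(F):F] = p^{v_{p}(h_{F})}$, since $\Gal(H_{p}(F)/F)$ is isomorphic to the Sylow-$p$ subgroup of $\Cl_{F}$.

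Next I will use the structure of $\Gal(F^{\mathrm{cyc},p}/F) \simeq \Z_{p}$. Its closed subgroups of finite index are exactly the $p^{k}\Z_{p}$. So every finite subextension of $F^{\mathrm{cyc},p}/F$ is one of the layers $F_{k}$, with $[F_{k}:F]=p^{k}$, and these layers form a chain $F=F_{0}\subset F_{1}\subset \cdots$. Therefore $H_{p}(F)=F_{k}$ with $k=v_{p}(h_{F})$.

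It remains to show that $k=m$. First, $F_{k}=H_{p}(F)$ is unramified over $F$, so by the definition of $m$ as the largest integer with $F_{m}/F$ unramified, we get $k\le m$. Conversely, $F_{m}/F$ is an unramified abelian $p$-extension; it is unramified at the infinite places too, because $F_{m}$ is totally real as a subfield of the cyclotomic $\Z_{p}$-extension of a totally real field (and for $p$ odd the degree is odd anyway). Hence $F_{m}\subseteq H_{p}(F)=F_{k}$, which gives $m\le k$.

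One point needs care. The maximal $m$ must exist, that is, not every layer $F_{m}/F$ can be unramified. This holds because $F^{\mathrm{cyc},p}/F$ is ramified at the primes above $p$, as $\Q^{\mathrm{cyc},p}/\Q$ is totally ramified at $p$ and $F/\Q$ is finite. It also follows from the finiteness of $H_{p}(F)$.

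The main subtlety is the convention on $H_{p}(F)$: it should be the narrow or the wide $p$-Hilbert class field, chosen consistently with $h_{F}$. For $p$ odd the two agree. For $p=2$, I would handle the infinite places through the total reality of the layers $F_{m}$, as noted above. The remaining steps are routine Galois theory.
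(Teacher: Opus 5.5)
Your proposal is correct and follows the paper's proof. The paper likewise notes that $H_{p}(F) \subseteq F_{S_{p}}(p) = F^{\mathrm{cyc},p}$ by Proposition~\ref{prop:tot-real-p-rat-equivs} and then says the result ``follows easily''. Your identification $H_{p}(F)=F_{k}$ with $k=v_{p}(h_{F})$, the two inequalities $k \le m$ and $m \le k$, and your handling of the infinite places when $p=2$ are precisely the routine details the paper leaves unstated.
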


\begin{proof}
We have $H_{p}(F) \subseteq F_{S_{p}}(p)$ by definition and   
$F_{S_{p}}(p) = F^{\mathrm{cyc},p}$ by Proposition~\ref{prop:tot-real-p-rat-equivs}.
Therefore $H_{p}(F) \subseteq F^{\mathrm{cyc},p}$, 
from which the desired result follows easily.
\end{proof}

\begin{corollary}\label{cor:tot-real-p-rat-tame}
Let $F$ be a totally real $p$-rational number field. 
If each prime in $S_{p}(F)$ is at most tamely ramified in $F/\Q$ then 
$p \nmid h_{F}$.
\end{corollary}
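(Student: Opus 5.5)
The plan is to combine Corollary~\ref{cor:tot-real-p-rat-class-number} with a ramification argument. By that corollary, $v_{p}(h_{F})=m$, where $m$ is the largest integer such that the $m$-th layer $F_{m}$ of $F^{\mathrm{cyc},p}/F$ is unramified over $F$. It therefore suffices to show that $m=0$, i.e.\ that the first layer $F_{1}/F$ (of degree $p$) is ramified at some prime of $F$. Equivalently, in the language of that proof, we want $H_{p}(F) \cap F^{\mathrm{cyc},p} = F$.

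First, I will note that $F^{\mathrm{cyc},p} = F\Q^{\mathrm{cyc},p}$, so $F_{1} = F\Q_{1}$, where $\Q_{1}$ is the degree-$p$ subfield of $\Q^{\mathrm{cyc},p}$. Next, fix $\mathfrak{p} \in S_{p}(F)$ and work locally over $\Q_{p}$. The extension $\Q_{1}/\Q$ is totally ramified at $p$, and it is wildly ramified since its degree is $p$. Suppose $F_{1}/F$ were unramified at $\mathfrak{p}$. Then the completion of $F_{1}$ at a prime above $\mathfrak{p}$ would have ramification index over $\Q_{p}$ equal to $e_{\mathfrak{p}}$, which is prime to $p$ by the tameness hypothesis. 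But this completion contains the completion of $\Q_{1}$ at $p$, whose ramification index over $\Q_{p}$ is $p$. Multiplicativity of ramification indices in the tower then gives $p \mid e_{\mathfrak{p}}$, a contradiction. Hence $F_{1}/F$ is ramified at $\mathfrak{p}$, so $m=0$ and $p \nmid h_{F}$.

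The main point requiring care is the case $p=2$. There $\Q_{1}=\Q(\sqrt{2})$, and "tamely ramified" means $e_{\mathfrak{p}}$ is odd. The argument above is insensitive to this: it only uses that $\Q_{1}/\Q$ has ramification index $p$ at $p$ and that $p \nmid e_{\mathfrak{p}}$. I would also check that Corollary~\ref{cor:tot-real-p-rat-class-number} is being applied correctly, namely that $m$ is detected by the first layer. This holds because if $F_{m}/F$ is unramified for some $m \geq 1$, then so is its subextension $F_{1}/F$. So I expect no real obstacle beyond this bookkeeping of local ramification indices.
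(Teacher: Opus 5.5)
Your proof is correct and is essentially the paper's argument run in contrapositive form. Both use Corollary~\ref{cor:tot-real-p-rat-class-number} to reduce to showing that the first layer $F_{1}$ cannot be unramified over $F$: it contains the totally and wildly ramified field $\Q_{1}$, and multiplicativity of ramification indices then contradicts tameness. One small remark is that the equality $F_{1}=F\Q_{1}$ needs $F\cap\Q_{1}=\Q$, which does hold here because of tameness, but your argument only ever uses the containment $\Q_{1}\subseteq F_{1}$, and that holds unconditionally.
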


\begin{proof}
Suppose for a contradiction that $p \mid h_{F}$.
Then $m > 0$ and so $H_{p}(F)$ must contain the first layer of the cyclotomic $\Z_{p}$-extension of $\Q$.
Hence some prime of $H_{p}(F)$ above $p$ must be wildly ramified in $H_{p}(F)/\Q$.
But $H_{p}(F)/F$ is unramified and so some prime of $F$ above $p$ must be wildly 
ramified in $F/\Q$, contradicting the tameness hypotheses.
\end{proof}

\subsection{$p$-rationality and $p$-adic regulators}
For a totally real number field $F$ and a prime number $p$, Leopoldt's conjecture
for $F$ at $p$ is equivalent to the non-vanishing of 
the $p$-adic regulator $R_{F,p}$ of $F$ (see \cite[\S 5.5]{MR1421575} or \cite[(10.3.5)]{MR2392026}).

We now recall the following result of Coates \cite[Appendix, Lemma~8]{MR460282}.

\begin{theorem}\label{thm:Coates}
Let $F$ be a totally real number field and let $p$ be an odd prime number. 
If $R_{F,p} \neq 0$ then 
\[
v_{p}\left( | T_{S_{p}(F)} | \right) 
= v_{p}\left( 
|\mu_{p^{\infty}}(F(\zeta_{p}))| \frac{h_{F}R_{F,p}}{\sqrt{d_{F}}} 
\prod_{\mathfrak{p} \in S_{p}(F)} \left( 1 - \Norm_{F/\Q}(\mathfrak{p})^{-1} \right)
\right). 
\]
\end{theorem}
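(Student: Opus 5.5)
The plan is to derive the formula from the $p$-adic analytic class number formula together with the Iwasawa main conjecture for totally real fields, which is Coates's original route. Since $R_{F,p}\neq 0$, Leopoldt's conjecture holds for $F$ at $p$. Hence $X_{S_{p}}(F)\simeq \Z_{p}\times T_{S_{p}}(F)$, where the free part corresponds to the cyclotomic $\Z_{p}$-extension $F^{\mathrm{cyc},p}$. Put $\Gamma=\Gal(F^{\mathrm{cyc},p}/F)$ and $\mathfrak{X}=\Gal(M/F^{\mathrm{cyc},p})$, where $M$ is the maximal abelian pro-$p$-extension of $F^{\mathrm{cyc},p}$ unramified outside $p$. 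Then $T_{S_{p}}(F)$ is the torsion of the $\Gamma$-coinvariants of $\mathfrak{X}$. It is convenient to pass to $\mathfrak{X}$ because $\mathfrak{X}$ is a finitely generated torsion $\Lambda$-module (Iwasawa) with no non-trivial finite submodules.

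The first step is a coinvariant calculation. Take the exact sequence
\[
0 \to \mathfrak{X}_{\Gamma} \to X_{S_{p}}(F) \to \Gamma \to 0 .
\]
Since $\Gamma\simeq\Z_{p}$ is free, the sequence splits, so $\mathfrak{X}_{\Gamma}\simeq T_{S_{p}}(F)$ is finite. Let $\gamma$ be a topological generator of $\Gamma$ and $f\in\Z_{p}[[T]]$ a characteristic power series of $\mathfrak{X}$. Because $\mathfrak{X}$ has no finite submodule and $\mathfrak{X}_\Gamma$ is finite, the standard Iwasawa lemma gives $\mathfrak{X}^{\Gamma}=0$ and $|\mathfrak{X}_{\Gamma}| = |f(0)|_{p}^{-1}$. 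So $v_{p}(|T_{S_{p}}(F)|)=v_{p}(f(0))$.

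The second step uses the main conjecture (Wiles, with the Deligne--Ribet/Cassou-Noguès construction): up to a unit and the standard twist, $f$ is the power series attached to the $p$-adic $L$-function $L_{p}(s,\mathbf{1}_{F})$ of $F$. This needs $p$ odd, and one works over $F(\zeta_{p})$ using the $\omega$-eigenspace. Evaluating at $s=1$ corresponds to $T=0$. The $p$-adic $L$-function has a simple pole there from the trivial zero of the $\Gamma$-part. This pole exactly accounts for the free factor $\Z_{p}$ already removed, so the relevant quantity is the residue. Concretely, $f(0)$ should equal $\lim_{s\to1}(s-1)\zeta_{F,p}(s)$, up to the normalising factor $|\mu_{p^\infty}(F(\zeta_p))|$ that comes from the Tate twist and the identification of generators.

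The third step applies Colmez's $p$-adic analytic class number formula:
\[
\lim_{s\to 1}(s-1)\zeta_{F,p}(s)=\frac{2^{[F:\Q]-1}h_{F}R_{F,p}}{\sqrt{d_{F}}}\prod_{\mathfrak{p}\mid p}\bigl(1-\Norm_{F/\Q}(\mathfrak{p})^{-1}\bigr).
\]
The power of $2$ is a $p$-adic unit for odd $p$. Combining the three steps gives the stated valuation.

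I expect the main obstacle to be the bookkeeping in the second step. One must match the normalisation of the power series with the residue, including the factor $|\mu_{p^\infty}(F(\zeta_p))|$ and the choice of $\gamma$ versus $\kappa(\gamma)$. In practice I would simply cite Coates's appendix for this identification rather than rederive it.
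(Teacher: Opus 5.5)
The paper gives no proof here; it quotes Coates's appendix. Coates's argument dates from 1977, before Colmez's residue formula and Wiles's proof of the main conjecture, so it is a direct class-field-theoretic computation. In outline, the classical argument runs as follows.

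\begin{itemize}
\item One uses the exact sequence $0\to U/\overline{E}\to X_{S_p}(F)\to \Cl_F\otimes\Z_p\to 0$, where $U=\prod_{\mathfrak p\mid p}U^{1}_{\mathfrak p}$ and $\overline E$ is the closure of the global units.
\item Under Leopoldt this gives $|T_{S_p}(F)|=|(U/\overline E)_{\mathrm{tors}}|\cdot|\Cl_F\otimes\Z_p|/[H_p(F)\cap F^{\mathrm{cyc},p}:F]$.
\item The torsion of $U/\overline E$ is computed with $\log_p$ as a lattice index in the trace-zero hyperplane.
\item The determinant against an integral basis gives $R_{F,p}/\sqrt{d_F}$.
\item The index of $\log_p U$ in $\mathcal O_F\otimes\Z_p$ gives $\prod_{\mathfrak p}(1-\Norm\mathfrak p^{-1})$; the local roots of unity cancel.
\item Comparing $\mathrm{Tr}\log_p(U)=\log_p N(U)$ with $\kappa(\Gamma)$ and with $[H_p(F)\cap F^{\mathrm{cyc},p}:F]$ gives $|\mu_{p^\infty}(F(\zeta_p))|$.
\end{itemize}

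Your route is genuinely different. Your Step~1 is correct: $\mathfrak X$ has no non-zero finite submodule, so $\mathfrak X^\Gamma=0$ and $|T_{S_p}(F)|=|\mathfrak X_\Gamma|=|f(0)|_p^{-1}$. Wiles's theorem and Colmez's formula then finish the argument. This is logically sound, and it exhibits the formula as the $T=0$ specialisation of the main conjecture. However, it spends two deep theorems on an elementary statement, and it reverses the historical logic: Coates's lemma combined with the main conjecture is what yields the residue formula up to a $p$-adic unit. The algebraic route needs only Leopoldt for $F$ and is self-contained.

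Two corrections to your Step~2.

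First, your fallback of citing Coates's appendix ``for this identification'' is circular, since that appendix is exactly the statement being proved. The bookkeeping is in fact short:
\begin{itemize}
\item Normalise $G\in\Lambda$ by $\zeta_{F,p}(1-s)=G(u^{s}-1)/(u^{s}-1)$ with $u=\kappa(\gamma)$. In this normalisation the main conjecture reads $\mathrm{char}_\Lambda(\mathfrak X)=(G)$, with no twist.
\item Letting $s\to 0$ gives $G(0)=-\log_p(u)\cdot\mathrm{Res}_{s=1}\zeta_{F,p}(s)$.
\item Closed subgroups of $\mu_{p-1}\times(1+p\Z_p)$ split as products. Hence $\kappa(\Gamma)=\chi_{\mathrm{cyc}}(\Gal(\overline{\Q}/F(\zeta_p)))=1+p^{k}\Z_p$, where $p^{k}=|\mu_{p^\infty}(F(\zeta_p))|$.
\item Therefore $v_p(\log_p u)=v_p(u-1)=k$, and this is the entire source of that factor.
\end{itemize}

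Second, $\mathfrak X$ is the trivial-character component of the unramified-outside-$p$ module over $F(\mu_{p^\infty})$, namely its $\Gal(F(\zeta_p)/F)$-coinvariants. It is not the $\omega$-eigenspace. The character $\omega$ only enters if you phrase the main conjecture through the minus part of the unramified module and Kummer duality.
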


This leads to the following condition for $p$-rationality of totally real number fields.

\begin{corollary}\label{cor:p-rat-p-reg}
Let $F$ be a totally real number field and let $p$ be a prime number such that $p \nmid 2d_{F}$.
Then $F$ is $p$-rational if and only if $p \nmid h_{F}$ and $v_{p}(R_{F,p})=[F:\Q]-1$.
\end{corollary}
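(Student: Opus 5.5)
We derive the statement from Coates' formula (Theorem~\ref{thm:Coates}) by checking that, under the hypothesis $p \nmid 2d_{F}$, every factor on the right-hand side other than $h_{F}$ and $R_{F,p}$ is a $p$-adic unit. Since $p$ is odd and $F$ is totally real, Proposition~\ref{prop:def-char-p-rational} says that $F$ is $p$-rational if and only if $R_{F,p} \neq 0$ and $v_{p}(|T_{S_{p}}(F)|)=0$. Here we use that Leopoldt's conjecture for totally real $F$ is equivalent to $R_{F,p}\neq 0$.

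We first dispose of the auxiliary factors.
\begin{itemize}
\item Because $p \nmid d_{F}$, we have $v_{p}(\sqrt{d_{F}})=0$.
\item Each $\mathfrak{p} \in S_{p}(F)$ has norm $p^{f_{\mathfrak{p}}}$ with $f_{\mathfrak{p}} \geq 1$. Hence $1-\Norm(\mathfrak{p})^{-1} = (p^{f_{\mathfrak{p}}}-1)/p^{f_{\mathfrak{p}}}$ has valuation $-f_{\mathfrak{p}}$.
\item Since $p$ is unramified in $F$, the extension $F(\zeta_{p})/F$ is totally ramified at primes above $p$ of degree $p-1$, and $F(\zeta_{p})$ cannot contain $\zeta_{p^{2}}$ (its ramification index at $p$ is $p-1 < p(p-1)$). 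Therefore $|\mu_{p^{\infty}}(F(\zeta_{p}))| = p$, contributing valuation $1$.
\end{itemize}
Since $p$ is unramified, $\sum_{\mathfrak{p}} f_{\mathfrak{p}} = [F:\Q]$. Coates' formula therefore becomes
\[
v_{p}(|T_{S_{p}}(F)|) = v_{p}(h_{F}) + v_{p}(R_{F,p}) + 1 - [F:\Q].
\]

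The key remaining step is the lower bound $v_{p}(R_{F,p}) \geq [F:\Q]-1$ for any $p$ unramified in $F$ (with $R_{F,p}$ normalised as in Coates). This bound matters because $v_{p}(|T|)\ge 0$ alone does not separate the two summands. To prove it, write $n=[F:\Q]$ and let $u_{1},\dots,u_{n-1}$ be fundamental units. Replacing $u_{i}$ by $u_{i}^{N}$ with $N=\prod_{\mathfrak{p}}(\Norm\mathfrak{p}-1)$, a $p$-adic unit, changes $R_{F,p}$ only by the unit factor $N^{n-1}$. Each $u_{i}^{N}$ is then $\equiv 1$ modulo every prime above $p$. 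Since $p$ is unramified, the $p$-adic logarithms of all conjugates are then divisible by $p$. The regulator is an $(n-1)\times(n-1)$ minor of the matrix $(\log_{p}\sigma_{j}(u_{i}))$, all of whose entries lie in $p\mathcal{O}$, so its valuation is at least $n-1$. One should double-check that this matches the paper's normalisation of $R_{F,p}$, in particular whether a factor such as $1/n$ or $\sqrt{d_F}$ appears; if a factor $1/n$ with $p \mid n$ occurs it must be tracked, and I expect this normalisation bookkeeping to be the main obstacle.

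Given the bound, both summands $v_{p}(h_{F})$ and $v_{p}(R_{F,p})-(n-1)$ are non-negative. So $v_{p}(|T|)=0$ if and only if $p \nmid h_{F}$ and $v_{p}(R_{F,p}) = n-1$. The condition $v_{p}(R_{F,p})=n-1$ is finite, so it already implies $R_{F,p}\neq 0$, i.e.\ Leopoldt's conjecture. Conversely, $p$-rationality gives Leopoldt, hence $R_{F,p}\neq 0$, and then Coates' formula applies. This proves the equivalence.
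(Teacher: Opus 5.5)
Your proof is correct and follows the paper's route. The paper likewise computes the $p$-adic valuations of the auxiliary factors in Coates's formula, invokes $v_{p}(R_{F,p}) \geq [F:\Q]-1$, and then argues each direction via Leopoldt's conjecture; it merely attributes that bound to ``the definition of $R_{F,p}$'' and $p \nmid d_{F}$, whereas you prove it with the $u_{i} \mapsto u_{i}^{N}$ argument. Your normalisation worry does not arise: under the standard convention $R_{F,p}$ is an $(n-1)\times(n-1)$ minor of $(\log_{p}\sigma_{j}(u_{i}))$ with no factor $1/n$, and you also correctly use the valuation of $\prod_{\mathfrak{p}}(1-\Norm(\mathfrak{p})^{-1})$ where the paper writes a literal equality with $p^{-[F:\Q]}$.
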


\begin{proof}
Since $p \nmid d_{F}$, we have 
$v_{p}(\sqrt{d_{F}})=0$, $|\mu_{p^{\infty}}(F(\zeta_{p}))|=p$
and
\[
\textstyle{\prod_{\mathfrak{p} \in S_{p}(F)} \left( 1 - \Norm_{F/\Q}(\mathfrak{p})^{-1} \right)} = p^{-[F:\Q]}.
\]
Hence if $R_{F,p}\ne0$ then Theorem~\ref{thm:Coates} gives
\[
v_{p}(|T_{S_p}(F)|)
=
v_{p}(h_{F}) + v_{p}(R_{F,p})-([F:\Q]-1),
\]
and the definition of $R_{F,p}$ together with 
the assumption $p\nmid d_{F}$ give $v_{p}(R_{F,p})\geq [F:\Q]-1$.

We now use the characterisation of $p$-rationality given by 
Proposition~\ref{prop:def-char-p-rational}(c).
If $F$ is $p$-rational, then $T_{S_p}(F)=0$ and Leopoldt’s conjecture gives $R_{F,p}\ne0$; 
the preceding inequality therefore forces $p \nmid h_{F}$ and 
$v_{p}(R_{F,p})=[F:\Q]-1$. Conversely, these two conditions imply $R_{F,p}\ne0$ 
and $T_{S_p}(F)=0$, and so $F$ is $p$-rational.
\end{proof}

\subsection{$p$-rationality for CM-fields}
The following result of Benmerieme and Movahhedi \cite[Proposition~2.14]{MR4192837}
relates the $p$-rationality of a CM-field $K$ to that of its maximal totally real 
subfield $K^{+}$.

\begin{prop}\label{prop:CM-tot-real-p-rat}
Let $K$ be a CM-field and let $p$ be an odd prime number. 
Suppose that $p \nmid h_{K}$ and that 
$\mu_{p} \not \subset K_{\mathfrak{p}}$ for each $\mathfrak{p} \in S_{p}(K)$.
Then $K$ is $p$-rational if and only if $K^{+}$ is $p$-rational.
\end{prop}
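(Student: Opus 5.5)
The plan is to use the three-condition characterisation of Theorem~\ref{thm:nqd-char-p-rat} for both $K$ and $K^{+}$, and compare the conditions one at a time. Throughout, $p$ is odd and complex conjugation $\tau$ generates $\Gal(K/K^{+})$. Since $[K:K^{+}]=2$ is prime to $p$, every $\F_{p}[\langle\tau\rangle]$-module $M$ splits as $M=M^{+}\oplus M^{-}$ into $\pm1$-eigenspaces, and the functor $M\mapsto M^{+}$ is exact. This eigenspace decomposition will be the main tool.

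\emph{Condition (a).} The hypothesis $p\nmid h_{K}$ makes (a) automatic for $K$. For $K^{+}$, I would use that the norm map $\Cl_{K}\to\Cl_{K^{+}}$ is surjective: $K/K^{+}$ is ramified at the infinite places, so $K\cap H(K^{+})=K^{+}$, and class field theory gives surjectivity. Hence $p\nmid h_{K^{+}}$, and (a) also holds for $K^{+}$.

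\emph{Condition (b).} The hypothesis $\mu_{p}\not\subset K_{\mathfrak{p}}$ for every $\mathfrak{p}\in S_{p}(K)$ means that both sides of $\lambda_{K,p}$ are trivial, so (b) holds for $K$. For $K^{+}$, each completion $K^{+}_{\mathfrak{q}}$ embeds into some $K_{\mathfrak{p}}$. Therefore $\mu_{p}\not\subset K^{+}_{\mathfrak{q}}$ for every $\mathfrak{q}\in S_{p}(K^{+})$, and (b) holds for $K^{+}$ as well.

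It remains to show that (c) for $K$ is equivalent to (c) for $K^{+}$. This is the main step. I would proceed in three parts.
\begin{enumerate}
\item Both sides of $\overline{\eta}_{K,p}$ are $\tau$-modules and the map is $\tau$-equivariant. Here $\tau$ permutes the primes of $S_{p}(K)$, and the product $\prod_{\mathfrak{p}\in S_{p}(K)}U_{\mathfrak{p}}$ equals $(\mathcal{O}_{K}\otimes\Z_{p})^{\times}$. Its $\tau$-invariants are $(\mathcal{O}_{K^{+}}\otimes\Z_{p})^{\times}=\prod_{\mathfrak{q}\in S_{p}(K^{+})}U_{\mathfrak{q}}$, and by exactness of taking $+$-parts, the $+$-part of $\bigl(\prod U_{\mathfrak{p}}\bigr)/p$-th powers is the corresponding quotient for $K^{+}$.
\item On the global side, $\mu_{p}(K)=1$ because $\mu_{p}\not\subset K_{\mathfrak{p}}$. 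Since $p$ is odd, the index $Q=[\mathcal{O}_{K}^{\times}:\mu(K)\mathcal{O}_{K^{+}}^{\times}]\in\{1,2\}$ is prime to $p$. It follows that $\mathcal{O}_{K}^{\times}/\mathcal{O}_{K}^{\times p}\cong\mathcal{O}_{K^{+}}^{\times}/\mathcal{O}_{K^{+}}^{\times p}$. Moreover, $\tau$ acts trivially on this quotient, since $u^{\tau}/u$ is a root of unity of order prime to $p$. So the global side is entirely $+$.
\item Consequently, $\overline{\eta}_{K,p}$ has image in the $+$-part, and it identifies with $\overline{\eta}_{K^{+},p}$ composed with the injection of the $+$-part into the whole. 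Hence the two maps are injective simultaneously.
\end{enumerate}

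The delicate point is the local identification in part 1, which requires checking that the $+$-part of the local quotient is exactly the $K^{+}$ quotient. I would handle this with the short exact sequence
\[
1\to U^{p}\to U\to U/U^{p}\to1
\]
and take $+$-parts, using that $H^{1}(\langle\tau\rangle,-)$ vanishes on pro-$p$ parts because $p$ is odd. The prime-to-$p$ torsion part of $U$ is killed by taking quotients modulo $p$-th powers anyway. Combining (a), (b) and (c) gives the equivalence stated in Proposition~\ref{prop:CM-tot-real-p-rat}.
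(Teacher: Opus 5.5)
Your proof is correct. The paper itself gives no argument for this proposition; it quotes it from Benmerieme--Movahhedi \cite[Proposition~2.14]{MR4192837}. You instead derive it from the Nguyen Quang Do criterion (Theorem~\ref{thm:nqd-char-p-rat}), in the same style as the paper's own proofs of Theorems~\ref{thm:p-odd-rat-cyclo} and~\ref{thm:2-rat-cyclo}.

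The individual steps check out:
\begin{itemize}
\item $h_{K^{+}}\mid h_{K}$ gives condition (a) on both sides.
\item The local hypothesis gives condition (b) on both sides.
\item The inclusion induces $\mathcal{O}_{K^{+}}^{\times}/\mathcal{O}_{K^{+}}^{\times p}\cong\mathcal{O}_{K}^{\times}/\mathcal{O}_{K}^{\times p}$, because $[\mathcal{O}_{K}^{\times}:\mathcal{O}_{K^{+}}^{\times}]=Qw_{K}/2$ is prime to $p$.
\item The map $\prod_{\mathfrak{q}}U_{\mathfrak{q}}/U_{\mathfrak{q}}^{p}\to\prod_{\mathfrak{p}}U_{\mathfrak{p}}/U_{\mathfrak{p}}^{p}$ is injective with image exactly the $+$-part.
\end{itemize}

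Your remark about first discarding the prime-to-$p$ torsion in that last step is genuinely needed. Equivalently, you can observe that the connecting map from the $p$-torsion group $(U/U^{p})^{\tau}$ lands in the $2$-torsion group $H^{1}(\langle\tau\rangle,U^{p})$, hence is trivial. Either way the point must be addressed, because $H^{1}(\langle\tau\rangle,-)$ of the Teichm\"uller part is non-zero as soon as some prime above $p$ ramifies in $K/K^{+}$. The hypotheses permit this, for example $K=\Q(\sqrt{-5})$ with $p=5$.

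What your route buys over the bare citation is transparency about where each hypothesis enters:
\begin{itemize}
\item The comparison of (c) uses only that $p$ is odd and $\mu_{p}\not\subset K$.
\item The full local condition $\mu_{p}\not\subset K_{\mathfrak{p}}$ is needed only for (b).
\item The hypothesis $p\nmid h_{K}$ is needed only for (a).
\end{itemize}
Finally, the implication from $K$ to $K^{+}$ is already Lemma~\ref{lem:p-rat-subfields}, so only the converse really requires your argument.
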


Recall that if $K$ is a CM-field, then $h_{K^{+}}$ divides $h_{K}$ and the 
quotient $h_{K}^{-} = h_{K} / h_{K^{+}}$ is called the relative class number of $K$
(see \cite[Theorem~4.10]{MR1421575}).

\begin{corollary}\label{cor:CM-tot-real-p-rat}
Let $K$ be a CM-field and let $p$ be a prime number such that $p \nmid 2d_{K}h_{K}^{-}$.
Then $K$ is $p$-rational if and only if $K^{+}$ is $p$-rational.
\end{corollary}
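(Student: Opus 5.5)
We will deduce Corollary~\ref{cor:CM-tot-real-p-rat} directly from Proposition~\ref{prop:CM-tot-real-p-rat} by checking that its hypotheses hold under the assumption $p \nmid 2d_{K}h_{K}^{-}$. There are three things to verify: that $p$ is odd, that $p \nmid h_{K}$, and that $\mu_{p} \not\subset K_{\mathfrak{p}}$ for every $\mathfrak{p} \in S_{p}(K)$.

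\emph{Parity of $p$.} Since $p \nmid 2$, the prime $p$ is odd.

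\emph{Roots of unity.} Since $p \nmid d_{K}$, the prime $p$ is unramified in $K/\Q$, so $e_{\mathfrak{p}}=1$ for all $\mathfrak{p} \in S_{p}(K)$. As $p$ is odd, $p-1 \geq 2$, and hence $(p-1) \nmid e_{\mathfrak{p}}$. By Remark~\ref{rmk:roots-of-unity-condition}, this gives $\mu_{p} \not\subset K_{\mathfrak{p}}$ for each $\mathfrak{p} \in S_{p}(K)$.

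\emph{The class number.} This is the only non-formal step. We have $h_{K} = h_{K}^{-} h_{K^{+}}$ and $p \nmid h_{K}^{-}$, so $p \nmid h_{K}$ is equivalent to $p \nmid h_{K^{+}}$. We therefore split into two cases according to whether $K^{+}$ is $p$-rational.

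\begin{itemize}
\item[(1)] Suppose $K^{+}$ is not $p$-rational. Assume for contradiction that $K$ is $p$-rational. Then $K^{+}$ is $p$-rational by Lemma~\ref{lem:p-rat-subfields}, which is impossible. So $K$ is not $p$-rational either, and the equivalence holds.
\item[(2)] Suppose $K^{+}$ is $p$-rational. Since $d_{K^{+}}$ divides $d_{K}$, we have $p \nmid d_{K^{+}}$, so $p$ is unramified in $K^{+}$ and every prime above $p$ is tamely ramified in $K^{+}/\Q$. Corollary~\ref{cor:tot-real-p-rat-tame} then gives $p \nmid h_{K^{+}}$, and hence $p \nmid h_{K}$. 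All hypotheses of Proposition~\ref{prop:CM-tot-real-p-rat} are now satisfied, so $K$ is $p$-rational.
\end{itemize}

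Together the two cases prove the equivalence.
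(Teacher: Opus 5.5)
Your proof is correct and follows essentially the same route as the paper: the forward direction comes from Lemma~\ref{lem:p-rat-subfields}, and the converse obtains $p \nmid h_{K^{+}}$ from Corollary~\ref{cor:tot-real-p-rat-tame}, hence $p \nmid h_{K}$, then checks $\mu_{p} \not\subset K_{\mathfrak{p}}$ from $p \nmid 2d_{K}$ and invokes Proposition~\ref{prop:CM-tot-real-p-rat}. Your case split on whether $K^{+}$ is $p$-rational is just a repackaging of the paper's two implications.
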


\begin{proof}
If $K$ is $p$-rational then $K^{+}$ is $p$-rational by Lemma~\ref{lem:p-rat-subfields}.
Suppose that $K^{+}$ is $p$-rational.
Then $p \nmid h_{K^{+}}$ by Corollary~\ref{cor:tot-real-p-rat-tame} since 
$p \nmid d_{K}$. Hence $p \nmid h_{K} =  h_{K^{+}} h_{K}^{-}$.
Moreover, $\mu_{p} \not \subset K_{\mathfrak{p}}$ 
for each $\mathfrak{p} \in S_{p}(K)$ since $p \nmid 2d_{K}$.
Therefore $K$ is $p$-rational by Proposition~\ref{prop:CM-tot-real-p-rat}.
\end{proof}

\section{$p$-rationality of real cyclotomic fields}\label{sec:real-cyclo}

\subsection{Cyclotomic units}\label{subsec:cyclotomic-units}
Here we recall some basic facts about cyclotomic units (also known as circular units)
and their relation to class numbers of real cyclotomic fields. 

\begin{definition}\label{def:cyclo-units-n}
For $n \in \Z_{\geq 1}$, let $\zeta_{n}=\exp(2\pi i / n)$, 
let $E_{n} = \Z[\zeta_{n}]^{\times}$, let 
$D_{n}$ be the subgroup of the multiplicative group $\Q(\zeta_{n})^{\times}$
generated by $\{ 1-\zeta_{n}^{a} : 1 \leq a < n \}$,
and let $C_{n} = D_{n} \cap E_{n}$.
Let $\Q(\zeta_{n})^{+}=\Q(\zeta_{n}+\zeta_{n}^{-1})$ 
be the maximal totally real subfield of $\Q(\zeta_{n})$ and
let $h_{n}^{+}$ denote its class number.
Let $E_{n}^{+} = E_{n} \cap \Q(\zeta_{n})^{+}$ and let 
$C_{n}^{+} = C_{n} \cap E_{n}^{+}$.
\end{definition}

\begin{remark}\label{rmk:n-not-2-mod-4}
For $n$ odd, we have $\Q(\zeta_{n})=\Q(\zeta_{2n})$.
Thus we can and do assume without loss of generality that $n \not \equiv 2 \bmod{4}$.
This assumption is 
important when considering the number of distinct prime factors of~$n$.
\end{remark}

The following result is due to Sinnott 
\cite[Theorem and (4.2)]{MR485778}.

\begin{theorem}\label{thm:sinnott-cyclo}
Let $n \in \Z_{\geq 3}$ with $n \not \equiv 2 \bmod 4$ and
let $g$ be the number of distinct prime factors of $n$.
Let $b=0$ if $g=1$ and $b=2^{g-2}+1-g$ if $g \geq 2$.
Then
\[
[E_{n} : C_{n}] 
=[E_{n}^{+} : C_{n}^{+}] 
= 2^{b}h_{n}^{+}.
\]
\end{theorem}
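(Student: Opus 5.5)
This is Sinnott's theorem, so the plan is to follow his approach: realise both the unit index and the class number as indices of lattices in a group ring, and compare them via the analytic class number formula.

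\textbf{Step 1: $[E_n:C_n]=[E_n^+:C_n^+]$.} Let $\mu_n'$ be the roots of unity in $\Q(\zeta_n)$. Since $n\not\equiv 2 \bmod 4$, the Hasse unit index $[E_n:\mu_n' E_n^+]$ is $1$ when $n$ is a prime power and $2$ otherwise.
\begin{itemize}
\item If $n$ is not a prime power, $1-\zeta_n$ is a unit in $C_n$.
\item One checks that $1-\zeta_n$ is not in $\mu_n'E_n^+$: its image $(1-\zeta_n)/(1-\zeta_n^{-1})=-\zeta_n$ under the map $u\mapsto u/\bar u$ is not a square in $\mu_n'$.
\item Hence $[C_n:\mu_n' C_n^+]=2$ as well.
\item In the prime-power case both indices are $1$.
\end{itemize}
Using $\mu_n'\subset C_n$ and $E_n^+\cap \mu_n' C_n^+ = \pm C_n^+$, the two indices $[E_n:C_n]$ and $[E_n^+:C_n^+]$ coincide.

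\textbf{Step 2: reduce to a lattice index.} Let $G=\Gal(\Q(\zeta_n)^+/\Q)$. Apply the logarithmic embedding $\ell$ into $\mathbb{R}[G]$ and work in the augmentation-zero part. There $\ell(E_n^+)$ is a full lattice whose covolume is $R^+$, up to an explicit factor.

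Write $C_n^+$ as generated, up to $\pm1$, by the elements $\Norm_{\Q(\zeta_n)/\Q(\zeta_d)}(1-\zeta_n)$ for $d\mid n$, suitably symmetrised. Then $\ell(C_n^+)$ is $U\cdot \ell(1-\zeta_n)$, where $U\subset\Z[G]$ is the \emph{Stickelberger-type} module generated by the partial norm elements $N_{I}$, with $I$ running over the inertia groups of the primes dividing $n$. For this, use the generalised index $(A:B)$ of lattices in $\mathbb{R}[G]$.

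Using Frobenius distribution relations, $\ell(1-\zeta_n)$ acts by an element $\theta$ whose character values are $L'(0,\chi)$-type quantities $\tfrac12\sum \chi(a)\log|1-\zeta^a|$. The analytic class number formula then gives
\[
[E_n^+:C_n^+]=h_n^+\cdot (\Z[G]^0 : U^0),
\]
where the lattice index factor collects Euler factors $\prod_{\ell\mid n}(1-\chi(\ell))$. These Euler factors cancel against the determinant of $\theta$ on $U$.

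\textbf{Step 3: compute $(\Z[G]^0:U^0)$.} This is the main obstacle. Sinnott's method is group cohomological. Filter $U$ by the subgroups $I_{\ell}$, and note that the quotient $\Z[G]/U$ is controlled by the cohomology of the elementary abelian group generated by the $I_\ell$'s. The index is then expressed as
\[
(\Z[G]:U)=\prod_\chi \prod_{\ell\mid n}(1-\chi(\mathrm{Frob}_\ell)\ldots)\cdot |\widehat H^0\text{-type error}|.
\]
After removing the Euler factors, what remains is a $2$-power coming from an $\mathbb{F}_2$-dimension count over the $g$ prime divisors. Passing to the plus part halves things appropriately. The count gives exponent $2^{g-2}+1-g$ for $g\ge2$, which is the number of subsets of the $g$ primes of size $\ge2$, mod complex conjugation. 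For $g=1$ the exponent is $0$, since then $U$ is cyclic and the index is exact.

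I would first verify the formula in the prime-power case, where it is the classical Kummer–Washington result $[E^+:C^+]=h^+$. I would then do the inductive cohomology count on $g$, which is the delicate step.
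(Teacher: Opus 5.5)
The paper gives no proof of this statement; it quotes Sinnott (the Theorem and (4.2) of his 1978 paper). Your Step~1 is a correct, self-contained proof of the first equality:
\begin{itemize}
\item $\mu(\Q(\zeta_n))\subset C_n$, since $(1-\zeta_n^{a})/(1-\zeta_n^{-a})=-\zeta_n^{a}$;
\item the unit $1-\zeta_n$ realises the Hasse index $2$ when $n$ is not a prime power;
\item $E_n^+\cap\mu(\Q(\zeta_n))C_n^+=C_n^+$.
\end{itemize}
The genuine gap is in the second equality $[E_n^+:C_n^+]=2^bh_n^+$.

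First, the set-up of Step~2 breaks down in general; it is fine when $g=1$. In general $C_n^+$ is not generated, up to roots of unity, by the norms $\Norm_{\Q(\zeta_n)/\Q(\zeta_d)}(1-\zeta_n)$, and $\ell(C_n^+)$ is not of the form $U\cdot\ell(1-\zeta_n)$. The distribution relation only gives $\Norm_{\Q(\zeta_n)/\Q(\zeta_d)}(1-\zeta_n)=(1-\zeta_d)^{\prod(1-\mathrm{Fr}_p^{-1})}$, with the product over primes $p\mid n$, $p\nmid d$, and these Euler factors can vanish. For example, take $n=55$ and let $\chi$ be the even quadratic character of conductor $5$.
\begin{itemize}
\item Since $11\equiv 1\bmod 5$, one gets $\Norm_{\Q(\zeta_{55})/\Q(\zeta_5)}(1-\zeta_{55})=1$.
\item Hence every element of the group generated by your norms, their conjugates and roots of unity has logarithmic image with zero $\chi$-component.
\item So that group has infinite index in $E_{55}^+$.
\item But $C_{55}^+$ contains $-\zeta_5^{2}(1-\zeta_5^{2})/(1-\zeta_5)=\frac{1+\sqrt5}{2}$, whose $\chi$-component is non-zero.
\end{itemize}
So the Euler factors cannot be ``collected and cancelled against $\det\theta$''.

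In Sinnott's argument one instead uses an element whose $\chi$-component is the \emph{primitive} value for every even $\chi$. The Euler factors are built into the generators of $U$, which have the shape $N(T_I)\prod_{p\notin I}(1-\mathrm{Fr}_p^{-1}e_{T_p})$, where $T_p$ is the inertia group at $p$ and $e_T=N(T)/|T|$. This is not the module generated by the norm elements alone. The class number formula then reduces the theorem to computing a generalised index of $e^+U$ in $e^+\Z[G]$. The real content of Sinnott's theorem is that this index receives no contribution from the Euler factors, and that what remains is exactly the $2$-power accounting for $2^b$.

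Second, that computation, which you rightly call the main obstacle, is not carried out in Step~3. The displayed formula contains unspecified terms, and your explanation of the exponent does not produce $b$. Complex conjugation does not act on sets of primes. Moreover, halving the number $2^g-g-1$ of subsets of size at least $2$ gives $1/2$ for $g=2$ (where $b=0$) and $11/2$ for $g=4$ (where $b=1$). As it stands, the second equality is asserted rather than proved. You should either cite Sinnott, as the paper does, or carry out his lengthy cohomological evaluation of this index.
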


\subsection{Class numbers and $p$-rationality}\label{subsec:class-no-p-rat}
The following lemma is well known to experts, but we include the short proof for the convenience of the reader.

\begin{lemma}\label{lem:real-cyclo-p-rat-class-number}
Let $n \in \Z_{\geq 3}$ with $n \not \equiv 2 \bmod 4$.
If $\Q(\zeta_{n})^{+}$ is $p$-rational then $p \nmid h_{n}^{+}$.
\end{lemma}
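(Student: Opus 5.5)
The plan is to use Corollary~\ref{cor:tot-real-p-rat-class-number}. Put $F=\Q(\zeta_{n})^{+}$; it is totally real and, by hypothesis, $p$-rational. That corollary gives $v_{p}(h_{n}^{+})=m$, where $m$ is the largest integer such that the $m$-th layer $F_{m}$ of $F^{\mathrm{cyc},p}$ is unramified over $F$. So it suffices to show $m=0$, i.e.\ that the first layer $F_{1}/F$ is ramified at some prime above $p$. This amounts to showing that $F$ meets the cyclotomic $\Z_{p}$-extension $\Q^{\mathrm{cyc},p}$ of $\Q$ in a field over which $F$ cannot add further unramified layers.

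\textbf{Key steps.} First, $F^{\mathrm{cyc},p}=F\Q^{\mathrm{cyc},p}$. Let $\Q_{j}$ denote the $j$-th layer of $\Q^{\mathrm{cyc},p}$, and let $p^{a}\| n$ (with $a\ge 2$ if $p=2$, or $a=0$).

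\begin{itemize}
\item[(1)] Determine $F\cap\Q^{\mathrm{cyc},p}$. For $p$ odd, $\Q_{j}$ lies in $\Q(\zeta_{p^{j+1}})$ and is real, so $\Q_{j}\subseteq \Q(\zeta_{n})$ exactly when $j\le a-1$. For $p=2$, $\Q_{j}=\Q(\zeta_{2^{j+2}})^{+}$, which gives $j\le a-2$. Write $F\cap\Q^{\mathrm{cyc},p}=\Q_{r}$.
\item[(2)] Then $F_{1}=F\Q_{r+1}$. This field lies in $\Q(\zeta_{N})^{+}$ with $N=\operatorname{lcm}(n,p^{a+1})$ (respectively $2^{a+1}$ when $p=2$), and $F_{1}/F$ has degree $p$.
\item[(3)] Show that the primes of $F$ above $p$ ramify in $F_{1}$. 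Write $n=p^{a}n'$ with $p\nmid n'$. The ramification index of $p$ in $\Q(\zeta_{n})$ is $\varphi(p^{a})$, while in $\Q(\zeta_{N})$ it is $\varphi(p^{a+1})$, which is $p$ times larger when $a\ge1$ (and also when $p=2$, since $a\ge2$). When $a=0$ and $p$ is odd, the index goes from $1$ to $p-1$, and $\Q_{1}$ is totally ramified of degree $p$ at $p$ while $p$ is unramified in $F$.
\end{itemize}

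\textbf{Main obstacle.} The delicate point is passing from ramification in the full cyclotomic fields to ramification in the degree-$p$ step $F_{1}/F$ of real subfields. I would handle it via inertia groups: inertia at $p$ in $\Gal(\Q(\zeta_{N})/\Q)$ is $(\Z/p^{a+1})^{\times}\times\{1\}$. If $F_{1}/F$ were unramified above $p$, then the inertia group of $F_{1}/\Q$, of $p$-part order $p^{a}$ (respectively $2^{a-1}$ for $p=2$, after accounting for complex conjugation), would inject into $\Gal(F/\Q)$, whose inertia $p$-part has order only $p^{a-1}$ (respectively $2^{a-2}$). This contradiction forces ramification in $F_{1}/F$.

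Alternatively, when $p\nmid n$, or more generally when $p$ is tamely ramified in $F$, Corollary~\ref{cor:tot-real-p-rat-tame} applies directly. So the case analysis above is only needed for $p\mid n$. With $m=0$ established, $p\nmid h_{n}^{+}$ follows.
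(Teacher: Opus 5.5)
Your proposal is correct and follows essentially the paper's route. Like the paper, you settle the tame case with Corollary~\ref{cor:tot-real-p-rat-tame}, and otherwise compare ramification indices in real cyclotomic fields to show that the first layer of $F^{\mathrm{cyc},p}/F$ is ramified above $p$, so $m=0$ in Corollary~\ref{cor:tot-real-p-rat-class-number}; the paper just uses $p^{2}\mid n$ to write $F_{k}=\Q(\zeta_{np^{k}})^{+}$ directly, where you compute $F\cap\Q^{\mathrm{cyc},p}$.

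Two small slips do no harm. For $a=0$ the layer $\Q_{1}$ does not lie in $\Q(\zeta_{\operatorname{lcm}(n,p^{a+1})})$ (one needs $p^{2}$, resp.\ $8$, in place of $p^{a+1}$), but that case is tame and already covered; and for $p=2$ the inertia orders are $2^{a}$ and $2^{a-1}$ unless $n$ is a power of $2$, though only their ratio matters.
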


\begin{proof}
Let $F=\Q(\zeta_{n})^{+}$.
If every prime of $F$ above $p$ is at most tamely ramified in $F/\Q$, the result follows immediately from Corollary~\ref{cor:tot-real-p-rat-tame}. 
We therefore assume that some prime above $p$ is wildly ramified in $F/\Q$.
Hence $p^{2} \mid n$ and so for every $k \geq 0$ we have that $F_{k} = \Q(\zeta_{np^{k}})^{+}$,
where $F_{k}$ denotes the $k$-th layer of the cyclotomic $\Z_{p}$-extension of $F$.
By considering ramification indices, we see that $F_{k}/F$ is wildly ramified at all primes above $p$
whenever $k \geq 1$. Therefore the desired result now follows from 
Corollary~\ref{cor:tot-real-p-rat-class-number}.
\end{proof}

\subsection{$p$-rationality for odd $p$}\label{subsec:real-cyclo-p-odd-rat}
We now combine the results of \S \ref{subsec:p-rat-tot-real}, 
\S \ref{subsec:cyclotomic-units}, and \S \ref{subsec:class-no-p-rat}
to give an explicit necessary and sufficient condition for the $p$-rationality of a real cyclotomic field when $p$ is odd. An important advantage of the following result
is that it only requires direct knowledge of $C_{n}^{+}$, 
not of $E_{n}^{+}$ or $h_{n}^{+}$.
Let $\varphi$ denote Euler's totient function
and recall that the relevant maps are defined in Definition~\ref{def:various-maps}.

\begin{theorem}\label{thm:p-odd-rat-cyclo}
Let $n \in \Z_{\geq 3}$ with $n \not \equiv 2 \bmod 4$ and let $p$ be an odd prime number.
Then $\Q(\zeta_{n})^{+}$ is $p$-rational if and only if
\begin{enumerate}
    \item either $p \nmid n$ or no prime of $\Q(\zeta_{n})^{+}$ above $p$ splits in $\Q(\zeta_{n})/\Q(\zeta_{n})^{+}$, and
    \item $\dim_{\F_{p}}(\eta_{\Q(\zeta_{n})^{+},p}(C_{n}^{+}))=\frac{1}{2}\varphi(n)-1$.
\end{enumerate}
\end{theorem}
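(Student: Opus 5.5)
The plan is to apply Nguyen Quang Do's criterion, Theorem~\ref{thm:nqd-char-p-rat}, to $F=\Q(\zeta_{n})^{+}$, together with Sinnott's index formula (Theorem~\ref{thm:sinnott-cyclo}) and Corollary~\ref{cor:p-sat}. We take $A=E_{n}^{+}$, $B=C_{n}^{+}$ and $\psi=\eta_{F,p}$; the target $\prod_{\mathfrak{p}} U_{\mathfrak{p}}/U_{\mathfrak{p}}^{p}$ is an $\F_{p}$-vector space, as Corollary~\ref{cor:p-sat} requires. Since $F$ is totally real of degree $\frac12\varphi(n)$, its unit group has $\Z$-rank $\frac12\varphi(n)-1$ and torsion $\{\pm1\}$. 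As $p$ is odd, $E_{n}^{+}[p]$ is trivial, so the hypothesis $A[p]\le B$ holds automatically and \eqref{eq:p-rank-vs-Z-rank} gives
\[
\rank_{p}(E_{n}^{+}) = \tfrac12\varphi(n)-1 .
\]
Thus condition (b) of the statement says exactly that $\dim_{\F_{p}}\eta_{F,p}(C_{n}^{+})=\rank_{p}(E_{n}^{+})$.

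First I match condition (a) of the statement with condition (b) of Theorem~\ref{thm:nqd-char-p-rat}. Since $F$ is real and $p$ is odd, $\mu_{p}\not\subset F$. By Remark~\ref{rmk:roots-of-unity-condition}, the roots-of-unity condition is then equivalent to: no prime of $F$ above $p$ splits completely in $F(\zeta_{p})/F$. If $p\nmid n$, then $p\nmid 2d_{F}$ and the condition holds automatically, again by that remark. If $p\mid n$, then $F\subsetneq F(\zeta_{p})\subseteq\Q(\zeta_{n})$, and since $[\Q(\zeta_{n}):F]=2$ we get $F(\zeta_{p})=\Q(\zeta_{n})$. Splitting completely in this quadratic extension just means splitting, so the condition becomes condition (a) of the statement. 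This identification is the step needing the most care, but it appears straightforward.

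For the forward direction, suppose $F$ is $p$-rational. Then Theorem~\ref{thm:nqd-char-p-rat}(b) gives (a) of the statement. Lemma~\ref{lem:real-cyclo-p-rat-class-number} gives $p\nmid h_{n}^{+}$, so $p\nmid 2^{b}h_{n}^{+}=[E_{n}^{+}:C_{n}^{+}]$ by Theorem~\ref{thm:sinnott-cyclo}. Hence $C_{n}^{+}$ is $p$-saturated of finite index prime to $p$, and the natural map $C_{n}^{+}/C_{n}^{+p}\to E_{n}^{+}/E_{n}^{+p}$ is an isomorphism. Injectivity follows from Proposition~\ref{prop:p-sat}. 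Surjectivity holds because the cokernel is a quotient of $E_{n}^{+}/C_{n}^{+}$, which has order prime to $p$, and is also killed by $p$. Composing with $\overline{\eta}_{F,p}$, which is injective by Theorem~\ref{thm:nqd-char-p-rat}(c), gives
\[
\dim_{\F_{p}}\eta_{F,p}(C_{n}^{+})=\rank_{p}(E_{n}^{+}),
\]
which is (b) of the statement.

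For the converse, assume (a) and (b) of the statement. By Corollary~\ref{cor:p-sat}(d), $C_{n}^{+}$ is $p$-saturated of finite index in $E_{n}^{+}$, so $p\nmid[E_{n}^{+}:C_{n}^{+}]$. Theorem~\ref{thm:sinnott-cyclo} then gives $p\nmid h_{n}^{+}$, so Theorem~\ref{thm:nqd-char-p-rat}(a) holds trivially. Next,
\[
\eta_{F,p}(E_{n}^{+})\supseteq\eta_{F,p}(C_{n}^{+}),
\]
and the right-hand side already has dimension $\rank_{p}(E_{n}^{+})$. By rank-nullity (Corollary~\ref{cor:p-sat}(b) with $B=A=E_{n}^{+}$), $\overline{\eta}_{F,p}$ is injective, which is Theorem~\ref{thm:nqd-char-p-rat}(c). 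Condition (b) of that theorem follows from (a) of the statement by the first step. Hence $F$ is $p$-rational.
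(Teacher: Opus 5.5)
Your proposal is correct and follows essentially the same route as the paper. The ingredients match: Theorem~\ref{thm:nqd-char-p-rat}; the identification $F(\zeta_{p})=\Q(\zeta_{n})$ when $p\mid n$ to match condition (a) with the roots-of-unity condition; Lemma~\ref{lem:real-cyclo-p-rat-class-number} with Theorem~\ref{thm:sinnott-cyclo} for the forward direction; and Corollary~\ref{cor:p-sat}(d) with Theorem~\ref{thm:sinnott-cyclo} for the converse. The only differences are cosmetic. In the forward direction you use the isomorphism $C_{n}^{+}/(C_{n}^{+})^{p}\cong E_{n}^{+}/(E_{n}^{+})^{p}$ where the paper uses $\eta(C_{n}^{+})=\eta(E_{n}^{+})$. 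In the converse you establish $p$-saturation before injectivity of $\overline{\eta}$, whereas the paper does it the other way round.
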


\begin{proof}
Henceforth abbreviate $\eta_{\Q(\zeta_{n})^{+},p}$, $\overline{\eta}_{\Q(\zeta_{n})^{+},p}$
and $\lambda_{\Q(\zeta_{n})^{+},p}$ to 
$\eta$, $\overline{\eta}$ and $\lambda$,
respectively.
Since $p$ is odd and $\Q(\zeta_{n})^{+}$ is totally real, we have
\begin{equation}\label{eq:p-odd-unit-rank-real-cyclo}
\rank_{p}(E_{n}^{+}) 
= \rank_{\Z}(E_{n}^{+})
= [\Q(\zeta_{n})^{+}:\Q] -1 
= \textstyle{\frac{1}{2}}\varphi(n)-1.
\end{equation}
If $p \mid n$ then $\Q(\zeta_{n})^{+}(\zeta_{p})=\Q(\zeta_{n})$ and if $p \nmid n$ then
$p \nmid d_{\Q(\zeta_{n})^{+}}$. Hence by Remark~\ref{rmk:roots-of-unity-condition}, 
condition 
(a) is equivalent to the bijectivity of $\lambda$.

Suppose that $\Q(\zeta_{n})^{+}$ is $p$-rational. 
By Theorem~\ref{thm:nqd-char-p-rat} 
the map $\lambda$ is an isomorphism and so condition (a) holds
by the discussion above.
Lemma~\ref{lem:real-cyclo-p-rat-class-number} implies that $p \nmid h_{n}^{+}$.
Thus $p \nmid [E_{n}^{+} : C_{n}^{+}]$ by Theorem~\ref{thm:sinnott-cyclo} and
so $\eta(C_{n}^{+}) = \eta(E_{n}^{+})$.
Again by Theorem~\ref{thm:nqd-char-p-rat}, $\overline{\eta}$ is injective
and thus by the rank-nullity theorem, the previous sentence and \eqref{eq:p-odd-unit-rank-real-cyclo} we have 
\[
\dim_{\F_{p}}(\eta(C_{n}^{+}))
=
\dim_{\F_{p}}(\eta(E_{n}^{+}))
=
\dim_{\F_{p}}(\overline{\eta}(E_{n}^{+}/(E_{n}^{+})^{p}))
=
\rank_{p}(E_{n}^{+})
= 
\textstyle{\frac{1}{2}}\varphi(n)-1.
\]
Therefore condition (b) also holds.

Suppose conversely that (a) and (b) hold. 
Then $\lambda$ is an isomorphism by (a) and the discussion above.
By (b) and \eqref{eq:p-odd-unit-rank-real-cyclo} we have
\[
\rank_{p}(E_{n}^{+})
=
\textstyle{\frac{1}{2}}\varphi(n)-1
= 
\dim_{\F_{p}}(\eta(C_{n}^{+}))
\leq 
\dim_{\F_{p}}(\eta(E_{n}^{+})) 
\leq 
\rank_{p}(E_{n}^{+}),
\]
and so these inequalities are in fact equalities. 
In particular, 
\[
\dim_{\F_{p}}(\overline{\eta}(E_{n}^{+}/(E_{n}^{+})^{p})) 
= \dim_{\F_{p}}(\eta(E_{n}^{+}))
=\rank_{p}(E_{n}^{+}),
\]
and so $\overline{\eta}$ is injective.
Moreover, 
$\dim_{\F_{p}}(\eta(C_{n}^{+}))=\rank_{p}(E_{n}^{+})$
and $E_{n}^{+}$ has no non-trivial $p$-torsion since $p$ is odd and $\Q(\zeta_{n})^{+}$
is totally real. Hence by Corollary~\ref{cor:p-sat}(d), 
$C_{n}^{+}$ is $p$-saturated and of finite index in $E_{n}^{+}$, 
or equivalently, $p \nmid [E_{n}^{+}:C_{n}^{+}]$.
Thus $p \nmid h_{n}^{+}$ by Theorem~\ref{thm:sinnott-cyclo},
and so $H_{p}(\Q(\zeta_{n})^{+})=\Q(\zeta_{n})^{+}$.
Therefore $\Q(\zeta_{n})^{+}$ is $p$-rational by Theorem~\ref{thm:nqd-char-p-rat}.
\end{proof}

\begin{remark}
By Corollary~\ref{cor:p-sat}(b), condition (b) of Theorem~\ref{thm:p-odd-rat-cyclo}
is equivalent to the injectivity of the map 
$C_{n}^{+}/(C_{n}^{+})^{p} \rightarrow \prod_{\mathfrak{p} \in S_{p}(\Q(\zeta_{n})^{+})} U_{\mathfrak{p}}/ U_{\mathfrak{p}}^{p}$ induced by 
$\eta_{\Q(\zeta_{n})^{+},p}$.
\end{remark}

\subsection{$2$-rationality}\label{subsec:real-cyclo-2-rat}
We shall need the following result to rule out certain situations.

\begin{lemma}\label{lem:2-rat}
Let $n \in \Z_{\geq 3}$ with $n \not \equiv 2 \bmod 4$.
If at least one of the following conditions holds then $\Q(\zeta_{n})^{+}$ is not $2$-rational:
\begin{enumerate}
    \item $\ell \mid n$ where $\ell$ is a prime number such that $\ell \equiv 1 \bmod{8}$,
    \item $4\ell \mid n$ where $\ell$ is a prime number such that $\ell \equiv -1 \bmod{8}$,
    \item $\ell_{1}\ell_{2} \mid n$ where $\ell_{1},\ell_{2}$ are distinct odd prime numbers such that 
    $\ell_{1} \equiv \ell_{2} \bmod{4}$,
    \item the number of distinct prime factors of $n$ is at least $3$.
\end{enumerate}
\end{lemma}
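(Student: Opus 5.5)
The plan is to exploit Lemma~\ref{lem:p-rat-subfields}. In each of the four cases I will exhibit a subfield $F \subseteq \Q(\zeta_{n})^{+}$, in every case a real quadratic field, and show that $F$ is not $2$-rational. The tool for the last step is Theorem~\ref{thm:nqd-char-p-rat} with $p=2$. Since $\mu_{2} \subset F$, condition (b) there says $\lvert S_{2}(F)\rvert = 1$ (Remark~\ref{rmk:roots-of-unity-condition}). For a real quadratic $F$ the only $\Z_{2}$-extension is the cyclotomic one, by Leopoldt. Moreover, condition (a) forces every unramified quadratic extension of $F$ that is real at all infinite places to equal the first cyclotomic layer $F(\sqrt{2})$.

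For case (a), take $F=\Q(\sqrt{\ell})$. This lies in $\Q(\zeta_{\ell})^{+}$ since $\ell \equiv 1 \bmod 4$. As $\ell \equiv 1 \bmod 8$, the prime $2$ splits in $F$, so $\lvert S_{2}(F)\rvert=2$ and condition (b) fails.

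For case (b), take $F=\Q(\sqrt{\ell})$ again. It lies in $\Q(\zeta_{4\ell})^{+}$ because $\sqrt{-\ell}\in\Q(\zeta_{\ell})$ and $i \in \Q(\zeta_{4})$. Here $2$ is ramified, and the completion is $\Q_{2}(\sqrt{\ell})=\Q_{2}(\sqrt{-1})$, since $-\ell$ is a $2$-adic square. Hence the global unit $-1$, which is not a square in $F$, becomes a square in $U_{\mathfrak{p}}$. So $\overline{\eta}_{F,2}$ is not injective and condition (c) fails.

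For cases (c) and (d) I will use condition (a) via genus theory. Suppose first that $\ell_{1}\equiv\ell_{2}\equiv 1 \bmod 4$. Take $F=\Q(\sqrt{\ell_{1}\ell_{2}})\subseteq\Q(\zeta_{\ell_{1}\ell_{2}})^{+}$. Then $F(\sqrt{\ell_{1}})$ is real and unramified over $F$ at all finite primes, so it lies in $H_{2}(F)$. If $\ell_{1}\ell_{2}\equiv 1 \bmod 8$, case (b)-style splitting already finishes the argument. Otherwise $2$ is inert in $F$, so $F(\sqrt{2})/F$ is ramified. Thus $H_{2}(F)\not\subseteq F^{\mathrm{cyc},2}$, and condition (a) fails.

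For (d), suppose $n$ has at least three prime factors and (c) does not apply. Then $4 \mid n$, and there are odd primes $\ell_{1}\equiv 1$, $\ell_{2}\equiv 3 \bmod 4$ dividing $n$. Take $F=\Q(\sqrt{\ell_{1}\ell_{2}})$. It lies in $\Q(\zeta_{4\ell_{1}\ell_{2}})^{+}$, because $\Q(\zeta_{4\ell_{1}\ell_{2}})$ contains $\sqrt{\ell_{1}}$ and $\sqrt{\pm\ell_{2}}$. Again $F(\sqrt{\ell_{1}})/F$ is real and unramified, including at $2$ since $\ell_{1}\equiv1\bmod 4$. It differs from $F(\sqrt{2})$, since $2\ell_{1}$ is not $\ell_{1}\ell_{2}$ times a square. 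So condition (a) fails.

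The main obstacle is case (c) with $\ell_{1}\equiv\ell_{2}\equiv 3 \bmod 4$. Here the genus field of $F=\Q(\sqrt{\ell_{1}\ell_{2}})$ is not real, and $h_{F}$ can be odd; for example $h=1$ for $21$. When $\ell_{1}\ell_{2}\equiv 1\bmod 8$, the splitting argument of case (a) applies. When $\ell_{1}\ell_{2}\equiv 5 \bmod 8$, I would attack condition (c) using the fundamental unit $\varepsilon$. It has norm $+1$, and I would show that $\ell_{1}\varepsilon$ (or $\ell_{2}\varepsilon$) is a square in $F$. The route is Hilbert~90: write $\varepsilon=\beta^{1-\sigma}$, so that $(\beta)$ is an ambiguous ideal, and then use that the ambiguous ideal classes are controlled by $\ell_{1},\ell_{2}$; as a check, for $21$ one has $3\varepsilon=\bigl((3+\sqrt{21})/2\bigr)^{2}$. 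Then $\varepsilon$ is a local square at the inert prime above $2$ exactly when $\ell_{1}$ is a square in the unramified quadratic extension $F_{\mathfrak{p}}=\Q_{2}(\sqrt{-3})$. This reduces to a residue computation modulo $8$, which I have not checked.

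If the unit approach fails, my fallback is to show that the relevant element of $X_{S_{2}}(F)$ is nontrivial torsion. I would do this by computing $\rank_{2}\Cl_{F}(8)\ge 3$ via Theorem~\ref{thm:Gras-ray-p-rat}, using the genus-theoretic fact that the narrow $2$-rank equals $1$.
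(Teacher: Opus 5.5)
Your overall strategy is the paper's: in each case exhibit a non-$2$-rational real quadratic subfield of $\Q(\zeta_n)^+$ and apply Lemma~\ref{lem:p-rat-subfields}. The difference is in how non-$2$-rationality of that quadratic field is established. The paper simply quotes the known list of $2$-rational real quadratic fields ($\Q(\sqrt{2})$, and $\Q(\sqrt{\ell})$, $\Q(\sqrt{2\ell})$ with $\ell\equiv\pm 3 \bmod 8$). You instead try to show by hand that a condition of Theorem~\ref{thm:nqd-char-p-rat} fails. The following parts of your argument are correct:
\begin{itemize}
\item case (a);
\item case (b);
\item case (c) when $\ell_1\equiv\ell_2\equiv 1\bmod 4$, or when $\ell_1\ell_2\equiv 1 \bmod 8$;
\item case (d).
\end{itemize}
In the first subcase of (c) the mod-$8$ split is unnecessary, since $F(\sqrt{\ell_1})\neq F(\sqrt{2})$ always.

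The gap is the subcase $\ell_1\equiv\ell_2\equiv 3 \bmod 4$ with $\ell_1\ell_2\equiv 5\bmod 8$. This occurs, for example, for $n=21$, where no other case of the lemma applies. Here your primary route fails once the residue computation you left unchecked is carried out. The completion at the inert prime is $F_{\mathfrak p}=\Q_2(\sqrt{-3})$. The kernel of $\Q_2^\times/\Q_2^{\times 2}\to F_{\mathfrak p}^\times/F_{\mathfrak p}^{\times 2}$ is $\{1,-3\}$, so a $2$-adic unit becomes a square in $F_{\mathfrak p}$ if and only if it is $\equiv 1 \bmod 4$.

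Since $\varepsilon\equiv\ell_1 \bmod F^{\times 2}$ and $\ell_1\equiv 3\bmod 4$, the fundamental unit is \emph{not} a square in $U_{\mathfrak p}$. In fact $\eta_{F,2}(\varepsilon)=\eta_{F,2}(-1)\neq 0$.

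The element you need is $-\varepsilon\equiv -\ell_1 \bmod F^{\times 2}$.
\begin{itemize}
\item Because $-\ell_1\equiv 1 \bmod 4$, it lies in $U_{\mathfrak p}^{2}$.
\item Because $\Norm_{F/\Q}(\varepsilon)=1$, $-\varepsilon$ is totally negative, so it is not in $\mathcal{O}_F^{\times 2}$.
\end{itemize}
Hence $\overline{\eta}_{F,2}$ is not injective, and condition (c) of Theorem~\ref{thm:nqd-char-p-rat} fails. For $\Q(\sqrt{21})$ this is explicit: $\varepsilon=\frac{5+\sqrt{21}}{2}$ and $-\varepsilon=-3\bigl(\frac{3+\sqrt{21}}{6}\bigr)^{2}$.

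Your fallback is also mis-targeted. Since $c_F=0$, Theorem~\ref{thm:Gras-ray-p-rat} only requires $\rank_2(\Cl_F(8))\geq 2$. For $\Q(\sqrt{21})$ one computes
\[
\rank_2(\Cl_F(8))=3-\dim_{\F_2}\eta_{F,2}(\mathcal{O}_F^\times)=2,
\]
so the inequality $\geq 3$ you propose to prove is false. Also, the narrow $2$-rank does not directly control $\Cl_F(8)$.

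With the sign correction to $-\varepsilon$, or by citing the classification as the paper does, your proof is complete.
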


\begin{proof}
The $2$-rational real quadratic fields are precisely $\Q(\sqrt{2})$ and those of the form 
$\Q(\sqrt{\ell})$ or $\Q(\sqrt{2\ell})$ where $\ell$ is a prime number such that $\ell \equiv \pm 3 \bmod{8}$; see \cite[Example~1.3]{MR4491965} or \cite[Proposition~2.12]{MR4192837}.
In particular, fields of the form 
$\Q(\sqrt{\ell})$ for a prime number $\ell$ such that $\ell \equiv \pm 1 \bmod{8}$, or the form
$\Q(\sqrt{\ell_{1}\ell_{2}})$ for distinct odd prime numbers $\ell_{1}$ and $\ell_{2}$, are not $2$-rational.

We shall now repeatedly use the fact that for an odd prime number $\ell$, 
the (unique) quadratic subfield
of $\Q(\zeta_{\ell})$ is $\Q(\sqrt{\ell^{*}})$ where $\ell^{*}=(-1)^{(\ell-1)/2}\ell$.
In case (a), we have
$\Q(\sqrt{\ell^{*}})=\Q(\sqrt{\ell}) \subseteq \Q(\zeta_{\ell}) \subseteq \Q(\zeta_{n})$
and so $\Q(\sqrt{\ell}) \subseteq \Q(\zeta_{n})^{+}$.
In case (b), we have $\Q(\sqrt{-1}) \subseteq \Q(\zeta_{n})$ and
$\Q(\sqrt{\ell^{*}})=\Q(\sqrt{-\ell}) \subseteq \Q(\zeta_{\ell}) \subseteq \Q(\zeta_{n})$;
hence $\Q(\sqrt{\ell}) \subseteq \Q(\zeta_{n})^{+}$.
In case (c), for $i=1,2$ we have 
$\Q(\sqrt{\ell_{i}^{*}}) \subseteq \Q(\zeta_{\ell_{i}}) \subseteq \Q(\zeta_{n})$ 
and so $\Q(\sqrt{\ell_{1}\ell_{2}}) \subseteq \Q(\zeta_{n})^{+}$ since $\ell_{1}\ell_{2} = \ell_{1}^{*}\ell_{2}^{*}$. In case (d), there are two subcases. 
If $n$ is odd then there are at least three distinct odd prime divisors of $n$, two of which 
must be congruent modulo $4$, which is case (c). If $4 \mid n$ then there are at least 
two distinct odd prime divisors of $n$, say $\ell_{1}$ and $\ell_{2}$. 
Then $\Q(\sqrt{-1}) \subseteq \Q(\zeta_{n})$ and 
for $i=1,2$ we have 
$\Q(\sqrt{\ell_{i}^{*}}) \subseteq \Q(\zeta_{\ell_{i}}) \subseteq \Q(\zeta_{n})$, 
and therefore $\Q(\sqrt{\ell_{1}\ell_{2}}) \subseteq \Q(\zeta_{n})^{+}$.
In each case (a)--(d), we have shown that $\Q(\zeta_{n})^{+}$
contains a real quadratic field that is not $2$-rational; 
thus $\Q(\zeta_{n})^{+}$ itself is not $2$-rational by Lemma~\ref{lem:p-rat-subfields}.
\end{proof}

We now give an explicit necessary and sufficient condition for the $2$-rationality of 
a real cyclotomic field. It
only requires direct knowledge of $C_{n}^{+}$, 
not of $E_{n}^{+}$ or $h_{n}^{+}$.

\begin{theorem}\label{thm:2-rat-cyclo}
Let $n \in \Z_{\geq 3}$ with $n \not \equiv 2 \bmod 4$.
Then $\Q(\zeta_{n})^{+}$ is $2$-rational if and only if
\begin{enumerate}
    \item there is precisely one prime of $\Q(\zeta_{n})^{+}$ above $2$, and
    \item $\dim_{\F_{2}}(\eta_{\Q(\zeta_{n})^{+},2}(C_{n}^{+}))=\frac{1}{2}\varphi(n)$.
\end{enumerate}
\end{theorem}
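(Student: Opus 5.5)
We follow the proof of Theorem~\ref{thm:p-odd-rat-cyclo}, with three changes: $p=2$ contributes torsion, the roots-of-unity condition looks different, and Sinnott's index carries a power of $2$. Write $F=\Q(\zeta_{n})^{+}$ and abbreviate $\eta=\eta_{F,2}$, $\overline{\eta}$, $\lambda$. Since $F$ is totally real, $E_{n}^{+}[2]=\{\pm1\}$. Hence \eqref{eq:p-rank-vs-Z-rank} gives $\rank_{2}(E_{n}^{+})=\tfrac12\varphi(n)$. Moreover $-1=(1-\zeta_n)/(1-\zeta_n^{-1})\cdot(-\zeta_n^{-1})$ lies in $C_{n}^{+}$, so $E_{n}^{+}[2]\le C_{n}^{+}$ and Corollary~\ref{cor:p-sat} applies with $A=E_{n}^{+}$ and $B=C_{n}^{+}$. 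For condition~(a): $\mu_{2}=\{\pm1\}\subset F$, so by Remark~\ref{rmk:roots-of-unity-condition} the map $\lambda$ is an isomorphism if and only if $|S_{2}(F)|=1$. That is exactly condition~(a).

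\emph{Forward direction.} Suppose $F$ is $2$-rational. Theorem~\ref{thm:nqd-char-p-rat} gives (a) and the injectivity of $\overline{\eta}$, so $\dim\eta(E_{n}^{+})=\tfrac12\varphi(n)$. To pass from $E_{n}^{+}$ to $C_{n}^{+}$ we need $[E_{n}^{+}:C_{n}^{+}]$ to be odd. By Theorem~\ref{thm:sinnott-cyclo} this index is $2^{b}h_{n}^{+}$, so we need $b=0$ and $2\nmid h_{n}^{+}$.

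For $b=0$, Lemma~\ref{lem:2-rat}(d) gives $g\le 2$. If $g=2$ then $b=2^{0}+1-2=0$, and if $g=1$ then $b=0$ by definition. So $b=0$.

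For $2\nmid h_{n}^{+}$ we use Lemma~\ref{lem:real-cyclo-p-rat-class-number}. Its proof used only Corollaries~\ref{cor:tot-real-p-rat-class-number} and \ref{cor:tot-real-p-rat-tame}, which hold for every $p$, including $p=2$. In the wild case the proof needs $p^{2}\mid n$, i.e.\ $4\mid n$. Then the cyclotomic $\Z_{2}$-extension of $F$ has layers $\Q(\zeta_{n2^{k}})^{+}$, since $\Q(\zeta_{2^{\infty}})^{+}$ is the cyclotomic $\Z_2$-extension of $\Q$ and $\Q(\zeta_4)=\Q(i)$ is absorbed into the non-real part. These layers are wildly ramified over $F$. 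So the lemma holds for $p=2$, and I would check this carefully. The index is then odd, hence $\eta(C_{n}^{+})=\eta(E_{n}^{+})$, and (b) follows.

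\emph{Converse.} Assume (a) and (b). Then
\[\tfrac12\varphi(n)=\dim\eta(C_{n}^{+})\le\dim\eta(E_{n}^{+})\le\rank_{2}(E_{n}^{+})=\tfrac12\varphi(n),\]
so all of these are equalities. Hence $\overline{\eta}$ is injective, and by Corollary~\ref{cor:p-sat}(d) the index $[E_{n}^{+}:C_{n}^{+}]$ is odd. Theorem~\ref{thm:sinnott-cyclo} then forces $b=0$ and $2\nmid h_{n}^{+}$. Therefore $H_{2}(F)=F$, so condition~(a) of Theorem~\ref{thm:nqd-char-p-rat} holds. Conditions (b) and (c) of that theorem were established above, so $F$ is $2$-rational.

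The converse needs nothing about $b$: the odd index gives it for free. The main obstacle is the forward direction, namely ruling out a factor $2^{b}$ via Lemma~\ref{lem:2-rat}(d) and justifying Lemma~\ref{lem:real-cyclo-p-rat-class-number} at $p=2$.
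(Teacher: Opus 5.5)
Your proposal is correct and is essentially the paper's proof. It uses the same $2$-rank count, the same use of Lemma~\ref{lem:real-cyclo-p-rat-class-number} together with Lemma~\ref{lem:2-rat}(d) to make $[E_{n}^{+}:C_{n}^{+}]=2^{b}h_{n}^{+}$ odd, and the same appeal to Corollary~\ref{cor:p-sat}(d) in the converse. Lemma~\ref{lem:real-cyclo-p-rat-class-number} is already stated for all $p$, so your re-check at $p=2$ is optional.

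One small slip: your identity for $-1$ is wrong. Since $(1-\zeta_{n})/(1-\zeta_{n}^{-1})=-\zeta_{n}$, your product equals $1$, not $-1$. The fix is to note that $-\zeta_{n}\in D_{n}$, with $(-\zeta_{n})^{n}=-1$ for $n$ odd and $(-\zeta_{n})^{n/2}=-1$ for $4\mid n$. Hence $-1\in C_{n}^{+}$, which is the fact the paper simply asserts as $\{\pm1\}\subset C_{n}^{+}$.
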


\begin{proof}
Henceforth abbreviate $\eta_{\Q(\zeta_{n})^{+},2}$, $\overline{\eta}_{\Q(\zeta_{n})^{+},2}$
and $\lambda_{\Q(\zeta_{n})^{+},2}$ to $\eta$ and $\overline{\eta}$ and
$\lambda$, respectively.
Let $g$ be the number of distinct prime factors of $n$.
Since $\{ \pm 1 \} \subset E_{n}^{+}$, we have
\begin{equation}\label{eq:2-unit-rank-real-cyclo}
\rank_{2}(E_{n}^{+}) 
= \rank_{\Z}(E_{n}^{+}) + 1
= [\Q(\zeta_{n})^{+}:\Q]
= \textstyle{\frac{1}{2}}\varphi(n).
\end{equation}

Suppose that $\Q(\zeta_{n})^{+}$ is $2$-rational.
Then by Theorem~\ref{thm:nqd-char-p-rat} and 
Remark~\ref{rmk:roots-of-unity-condition}, condition (a) holds. 
Lemma~\ref{lem:real-cyclo-p-rat-class-number} implies that $2 \nmid h_{n}^{+}$
and Lemma~\ref{lem:2-rat}(d) implies that $g \leq 2$.
Thus $2 \nmid [E_{n}^{+} : C_{n}^{+}]$ by Theorem~\ref{thm:sinnott-cyclo} and
so $\eta(C_{n}^{+}) = \eta(E_{n}^{+})$.
Again by Theorem~\ref{thm:nqd-char-p-rat}, $\overline{\eta}$ is injective
and thus by the rank-nullity theorem, the previous sentence and \eqref{eq:2-unit-rank-real-cyclo} we have 
\[
\dim_{\F_{2}}(\eta(C_{n}^{+}))
=
\dim_{\F_{2}}(\eta(E_{n}^{+}))
=
\dim_{\F_{2}}(\overline{\eta}(E_{n}^{+}/(E_{n}^{+})^{2}))
=
\rank_{2}(E_{n}^{+})
= 
\textstyle{\frac{1}{2}}\varphi(n).
\]
Therefore condition (b) also holds.

Suppose conversely that (a) and (b) hold. 
Then $\lambda$ is an isomorphism by (a) and Remark~\ref{rmk:roots-of-unity-condition}.
By (b) and \eqref{eq:2-unit-rank-real-cyclo} we have
\[
\rank_{2}(E_{n}^{+})
=
\textstyle{\frac{1}{2}}\varphi(n)
= 
\dim_{\F_{2}}(\eta(C_{n}^{+}))
\leq 
\dim_{\F_{2}}(\eta(E_{n}^{+})) 
\leq 
\rank_{2}(E_{n}^{+}),
\]
and so these inequalities are in fact equalities. 
In particular, 
\[
\dim_{\F_{2}}(\overline{\eta}(E_{n}^{+}/(E_{n}^{+})^{2})) 
= \dim_{\F_{2}}(\eta(E_{n}^{+}))
=\rank_{2}(E_{n}^{+}),
\]
and so $\overline{\eta}$ is injective.
Moreover, 
$\dim_{\F_{2}}(\eta(C_{n}^{+}))=\rank_{2}(E_{n}^{+})$
and $\{ \pm 1 \} \subset  C_{n}^{+}$.
Hence by Corollary~\ref{cor:p-sat}(d),
$C_{n}^{+}$ is $2$-saturated and of finite index in $E_{n}^{+}$,
or equivalently, $[E_{n}^{+}:C_{n}^{+}]$ is odd.
Thus $h_{n}^{+}$ is also odd by Theorem~\ref{thm:sinnott-cyclo},
and so $H_{2}(\Q(\zeta_{n})^{+})=\Q(\zeta_{n})^{+}$.
Therefore $\Q(\zeta_{n})^{+}$ is $2$-rational by Theorem~\ref{thm:nqd-char-p-rat}.
\end{proof}

\subsection{Auxiliary results}\label{subsec:real-cyclo-aux}
We give a selection of results that are not strictly necessary given the results 
of \S \ref{subsec:real-cyclo-p-odd-rat} and \S \ref{subsec:real-cyclo-2-rat}, but 
will nonetheless be useful for speeding up certain computations.

\begin{lemma}
If $n=2^{k}$, $2^{k} \cdot 3$ or $2^{k} \cdot 5$
for some $k \in \Z_{\geq 0}$ then $\Q(\zeta_{n})^{+}$ is $2$-rational.    
\end{lemma}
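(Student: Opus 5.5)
The plan is to reduce every case to the going-up result Corollary~\ref{cor:p-rat-going-up} with $p=2$. For each $n$ in the statement, $\Q(\zeta_{n})^{+}/\Q$ is abelian of $2$-power degree, since $\varphi(2^{k})$, $\varphi(3\cdot 2^{k})$ and $\varphi(5\cdot 2^{k})$ are all powers of $2$. So for any subfield $F$ the extension $\Q(\zeta_{n})^{+}/F$ is a finite Galois $2$-extension. It therefore suffices to find, in each case, a totally real $2$-rational subfield $F$ such that $\Q(\zeta_{n})^{+}/F$ is unramified outside $S_{2}(F)$. The infinite places cause no trouble because everything is totally real.

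\emph{Case $n=2^{k}$.} Take $F=\Q$. Then $\Q(\zeta_{2^{k}})^{+}/\Q$ is unramified outside $2$, and $\Q$ is $2$-rational, so Corollary~\ref{cor:p-rat-going-up} applies directly.

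\emph{Case $n=3\cdot 2^{k}$.} If $k\le 1$ then $\Q(\zeta_{n})^{+}=\Q$ and there is nothing to prove. For $k\ge 2$, take $F=\Q(\sqrt{3})$. It lies in $\Q(\zeta_{12})^{+}\subseteq \Q(\zeta_{n})^{+}$, and it is $2$-rational by the classification of $2$-rational real quadratic fields quoted in the proof of Lemma~\ref{lem:2-rat}, since $3\equiv 3 \bmod 8$. The only odd ramified prime is $3$. Its inertia group in $(\Z/n\Z)^{\times}$ is $\{a : a\equiv 1 \bmod 2^{k}\}$, of order $2$, and this group does not contain $-1$ once $k\ge 2$. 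Hence $3$ has ramification index $2$ in $\Q(\zeta_{n})^{+}$. This index is already attained in $F$, so $\Q(\zeta_{n})^{+}/F$ is unramified above $3$, and Corollary~\ref{cor:p-rat-going-up} finishes this case.

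\emph{Case $n=5\cdot 2^{k}$.} For $k\le 1$ we have $\Q(\zeta_{n})^{+}=\Q(\sqrt{5})$, which is $2$-rational since $5\equiv -3\bmod 8$. For $k\ge 2$, the inertia group at $5$ is $\{a\equiv 1 \bmod 2^{k}\}\cong(\Z/5\Z)^{\times}$, cyclic of order $4$, and it meets $\{\pm1\}$ trivially. So $5$ has ramification index $4$ in $\Q(\zeta_{n})^{+}$, and $\Q(\sqrt{5})$ only absorbs half of this. I would instead take $F=\Q(\zeta_{20})^{+}$. This is cyclic quartic, and $5$ is totally ramified in it by the same inertia computation, so $\Q(\zeta_{n})^{+}/F$ is unramified above $5$ and the corollary applies.

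The remaining step, which I expect to be the main obstacle, is to show that $\Q(\zeta_{20})^{+}$ itself is $2$-rational. I would do this with Theorem~\ref{thm:2-rat-cyclo}, which amounts to a finite explicit check. For condition (a), $2$ is inert in $\Q(\sqrt{5})$ and ramified in $\Q(i)$, so one checks that there is a single prime of $\Q(\zeta_{20})^{+}$ above $2$. For condition (b), one computes the images of $-1$ and of a basis of $C_{20}^{+}$ under $\eta$, taking values in $U_{\mathfrak{p}}/U_{\mathfrak{p}}^{2}$ for the unique $\mathfrak{p}\mid 2$. Here a suitable basis is given by real cyclotomic units such as $\zeta_{20}^{(1-a)/2}(1-\zeta_{20}^{a})/(1-\zeta_{20})$ together with the unit of $\Q(\sqrt{5})$. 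One then verifies that these images span a space of dimension $\frac12\varphi(20)=4$. If this direct computation proved awkward, a fallback would be to cite the known $2$-rationality of $\Q(\zeta_{20})^{+}$ from Gras's or Greenberg's lists of $2$-rational abelian fields.
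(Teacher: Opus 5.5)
Your reductions are correct as far as they go. The inertia computations at $3$ and $5$ are right, and $\Q(\sqrt{3})$ and $\Q(\sqrt{5})$ are covered by the quadratic classification quoted in the proof of Lemma~\ref{lem:2-rat}. Corollary~\ref{cor:p-rat-going-up} applies because every field involved is totally real. So the cases $n=2^{k}$, $n=3\cdot 2^{k}$, and $n=5\cdot 2^{k}$ with $k\leq 1$ are complete. This is a different route from the paper, which simply quotes Gras \cite[IV, 3.5.1]{MR1941965} after identifying $\Q(\sqrt{-3})$ and $\Q(\sqrt{\sqrt{5}(1-\sqrt{5})/2})$ with $\Q(\zeta_{3})$ and $\Q(\zeta_{5})$. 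In other words, the paper works through the complex fields and the $2$-power cyclotomic tower, then passes to real subfields. Your version stays inside the paper's own tools. The cost is that the totally real going-up cannot absorb the ramification index $4$ at $5$, so you are forced onto a quartic base field.

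That is where the gap is. For $n=5\cdot 2^{k}$ with $k\geq 2$, everything rests on the $2$-rationality of $\Q(\zeta_{20})^{+}$, and you do not establish it. The check via Theorem~\ref{thm:2-rat-cyclo} is not a formality. The prime above $2$ in $\Q(\zeta_{20})^{+}$ has $e=f=2$, so $U_{\mathfrak{p}}/U_{\mathfrak{p}}^{2}$ is $5$-dimensional and you must exhibit a $4$-dimensional image. That needs genuine generators of $C_{20}^{+}$ and explicit $2$-adic computations. The ratios $\zeta^{(1-a)/2}(1-\zeta^{a})/(1-\zeta)$ are the generators for prime-power conductor and need not generate $C_{20}^{+}$ (cf.\ \S\ref{subsec:basis-for-cyclo-units}). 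Your fallback of citing a list is exactly the input the paper takes from Gras.

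A short way to close the gap by hand is to go through $\Q(\zeta_{5})$ instead. It is $2$-rational by Theorem~\ref{thm:nqd-char-p-rat}: $h=1$, $2$ is inert, and $\mathcal{O}^{\times}/\mathcal{O}^{\times 2}$ is represented by $\pm1,\pm\varepsilon$ with $\varepsilon=(1+\sqrt{5})/2$. In the unramified field $\Q_{2}(\zeta_{5})$, every unit is a Teichm\"uller root of unity times $1+2x$, and squares have $x\equiv 0 \bmod 2$. For each of $-1$, $\varepsilon$, $-\varepsilon$ the element $x$ is a unit (use $\varepsilon^{3}=1+2\varepsilon$), so none is a square. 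Since $\Q(\zeta_{5})$ is totally complex, $G_{S_{2}}(\Q(\zeta_{5\cdot 2^{k}}))$ is open of index $2^{k-1}$ in a free pro-$2$ group of rank $3$. By the Schreier formula it is free of rank $2^{k}+1=c_{\Q(\zeta_{5\cdot 2^{k}})}+1$. Proposition~\ref{prop:def-char-p-rational}(a) and Lemma~\ref{lem:p-rat-subfields} then give the claim for $\Q(\zeta_{5\cdot 2^{k}})^{+}$.
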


\begin{proof}
The claim follows easily from \cite[IV, 3.5.1]{MR1941965}, once one observes that 
\[
\Q(\sqrt{-3}) \cong \Q(\zeta_{3}) \quad \text{ and } \quad
\textstyle{\Q\left(\sqrt{\sqrt{5}  \left(\frac{1-\sqrt{5}}{2}\right) }\right)} \cong \Q(\zeta_{5}). \qedhere
\]    
\end{proof}

\begin{lemma}\label{lem:cyclo-going-up}
Let $m \in \Z_{\geq 1}$ with $m \not \equiv 2 \bmod{4}$.
Let $p$ be an odd prime number and let $k \in \Z_{\geq 1}$.
Then $\Q(\zeta_{p^{k}m})^{+}$ is $p$-rational if and only if $\Q(\zeta_{pm})^{+}$ is $p$-rational.     
\end{lemma}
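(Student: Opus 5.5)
The plan is to apply Corollary~\ref{cor:p-rat-going-up} with $F=\Q(\zeta_{pm})^{+}$ and $K=\Q(\zeta_{p^{k}m})^{+}$. Both fields are totally real. So it suffices to check two things: that $K/F$ is a finite Galois $p$-extension, and that it is unramified outside $S_{p}(F)$. The corollary then gives the equivalence directly.

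First we reduce to the case where $p \nmid m$. Write $m = p^{j} m'$ with $p \nmid m'$. If $j \geq 1$, then $pm = p^{j+1}m'$ and $p^{k}m = p^{k+j}m'$, so both fields already contain $\Q(\zeta_{p^{2}})^{+}$. The argument below goes through verbatim with $p^{j+1}$ in place of $p$. Alternatively, one checks directly that $\Q(\zeta_{p^{k+j}m'})^{+}$ and $\Q(\zeta_{p^{j+1}m'})^{+}$ both fit into the same chain over $\Q(\zeta_{pm'})^{+}$, and applies the corollary twice. Note also that $pm \not\equiv 2 \bmod 4$ and $p^{k}m \not\equiv 2 \bmod 4$, since $p$ is odd, so the fields are as in the setup.

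Next, the extension $\Q(\zeta_{p^{k}m})/\Q(\zeta_{pm})$ is abelian. When $p \nmid m$, its Galois group is the kernel of $(\Z/p^{k}\Z)^{\times} \to (\Z/p\Z)^{\times}$, which has order $p^{k-1}$. The general case has degree $\varphi(p^{k}m)/\varphi(pm)$, a power of $p$, because multiplying the $p$-part of the modulus by $p$ multiplies $\varphi$ by $p$ once $p$ already divides it. Since $[\Q(\zeta_{N}):\Q(\zeta_{N})^{+}]=2$ for $N \geq 3$, the extension $K/F$ is Galois (everything is abelian) with degree
\[
[K:F] = [\Q(\zeta_{p^{k}m}):\Q(\zeta_{pm})],
\]
which is a power of $p$.

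For ramification, any prime $\ell$ ramified in $\Q(\zeta_{p^{k}m})/\Q(\zeta_{pm})$ must divide $p^{k}m$. For $\ell \neq p$, the inertia group at $\ell$ in $\Gal(\Q(\zeta_{p^{k}m})/\Q)$ corresponds to $(\Z/\ell^{v_{\ell}(m)}\Z)^{\times}$ and is already fully present in $\Q(\zeta_{pm})$. Concretely, $\Q(\zeta_{p^{k}m})$ is the compositum of $\Q(\zeta_{pm})$ with $\Q(\zeta_{p^{k}})$, and the latter is unramified outside $p$. Hence $K/F$ is unramified outside the primes above $p$; infinite places cause no issue since both fields are totally real.

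Corollary~\ref{cor:p-rat-going-up} now gives the lemma. The only slightly delicate step is verifying the $p$-power degree when $p \mid m$, and this is handled by the multiplicativity of $\varphi$ on the $p$-part noted above.
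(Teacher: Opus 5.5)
Your proposal is correct and takes the same route as the paper, which simply notes that $\Q(\zeta_{p^{k}m})^{+}/\Q(\zeta_{pm})^{+}$ is a finite Galois $p$-extension unramified outside $S_{p}(\Q(\zeta_{pm})^{+})$ and invokes Corollary~\ref{cor:p-rat-going-up}; your write-up just supplies the verification of these two facts. Two small points: the reduction to $p \nmid m$ is unnecessary, and the identity $\Q(\zeta_{p^{k}m})=\Q(\zeta_{pm})\Q(\zeta_{p^{k}})$ only holds when $p\nmid m$ or $k=1$, so in general you should use $\zeta_{p^{k+v_{p}(m)}}$ instead. Also, passing from $\Q(\zeta_{p^{k}m})/\Q(\zeta_{pm})$ to the plus parts needs one line: at primes not above $p$, the ramification index of $K/F$ is at most $[\Q(\zeta_{pm}):F]=2$ and divides the odd degree $[K:F]$, so it equals $1$.
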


\begin{proof}
Since $\Q(\zeta_{p^{k}m})^{+}/\Q(\zeta_{pm})^{+}$ is a finite Galois $p$-extension that
is unramified outside $S_{p}(\Q(\zeta_{pm})^{+})$, this is a special case of
Corollary~\ref{cor:p-rat-going-up}.
\end{proof}

\begin{lemma}\label{lem:p-regular}
Let $p$ be an odd prime number and let $k \in \Z_{\geq 1}$.
Then $\Q(\zeta_{p^{k}})^{+}$ is $p$-rational if and only if $p$ is a regular prime,
that is, $p \nmid h_{\Q(\zeta_{p})}$.
\end{lemma}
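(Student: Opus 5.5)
By Lemma~\ref{lem:cyclo-going-up} with $m=1$, the field $\Q(\zeta_{p^{k}})^{+}$ is $p$-rational if and only if $F=\Q(\zeta_{p})^{+}$ is. It therefore suffices to show that $F$ is $p$-rational if and only if $p \nmid h_{\Q(\zeta_{p})}$. The plan is to pass to the CM-field $K=\Q(\zeta_{p})$ and use Proposition~\ref{prop:CM-tot-real-p-rat}, together with the known descriptions of $p$-rationality of $\Q(\zeta_p)$. Throughout we use two facts about the unique prime $\mathfrak{p}=(1-\zeta_p)$ of $K$ above $p$. It is totally ramified, and $K$ itself contains $\mu_p$, so condition (b) of Theorem~\ref{thm:nqd-char-p-rat} holds for $K$ by Remark~\ref{rmk:roots-of-unity-condition}, since $|S_p(K)|=1$. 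For $F$, the ramification index of the prime above $p$ is $(p-1)/2$, which is not divisible by $p-1$ when $p$ is odd. Hence $\mu_p \not\subset F_{\mathfrak{p}}$ and condition (b) holds for $F$ as well.

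\emph{Regular implies $F$ is $p$-rational.} Suppose $p \nmid h_K$. Then $p \nmid h_F$, since $h_F \mid h_K$, so condition (a) of Theorem~\ref{thm:nqd-char-p-rat} holds for $F$. For condition (c) I would use the local unit group $U_{\mathfrak{p}}$ of $F$. Since $\mu_p \not\subset F_{\mathfrak{p}}$, we have $U_{\mathfrak{p}}/U_{\mathfrak{p}}^p \simeq \F_p^{(p-1)/2}$. The target claim is that $\overline{\eta}_{F,p}$ is injective, i.e.\ that no unit $u\in E_p^{+}\setminus (E_p^{+})^p$ is a local $p$-th power. This is the classical step in the proof of Kummer's lemma: a unit $u$ that is a local $p$-th power (or merely $\equiv$ rational integer mod $p$) yields $K(\sqrt[p]{u})/K$ unramified everywhere (unramified at $\mathfrak{p}$ because $u$ is congruent to a $p$-th power mod $\mathfrak{p}^{p}$), which is impossible when $p\nmid h_K$ unless $u\in K^{\times p}$, and then $u \in E_p^{+p}$ since $\mu_p\cap F=1$. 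Note that the dimension count $(p-1)/2$ versus $\rank_p(E_p^+)=(p-3)/2$ is consistent with this. Hence $F$ is $p$-rational. A shortcut worth keeping in mind: one could instead cite that $K$ is $p$-rational for regular $p$, via Theorem~\ref{thm:nqd-char-p-rat} with the same Kummer argument applied to $K$, and then apply Lemma~\ref{lem:p-rat-subfields}.

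\emph{$F$ is $p$-rational implies regular.} Suppose $F$ is $p$-rational. Then $p\nmid h_F$ by Lemma~\ref{lem:real-cyclo-p-rat-class-number}, so it remains to show $p \nmid h_K^{-}$. First I would check whether $K$ itself is $p$-rational. Proposition~\ref{prop:CM-tot-real-p-rat} cannot be applied to $K$, because $\mu_p\subset K_{\mathfrak{p}}$, so I would instead use the reflection principle. A standard fact (Kummer/Herbrand) is that $p\mid h_K^-$ implies $p\mid h_K$. The plan is then to show that $p\mid h^-_K$ produces a nontrivial $\Z_p$-torsion element in $X_{S_p}(F)$, contradicting Proposition~\ref{prop:tot-real-p-rat-equivs}(b). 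By Kummer theory over $K$, an odd eigenspace $\Cl_K[p]^{\omega^{i}}$ with $i$ odd, $i\neq 1$, is reflected to the even eigenspace $\omega^{1-i}$ of $K^{\times}/K^{\times p}$ classes giving $p$-extensions of $K$ unramified outside $p$. The resulting even-character cyclic degree $p$ extension descends to an abelian $p$-extension of $F$ unramified outside $p$ that is not contained in $F^{\mathrm{cyc},p}$, since its character $\omega^{1-i}$ is nontrivial. Concretely, this is the classical equivalence: $p$ is regular if and only if $p\nmid B_2B_4\cdots B_{p-3}$, if and only if the $p$-adic $L$-values $L_p(1,\omega^{j})$ with $j$ even, $j\neq0$, are units. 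By Theorem~\ref{thm:Coates} (equivalently Corollary~\ref{cor:p-rat-p-reg}, but $p\mid d_F$ here, so Coates' formula should be used directly), $|T_{S_p}(F)|$ equals, up to a $p$-adic unit, $\prod_{j}L_p(1,\omega^{j})$ by the $p$-adic class number formula. This $p$-adic $L$-function route is the cleanest approach, and I would take it.

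The main obstacle is making this last identification rigorous with only the paper's tools: expressing $v_p(|T_{S_p}(F)|)$ via Theorem~\ref{thm:Coates} as $v_p\bigl(\prod_{j\ \mathrm{even},\,2\le j\le p-3}L_p(1,\omega^j)\bigr)$, and using $L_p(1,\omega^j)\equiv L_p(0,\omega^{j})=-(1-\omega^{j-1}(p))B_{1,\omega^{j-1}}\equiv -B_{j}/j \pmod p$ through congruences of the Kubota–Leopoldt function. Then $T_{S_p}(F)=0$ if and only if $p\nmid B_2\cdots B_{p-3}$, which by Kummer's criterion is equivalent to regularity. This gives both directions at once, and in the write-up I would likely present it that way.
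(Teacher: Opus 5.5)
Your reduction to $k=1$ via Lemma~\ref{lem:cyclo-going-up} with $m=1$ is exactly the paper's reduction. The difference is the base case $F=\Q(\zeta_p)^{+}$: the paper does not prove it but cites Assim and Benmerieme--Movahhedi, whereas you prove it. Both of your arguments are correct.

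\textbf{Regular $\Rightarrow$ $p$-rational.} You verify the three conditions of Theorem~\ref{thm:nqd-char-p-rat} for $F$. Condition (c) is the Kummer-lemma step. If $u\in E_p^{+}$ is a local $p$-th power, then the prime above $p$ splits completely in $K(\sqrt[p]{u})/K$. This extension is therefore unramified, hence trivial when $p\nmid h_K$. Finally $u\in K^{\times p}\cap F=F^{\times p}$, for instance because $N_{K/F}(v)^p=u^2$ forces $u^2\in F^{\times p}$ and $p$ is odd.

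\textbf{The $L$-function route (both directions).} You combine Theorem~\ref{thm:Coates}, where the factors $|\mu_{p^\infty}(K)|=p$ and $1-p^{-1}$ cancel, with the $p$-adic class number formula
\[
\textstyle\prod_{j} L_p(1,\omega^j)=2^{(p-3)/2}h_FR_{F,p}/\sqrt{d_F},
\]
which has no Euler factors since each $\omega^j$ has conductor $p$. You then use $L_p(1,\omega^j)\equiv -B_{1,\omega^{j-1}}\equiv -B_j/j \pmod p$ and Kummer's criterion. The non-vanishing of $R_{F,p}$ that Theorem~\ref{thm:Coates} needs comes for free in each direction: from Leopoldt if $F$ is $p$-rational, or from the $L$-values being units if $p$ is regular.

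\textbf{Comparison.} Your route gives a self-contained explanation linking $T_{S_p}(F)$ directly to Bernoulli numbers. The cost is importing inputs well beyond the paper: Iwasawa integrality of $L_p$, which gives $L_p(1)\equiv L_p(0)$; the Kummer congruence for $B_{1,\omega^{j-1}}$; and Kummer's criterion. So it is not achievable ``with only the paper's tools''. The paper's citation is shorter and defers exactly this classical input to the literature.

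\textbf{Side remarks to fix if you keep them.}
\begin{itemize}
\item ``$p\mid h_K^-\Rightarrow p\mid h_K$'' is trivial. You presumably meant Kummer's theorem $p\mid h_F\Rightarrow p\mid h_K^-$.
\item In the reflection sketch the eigenspaces are swapped. The Kummer generator $\alpha$ with $(\alpha)=\mathfrak{a}^p$ lies in the $\omega^i$-part of $K^\times/K^{\times p}$. It is $\Gal(K(\sqrt[p]{\alpha})/K)$ that carries the even character $\omega^{1-i}$ and therefore descends to $F$.
\end{itemize}
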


\begin{proof}
See \cite[Example (ii)(a), p.\ 503]{MR1324685} or 
\cite[discussion after Corollary~2.2]{MR4192837} for the case $k=1$.
The general case then follows from Lemma~\ref{lem:cyclo-going-up}.
\end{proof}

\section{Real abelian number fields}\label{sec:real-abelian-number-fields}

\subsection{Cyclotomic units}
By the Kronecker--Weber theorem, 
for an abelian number field $K$,
there exists a minimal $n \in \Z_{\geq 1}$
such that $K \subseteq \Q(\zeta_{n})$; this is called the \emph{conductor}
of $K$. By Remark~\ref{rmk:n-not-2-mod-4}, we have $n \not \equiv 2 \bmod 4$.
For an arbitrary abelian number field $K$, 
there are several definitions of cyclotomic units that give distinct subgroups in general;
we refer the reader to \cite{MR2183100} for a helpful overview. 
By \cite[Proposition~1]{MR1057325}, the following definition is equivalent 
to that of Sinnott in \cite[\S 4]{MR595586}.

\begin{definition}\label{def:cyclo-units-abelian}
For an abelian number field $K$ of conductor $n \geq 3$, let $D_{K}$
be the subgroup of the multiplicative group $K^{\times}$ generated by 
\[
\{ \pm\Norm_{\Q(\zeta_{m})/\Q(\zeta_{m}) \cap K}(1-\zeta_{m}^a) 
: m \in \Z_{>1}, \, m \mid n, \, (a,m)= 1 \},
\]
and let $C_{K} = D_{K} \cap \mathcal{O}_{K}^{\times}$.
\end{definition}

The following result is due to Sinnott
\cite[Theorem~4.1 and Proposition~5.1]{MR595586}.

\begin{theorem}\label{thm:sinnott-abelian}
Let $F$ be a real abelian number field of conductor $n \geq 3$.
Then 
\[
[\mathcal{O}_{F}^{\times} : C_{F}] 
= 2^{[F:\Q]-1} 
\left(\frac{\prod_{i=1}^{r}[F \cap \Q(\zeta_{p_{i}^{e_{i}}}):\Q]}{[F:\Q]}\right) u_{F}h_{F},
\]
where $u_{F} \in \Z_{\geq 1}$ is divisible only by primes dividing $[F:\Q]$
and $n=p_{1}^{e_{1}} \cdots p_{r}^{e_{r}}$ is the prime factorisation of $n$.
\end{theorem}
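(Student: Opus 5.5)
This statement is quoted from Sinnott \cite[Theorem~4.1 and Proposition~5.1]{MR595586}, and in the paper it is taken as a black box. If I were to reprove it, I would follow Sinnott's regulator-and-index strategy.

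\emph{Setting up the comparison.} Let $F$ be real abelian of conductor $n$, let $G=\Gal(F/\Q)$, and let $\ell\colon \mathcal{O}_{F}^{\times}\to \mathbb{R}^{G}$ be the logarithmic embedding $u\mapsto (\log|\sigma u|)_{\sigma}$. Both $\ell(\mathcal{O}_{F}^{\times})$ and $\ell(C_{F})$ are $\Z[G]$-lattices in the trace-zero hyperplane, and the kernel of $\ell$ is $\{\pm1\}$ in each case. Hence $[\mathcal{O}_{F}^{\times}:C_{F}]=R(C_{F})/R_{F}$, with $R(C_F)$ the regulator of $C_F$. By the analytic class number formula,
\[
h_{F}R_{F}=\prod_{\chi\neq 1}\tfrac12\, \tau(\chi)\, L(1,\chi)\big/ f_{\chi}\quad(\text{up to the standard normalisation}),
\]
where the product runs over the nontrivial (even) characters of $G$. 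For each such $\chi$, the classical evaluation
\[
\sum_{a}\chi(a)\log|1-\zeta_{f_\chi}^{a}| = -\tfrac{f_\chi}{\tau(\bar\chi)}L(1,\chi)
\]
expresses every $L(1,\chi)$ through the $\chi$-component of the logarithms of the generators of $D_{F}$. Feeding the Euler factors $\prod_{p\mid m,\,p\nmid f_\chi}(1-\bar\chi(p))$, which appear when one takes norms from level $m$ rather than from the conductor $f_\chi$, into this evaluation gives a formal identity between $\prod_{\chi}L(1,\chi)$ and a ``Dedekind determinant'' attached to the $\Q[G]$-module spanned by $\ell(D_{F})$.

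\emph{Passing to the index of $\Z[G]$-lattices.} Following Sinnott, I would write $\ell(D_{F})=U\cdot \omega$, where $U\subseteq \Q[G]$ is the $\Z[G]$-module generated by the elements
\[
\Norm_{\Q(\zeta_m)/\Q(\zeta_m)\cap F}\bigl(1-\zeta_m\bigr)\longleftrightarrow \Bigl(\textstyle\prod_{p\mid m}(1-\mathrm{Fr}_p^{-1})\Bigr)\, s(\text{inertia}),
\]
with $s(\cdot)$ denoting the norm element of the relevant inertia group. I would then use the generalised index $(A:B)$ of full lattices in a common $\Q$-space. This gives
\[
R(C_F)/R_F = h_F\cdot(\Z[G]^{0}:U^{0})\cdot(\text{explicit }2\text{-power}),
\]
where the superscript $0$ denotes the augmentation-zero part. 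The factor $2^{[F:\Q]-1}$ comes from the $\pm$ signs and the $\tfrac12$ in the class number formula. The multiplicativity of $(A:B)$ along the determinant lets one replace the determinant of the $L$-values by this lattice index.

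\emph{Computing $(\Z[G]^0:U^0)$: the main obstacle.} This is the hard step. The module $U$ is generated by the products of Euler-factor elements and inertia norms above, for the inertia groups $T_i$ of the primes $p_i\mid n$. I would proceed as follows.
\begin{enumerate}
\item Compare $U$ with the simpler module $U'$ generated by the inertia norms $s(T_I)$ for $I\subseteq\{1,\dots,r\}$, using $(1-\mathrm{Fr}_p^{-1})$-twists and a filtration by subsets of primes. This comparison yields the factor $\prod_i |T_i| / [F:\Q]$, which is the displayed quotient since $|T_i|=[F\cap\Q(\zeta_{p_i^{e_i}}):\Q]$.
\item Express the remaining discrepancy as a ratio of orders of Tate cohomology groups $\hat H^{0}$ and $\hat H^{1}$ of $G$ acting on $U'$ (or on an associated lattice). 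Every such group is killed by $|G|$, so this ratio $u_F$ is a positive integer divisible only by primes dividing $[F:\Q]$.
\end{enumerate}
Step 2 is where I expect real difficulty. Showing that the cohomological ratio is an integer, rather than merely a rational number supported on primes dividing $|G|$, requires Sinnott's finer argument in \cite[Proposition~5.1]{MR595586}. I would attempt it by induction on $r$, writing $G$ as built from the inertia subgroups and using the exact sequences relating $U'$ for $r$ and $r-1$ primes.
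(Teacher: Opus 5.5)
Your black-box reading matches the paper, which offers no argument beyond citing Sinnott \cite[Theorem~4.1 and Proposition~5.1]{MR595586}. The problem lies in your sketch.

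In Step~1 you identify $\lvert T_i\rvert$ with $[F\cap\Q(\zeta_{p_i^{e_i}}):\Q]$, and this is false in general. The field $F\cap\Q(\zeta_{p_i^{e_i}})$ is the largest subfield of $F$ unramified outside $p_i$. Its degree is therefore $\lvert G/\langle T_j : j\neq i\rangle\rvert$, not $\lvert T_i\rvert$. Because $G$ is generated by its inertia groups, your factor $\prod_i\lvert T_i\rvert/[F:\Q]$ is always at least $1$; in fact it is the narrow genus number of $F$. The displayed quotient, by contrast, equals $1/[F:F']$, where $F'$ is the compositum of the linearly disjoint fields $F\cap\Q(\zeta_{p_i^{e_i}})$, so it is at most $1$. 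The two coincide only when $G$ is the direct product of its inertia groups, where both equal $1$ (the case of Remark~\ref{rmk:cF=1}(c)).

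Here is a concrete check. Let $F$ be real cyclic of odd prime degree $\ell$ with squarefree conductor $n=p_1\cdots p_s$ and $s\geq 2$ (for instance a cyclic cubic field of conductor $91$).
\begin{itemize}
\item For $1<m<n$ with $m\mid n$ we have $F\cap\Q(\zeta_m)=\Q$. Those generators are the rational numbers $\Phi_m(1)$ (with $\Phi_m$ the $m$-th cyclotomic polynomial) and contribute no units.
\item Hence $C_F=\langle -1,\sigma(\eta):\sigma\in G\rangle$ with $\eta=\Norm_{\Q(\zeta_n)/F}(1-\zeta_n)$, and $\prod_{\sigma}\sigma(\eta)=\Phi_n(1)=1$.
\item The regulator of $C_F$ is a cofactor of the group matrix $(\log\lvert\sigma\tau^{-1}(\eta)\rvert)_{\sigma,\tau\in G}$, whose row and column sums vanish.
\item Every $\chi\neq 1$ has conductor $n$, so this cofactor equals $\frac{1}{\ell}\prod_{\chi\neq 1}\bigl\lvert\sum_{a\in(\Z/n\Z)^{\times}}\chi(a)\log\lvert 1-\zeta_n^{a}\rvert\bigr\rvert=2^{\ell-1}h_FR_F/\ell$ by the class number formula.
\end{itemize}
Thus $[\mathcal{O}_F^{\times}:C_F]=2^{\ell-1}h_F/\ell$. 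This agrees with the statement with $u_F=1$. Your factor would instead give $2^{\ell-1}\ell^{s-1}u_Fh_F$, which is impossible for any positive integer $u_F$.

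So the comparison in Step~1 has to produce $\prod_i\lvert G/\langle T_j:j\neq i\rangle\rvert/\lvert G\rvert$. In this example the denominator comes from the cofactor, that is, from restricting to the trace-zero hyperplane, not from inertia norms.

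Independently, Step~2 leaves open precisely the integrality of $u_F$, which is the substance of Sinnott's Proposition~5.1. Even after correcting Step~1, your sketch only yields a rational $u_F$ supported on primes dividing $[F:\Q]$.
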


\begin{remark}\label{rmk:cF=1}
In \cite{MR595586}, $u_{F}$ is a certain generalised module index $(R:U)$.
In fact, $u_{F}=1$ in the following situations:
\begin{enumerate}
    \item At most two finite primes ramify in $F/\Q$ \cite[Theorem~5.1]{MR595586};
    \item The Galois group $\Gal(F/\Q)$ is cyclic \cite[Theorem~5.3]{MR595586};
    \item The Galois group $\Gal(F/\Q)$ is the direct product of its inertia
    subgroups \cite[Theorem~5.4]{MR595586}. 
    Note that in this case we have 
    $[\mathcal{O}_{F}^{\times} : C_{F}] = 2^{[F:\Q]-1}h_{F}$.  
\end{enumerate}
\end{remark}

\subsection{$p$-rationality for $p$ satisfying certain restrictions}
We now give a generalisation of Theorem~\ref{thm:p-odd-rat-cyclo} to 
arbitrary real abelian number fields, but at the cost of excluding certain prime numbers. 
Again, the advantage of the following result is that it only requires direct knowledge
of $C_{F}$, and not of $\mathcal{O}_{F}^{\times}$ or $h_{F}$.

\begin{theorem}\label{thm:real-abelian-p-rat}
Let $F$ be a real abelian number field of conductor $n \geq 3$ and
let $p$ be an odd prime number such that $p^{2} \nmid n$.
If $\Gal(F/\Q)$ is not the direct product of its inertia subgroups,
assume in addition that $p \nmid [F:\Q]$.
Then $F$ is $p$-rational if and only if 
\begin{enumerate}
    \item no prime of $F$ above $p$ splits completely in $F(\zeta_{p})/F$, and 
    \item $\dim_{\F_{p}}(\eta_{F,p}(C_{F}))=[F:\Q]-1$.
\end{enumerate}
\end{theorem}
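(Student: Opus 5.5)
I will follow the proof of Theorem~\ref{thm:p-odd-rat-cyclo} closely, replacing Theorem~\ref{thm:sinnott-cyclo} by Theorem~\ref{thm:sinnott-abelian}. Since $p$ is odd and $F$ is totally real, $\mathcal{O}_{F}^{\times}$ has no non-trivial $p$-torsion. Hence
\[
\rank_{p}(\mathcal{O}_{F}^{\times})=\rank_{\Z}(\mathcal{O}_{F}^{\times})=[F:\Q]-1.
\]
By Remark~\ref{rmk:roots-of-unity-condition} (noting $\mu_{p}\not\subset F$), condition (a) is equivalent to $\lambda_{F,p}$ being an isomorphism. The first key step is the following claim: under the hypotheses, $p \mid [\mathcal{O}_{F}^{\times}:C_{F}]$ if and only if $p \mid h_{F}$. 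To prove it, I will examine the factors in Theorem~\ref{thm:sinnott-abelian}. The factor $2^{[F:\Q]-1}$ is prime to $p$. The factor $u_{F}$ is divisible only by primes dividing $[F:\Q]$, so it is prime to $p$ when $p\nmid[F:\Q]$; otherwise $\Gal(F/\Q)$ is the direct product of its inertia subgroups and $u_{F}=1$ by Remark~\ref{rmk:cF=1}(c). For the rational factor $\prod_{i}[F\cap\Q(\zeta_{p_i^{e_i}}):\Q]/[F:\Q]$, if $p\nmid[F:\Q]$ then every degree in the numerator divides $[F:\Q]$, so $p$ divides neither numerator nor denominator. In the inertia-product case the factor equals $1$ by Remark~\ref{rmk:cF=1}(c). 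This proves the claim.

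Second, I need an analogue of Lemma~\ref{lem:real-cyclo-p-rat-class-number}: if $F$ is $p$-rational then $p\nmid h_{F}$. Since $p^{2}\nmid n$, every prime of $F$ above $p$ is at most tamely ramified in $F/\Q$, because $F\subseteq\Q(\zeta_{n})$ and the ramification index of $p$ in $\Q(\zeta_{n})$ divides $p-1$. So Corollary~\ref{cor:tot-real-p-rat-tame} applies directly. This is where the hypothesis $p^{2}\nmid n$ enters.

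With these two facts, the argument is the one already used. For the forward direction: if $F$ is $p$-rational, Theorem~\ref{thm:nqd-char-p-rat} gives (a) and the injectivity of $\overline{\eta}_{F,p}$. Also $p\nmid h_{F}$, so $p\nmid[\mathcal{O}_{F}^{\times}:C_{F}]$. Hence $\eta_{F,p}(C_{F})=\eta_{F,p}(\mathcal{O}_{F}^{\times})$, and its dimension is $\rank_{p}(\mathcal{O}_{F}^{\times})=[F:\Q]-1$. For the converse, (b) gives the chain
\[
[F:\Q]-1=\dim\eta_{F,p}(C_{F})\le\dim\eta_{F,p}(\mathcal{O}_{F}^{\times})\le[F:\Q]-1,
\]
so $\overline{\eta}_{F,p}$ is injective. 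By Corollary~\ref{cor:p-sat}(d), $C_{F}$ is $p$-saturated of finite index, so $p\nmid[\mathcal{O}_{F}^{\times}:C_{F}]$ and hence $p\nmid h_{F}$ by the claim. Then condition (a) of Theorem~\ref{thm:nqd-char-p-rat} holds trivially, and $F$ is $p$-rational.

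The main obstacle is the $p$-adic bookkeeping of Sinnott's index formula in the first step. Specifically, I must check that the rational factor contributes no power of $p$ in either direction. This is not automatic when $p\mid[F:\Q]$, which is exactly why that case is restricted to the inertia-product situation.
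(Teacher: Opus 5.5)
Your proposal is correct and follows essentially the same route as the paper's proof. Both arguments get tameness from $p^{2}\nmid n$ and hence $p\nmid h_{F}$ via Corollary~\ref{cor:tot-real-p-rat-tame}, then use a $p$-adic bookkeeping of Theorem~\ref{thm:sinnott-abelian} to show $p\mid[\mathcal{O}_{F}^{\times}:C_{F}]$ exactly when $p\mid h_{F}$ (the paper states this as $v_{p}([\mathcal{O}_{F}^{\times}:C_{F}])=v_{p}(h_{F})$), and finally run the argument of Theorem~\ref{thm:p-odd-rat-cyclo} unchanged.
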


\begin{proof}
Since $p^{2} \nmid n$, each prime of $F$ above $p$ is at most tamely ramified in $F/\Q$.
Hence if $F$ is $p$-rational then $p \nmid h_{F}$ by Corollary~\ref{cor:tot-real-p-rat-tame}.

By Theorem~\ref{thm:sinnott-abelian} we have 
$[\mathcal{O}_{F}^{\times} : C_{F}] = 2^{[F:\Q]-1} k_{F}h_{F}$ for some 
$k_{F} \in \Q_{>0}$ such that, when written in lowest terms, every prime occurring 
in the numerator or denominator of $k_{F}$ divides $[F:\Q]$.
In fact, if $\Gal(F/\Q)$ is the direct product of its inertia subgroups
then $k_{F}=1$ by Remark~\ref{rmk:cF=1}. Therefore the hypotheses imply that 
$v_{p}([\mathcal{O}_{F}^{\times} : C_{F}])=v_{p}(h_{F})$.

Given these two observations, the proof proceeds as in Theorem~\ref{thm:p-odd-rat-cyclo}.
\end{proof}

\begin{remark}
If $p \nmid n$ then condition (a) of Theorem~\ref{thm:real-abelian-p-rat} holds automatically.
\end{remark}

\subsection{Reduction to the cyclic case}
Thanks to the following result of Greenberg \cite[Proposition~3.6]{MR3512524}, 
in many situations the problem of determining $p$-rationality for an abelian number field 
may be reduced to determining $p$-rationality for its cyclic subfields.

\begin{prop}\label{prop:Greenberg-abelian-reduction}
Let $K$ be a finite abelian extension of $\Q$ and let $p$ be a 
prime number such that $p \nmid [K:\Q]$. 
Then $K$ is $p$-rational if and only if every cyclic extension of $\Q$ contained
in $K$ is $p$-rational. 
\end{prop}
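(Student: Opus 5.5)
The natural tool is the group-ring (character) decomposition of the $\Z_{p}$-module $X_{S_{p}}(K)$ under $G=\Gal(K/\Q)$. Since $p \nmid |G|$, the group ring $\Z_{p}[G]$ is a product of unramified extensions of $\Z_{p}$ indexed by $\Gal(\overline{\Q}_{p}/\Q_{p})$-orbits of characters $\chi$ of $G$. The plan is to show that $p$-rationality of $K$ decomposes as a conjunction over these orbits, and that each orbit condition is exactly $p$-rationality of the cyclic subfield $K^{\ker\chi}$.

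The direction ``only if'' is immediate from Lemma~\ref{lem:p-rat-subfields}: each cyclic subfield of $K$ is a subfield. For the converse, first I will record a descent statement. For a subfield $L = K^{H}$ with $p \nmid |H|$, restriction identifies $X_{S_{p}}(L)$ with the coinvariants $X_{S_{p}}(K)_{H}$. This holds because $K_{S_{p}}^{\mathrm{ab}}(p)$ is Galois over $L$, the extension $K/L$ has degree prime to $p$, and the maximal abelian pro-$p$ subextension over $L$ is cut out by $H$-coinvariants, the transfer being an isomorphism on $p$-parts since $|H|$ is invertible in $\Z_{p}$. Since taking coinvariants by a group of order prime to $p$ is exact, it coincides with taking $H$-invariants, that is, with applying the idempotent $e_{H}=|H|^{-1}\sum_{h\in H}h$. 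Hence $X_{S_{p}}(L)\simeq e_{H}X_{S_{p}}(K)$.

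Next I decompose $X=X_{S_{p}}(K)=\bigoplus_{[\chi]} e_{[\chi]}X$ over $\Gal(\overline{\Q}_{p}/\Q_{p})$-orbits $[\chi]$ of characters of $G$. Each $e_{[\chi]}X$ is a module over the discrete valuation ring $\mathcal{O}_{[\chi]}=\Z_{p}[\chi]$. By Proposition~\ref{prop:def-char-p-rational}(b), $K$ is $p$-rational if and only if $X$ is $\Z_{p}$-free of rank $c_{K}+1$. I will use instead a rank-free formulation: $K$ is $p$-rational if and only if $X$ is $\Z_{p}$-torsion-free and $\rank_{\Z_{p}}X = c_{K}+1$.

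Since the $\Z_{p}$-rank is at least $c_{K}+1$ always, and more precisely each character component has rank at least its ``expected'' contribution, I will prefer the following criterion. $K$ is $p$-rational if and only if $\dim_{\F_{p}}X/pX = c_{K}+1$ (the analogue of Theorem~\ref{thm:Gras-ray-p-rat}), with $\dim_{\F_{p}}(e_{[\chi]}X/p) \ge r_{[\chi]}$ for each orbit. Here $r_{[\chi]}$ is the $\chi$-part of the $G$-representation $\Z_{p}\oplus\Z_{p}[G]\otimes_{\Z_{p}[\langle c\rangle]}(\text{sign})$ governing $c_{K}+1$, that is, the trivial character together with the odd characters. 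The lower bound for each component comes from the known $G$-equivariant Euler characteristic, or Leopoldt-type lower bound on each isotypic part. Then $p$-rationality of $K$ is equivalent to equality $\dim_{\F_{p}}(e_{[\chi]}X/p)=r_{[\chi]}$ for every orbit $[\chi]$.

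Now fix an orbit $[\chi]$, let $H=\ker\chi$ and $L=K^{H}$, which is cyclic over $\Q$. By the descent step, $e_{H}X\simeq X_{S_{p}}(L)$, and $e_{[\chi]}X$ is a direct summand of $e_{H}X$, since $\chi$ is trivial on $H$. The lower bounds hold componentwise for $L$ as well. Thus if $L$ is $p$-rational, then equality holds for all components of $X_{S_{p}}(L)$, in particular for $[\chi]$. Running over all $\chi$ gives equality for every component of $X$, hence $K$ is $p$-rational.

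The main obstacle is justifying the componentwise lower bound $\dim_{\F_{p}}(e_{[\chi]}X/p)\ge r_{[\chi]}$ with the correct $r_{[\chi]}$. I would attempt it via $X/p \simeq \mathrm{Hom}(H^{1}(G_{S_{p}}(K),\F_{p}),\F_{p})$ and the equivariant global Euler–Poincaré characteristic formula: $\chi(G_{S_{p}},\F_{p})$ as a virtual $\F_{p}[G]$-module is $-(\text{infinite places contribution})$, and $H^{0}$ contributes the trivial character. This bounds the $\chi$-part of $H^{1}$ from below, which is exactly what is needed, since $p\nmid|G|$ makes $\F_{p}[G]$ semisimple.
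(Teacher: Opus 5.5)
Your strategy is sound, and it gives a complete proof once the isotypic lower bound is justified. It is not, however, the route behind the statement in the paper: the paper gives no proof of its own and simply quotes Greenberg \cite[Proposition~3.6]{MR3512524}.

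The usual proof works with the torsion submodule rather than with $X/pX$. Leopoldt's conjecture is known for abelian fields (Brumer), so $K$ is $p$-rational if and only if $T_{S_{p}}(K)=0$. Your descent step gives $e_{[\chi]}T_{S_{p}}(K)\simeq e_{[\chi]}T_{S_{p}}(K^{\ker\chi})$, so $T_{S_{p}}(K)=0$ as soon as every cyclic subfield is $p$-rational; no rank bookkeeping is needed.

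Your count of $\dim_{\F_{p}}(X/pX)$ avoids Brumer's transcendence result altogether, and you recover Leopoldt's conjecture for $K$ at $p$ as a by-product. With the obvious modification of $r_{[\chi]}$, the same count also works for abelian $K/F$ with $p\nmid[K:F]$ over base fields $F$ where Leopoldt's conjecture is not available. The price is the isotypic lower bound $\dim_{\F_{p}}(e_{[\chi]}X/pX)\geq r_{[\chi]}$, which you rightly identify as the crux.

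For that bound I would not go through the equivariant Euler--Poincar\'e characteristic of $G_{S_{p}}(K)$. In general $K/\Q$ is ramified at primes $\ell\neq p$, so $K$ does not lie in the maximal extension of $\Q$ unramified outside $p$ and $\infty$. The equivariant formula is therefore not a direct consequence of Shapiro's lemma and Tate's formula over $\Q$; you would have to enlarge $S$ and then control how $H^{1}$ and $H^{2}$ change.

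It is quicker to use \eqref{eqn:four-term-exact-gcft}:
\begin{itemize}
\item After extending scalars to $\overline{\Q}_{p}$, the local term $\prod_{\mathfrak{p}\in S_{p}(K)}\widehat{U_{\mathfrak{p}}}$ becomes $\overline{\Q}_{p}\otimes_{\Q}K\simeq\overline{\Q}_{p}[G]$, via the $p$-adic logarithm and the normal basis theorem.
\item By Dirichlet--Herbrand, $\overline{\Q}_{p}\otimes_{\Z}\mathcal{O}_{K}^{\times}$ is the sum of the non-trivial characters $\chi$ with $\chi(c)=1$, each occurring once, where $c\in G$ is complex conjugation.
\item Since $\Gal(K_{S_{p}}^{\mathrm{ab}}(p)/H_{p}(K))$ has finite index in $X$, the multiplicity of $\chi$ in $\overline{\Q}_{p}\otimes_{\Z_{p}}X$ is at least $1$ unless $\chi$ is non-trivial and even.
\end{itemize}
This gives $\dim_{\F_{p}}(e_{[\chi]}X/pX)\geq\rank_{\Z_{p}}(e_{[\chi]}X)\geq r_{[\chi]}$, as required.

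Finally, in the descent step, make explicit that an abelian pro-$p$ extension of $L$ inside $K_{S_{p}}^{\mathrm{ab}}(p)$ is unramified over $L$ at every prime not above $p$, including primes that ramify in $K/L$. The reason is that its ramification indices there are powers of $p$ dividing those of $K/L$, which are prime to $p$.
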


\subsection{Real cyclic fields of odd prime degree}\label{subsec:Lim-criterion}
Lim \cite[Proposition~4.1]{MR4330938} has given a criterion for the $p$-rationality 
of real cyclic number fields of odd prime degree in terms of generalised 
Bernoulli numbers,
under the hypothesis that $p$ does not divide the conductor. 
This is then used to show the following result \cite[Theorem~4.6]{MR4330938}.

\begin{theorem}\label{thm:Lim-safe}
Let $\ell$ be an odd Sophie Germain prime and let $q=2\ell + 1$. 
If $p$ is an odd prime number that is a primitive root modulo $\ell$ and 
$p < 4\ell$ then $\Q(\zeta_{q})^{+}$ is $p$-rational.
\end{theorem}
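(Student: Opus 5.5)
Since $q=2\ell+1$ is prime, the field $F=\Q(\zeta_{q})^{+}$ is real cyclic of degree $(q-1)/2=\ell$, an odd prime, and its conductor is $q$. The plan is to apply Lim's Bernoulli-number criterion \cite[Proposition~4.1]{MR4330938} to $F$. To do so we must check that $p\neq q$. If $p=q$ then $p=2\ell+1\equiv 1 \bmod \ell$, so $p$ would have order $1$ modulo $\ell$; this contradicts $p$ being a primitive root modulo $\ell$, since $\ell\geq 3$. Alternatively, one can work directly with Theorem~\ref{thm:real-abelian-p-rat}. Its hypotheses hold because $p^{2}\nmid q$, and $p\nmid \ell=[F:\Q]$ (as $p$ is a primitive root modulo $\ell$); moreover $\Gal(F/\Q)$ is cyclic, so $u_F=1$ anyway. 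Condition (a) of that theorem is automatic because $p\nmid q$.

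The heart of the matter is an eigenspace decomposition. Let $\chi$ run over the non-trivial even characters of conductor $q$ and order $\ell$. Since $p\nmid \ell$, the $\Z_p[\Gal(F/\Q)]$-modules decompose according to the $\Q_p$-conjugacy classes of these characters. The order of $p$ modulo $\ell$ is $\ell-1$, so all $\ell-1$ non-trivial characters form a single Frobenius orbit. In other words $\Z_p[\mu_\ell]$ is unramified of degree $\ell-1$ over $\Z_p$, and there is just one non-trivial component. Consequently $T_{S_p}(F)$, or equivalently the failure of $p$-rationality, is controlled by a single $p$-adic quantity, namely $L_p(1,\chi)$ up to units, or by the generalised Bernoulli number $B_{1,\chi\omega^{-1}}$ as in Lim's criterion. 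Either this quantity is a $p$-adic unit, or its valuation is at least the residue degree $\ell-1$, because the relevant module is then a nonzero $\Z_p[\mu_\ell]/p$-vector space. Thus the $p$-part of $|T_{S_p}(F)|$ is either $1$ or at least $p^{\ell-1}$.

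The main obstacle is ruling out the second case, and this is where $p<4\ell$ is used. Here I would bound $|T_{S_p}(F)|$ from above via Theorem~\ref{thm:Coates}. The relevant factor is $h_F R_{F,p}/\sqrt{d_F}$, up to the Euler factors, which are $p$-adic units times $p^{-\ell}$. Its norm from $\Q_p(\mu_\ell)$ is the product over the characters. I would first try to show that the $\chi$-component has order dividing a quantity like $N(\tfrac12 B_{1,\chi\omega^{-1}})$, and then bound this archimedean norm crudely. The generalised Bernoulli number is a sum of at most $q$ roots of unity divided by $q$, with each conjugate of size $O(1)$, so the norm is roughly at most $C^{\ell-1}$. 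A $p$-divisibility would force $p^{\ell-1}$ to divide an integer of size less than $(4\ell)^{\ell-1}$ or so, which is a contradiction provided the explicit constant works out to $p<4\ell$. Getting the sharp constant $4\ell$ requires Lim's precise estimate. Failing a self-contained bound, I would simply cite \cite[Theorem~4.6]{MR4330938}, after verifying its hypotheses as in the first paragraph.
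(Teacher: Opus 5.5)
In the form you finally commit to, your proof is the same as the paper's. The paper gives no argument for Theorem~\ref{thm:Lim-safe}; it quotes it verbatim from \cite[Theorem~4.6]{MR4330938}. Your preliminary checks are correct: $p\neq q$ because $q\equiv 1 \bmod \ell$ while $p$ has order $\ell-1\geq 2$ modulo $\ell$, and $p\nmid \ell$. So is the dichotomy that $|T_{S_{p}}(F)|$ is either $1$ or at least $p^{\ell-1}$, since $\Z_{p}[\mu_{\ell}]$ is unramified of degree $\ell-1$ over $\Z_{p}$. The self-contained route in your last paragraph, however, would not work, so everything rests on the citation.

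Here is why the size argument fails.
\begin{itemize}
\item The character $\chi\omega^{-1}$ has conductor $pq$, not $q$. So $B_{1,\chi\omega^{-1}}=\frac{1}{pq}\sum_{b=1}^{pq}\chi\omega^{-1}(b)\,b$ lies in $\Q(\mu_{\ell(p-1)})$, and its complex absolute values are not $O(1)$. By the functional equation they equal $\frac{\sqrt{pq}}{\pi}\lvert L(1,\overline{\chi\omega^{-1}})\rvert$.
\item More seriously, your dichotomy concerns the norm from $\Q_{p}(\mu_{\ell})$ to $\Q_{p}$, that is, the $\ell-1$ Frobenius conjugates $\chi^{p^{i}}\omega^{-1}$. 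That norm is not realised by any archimedean norm.
\item The norm to $\Q$ runs over all $\varphi(\ell(p-1))$ twists $\chi^{a}\omega^{-b}$, but only one prime $\mathfrak{P}\mid p$, of absolute norm $p^{\ell-1}$, is forced to divide. You would be comparing $p^{\ell-1}$ with something of size about $(\sqrt{pq})^{\varphi(\ell(p-1))}$, which gives no contradiction.
\item The relative norm to $\Q(\mu_{p-1})$ only lands in $\mathfrak{p}^{\ell-1}$, where $\mathfrak{p}$ is a prime of degree one. Membership in $\mathfrak{p}^{\ell-1}$ only forces some conjugate to have absolute value at least $p^{(\ell-1)/\varphi(p-1)}$. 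That is no constraint on a quantity whose conjugates have size about $(\sqrt{pq})^{\ell-1}$.
\item Theorem~\ref{thm:Coates} does not help either, because $R_{F,p}$ is a $p$-adic quantity with no archimedean bound.
\end{itemize}

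So no crude size estimate yields a contradiction for any range of $p$. Reaching $p<4\ell$ needs a finer input. One possibility is a congruence reducing $B_{1,\chi\omega^{-1}}$ modulo $\mathfrak{P}$ to an explicit element of $\Z[\mu_{\ell}]$ with controlled integer coefficients. Because $p$ is inert in $\Q(\mu_{\ell})$, divisibility by a prime above $p$ is then the same as divisibility by $p$, which becomes a congruence condition on those coefficients. Unless you supply such an argument, present the result as the paper does: purely as Lim's theorem.
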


\section{The Schirokauer map}\label{sec:Schirokauer-map}

\subsection{Definition and basic properties of the Schirokauer map}\label{subsec:Schirokauer-def-basic}

Let $K$ be a number field and let $p$ be a prime number.
The following map underpins the computational results of this article.

\begin{definition}\label{def:Schirokauer}
Let $\epsilon_{p}=\epsilon_{K,p}$ be the exponent of the finite group $(\mathcal{O}_{K}/p\mathcal{O}_{K})^{\times}$.
The \textit{Schirokauer map} on $\mathcal{O}_{K}^{\times}$ at $p$
is defined to be the function
\[
\psi_{K,p} \colon \mathcal{O}_{K}^\times \longrightarrow 
\frac{\mathcal{O}_{K}}{p\mathcal{O}_{K}}, 
\quad
x \longmapsto
\frac{x^{\epsilon_{p}} - 1}{p} \bmod p\mathcal{O}_{K}.
\]
\end{definition}

\begin{remark}\label{rmk:comp-of-maps-makes-Schirokauer}
The Schirokauer map $\psi_{K,p}$ is the composition of group
homomorphisms
\begin{equation}\label{eq:comp-of-maps-makes-Schirokauer}
\mathcal{O}_{K}^\times 
\longrightarrow 
\left( \frac{\mathcal{O}_{K}}{p^{2}\mathcal{O}_{K}} \right)^{\times}
\xrightarrow{\, \uparrow \epsilon_{p} \,}
\left( \frac{1+p\mathcal{O}_{K}}{1+p^{2}\mathcal{O}_{K}} \right)
\xrightarrow{\, \cong \, } 
\frac{p\mathcal{O}_{K}}{p^{2}\mathcal{O}_{K}}
\xrightarrow{\, \cong \, }
\frac{\mathcal{O}_{K}}{p\mathcal{O}_{K}},
\end{equation} 
where the first arrow is induced by the canonical projection,
the second arrow raises elements to the power of $\epsilon_{p}$,
the third arrow is
$1+px \bmod{p^{2}\mathcal{O}_{K}} \mapsto px \bmod{p^{2}\mathcal{O}_{K}}$, 
and the fourth arrow is
$px \bmod{p^{2}\mathcal{O}_{K}} \mapsto x \bmod{p\mathcal{O}_{K}}$.
In particular, $\psi_{K,p}$ is itself a group homomorphism.
\end{remark}

\begin{remark}\label{rmk:ep-ram-or-unram}
If $p$ is unramified in $K/\Q$ then $\epsilon_{p}$ is the least common multiple
of the quantities $|(\mathcal{O}_{K}/\mathfrak{p})^{\times}| = \Norm(\mathfrak{p})-1$ as $\mathfrak{p}$ ranges over all primes of $K$ above $p$, and so, in particular,
$p \nmid \epsilon_{p}$.
If some prime above $p$ is ramified in $K/\mathbb{Q}$ then $p \mid \epsilon_{p}$. 
\end{remark}

\begin{remark}
If $p$ is unramified in $K/\Q$ then the map $\lambda_{1}$ defined by Schirokauer 
\cite[p.~412]{MR1253502} restricted to $\mathcal{O}_{K}^{\times}$ is equal 
to the composition of the first three arrows of \eqref{eq:comp-of-maps-makes-Schirokauer}.  
For the relation of $\lambda_{1}$ to the $p$-adic logarithm, see \cite[p.~413]{MR1253502}
(see also \cite[\S 4.3]{MR4158582}).
\end{remark}

\subsection{The relation between $\psi_{K,p}$ and $\eta_{K,p}$}
The following lemma underpins many of the results of this article.

\begin{lemma}\label{lem:p-ranks-eta-psi-equal}
Let $K$ be a number field and let $p$ be a prime number such that $p \nmid 2d_{K}$.
Then $\ker(\eta_{K,p})=\ker(\psi_{K,p})$ and $\dim_{\F_{p}} \eta_{K,p}(B) = \dim_{\F_{p}} \psi_{K,p}(B)$ for every subgroup $B \leq \mathcal{O}_{K}^{\times}$.
\end{lemma}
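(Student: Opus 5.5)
The plan is to show that $\psi_{K,p}$ factors as $\eta_{K,p}$ followed by an \emph{injective} $\F_{p}$-linear map
\[
\iota \colon \textstyle{\prod_{\mathfrak{p} \in S_{p}(K)}} U_{\mathfrak{p}}/U_{\mathfrak{p}}^{p} \longrightarrow \mathcal{O}_{K}/p\mathcal{O}_{K}.
\]
Both claims then follow at once: $\ker(\psi_{K,p}) = \ker(\iota \circ \eta_{K,p}) = \ker(\eta_{K,p})$. Moreover, for any $B \leq \mathcal{O}_{K}^{\times}$ we have $\psi_{K,p}(B) = \iota(\eta_{K,p}(B))$, and an injective linear map preserves dimension.

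To build $\iota$, first use that $p$ is unramified. By the Chinese remainder theorem, $p\mathcal{O}_{K} = \prod_{\mathfrak{p} \in S_{p}(K)} \mathfrak{p}$ gives
\[
\mathcal{O}_{K}/p^{k}\mathcal{O}_{K} \cong \textstyle{\prod_{\mathfrak{p}}} \mathcal{O}_{K_{\mathfrak{p}}}/p^{k}\mathcal{O}_{K_{\mathfrak{p}}} \quad (k = 1,2),
\]
compatibly with the diagonal embeddings of $\mathcal{O}_{K}^{\times}$. So it suffices to work one prime at a time, with $\mathcal{O} = \mathcal{O}_{K_{\mathfrak{p}}}$, where $p$ is a uniformiser, and residue field of size $q = \Norm(\mathfrak{p})$.

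There is the standard decomposition $U_{\mathfrak{p}} = \mu_{q-1} \times (1+p\mathcal{O})$. Since $p$ is odd and $e_{\mathfrak{p}}=1$, the $p$-adic logarithm and exponential give an isomorphism $1+p\mathcal{O} \cong p\mathcal{O}$, under which $p$-th powers correspond to $p^{2}\mathcal{O}$. Hence $(1+p\mathcal{O})^{p} = 1+p^{2}\mathcal{O}$, and
\[
U_{\mathfrak{p}}/U_{\mathfrak{p}}^{p} \cong (1+p\mathcal{O})/(1+p^{2}\mathcal{O}) \cong \mathcal{O}/p\mathcal{O},
\]
with the map $U_{\mathfrak{p}} \to (1+p\mathcal{O})/(1+p^{2}\mathcal{O})$ given by projecting away the $\mu_{q-1}$ component. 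This identification is the step I expect to need the most care; I would check it by hand via the binomial expansion $(1+px)^{p} \equiv 1+p^{2}x \bmod p^{3}$, valid because $p$ is odd.

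Now let $\epsilon_{p}$ be as in Definition~\ref{def:Schirokauer}. By Remark~\ref{rmk:ep-ram-or-unram}, $(q-1) \mid \epsilon_{p}$ and $p \nmid \epsilon_{p}$. For $u = \omega \cdot v$ with $\omega \in \mu_{q-1}$ and $v \in 1+p\mathcal{O}$, we get $u^{\epsilon_{p}} = v^{\epsilon_{p}}$. So the $\mathfrak{p}$-component of the second arrow in \eqref{eq:comp-of-maps-makes-Schirokauer} equals the projection to $(1+p\mathcal{O})/(1+p^{2}\mathcal{O})$ followed by raising to the power $\epsilon_{p}$. The latter is an automorphism of this $\F_{p}$-vector space, since $p \nmid \epsilon_{p}$.

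The map $U_{\mathfrak{p}} \to (1+p\mathcal{O})/(1+p^{2}\mathcal{O})$, $u \mapsto u^{\epsilon_{p}}$, therefore kills $U_{\mathfrak{p}}^{p}$. It induces a map on $U_{\mathfrak{p}}/U_{\mathfrak{p}}^{p}$ that is an isomorphism onto $(1+p\mathcal{O})/(1+p^{2}\mathcal{O})$. Composing with the last two arrows of \eqref{eq:comp-of-maps-makes-Schirokauer} and taking the product over $\mathfrak{p}$ gives $\iota$, which is in fact an isomorphism. The diagram commutes because both routes are induced by the same diagonal embedding of $\mathcal{O}_{K}^{\times}$, and this completes the argument.
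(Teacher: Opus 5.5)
Your proposal is correct and follows essentially the same route as the paper: split into local factors via the Chinese remainder theorem, identify $U_{\mathfrak{p}}/U_{\mathfrak{p}}^{p}$ with $(1+p\mathcal{O}_{K_{\mathfrak{p}}})/(1+p^{2}\mathcal{O}_{K_{\mathfrak{p}}}) \cong \mathcal{O}_{K_{\mathfrak{p}}}/p\mathcal{O}_{K_{\mathfrak{p}}}$, and use $p \nmid \epsilon_{p}$ to see that $\epsilon_{p}$-th powering is an automorphism. The only differences are presentational: you justify $(1+p\mathcal{O}_{K_{\mathfrak{p}}})^{p} = 1+p^{2}\mathcal{O}_{K_{\mathfrak{p}}}$ via the Teichm\"uller splitting and the $p$-adic logarithm, whereas the paper cites Serre for $U_{\mathfrak{p},2}=U_{\mathfrak{p},1}^{p}$; and your $\iota$ is the paper's $\Phi_{K,p}^{-1}$ composed with $\epsilon_{p}$-th powering, so $\psi_{K,p}=\iota\circ\eta_{K,p}$ is just the paper's identity $\Phi_{K,p}(\psi_{K,p}(x))=\eta_{K,p}(x)^{\epsilon_{p}}$ rewritten.
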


\begin{proof}
Since $p \nmid d_{K}$, the prime $p$ is unramified in $K/\Q$ and hence
$p\mathcal{O}_{K} = \prod_{\mathfrak{p} \in S_{p}(K)} \mathfrak{p}$.
The Chinese remainder theorem therefore gives the canonical isomorphism
\begin{equation}\label{eq:CRT-1-unit-decomp}
\left( \frac{1+p\mathcal{O}_{K}}{1+p^{2}\mathcal{O}_{K}} \right)
\xrightarrow{\, \cong \, } 
\prod_{\mathfrak{p} \in S_{p}(K)}
\left( \frac{1 + \mathfrak{p}}{1 + \mathfrak{p}^{2}} \right).    
\end{equation}

Let $\mathfrak{p} \in S_{p}(K)$.
For $i \geq 1$, set $U_{\mathfrak{p},i} = 1 + \widehat{\mathfrak{p}}^{i}$ where $\widehat{\mathfrak{p}} = \mathfrak{p}\mathcal{O}_{K_{\mathfrak{p}}}$.
Since $K_{\mathfrak{p}}/\Q_{p}$ is unramified and $p$ is odd, 
\cite[Chapter~XIV, \S 4, Proposition~9]{MR554237} gives
$U_{\mathfrak{p},2} = U_{\mathfrak{p},1}^{p}$.
Therefore completion at $\mathfrak{p}$ and 
the inclusion $U_{\mathfrak{p},1} \hookrightarrow U_{\mathfrak{p}}$ 
consequently induce isomorphisms
\begin{equation}\label{eq:global-to-local-1-units}
\left( \frac{1 + \mathfrak{p}}{1 + \mathfrak{p}^{2}} \right)
\xrightarrow{\, \cong \, } 
\frac{U_{\mathfrak{p},1}}{U_{\mathfrak{p},2}}
=
\frac{U_{\mathfrak{p},1}}{U_{\mathfrak{p},1}^{p}}
\xrightarrow{\, \cong \, } 
\frac{U_{\mathfrak{p}}}{U_{\mathfrak{p}}^{p}}.
\end{equation}
For the last isomorphism of \eqref{eq:global-to-local-1-units}, 
we have used the fact that raising elements to the $p$-th power 
induces an automorphism of $U_{\mathfrak{p}}/U_{\mathfrak{p},1} \simeq (\mathcal{O}_{K}/\mathfrak{p})^{\times}$.

Combining \eqref{eq:CRT-1-unit-decomp} and \eqref{eq:global-to-local-1-units}, 
we obtain an isomorphism 
\[
\rho_{K,p} : 
\left( \frac{1+p\mathcal{O}_{K}}{1+p^{2}\mathcal{O}_{K}} \right)
\xrightarrow{\, \cong \, } 
\prod_{\mathfrak{p} \in S_{p}(K)}
\frac{U_{\mathfrak{p}}}{U_{\mathfrak{p}}^{p}}.
\]
We also have the canonical isomorphism
\[
\delta_{K,p} : 
\left( \frac{1+p\mathcal{O}_{K}}{1+p^{2}\mathcal{O}_{K}} \right)
\xrightarrow{\, \cong \, } 
\frac{\mathcal{O}_{K}}{p\mathcal{O}_{K}}, \quad 1+px \mapsto x \bmod p\mathcal{O}_{K}.
\]
Set $\Phi_{K,p} = \rho_{K,p} \circ \delta_{K,p}^{-1}$.
Then Remark~\ref{rmk:comp-of-maps-makes-Schirokauer} and the definitions give
\[
\Phi_{K,p}(\psi_{K,p}(x)) = \eta_{K,p}(x)^{\epsilon_{p}}
\]
for every $x \in \mathcal{O}_{K}^{\times}$.
But $p \nmid \epsilon_{p}$ by Remark~\ref{rmk:ep-ram-or-unram},
and so raising to the $\epsilon_{p}$-th power is an automorphism on 
$\prod_{\mathfrak{p} \in S_{p}(K)} U_{\mathfrak{p}}/U_{\mathfrak{p}}^{p}$.
The desired results now follow immediately.
\end{proof}

\subsection{Criteria for $p$-rationality in terms of the Schirokauer map}\label{subsec:crit-p-rat-Schirokauer}
The following result may be viewed as a version of 
Theorem~\ref{thm:nqd-char-p-rat} (which is due to Nguyen Quang Do) in terms
of the Schirokauer map; it is also related to
a result of Barbulescu and Ray \cite[Lemma~4.2]{MR4158582}.
The key advantage is that the Schirokauer map $\psi_{K,p}$
is easier to compute than the maps $\eta_{K,p}$ or $\overline{\eta}_{K,p}$ of
Definition~\ref{def:various-maps}.

\begin{theorem}\label{thm:new-p-rat-criterion}
Let $K$ be a number field and let $p$ be a prime number such that $p \nmid 2d_{K}$. 
Then $K$ is $p$-rational precisely when both of the following conditions are
satisfied:
\begin{enumerate}
    \item The compositum of all $\Z_{p}$-extensions of $K$ contains $H_{p}(K)$.
    \item $\dim_{\mathbb{F}_{p}}(\psi_{K,p}(\mathcal{O}_{K}^{\times}))=\rank_{\Z}(\mathcal{O}_{K}^{\times})$, or equivalently, $\ker(\psi_{K,p})=\mathcal{O}_{K}^{\times p}$.
\end{enumerate}
\end{theorem}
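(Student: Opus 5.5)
The plan is to reduce Theorem~\ref{thm:new-p-rat-criterion} to Theorem~\ref{thm:nqd-char-p-rat}. We show that, under the hypothesis $p \nmid 2d_{K}$, condition (b) of Theorem~\ref{thm:nqd-char-p-rat} holds automatically. We then show that condition (c) there is equivalent to condition (b) here. Condition (a) is identical in both statements, so nothing needs to be done for it.

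First, the roots-of-unity condition. Since $p \nmid 2d_{K}$, Remark~\ref{rmk:roots-of-unity-condition} says directly that $\lambda_{K,p}$ is an isomorphism. The reason is that every $\mathfrak{p} \in S_{p}(K)$ is unramified, so $e_{\mathfrak{p}}=1$. Since $p$ is odd, $(p-1) \nmid 1$, hence $\mu_{p} \not\subset K_{\mathfrak{p}}$ and $\mu_{p} \not\subset K$.

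Next, the unit condition. We have $\mu_{p}(K)$ trivial, so $\mathcal{O}_{K}^{\times}[p]=1$. Equation \eqref{eq:p-rank-vs-Z-rank} then gives $\rank_{p}(\mathcal{O}_{K}^{\times}) = \rank_{\Z}(\mathcal{O}_{K}^{\times})$. By the remark following Theorem~\ref{thm:nqd-char-p-rat}, condition (c) is equivalent to $\dim_{\F_{p}}\eta_{K,p}(\mathcal{O}_{K}^{\times}) = \rank_{p}(\mathcal{O}_{K}^{\times})$. Lemma~\ref{lem:p-ranks-eta-psi-equal}, applied with $B=\mathcal{O}_{K}^{\times}$, replaces $\eta_{K,p}$ by $\psi_{K,p}$ here. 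Combined with the equality of ranks, this gives exactly the first formulation of (b).

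For the equivalent formulation, apply Corollary~\ref{cor:p-sat}(b) with $A=B=\mathcal{O}_{K}^{\times}$, $X=\mathcal{O}_{K}/p\mathcal{O}_{K}$ and $\psi=\psi_{K,p}$. The hypothesis $A[p]\le B$ is trivial, and $\psi_{K,p}$ is a homomorphism by Remark~\ref{rmk:comp-of-maps-makes-Schirokauer}. The corollary shows that $\dim_{\F_{p}}\psi_{K,p}(\mathcal{O}_{K}^{\times}) = \rank_{p}(\mathcal{O}_{K}^{\times})$ holds if and only if $\ker(\psi_{K,p}) = \mathcal{O}_{K}^{\times p}$. Alternatively, Lemma~\ref{lem:p-ranks-eta-psi-equal} gives $\ker\psi_{K,p}=\ker\eta_{K,p}$, and injectivity of $\overline{\eta}_{K,p}$ means precisely $\ker\eta_{K,p}=\mathcal{O}_{K}^{\times p}$.

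There is no real obstacle, since the heavy lifting is in Lemma~\ref{lem:p-ranks-eta-psi-equal}. The only point needing care is the vanishing of $p$-torsion in $\mathcal{O}_{K}^{\times}$, which is what allows $\rank_{\Z}$ to replace $\rank_{p}$. This relies on both $p$ odd and $p$ unramified.
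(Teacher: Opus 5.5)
Your proposal is correct and follows essentially the same route as the paper: reduce to Theorem~\ref{thm:nqd-char-p-rat}, get the roots-of-unity condition for free from Remark~\ref{rmk:roots-of-unity-condition}, and use $\mu_{p} \not\subset K$ to replace $\rank_{p}$ by $\rank_{\Z}$. You then combine Lemma~\ref{lem:p-ranks-eta-psi-equal} with Corollary~\ref{cor:p-sat}(b) to obtain both formulations of condition (b), exactly as the paper's short proof does.
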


\begin{proof}
All of the following assertions use the hypothesis that $p \nmid 2d_{K}$.
The map $\lambda_{K,p}$ is an isomorphism by Remark~\ref{rmk:roots-of-unity-condition}. 
Moreover, $\mu_{p} \not \subset K$ and so 
$\rank_{\Z}(\mathcal{O}_{K}^{\times})=\rank_{p}(\mathcal{O}_{K}^{\times})$.
Corollary~\ref{cor:p-sat}(b) 
applied to $\psi_{K,p}$ 
therefore shows that 
$\dim_{\mathbb{F}_{p}}(\psi_{K,p}(\mathcal{O}_{K}^{\times}))=\rank_{\Z}(\mathcal{O}_{K}^{\times})$ is equivalent to $\ker(\psi_{K,p})=\mathcal{O}_{K}^{\times p}$.
The desired result now follows from 
Theorem~\ref{thm:nqd-char-p-rat} and Lemma~\ref{lem:p-ranks-eta-psi-equal}.
\end{proof}

We obtain the following useful variant for totally real number fields.

\begin{corollary}\label{cor:tot-real-p-rat-iff}
Let $F$ be a totally real number field and  
let $p$ be a prime number such that $p \nmid 2d_{F}$.
Then the following assertions are equivalent:
\begin{enumerate}
\item $F$ is $p$-rational.
\item $\dim_{\mathbb{F}_{p}}(\psi_{F,p}(\mathcal{O}_{F}^{\times}))=[F:\Q]-1$ and $p \nmid h_{F}$.
\item $\ker(\psi_{F,p})=\mathcal{O}_{F}^{\times p}$ and $p \nmid h_{F}$.
\item $v_{p}(R_{F,p})=[F:\Q]-1$ and $p \nmid h_{F}$.
\end{enumerate}
\end{corollary}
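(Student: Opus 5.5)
The plan is to derive everything from Theorem~\ref{thm:new-p-rat-criterion} and Corollary~\ref{cor:p-rat-p-reg}, using that $F$ is totally real and $p \nmid 2d_{F}$. First, since $F$ is totally real, Dirichlet's unit theorem gives $\rank_{\Z}(\mathcal{O}_{F}^{\times}) = [F:\Q]-1$. Moreover $p$ is odd, so $\mu_{p} \not\subset F$ and $\mathcal{O}_{F}^{\times}[p]$ is trivial. Hence $\rank_{p}(\mathcal{O}_{F}^{\times}) = [F:\Q]-1$ by \eqref{eq:p-rank-vs-Z-rank}. Applying Corollary~\ref{cor:p-sat}(b) to $B=A=\mathcal{O}_{F}^{\times}$ and $\psi=\psi_{F,p}$, the condition $\dim_{\F_{p}}(\psi_{F,p}(\mathcal{O}_{F}^{\times}))=[F:\Q]-1$ is equivalent to $\ker(\psi_{F,p}) = \mathcal{O}_{F}^{\times p}$. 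This gives (b) $\Leftrightarrow$ (c) immediately; the condition $p\nmid h_F$ is common to both.

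For (a) $\Rightarrow$ (b), suppose $F$ is $p$-rational. Since $p \nmid d_{F}$, every prime above $p$ is unramified, in particular tamely ramified, in $F/\Q$. Corollary~\ref{cor:tot-real-p-rat-tame} then gives $p \nmid h_{F}$. Theorem~\ref{thm:new-p-rat-criterion}(b) gives $\dim_{\F_{p}}(\psi_{F,p}(\mathcal{O}_{F}^{\times})) = \rank_{\Z}(\mathcal{O}_{F}^{\times}) = [F:\Q]-1$.

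For (b) $\Rightarrow$ (a), observe that $p \nmid h_{F}$ means $H_{p}(F)=F$. Thus condition (a) of Theorem~\ref{thm:new-p-rat-criterion} holds trivially, and its condition (b) is exactly the dimension hypothesis. Hence $F$ is $p$-rational.

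Finally, (a) $\Leftrightarrow$ (d) is precisely Corollary~\ref{cor:p-rat-p-reg}, whose hypotheses ($F$ totally real and $p \nmid 2d_{F}$) are the same as here, so nothing new is needed.

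None of the steps is a real obstacle; the content sits in the earlier results. The one point to check is the deduction $p \nmid h_{F}$ from $p$-rationality in (a) $\Rightarrow$ (b). This requires the tameness hypothesis of Corollary~\ref{cor:tot-real-p-rat-tame}, which follows from $p\nmid d_F$. The condition $p\nmid h_F$ is needed for the converse because condition (a) of Theorem~\ref{thm:new-p-rat-criterion} is not otherwise available in computable form.
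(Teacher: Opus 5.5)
Your proof is correct and follows essentially the same route as the paper's: (a)$\Leftrightarrow$(d) comes from Corollary~\ref{cor:p-rat-p-reg}, $p \nmid h_F$ comes from Corollary~\ref{cor:tot-real-p-rat-tame}, and the remaining equivalences come from Theorem~\ref{thm:new-p-rat-criterion} together with $\rank_{\Z}(\mathcal{O}_{F}^{\times})=[F:\Q]-1$. The only difference is that you spell out two steps the paper leaves implicit: the equivalence (b)$\Leftrightarrow$(c) via Corollary~\ref{cor:p-sat}(b), and the fact that condition (a) of the theorem holds trivially once $H_p(F)=F$.
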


\begin{proof}
The equivalence of (a) and (d) is Corollary~\ref{cor:p-rat-p-reg}.
Note that $\rank_{\Z}(\mathcal{O}_{F}^{\times}) = [F:\Q]-1$
and that if (a) holds then $p \nmid h_{F}$ by 
Corollary~\ref{cor:tot-real-p-rat-tame}. 
Therefore the remaining equivalences now follow from 
Theorem~\ref{thm:new-p-rat-criterion}.
\end{proof}

We also observe that Lemma~\ref{lem:p-ranks-eta-psi-equal} easily gives the 
following versions of Theorems~\ref{thm:p-odd-rat-cyclo} and \ref{thm:real-abelian-p-rat} in terms of the Schirokauer map, subject to the additional condition that $p \nmid 2n$.

\begin{theorem}\label{thm:p-rat-cyclo-Schirokauer} 
Let $n \in \Z_{\geq 3}$ with $n \not \equiv 2 \bmod 4$
and let $p$ be a prime number such that $p \nmid 2n$.
Then $\Q(\zeta_{n})^{+}$ is $p$-rational if and only if
$\dim_{\mathbb{F}_{p}}(\psi_{\Q(\zeta_{n})^{+},p}(C_{n}^{+}))=\frac{1}{2}\varphi(n) - 1$.
\end{theorem}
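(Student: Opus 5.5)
The plan is to deduce the statement directly from Theorem~\ref{thm:p-odd-rat-cyclo}, using Lemma~\ref{lem:p-ranks-eta-psi-equal} to pass from $\eta$ to the Schirokauer map. Throughout, write $F=\Q(\zeta_{n})^{+}$. Since $p \nmid 2n$, the prime $p$ is odd, and Theorem~\ref{thm:p-odd-rat-cyclo} applies. Condition (a) of that theorem holds automatically, because its first alternative $p \nmid n$ is satisfied.

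Hence $F$ is $p$-rational if and only if
\[
\dim_{\F_{p}}\bigl(\eta_{F,p}(C_{n}^{+})\bigr)=\tfrac{1}{2}\varphi(n)-1 .
\]
To apply Lemma~\ref{lem:p-ranks-eta-psi-equal} we need $p \nmid 2d_{F}$. The prime $p$ is odd. Moreover, $d_{F}$ divides a power of $d_{\Q(\zeta_n)}$: the relation $d_{\Q(\zeta_n)} = d_F^{2}\,\Norm_{F/\Q}(\mathfrak{d}_{\Q(\zeta_n)/F})$ gives $d_F \mid d_{\Q(\zeta_n)}$. Since $d_{\Q(\zeta_n)}$ is divisible only by primes dividing $n$, it follows that $p \nmid d_{F}$. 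Equivalently, $p$ is unramified in $\Q(\zeta_{n})$, hence also in $F$.

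Now apply Lemma~\ref{lem:p-ranks-eta-psi-equal} with $K=F$ and $B=C_{n}^{+}\le \mathcal{O}_{F}^{\times}=E_{n}^{+}$. This gives
\[
\dim_{\F_{p}}\bigl(\eta_{F,p}(C_{n}^{+})\bigr)=\dim_{\F_{p}}\bigl(\psi_{F,p}(C_{n}^{+})\bigr),
\]
and substituting into the previous display yields the claim.

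The only point needing any care is the unramifiedness of $p$ in $F$, which is standard. No real obstacle is expected.
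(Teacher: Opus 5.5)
Your proposal is correct and follows essentially the same route as the paper. The paper obtains this result by combining Theorem~\ref{thm:p-odd-rat-cyclo}, whose condition (a) is automatic when $p \nmid n$, with Lemma~\ref{lem:p-ranks-eta-psi-equal}. It uses the fact that $p \nmid 2n$ forces $p \nmid 2d_{\Q(\zeta_n)^{+}}$, which is exactly what you do.
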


\begin{theorem}\label{thm:real-abelian-p-rat-Schirokauer} 
Let $F$ be a real abelian number field of conductor $n \geq 3$
and let $p$ be a prime number such that $p \nmid 2n$.
If $\Gal(F/\Q)$ is not the direct product of its inertia subgroups,
assume in addition that $p \nmid [F:\Q]$.
Then $F$ is $p$-rational if and only if 
$\dim_{\F_{p}}(\psi_{F,p}(C_{F}))=[F:\Q]-1$.
\end{theorem}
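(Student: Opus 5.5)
The plan is to deduce this directly from Theorem~\ref{thm:real-abelian-p-rat}, using Lemma~\ref{lem:p-ranks-eta-psi-equal} to replace $\eta_{F,p}$ by $\psi_{F,p}$. This is the same way Theorem~\ref{thm:p-rat-cyclo-Schirokauer} comes from Theorem~\ref{thm:p-odd-rat-cyclo}.

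First, check that the hypotheses of Theorem~\ref{thm:real-abelian-p-rat} hold.
\begin{itemize}
\item Since $p \nmid 2n$, the prime $p$ is odd.
\item Certainly $p^{2} \nmid n$.
\item The extra assumption $p \nmid [F:\Q]$, required when $\Gal(F/\Q)$ is not the direct product of its inertia subgroups, is carried over verbatim.
\end{itemize}
Hence $F$ is $p$-rational if and only if conditions (a) and (b) of Theorem~\ref{thm:real-abelian-p-rat} hold.

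Second, show that condition (a) is automatic. By the conductor--discriminant formula (or simply because $F \subseteq \Q(\zeta_{n})$ and $p \nmid n$), $p$ is unramified in $F/\Q$, so $p \nmid d_{F}$. Since $p$ is odd, Remark~\ref{rmk:roots-of-unity-condition} then shows that $\lambda_{F,p}$ is an isomorphism. Because $F$ is totally real, $\mu_{p} \not\subset F$, so this is equivalent to no prime above $p$ splitting completely in $F(\zeta_{p})/F$, which is exactly condition (a). (This is also the content of the remark after Theorem~\ref{thm:real-abelian-p-rat}.)

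Third, rewrite condition (b). Since $p \nmid 2d_{F}$, Lemma~\ref{lem:p-ranks-eta-psi-equal} applied with $B = C_{F} \leq \mathcal{O}_{F}^{\times}$ gives $\dim_{\F_{p}}\eta_{F,p}(C_{F}) = \dim_{\F_{p}}\psi_{F,p}(C_{F})$. So condition (b) becomes $\dim_{\F_{p}}(\psi_{F,p}(C_{F})) = [F:\Q]-1$, which completes the argument.

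There is no real obstacle here. The only point needing care is the justification that $p \nmid n$ implies $p \nmid d_{F}$, which holds because primes ramifying in $F$ divide the conductor.
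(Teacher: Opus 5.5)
Your proposal is correct and follows the paper's route. The paper likewise deduces this theorem from Theorem~\ref{thm:real-abelian-p-rat} by using Lemma~\ref{lem:p-ranks-eta-psi-equal} to replace $\eta_{F,p}$ with $\psi_{F,p}$; condition (a) is automatic when $p \nmid n$, as the remark following that theorem notes.
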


\begin{remark}
Let $K$ be an abelian CM-field of conductor $n$ and let $p$ be a prime number
such that $p \nmid 2nh_{K}^{-}[K:\Q]$. 
In this situation, Theorem~\ref{thm:real-abelian-p-rat-Schirokauer} applied
to $K^{+}$ can be combined with Corollary~\ref{cor:CM-tot-real-p-rat} to determine the 
$p$-rationality of $K$. In the special case $K=\Q(\zeta_{n})$, one may instead
assume that $p \nmid 2nh_{K}^{-}$ and use Theorem~\ref{thm:p-rat-cyclo-Schirokauer}.
\end{remark}

\begin{remark}
Theorem \ref{thm:real-abelian-p-rat-Schirokauer} is related to
\cite[Theorem 3]{MR5078208}, which implicitly concerns $p$-rationality
and uses a map similar to but different from the Schirokauer map.
\end{remark}

\section{Quasi-$p$-rational fields}\label{sec:quasi-p-rationality}

Let $K$ be a number field.
Let $c_{K}$ denote the number of pairs of complex embeddings of $K$.
Let $p$ be a prime number and let $S_{p}=S_{p}(K)$ be the set of primes of $K$ 
lying above $p$. 
Let $K_{S_{p}}^{\mathrm{ab}}(p)$ be the 
maximal abelian pro-$p$-extension of $K$ unramified outside $S_{p}$, 
and let $H_{p}(K)$ be the $p$-Hilbert class field of $K$. 

\begin{definition}\label{def:weak-p-rat}
If $\Gal(K_{S_{p}}^{\mathrm{ab}}(p)/H_{p}(K)) \simeq \Z_{p}^{c_{K}+1}$
then $K$ is said to be \textit{quasi-$p$-rational}.
\end{definition}

\begin{remark}
We avoid the term \emph{weakly $p$-rational}, which has already been used by Gras for a different weakening of the notion of $p$-rationality \cite[\S 3.2]{MR3320496}.
\end{remark}

\begin{lemma}\label{lem:rat-implies-quasi-rat-implies-leopoldt}
Let $K$ be a number field.
\begin{enumerate}
    \item If $K$ is $p$-rational, then $K$ is quasi-$p$-rational.
    \item If $K$ is quasi-$p$-rational, then $K$ satisfies Leopoldt's conjecture at $p$. 
    \item If $p \nmid h_{K}$ then $K$ is $p$-rational if and only if $K$ is quasi-$p$-rational.
\end{enumerate}
\end{lemma}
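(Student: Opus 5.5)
Write $M = K_{S_p}^{\mathrm{ab}}(p)$ and $X = X_{S_p}(K) = \Gal(M/K)$. Let $Y = \Gal(M/H_p(K))$. Since $H_p(K)/K$ is an abelian $p$-extension unramified everywhere, we have $H_p(K) \subseteq M$. Hence $Y$ is a closed subgroup of $X$ with finite quotient $X/Y \simeq \Gal(H_p(K)/K) \simeq \Cl_K \otimes \Z_p$. The whole proof is a comparison of $X$ and $Y$ as finitely generated $\Z_p$-modules, using the structure $X \simeq \Z_p^{c_K+1+\delta_K} \times T_{S_p}(K)$ from the introduction.

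For (b), I would note that $Y$ has finite index in $X$, so $\rank_{\Z_p}(Y) = \rank_{\Z_p}(X) = c_K + 1 + \delta_K$. If $K$ is quasi-$p$-rational then $Y \simeq \Z_p^{c_K+1}$ has rank $c_K+1$, which forces $\delta_K = 0$. That is exactly Leopoldt's conjecture at $p$.

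For (a), suppose $K$ is $p$-rational, so $X$ is free of rank $c_K+1$ by Proposition~\ref{prop:def-char-p-rational}(b). A closed subgroup of finite index in $X$ is a $\Z_p$-submodule of a free $\Z_p$-module. It is therefore torsion-free and finitely generated, hence free. Its rank equals that of $X$, namely $c_K+1$, so $Y \simeq \Z_p^{c_K+1}$.

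For (c), the point is that if $p \nmid h_K$ then $H_p(K) = K$, so $Y = X$. The two conditions then read identically: $X \simeq \Z_p^{c_K+1}$, which is condition (b) of Proposition~\ref{prop:def-char-p-rational}. The only step needing care is that "closed subgroup" and "$\Z_p$-submodule" coincide here, which holds because closed subgroups of pro-$p$ abelian groups are $\Z_p$-submodules. I expect no genuine obstacle.
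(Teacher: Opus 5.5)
Your proposal is correct and follows essentially the same route as the paper: compare $X_{S_p}(K)$ with its closed finite-index subgroup $\Gal(K_{S_p}^{\mathrm{ab}}(p)/H_p(K))$, using that finite index preserves $\Z_p$-rank (and freeness in (a)), and that $H_p(K)=K$ when $p \nmid h_K$ for (c). In (b) you read off $\delta_K=0$ from the decomposition $X_{S_p}(K)\simeq \Z_p^{c_K+1+\delta_K}\times T_{S_p}(K)$ stated in the introduction. This is the same rank criterion for Leopoldt's conjecture that the paper invokes via its reference to Neukirch--Schmidt--Wingberg.
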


\begin{proof}
(a) Suppose that $K$ is $p$-rational. Then 
$X_{S_{p}}(K) = \Gal(K_{S_{p}}^{\mathrm{ab}}(p)/K) \simeq \Z_{p}^{c_{K}+1}$
by Proposition~\ref{prop:def-char-p-rational}.
Since $\Gal(K_{S_{p}}^{\mathrm{ab}}(p)/H_{p}(K))$
is a closed subgroup of $X_{S_{p}}(K) \simeq \Z_{p}^{c_{K}+1}$ of finite index,
it is itself isomorphic to $\Z_{p}^{c_{K}+1}$.
Thus $K$ is also quasi-$p$-rational.

(b) Suppose that $K$ is quasi-$p$-rational. 
Then $X_{S_{p}}(K) \simeq T_{S_p}(K) \oplus \Z_{p}^{c_{K}+1}$ 
since $X_{S_{p}}(K)$ is a finitely generated $\Z_{p}$-module
and $\Gal(K_{S_{p}}^{\mathrm{ab}}(p)/H_{p}(K))$
is a closed subgroup of finite index.
Thus $K$ satisfies Leopoldt's conjecture at $p$ by \cite[(10.3.20)(ii)]{MR2392026}.

(c) If $p \nmid h_{K}$ then $H_{p}(K)=K$ and so $X_{S_{p}}(K) = \Gal(K_{S_{p}}^{\mathrm{ab}}(p)/H_{p}(K))$. Thus the claim follows from 
Proposition~\ref{prop:def-char-p-rational} and the definitions.
\end{proof}

\begin{remark}\label{rmk:quasi-p-rat-Gras-conj}
Since class numbers have finitely many prime factors, a number field satisfies Gras's
Conjecture~\ref{conj:Gras} if and only if it is quasi-$p$-rational 
for all but finitely many $p$.
\end{remark}

\begin{prop}\label{prop:quasi-p-rat-criteria}
A number field $K$ is quasi-$p$-rational precisely 
when both of the following conditions are satisfied:
\begin{enumerate}
    \item The diagonal map $\lambda_{K,p} : \mu_{p}(K) \longrightarrow \prod_{\mathfrak{p} \in S_{p}(K)} 
    \mu_{p}(K_{\mathfrak{p}})$ is an isomorphism.
    \item The diagonal map 
    $\overline{\eta}_{K,p} \colon \mathcal{O}_{K}^{\times}/\mathcal{O}_{K}^{\times p} \longrightarrow 
    \prod_{\mathfrak{p} \in S_{p}(K)} U_{\mathfrak{p}}/ U_{\mathfrak{p}}^{p}$ is injective.
\end{enumerate}
\end{prop}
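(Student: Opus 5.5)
The plan is to prove the equivalence through the class field theory description of $\Gal(K_{S_{p}}^{\mathrm{ab}}(p)/H_{p}(K))$, which removes the class group from the picture. Write $\mathcal{U} = \prod_{\mathfrak{p} \in S_{p}(K)} U_{\mathfrak{p}}^{(1)}$ for the product of principal local units at $p$ (equivalently, the pro-$p$ completion of $\prod_{\mathfrak{p}} U_{\mathfrak{p}}$). Let $\overline{E}$ be the closure in $\mathcal{U}$ of the image of $\mathcal{O}_{K}^{\times}$. The standard Artin reciprocity isomorphism is
\[
\Gal(K_{S_{p}}^{\mathrm{ab}}(p)/H_{p}(K)) \simeq \mathcal{U}/\overline{E}.
\]
Since $\mathcal{U}$ is a finitely generated $\Z_{p}$-module of rank $[K:\Q]$ with torsion $\prod_{\mathfrak{p}} \mu_{p^{\infty}}(K_{\mathfrak{p}})$, we get the following. $K$ is quasi-$p$-rational if and only if $\mathcal{U}/\overline{E}$ is torsion-free of rank $c_{K}+1$. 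This holds if and only if (i) Leopoldt's conjecture holds, i.e. $\mathcal{O}_{K}^{\times} \otimes \Z_{p} \to \overline{E}$ is an isomorphism onto a module of rank $r_{1}+c_{K}-1$, and (ii) $\mathcal{U}/\overline{E}$ has no element of order $p$.

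Next I will translate (ii) into conditions (a) and (b) using the argument of Proposition~\ref{prop:p-sat}, transferred to $\Z_{p}$-modules. Condition (ii) says that $\overline{E}$ is $p$-saturated in $\mathcal{U}$, so it forces $\mathcal{U}[p] \leq \overline{E}$. Now $\mathcal{U}[p] = \prod_{\mathfrak{p}} \mu_{p}(K_{\mathfrak{p}})$, while $\overline{E}[p]$ is the image of $\mu_{p}(K)$ (using Leopoldt for torsion of $\overline{E}$). Hence $\mathcal{U}[p] \leq \overline{E}$ is exactly the surjectivity of $\lambda_{K,p}$. Injectivity of $\lambda_{K,p}$ is automatic, since the diagonal embedding of roots of unity is injective. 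Granting $\mathcal{U}[p] \leq \overline{E}$, Proposition~\ref{prop:p-sat} gives that (ii) is equivalent to $\overline{E} \cap \mathcal{U}^{p} = \overline{E}^{p}$. Under Leopoldt, $\overline{E}/\overline{E}^{p} \cong \mathcal{O}_{K}^{\times}/\mathcal{O}_{K}^{\times p}$. Also $\mathcal{U}/\mathcal{U}^{p} = \prod U_{\mathfrak{p}}/U_{\mathfrak{p}}^{p}$, because the prime-to-$p$ part of $U_{\mathfrak{p}}$ dies modulo $p$-th powers. So $\overline{E} \cap \mathcal{U}^{p} = \overline{E}^{p}$ is exactly injectivity of $\overline{\eta}_{K,p}$, which is condition (b). This proves that quasi-$p$-rationality implies (a) and (b).

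For the converse it remains to show that (a) and (b) force Leopoldt's conjecture, after which the preceding paragraph runs backwards. Suppose Leopoldt fails. Then the kernel $N$ of $\mathcal{O}_{K}^{\times} \otimes \Z_{p} \to \mathcal{U}$ contains a non-torsion element $x$. Write $x = y^{p^{k}} t$ with $t$ torsion and $y \notin (\mathcal{O}_{K}^{\times}\otimes\Z_{p})^{p}$, with $k$ maximal. Then $y$ maps into $\mathcal{U}[p^{\infty}]$. Using (a) I will modify $y$ by a global root of unity so that it maps to an element that is a $p$-th power in $\mathcal{U}$, while remaining a non-$p$-th power in $\mathcal{O}_{K}^{\times}\otimes\Z_{p} = \varprojlim \mathcal{O}_{K}^{\times}/\mathcal{O}_{K}^{\times p^{m}}$. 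Reducing modulo $p$-th powers then produces a non-trivial element of $\ker \overline{\eta}_{K,p}$, contradicting (b).

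I expect this torsion bookkeeping to be the main obstacle: making sure that local $p$-power roots of unity in the image can be absorbed by global ones. Here (a), in the form $\mu_{p}(K) \cong \prod \mu_{p}(K_{\mathfrak{p}})$, should be exactly what is needed, via an induction on the order of $p$-power torsion. If this gets messy, the fallback is to quote Leopoldt's conjecture from Theorem~\ref{thm:nqd-char-p-rat}'s proof context, or from \cite[(10.3.20)]{MR2392026}, in the form: $\overline{\eta}_{K,p}$ injective together with $\lambda_{K,p}$ bijective implies $\delta_{K}=0$.
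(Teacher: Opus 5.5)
Your framework matches the paper's. The isomorphism $\Gal(K_{S_p}^{\mathrm{ab}}(p)/H_p(K))\simeq\mathcal{U}/\overline{E}$ is the right end of the paper's four-term sequence. Your forward direction is correct: a rank count gives Leopoldt, and then $p$-saturation of $\overline{E}$ in $\mathcal{U}$ gives (a) via $\mathcal{U}[p]\le\overline{E}$ and (b) via Proposition~\ref{prop:p-sat}. This is the paper's snake-lemma argument unpacked. The converse also runs backwards correctly \emph{once Leopoldt is known}.

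The gap is the step deriving Leopoldt from (a) and (b). Write $\iota\colon M=\mathcal{O}_K^{\times}\otimes\Z_p\to\mathcal{U}$ for the diagonal map. Your mechanism, twisting $y$ by a global root of unity so that $\iota(y)$ becomes a $p$-th power in $\mathcal{U}$, needs every class in $\mathcal{U}[p^{\infty}]/\mathcal{U}[p^{\infty}]^{p}$ to be hit by $\mu_{p^{\infty}}(K)$. But (a) only concerns $\mu_p$ and says nothing about higher $p$-power roots of unity. For example, take $K=\Q(\sqrt{7})$ and $p=2$. There is a single prime above $2$, with $K_{\mathfrak p}=\Q_2(\sqrt{-1})$, so (a) holds; yet $\mathcal{U}[2^{\infty}]=\mu_4$ while $\mu_{2^{\infty}}(K)=\{\pm 1\}$. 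Consider the configuration $x=y^{2}\cdot(-1)\in N$ with $\iota(y)=\sqrt{-1}$, which is algebraically consistent and is exactly what a proof by contradiction must rule out. There, neither $\iota(y)$ nor $\iota(-y)$ is a square in $\mathcal{U}$, since a square root would be a primitive $8$-th root of unity. So no induction on the torsion order rescues the step: the element to feed into (b) is not a twist of $y$. Your fallback does not supply the implication either, since Theorem~\ref{thm:nqd-char-p-rat} carries the extra condition on $H_p(K)$.

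The fix is to use (a) only at the level of $p$-torsion, which is where it lives. Let $N=\ker(\iota)$. Suppose $x\in N$ and $x=m^{p}$ with $m\in M$. Then $\iota(m)\in\mathcal{U}[p]=\iota(\mu_p(K))$ by (a), so $m\zeta^{-1}\in N$ for some $\zeta\in\mu_p(K)$, and $x=(m\zeta^{-1})^{p}\in N^{p}$. Hence $N\cap M^{p}=N^{p}$, so $N/N^{p}$ injects into $\ker(\overline{\eta}_{K,p})$ via $M/M^{p}=\mathcal{O}_K^{\times}/\mathcal{O}_K^{\times p}$. By (b), $N=N^{p}$, and since $N$ is a finitely generated $\Z_p$-module, Nakayama gives $N=0$, which is Leopoldt. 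This is exactly the paper's injection $D_K/D_K^{p}\hookrightarrow\ker(\overline{\eta}_{K,p})$ from the snake lemma, followed by Nakayama.

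Alternatively, a rank count works. Condition (b) makes $M/M^{p}\to\overline{E}/\overline{E}^{p}$ an isomorphism, and (a) gives $\overline{E}[p]=\iota(\mu_p(K))\cong M[p]$. Comparing $\dim_{\F_p}(A/A^{p})=\rank_{\Z_p}(A)+\dim_{\F_p}(A[p])$ for $A=M$ and $A=\overline{E}$ then yields $\rank_{\Z_p}(\overline{E})=\rank_{\Z_p}(M)$.
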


\begin{proof}
We adapt the proof of \cite[Proposition~2.3]{MR4192837}, 
whose statement was recalled in Theorem~\ref{thm:nqd-char-p-rat}.
For an abelian group $A$ written multiplicatively, define the $p$-adic completion of $A$ to be
$\widehat{A} := \textstyle{\varprojlim_{n}} A/A^{p^{n}}$.
By definition, the Leopoldt kernel $D_{K}$ is the kernel of the
diagonal map $\mathcal{O}_{K}^{\times} \longrightarrow \prod_{\mathfrak{p} \in S_{p}(K)} U_{\mathfrak{p}}$ after passing to $p$-adic completions 
and we have an exact sequence
provided by global class field theory
\begin{equation}\label{eqn:four-term-exact-gcft}
1 
\longrightarrow
D_{K}
\longrightarrow
\widehat{\mathcal{O}_{K}^{\times}}
\xrightarrow{\widehat{\eta_{K,p}}}
\textstyle{\prod_{\mathfrak{p} \in S_{p}(K)}} \widehat{U_{\mathfrak{p}}}
\longrightarrow
\Gal(K_{S_{p}}^{\mathrm{ab}}(p)/H_{p}(K))
\longrightarrow
1
\end{equation}
(see \cite[Corollary~13.6]{MR1421575}).
Since $\widehat{\mathcal{O}_{K}^{\times}} = \Z_{p} \otimes_{\Z} \mathcal{O}_{K}^{\times}$,
we have that $D_{K}=0$ if and only if Leopoldt's conjecture holds for $K$ at $p$
by \cite[(10.3.6)(iii)]{MR2392026}.

Suppose that $K$ is quasi-$p$-rational. 
Then Leopoldt's conjecture holds for $K$ at $p$ by Lemma~\ref{lem:rat-implies-quasi-rat-implies-leopoldt}(b) and so $D_{K}=0$. Thus we obtain a commutative diagram
\begin{equation}\label{eqn:snake-lem-diag} 
\begin{gathered}
\xymatrix{
1 \ar[r] & \widehat{\mathcal{O}_{K}^{\times}} \ar[r] \ar[d] & 
\textstyle{\prod_{\mathfrak{p} \in S_{p}(K)}} \widehat{U_{\mathfrak{p}}}
\ar[r] \ar[d] &
\Gal(K_{S_{p}}^{\mathrm{ab}}(p)/H_{p}(K)) \ar[d] \ar[r]  & 1  \\
1 \ar[r] & \widehat{\mathcal{O}_{K}^{\times}} \ar[r] & 
\textstyle{\prod_{\mathfrak{p} \in S_{p}(K)}} \widehat{U_{\mathfrak{p}}} \ar[r] &
\Gal(K_{S_{p}}^{\mathrm{ab}}(p)/H_{p}(K)) \ar[r]  & 1, 
}
\end{gathered}
\end{equation}
where the vertical maps raise elements to the $p$-th power.
Conditions (a) and (b) hold by observing that the right vertical map is injective and
applying the snake lemma.

Suppose conversely that conditions (a) and (b) hold. 
Consider the following commutative diagram in which the rows
are obtained from \eqref{eqn:four-term-exact-gcft} and the vertical maps
again raise elements to the $p$-th power
\begin{equation}\label{eqn:show-Dmod-pth-powers-trivial} 
\begin{gathered}
\xymatrix{
1 \ar[r] & D_{K} \ar[r] \ar[d] & \widehat{\mathcal{O}_{K}^{\times}} \ar[r] \ar[d] & 
\im(\widehat{\eta_{K,p}})
\ar[r] \ar[d]   & 1  \\
1 \ar[r] & D_{K} \ar[r] & \widehat{\mathcal{O}_{K}^{\times}} \ar[r] & 
\im(\widehat{\eta_{K,p}}) \ar[r]  & 1.
}
\end{gathered}
\end{equation}
Then using condition (a) and applying the snake lemma gives an injection 
$D_{K}/D_{K}^{p} \hookrightarrow \ker(\overline{\eta}_{K,p})$.
The latter kernel is trivial by condition (b), and so $D_{K}/D_{K}^{p}$ is also trivial. 
Since $D_{K}$ is a finitely generated $\Z_{p}$-module, Nakayama's lemma implies that $D_{K}=0$.
Hence the exact sequence \eqref{eqn:four-term-exact-gcft} now gives the diagram 
\eqref{eqn:snake-lem-diag}. Applying the snake lemma to \eqref{eqn:snake-lem-diag} 
and using conditions (a) and (b) gives that $\Gal(K_{S_{p}}^{\mathrm{ab}}(p)/H_{p}(K))$ is torsion-free as a $\Z_{p}$-module. 
But Leopoldt's conjecture holds for $K$ at $p$ since $D_{K}=0$
and so $\rank_{\Z_{p}} X_{S_{p}}(K) = c_{K} + 1$ by \cite[(10.3.20)(ii)]{MR2392026}.
Since $\Gal(K_{S_{p}}^{\mathrm{ab}}(p)/H_{p}(K))$ is 
a closed subgroup of
$X_{S_{p}}(K)$ of finite index, we conclude that
$\Gal(K_{S_{p}}^{\mathrm{ab}}(p)/H_{p}(K)) \simeq \Z_{p}^{c_{K}+1}$,
that is, $K$ is quasi-$p$-rational.
\end{proof}

\begin{corollary}\label{cor:char-quasi-p-rat-Schirokauer}
Let $K$ be a number field and let $p$ be a prime number such that $p \nmid 2d_{K}$.
Then $K$ is quasi-$p$-rational if and only if
$\dim_{\F_p}(\psi_{K,p}(\mathcal{O}_{K}^{\times}))=\rank_{\Z}(\mathcal{O}_{K}^{\times})$.
\end{corollary}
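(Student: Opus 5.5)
This follows the pattern of the proof of Theorem~\ref{thm:new-p-rat-criterion}, with Proposition~\ref{prop:quasi-p-rat-criteria} replacing Theorem~\ref{thm:nqd-char-p-rat}; condition (a) on $H_{p}(K)$ simply drops out. By Proposition~\ref{prop:quasi-p-rat-criteria}, $K$ is quasi-$p$-rational if and only if $\lambda_{K,p}$ is an isomorphism and $\overline{\eta}_{K,p}$ is injective. The task is to show that, under $p \nmid 2d_{K}$, the first condition is automatic and the second is equivalent to the stated dimension equality for $\psi_{K,p}$.

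First, since $p \nmid 2d_{K}$, Remark~\ref{rmk:roots-of-unity-condition} shows that condition (a) of Proposition~\ref{prop:quasi-p-rat-criteria} holds: $\lambda_{K,p}$ is an isomorphism. Moreover, $p$ is odd and unramified, so $\mu_{p} \not\subset K$. Hence $\mathcal{O}_{K}^{\times}[p]$ is trivial, and \eqref{eq:p-rank-vs-Z-rank} gives $\rank_{p}(\mathcal{O}_{K}^{\times}) = \rank_{\Z}(\mathcal{O}_{K}^{\times})$.

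Next, apply Corollary~\ref{cor:p-sat}(b) with $A=B=\mathcal{O}_{K}^{\times}$, which is allowed since $A[p] \leq B$ trivially. Taking $\psi=\eta_{K,p}$, the injectivity of $\overline{\eta}_{K,p}$ is equivalent to $\dim_{\F_{p}}(\eta_{K,p}(\mathcal{O}_{K}^{\times})) = \rank_{p}(\mathcal{O}_{K}^{\times})$. One small point needs care: the target $\prod_{\mathfrak{p}} U_{\mathfrak{p}}/U_{\mathfrak{p}}^{p}$ must be an $\F_{p}$-vector space. It is, being abelian of exponent $p$.

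Finally, Lemma~\ref{lem:p-ranks-eta-psi-equal} (valid since $p \nmid 2d_{K}$) gives $\dim_{\F_{p}}\eta_{K,p}(\mathcal{O}_{K}^{\times}) = \dim_{\F_{p}}\psi_{K,p}(\mathcal{O}_{K}^{\times})$. Combining this with the equality of ranks from the first step yields the claim. There is no real obstacle: all substantive content is in Proposition~\ref{prop:quasi-p-rat-criteria} and Lemma~\ref{lem:p-ranks-eta-psi-equal}, and the only thing to check is that the hypotheses of those results and of Corollary~\ref{cor:p-sat} are met.
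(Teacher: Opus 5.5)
Your proposal is correct and follows essentially the same argument as the paper. You verify condition (a) of Proposition~\ref{prop:quasi-p-rat-criteria} via Remark~\ref{rmk:roots-of-unity-condition}, convert injectivity of $\overline{\eta}_{K,p}$ into a dimension count by rank-nullity (which you invoke through Corollary~\ref{cor:p-sat}(b)), use $\mu_{p}\not\subset K$ to identify $\rank_{p}$ with $\rank_{\Z}$, and finish with Lemma~\ref{lem:p-ranks-eta-psi-equal}.
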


\begin{proof}
Since $p \nmid 2d_{K}$, condition (a) of Proposition~\ref{prop:quasi-p-rat-criteria}
is satisfied by Remark~\ref{rmk:roots-of-unity-condition}.    
Thus $K$ is quasi-$p$-rational if and only if $\overline{\eta}_{K,p}$ is injective.
But
$\overline{\eta}_{K,p}$ is injective if and only if 
$\dim_{\F_{p}}(\eta_{K,p}(\mathcal{O}_{K}^{\times})) 
= \rank_{p}(\mathcal{O}_{K}^{\times})$ by the rank-nullity theorem.
Moreover, $\mu_{p} \not \subset K$ and so 
$\rank_{\Z}(\mathcal{O}_{K}^{\times})=\rank_{p}(\mathcal{O}_{K}^{\times})$.
Therefore the desired result now follows from Lemma~\ref{lem:p-ranks-eta-psi-equal}.
\end{proof}

\section{Quasi-$p$-rationality and $p$-saturation of units}\label{sec:p-rat-p-sat}

\subsection{Showing that a subgroup of units is $p$-saturated}\label{subsec:showing-units-p-saturated}
The following result can be used to show that a subgroup of the unit group
$\mathcal{O}_{K}^{\times}$ of a number field
$K$ is $p$-saturated; it does not require the notions of $p$-rationality or quasi-$p$-rationality.

\begin{prop}\label{prop:psi-eta-give-p-sat}
Let $K$ be a number field and let $U$ be a subgroup of $\mathcal{O}_{K}^{\times}$.
Let $p$ be a prime number. Suppose that either of the following conditions holds:
\begin{enumerate}
    \item $p \nmid 2d_{K}$ and either $\dim_{\F_p}(\psi_{K,p}(U))=\rank_{\Z}(\mathcal{O}_{K}^{\times})$
    or $\overline{\psi_{K,p} \vert_{U}}$ is injective;
    \item $\mu_{p}(K) \leq U$ and either $\dim_{\F_p}(\eta_{K,p}(U))=\rank_{p}(\mathcal{O}_{K}^{\times})$ or $\overline{\eta_{K,p} \vert_{U}}$ is injective.
\end{enumerate}
Then $U$ is $p$-saturated in $\mathcal{O}_{K}^{\times}$.
\end{prop}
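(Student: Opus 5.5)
The plan is to reduce both cases to Corollary~\ref{cor:p-sat}, applied with $A=\mathcal{O}_{K}^{\times}$, $B=U$, and $X$ either $\mathcal{O}_{K}/p\mathcal{O}_{K}$ (with $\psi=\psi_{K,p}$) or $\prod_{\mathfrak{p}\in S_{p}(K)} U_{\mathfrak{p}}/U_{\mathfrak{p}}^{p}$ (with $\psi=\eta_{K,p}$). Both targets are $\F_{p}$-vector spaces: the first is killed by $p$, and the second is a product of groups of exponent $p$. By Remark~\ref{rmk:comp-of-maps-makes-Schirokauer} both maps are group homomorphisms. The one hypothesis of Corollary~\ref{cor:p-sat} that must be checked separately is $A[p]\leq B$, that is, $\mu_{p}(K)\leq U$.

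In case (b), $\mu_{p}(K)\leq U$ is assumed directly. If $\overline{\eta_{K,p}\vert_{U}}$ is injective, then Corollary~\ref{cor:p-sat}(b)(iv) holds, and part (c) gives that $U$ is $p$-saturated. If instead $\dim_{\F_p}(\eta_{K,p}(U))=\rank_{p}(\mathcal{O}_{K}^{\times})$, then Corollary~\ref{cor:p-sat}(d) applies verbatim and gives $p$-saturation (and finite index).

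In case (a), the assumption $p\nmid 2d_{K}$ gives $\mu_{p}\not\subset K$, as already noted in the proofs of Theorem~\ref{thm:new-p-rat-criterion} and Corollary~\ref{cor:char-quasi-p-rat-Schirokauer}. This follows from Remark~\ref{rmk:roots-of-unity-condition}: $p$ is odd and unramified, so $(p-1)\nmid e_{\mathfrak{p}}=1$. Hence $\mathcal{O}_{K}^{\times}[p]=1\leq U$, and moreover $\rank_{\Z}(\mathcal{O}_{K}^{\times})=\rank_{p}(\mathcal{O}_{K}^{\times})$ by \eqref{eq:p-rank-vs-Z-rank}. The dimension condition $\dim_{\F_p}(\psi_{K,p}(U))=\rank_{\Z}(\mathcal{O}_{K}^{\times})$ therefore becomes the hypothesis of Corollary~\ref{cor:p-sat}(d) with $\psi=\psi_{K,p}$. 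The injectivity alternative is handled by Corollary~\ref{cor:p-sat}(b),(c) exactly as in case (b).

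Lemma~\ref{lem:p-ranks-eta-psi-equal} is not needed here; one could equally transfer case (a) to $\eta_{K,p}$ through it, but applying the corollary directly to $\psi_{K,p}$ is simpler. There is no real obstacle. The only point needing care is verifying the torsion hypothesis $A[p]\leq B$ in case (a), which is where $p\nmid 2d_{K}$ is used.
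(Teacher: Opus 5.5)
Your proposal is correct and follows essentially the same route as the paper: apply Corollary~\ref{cor:p-sat}(c),(d) to $\psi_{K,p}$ and $\eta_{K,p}$ with $A=\mathcal{O}_{K}^{\times}$ and $B=U$. In case (a) you use $p\nmid 2d_{K}$ to get $\mu_{p}\not\subset K$, which gives both $A[p]=1\leq U$ and $\rank_{\Z}(\mathcal{O}_{K}^{\times})=\rank_{p}(\mathcal{O}_{K}^{\times})$, just as the paper does. One small point: the homomorphism property of $\eta_{K,p}$ comes directly from its definition as a diagonal map, not from Remark~\ref{rmk:comp-of-maps-makes-Schirokauer}, which only concerns $\psi_{K,p}$.
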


\begin{proof}
The desired result follows from applications of 
Corollary~\ref{cor:p-sat}(c)(d) to $\psi_{K,p}$ and $\eta_{K,p}$
with $A=\mathcal{O}_{K}^{\times}$ and $B=U$.
Note that in part (a), the hypothesis $p \nmid 2d_{K}$ implies that $\mu_{p} \not\subset K$
and so $\rank_{\Z}(\mathcal{O}_{K}^{\times})=\rank_{p}(\mathcal{O}_{K}^{\times})$.
\end{proof}

Assuming Gras's Conjecture~\ref{conj:Gras}, the following two results
show that, for a fixed subgroup $U$ of $\mathcal{O}_{K}^{\times}$ of finite index,
Proposition~\ref{prop:psi-eta-give-p-sat} detects the $p$-saturation of 
$U$ in $\mathcal{O}_{K}^{\times}$ except for at most 
finitely many prime numbers $p$ (see Remark~\ref{rmk:quasi-p-rat-Gras-conj}).
Even if Gras's conjecture is false, it is expected that exceptions will be rare
(see Remark~\ref{rmk:Dirichlet-density-0}).

\begin{prop}\label{prop:quasi-p-rat-subgroup-of-units-eta}
Let $K$ be a number field and let $U$ be a subgroup of $\mathcal{O}_{K}^{\times}$ of finite index.
Let $p$ be a prime number and suppose that $U$ is $p$-saturated in $\mathcal{O}_{K}^{\times}$.
Then $K$ is quasi-$p$-rational precisely when
both of the following conditions are satisfied:
\begin{enumerate}
    \item The diagonal map 
$\lambda_{K,p} : \mu_{p}(K) \longrightarrow \prod_{\mathfrak{p} \in S_{p}(K)} \mu_{p}(K_{\mathfrak{p}})$
is an isomorphism.
    \item $\dim_{\F_p}(\eta_{K,p}(U))=\rank_{p}(\mathcal{O}_{K}^{\times})$, or equivalently, 
    $\overline{\eta_{K,p} \vert_{U}}$ is injective.
\end{enumerate}
\end{prop}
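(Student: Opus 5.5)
By Proposition~\ref{prop:quasi-p-rat-criteria}, $K$ is quasi-$p$-rational exactly when $\lambda_{K,p}$ is an isomorphism and $\overline{\eta}_{K,p}$ is injective. Condition (a) of the statement is literally the first of these. The plan is therefore to show, under the hypothesis that $U$ is $p$-saturated of finite index, that $\overline{\eta}_{K,p}$ is injective if and only if condition (b) holds. I will also check the internal equivalence in (b).

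The key step is to show that $U$ has the same image as $\mathcal{O}_{K}^{\times}$ in $\mathcal{O}_{K}^{\times}/\mathcal{O}_{K}^{\times p}$, that is, $U\mathcal{O}_{K}^{\times p} = \mathcal{O}_{K}^{\times}$. Set $A=\mathcal{O}_{K}^{\times}$. The index $[A:U]$ is finite and coprime to $p$, because $U$ is $p$-saturated of finite index (\S\ref{sec:p-sat-fg-ab-groups}). Take $a \in A$ and let $m=[A:U]$, so $a^{m} \in U$. Choose integers $s,t$ with $sm+tp=1$. Then $a = (a^{m})^{s}(a^{t})^{p} \in U A^{p}$.

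Two consequences follow. First, $\eta_{K,p}(U)=\eta_{K,p}(A)$, since $\eta_{K,p}$ kills $A^{p}$ because the target has exponent $p$. Second, $\mu_{p}(K) = A[p] \leq U$: if $\zeta \in A[p]$ then $\zeta^{m}\in U$ and $\zeta^{p}=1$, and $\gcd(m,p)=1$, so $\zeta\in U$. Now apply the rank--nullity remark after Theorem~\ref{thm:nqd-char-p-rat}: $\overline{\eta}_{K,p}$ is injective if and only if $\dim_{\F_p}\eta_{K,p}(A)=\rank_{p}(A)$. By the first consequence this is the same as $\dim_{\F_p}\eta_{K,p}(U)=\rank_{p}(A)$, which is (b).

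It remains to see that this dimension condition is equivalent to injectivity of $\overline{\eta_{K,p}\vert_{U}}$. By Corollary~\ref{cor:p-sat}(b), injectivity is equivalent to $\dim_{\F_p}\eta_{K,p}(U)=\rank_{p}(U)$, so I must show $\rank_{p}(U)=\rank_{p}(A)$. Since $U$ has finite index, $\rank_{\Z}U=\rank_{\Z}A$. By the second consequence, $U[p]=A[p]$. Equation~\eqref{eq:p-rank-vs-Z-rank} then gives $\rank_{p}(U)=\rank_{p}(A)$.

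No step looks hard. The only point needing care is that finite index coprime to $p$ gives $U A^{p}=A$ and $A[p]\leq U$, which the Bézout argument above settles.
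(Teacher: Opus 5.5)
Your proposal is correct and follows essentially the same route as the paper: the paper's proof notes that $\rank_{p}(U)=\rank_{p}(\mathcal{O}_{K}^{\times})$ and $\eta_{K,p}(U)=\eta_{K,p}(\mathcal{O}_{K}^{\times})$, because $U$ is $p$-saturated of finite index. It then invokes Corollary~\ref{cor:p-sat}(b) and Proposition~\ref{prop:quasi-p-rat-criteria}. Your B\'ezout argument, giving $U\mathcal{O}_{K}^{\times p}=\mathcal{O}_{K}^{\times}$ and $\mu_{p}(K)\leq U$, simply supplies the justification for those two facts, which the paper leaves implicit.
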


\begin{proof}
Since $U$ is $p$-saturated and of finite index in $\mathcal{O}_{K}^{\times}$,
we have (i) $\rank_{p}(U)=\rank_{p}(\mathcal{O}_{K}^{\times})$
and (ii) $\eta_{K,p}(U)=\eta_{K,p}(\mathcal{O}_{K}^{\times})$.
The equivalence of the conditions within part (b) follows from (i) 
and Corollary~\ref{cor:p-sat}(b).
That $K$ is quasi-$p$-rational if and only if both (a) and (b) hold
now follows from (ii) and Proposition~\ref{prop:quasi-p-rat-criteria}. 
\end{proof}

\begin{corollary}\label{cor:quasi-p-rat-subgroup-of-units-schirokauer}
Let $K$ be a number field and let $U$ be a subgroup of $\mathcal{O}_{K}^{\times}$ of finite index.
Let $p$ be a prime number such that $p \nmid 2d_{K}$ and suppose that $U$ is $p$-saturated in $\mathcal{O}_{K}^{\times}$.
Then the following assertions are equivalent:
\begin{enumerate}
    \item $K$ is quasi-$p$-rational.
    \item $\dim_{\F_p}(\psi_{K,p}(U))=\rank_{\Z}(\mathcal{O}_{K}^{\times})$. 
    \item $\overline{\psi_{K,p} \vert_{U}}$ is injective.
\end{enumerate}
\end{corollary}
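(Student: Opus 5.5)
The plan is to reduce this to Proposition~\ref{prop:quasi-p-rat-subgroup-of-units-eta} by translating from $\eta_{K,p}$ to $\psi_{K,p}$, in the same way that Corollary~\ref{cor:char-quasi-p-rat-Schirokauer} was deduced from Proposition~\ref{prop:quasi-p-rat-criteria}.

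Two consequences of the hypothesis $p \nmid 2d_{K}$ will be used throughout. First, condition (a) of Proposition~\ref{prop:quasi-p-rat-subgroup-of-units-eta}, the bijectivity of $\lambda_{K,p}$, holds automatically by Remark~\ref{rmk:roots-of-unity-condition}. Second, $\mu_{p} \not\subset K$, and so $\rank_{\Z}(\mathcal{O}_{K}^{\times}) = \rank_{p}(\mathcal{O}_{K}^{\times})$ by \eqref{eq:p-rank-vs-Z-rank}. Proposition~\ref{prop:quasi-p-rat-subgroup-of-units-eta} then shows that (a) is equivalent to $\dim_{\F_p}(\eta_{K,p}(U)) = \rank_{p}(\mathcal{O}_{K}^{\times})$. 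Lemma~\ref{lem:p-ranks-eta-psi-equal}, applied with $B=U$, gives $\dim_{\F_p}\eta_{K,p}(U) = \dim_{\F_p}\psi_{K,p}(U)$. Combining this with the equality of ranks shows that the condition is exactly (b), which proves (a)$\iff$(b).

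For (b)$\iff$(c), I will use Corollary~\ref{cor:p-sat}(b) with $A=\mathcal{O}_{K}^{\times}$, $B=U$ and $\psi=\psi_{K,p}$. Its hypothesis $A[p]\le B$ holds because $A[p]=\mu_p(K)$ is trivial. The corollary says that $\overline{\psi_{K,p}\vert_U}$ is injective if and only if $\dim_{\F_p}\psi_{K,p}(U) = \rank_p(U)$. It remains to check that $\rank_p(U) = \rank_p(\mathcal{O}_K^{\times}) = \rank_{\Z}(\mathcal{O}_K^{\times})$. Since $U$ has finite index, it has the same $\Z$-rank as $\mathcal{O}_K^{\times}$. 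Its $p$-torsion is trivial, so \eqref{eq:p-rank-vs-Z-rank} gives $\rank_p(U)=\rank_{\Z}(U)=\rank_{\Z}(\mathcal{O}_K^{\times})$. This step does not even need $p$-saturation.

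There is no real obstacle here; the only point requiring care is keeping track of the various ranks, and in particular confirming that both the torsion hypothesis of Corollary~\ref{cor:p-sat} and the rank identifications come from $\mu_p\not\subset K$.
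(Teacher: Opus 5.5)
Your proof is correct and is essentially the paper's argument. The paper gets (a)$\iff$(b) from Corollary~\ref{cor:char-quasi-p-rat-Schirokauer} combined with $\psi_{K,p}(U)=\psi_{K,p}(\mathcal{O}_{K}^{\times})$ (which uses $p$-saturation and finite index), whereas you go through Proposition~\ref{prop:quasi-p-rat-subgroup-of-units-eta} and Lemma~\ref{lem:p-ranks-eta-psi-equal}, which uses the same ingredients in the other order; your (b)$\iff$(c) step is the paper's use of Corollary~\ref{cor:p-sat}(b), and you are right that $\rank_{p}(U)=\rank_{\Z}(\mathcal{O}_{K}^{\times})$ does not need $p$-saturation.
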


\begin{proof}
The hypothesis $p \nmid 2d_{K}$ implies that $\mu_{p} \not\subset K$
and so (i) $\rank_{\Z}(\mathcal{O}_{K}^{\times})=\rank_{p}(\mathcal{O}_{K}^{\times})$.
Since $U$ is $p$-saturated and of finite index in $\mathcal{O}_{K}^{\times}$,
we have (ii) $\rank_{p}(U)=\rank_{p}(\mathcal{O}_{K}^{\times})$
and (iii) $\psi_{K,p}(U)=\psi_{K,p}(\mathcal{O}_{K}^{\times})$.
The equivalence of (b) and (c) follows from (i), (ii) and Corollary~\ref{cor:p-sat}(b).
The equivalence of (a) and (b) follows from (iii) and Corollary~\ref{cor:char-quasi-p-rat-Schirokauer}.
\end{proof}

\subsection{Finding the $p$-th powers of units in a subgroup}
We now consider the situation in which a given subgroup of the unit group $\mathcal{O}_{K}^{\times}$ is not necessarily $p$-saturated. 
Under the assumption that $K$ is quasi-$p$-rational, 
the following result identifies precisely those elements of $U$ that are $p$-th powers in $\mathcal{O}_{K}^{\times}$, and hence indicates where one should attempt to extract $p$-th roots.

\begin{prop}\label{prop:ker-subgroup-of-units}
Let $K$ be a number field and let $U$ be a subgroup of $\mathcal{O}_{K}^{\times}$.
\begin{enumerate}
    \item If $K$ is quasi-$p$-rational then
    $\ker(\eta_{K,p} \vert_{U}) = U \cap \mathcal{O}_{K}^{\times p}$.
    \item If $K$ is quasi-$p$-rational and $p \nmid 2d_{K}$ then $\ker(\psi_{K,p} \vert_{U}) = U \cap \mathcal{O}_{K}^{\times p}$. 
\end{enumerate}
\end{prop}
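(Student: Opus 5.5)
The argument is a direct consequence of Proposition~\ref{prop:quasi-p-rat-criteria} together with Lemma~\ref{lem:p-ranks-eta-psi-equal}.

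For part (a), we prove the two inclusions separately.

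First, $U \cap \mathcal{O}_{K}^{\times p} \subseteq \ker(\eta_{K,p}\vert_{U})$. If $u \in U$ and $u = v^{p}$ with $v \in \mathcal{O}_{K}^{\times}$, then $\eta_{K,p}(u) = \eta_{K,p}(v)^{p}$. Each factor $U_{\mathfrak{p}}/U_{\mathfrak{p}}^{p}$ has exponent dividing $p$, so this is trivial. This inclusion uses no hypothesis on $K$.

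Conversely, suppose $u \in U$ with $\eta_{K,p}(u)=1$. By construction, $\eta_{K,p}$ factors as the projection $\mathcal{O}_{K}^{\times} \to \mathcal{O}_{K}^{\times}/\mathcal{O}_{K}^{\times p}$ followed by $\overline{\eta}_{K,p}$. Hence the class $u\mathcal{O}_{K}^{\times p}$ lies in $\ker(\overline{\eta}_{K,p})$. Since $K$ is quasi-$p$-rational, Proposition~\ref{prop:quasi-p-rat-criteria}(b) says $\overline{\eta}_{K,p}$ is injective. So $u \in \mathcal{O}_{K}^{\times p}$, and therefore $u \in U \cap \mathcal{O}_{K}^{\times p}$. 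This gives
\[
\ker(\eta_{K,p}\vert_{U}) = U \cap \ker(\eta_{K,p}) = U \cap \mathcal{O}_{K}^{\times p}.
\]
In fact the argument shows the stronger identity $\ker(\eta_{K,p}) = \mathcal{O}_{K}^{\times p}$, and the statement for $U$ follows by intersecting.

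For part (b), assume in addition that $p \nmid 2d_{K}$. Lemma~\ref{lem:p-ranks-eta-psi-equal} then gives $\ker(\psi_{K,p}) = \ker(\eta_{K,p})$. Intersecting with $U$ and applying part (a) yields
\[
\ker(\psi_{K,p}\vert_{U}) = U \cap \ker(\psi_{K,p}) = U \cap \ker(\eta_{K,p}) = U \cap \mathcal{O}_{K}^{\times p}.
\]

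The only point needing any care is recognising that $\ker(\eta_{K,p})$ equals the full preimage of $\ker(\overline{\eta}_{K,p})$. This holds because $\eta_{K,p}$ kills $\mathcal{O}_{K}^{\times p}$, as shown in the first inclusion. Note that no saturation or finite-index assumption on $U$ is needed, unlike in Proposition~\ref{prop:quasi-p-rat-subgroup-of-units-eta}.
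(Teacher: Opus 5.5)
Your proposal is correct and follows the paper's own proof: Proposition~\ref{prop:quasi-p-rat-criteria} gives injectivity of $\overline{\eta}_{K,p}$, so $\ker(\eta_{K,p})=\mathcal{O}_{K}^{\times p}$, and Lemma~\ref{lem:p-ranks-eta-psi-equal} transfers this to $\psi_{K,p}$ when $p \nmid 2d_{K}$. Both parts then follow by intersecting with $U$. You simply spell out the inclusions that the paper leaves as ``follow easily''.
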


\begin{proof}
Suppose that $K$ is quasi-$p$-rational. 
Then by Proposition~\ref{prop:quasi-p-rat-criteria} the map $\overline{\eta}_{K,p}$ 
is injective and so $\ker(\eta_{K,p})=\mathcal{O}_{K}^{\times p}$.
Thus if $p \nmid 2d_{K}$ then we also have
$\ker(\psi_{K,p})=\mathcal{O}_{K}^{\times p}$ by Lemma~\ref{lem:p-ranks-eta-psi-equal}.
The desired results now follow easily. 
\end{proof}

\section{Maps on $S$-units and $p$-rationality}\label{sec:maps-on-S-units-p-rat}

\subsection{Maps on $S$-units}\label{subsec:maps-on-S-units}
Let $K$ be a number field and let $S$ be a finite set of finite primes of $K$.
Let $p$ be a prime number. 
We extend the Schirokauer map to $S$-units as follows.

\begin{definition}\label{def:Schirokauer-S-units}
Let  $\epsilon_{p}=\epsilon_{K,p}$ be the exponent of $(\mathcal{O}_{K}/p\mathcal{O}_{K})^{\times}$.
Let $S_{p}(K)$ denote the set of primes of $K$ above $p$.
Assume that $S \cap S_{p}(K) = \emptyset$.
The \textit{Schirokauer map} on $\mathcal{O}_{K,S}^{\times}$ at $p$
is defined to be the group homomorphism
\[
\psi_{K,S,p} \colon \mathcal{O}_{K,S}^\times \longrightarrow 
\frac{\mathcal{O}_{K,S}}{p\mathcal{O}_{K,S}} \cong \frac{\mathcal{O}_{K}}{p\mathcal{O}_{K}}, 
\quad
x \mapsto
\frac{x^{\epsilon_{p}} - 1}{p} \bmod p\mathcal{O}_{K,S}.
\]
If $S=\emptyset$ then we abbreviate $\psi_{K,S,p}$ to $\psi_{K,p}$.
\end{definition}

We now package the above map together with the valuation maps at primes in $S$.

\begin{definition}
Assume that $S \cap S_{p}(K) = \emptyset$.
The \textit{augmented Schirokauer map} on $\mathcal{O}_{K,S}^{\times}$ at $p$
is defined to be the group homomorphism
\[
\theta_{K,S,p} \colon \mathcal{O}_{K,S}^{\times} \longrightarrow 
\frac{\mathcal{O}_{K}}{p\mathcal{O}_{K}} \times \prod_{\mathfrak{p} \in S} \frac{\Z}{p\Z},
\quad x \mapsto \left( \psi_{K,S,p}(x), (\ord_{\mathfrak{p}}(x) \bmod{p})_{\mathfrak{p} \in S} \right),
\]
where $\ord_{\mathfrak{p}} \colon K^{\times} \rightarrow \Z$ is the normalised valuation map at $\mathfrak{p}$.
If $S=\emptyset$ then $\theta_{K,S,p}=\psi_{K,p}$.
\end{definition}

It is also possible to extend the definition of $\eta_{K,p}$ to $S$-units
in an entirely analogous manner, but we omit the details for reasons of brevity.

\subsection{$p$-rationality and $S$-units}
We extend the results of \S \ref{subsec:crit-p-rat-Schirokauer} and 
\S \ref{sec:p-rat-p-sat} to $S$-units.

\begin{prop}\label{prop:S-units-map-gives-p-sat} 
Let $K$ be a number field. 
Let $S$ be a finite set of finite primes of $K$ and 
let $p$ be a prime number such that $S \cap S_{p}(K)=\emptyset$.
Let $U$ be a subgroup of $\mathcal{O}_{K,S}^{\times}$
and assume that $\mu_{p}(K) \leq U$.
If $\dim_{\mathbb{F}_{p}}(\theta_{K,S,p}(U)) = \rank_{p}(\mathcal{O}_{K,S}^{\times})$
then $U$ is $p$-saturated and of finite index in $\mathcal{O}_{K,S}^{\times}$.
\end{prop}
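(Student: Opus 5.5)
This is a direct application of Corollary~\ref{cor:p-sat}(d) with $A=\mathcal{O}_{K,S}^{\times}$, $B=U$, $X=\mathcal{O}_{K}/p\mathcal{O}_{K}\times\prod_{\mathfrak{p}\in S}\Z/p\Z$ and $\psi=\theta_{K,S,p}$. So the plan is to check that these data satisfy the hypotheses of that corollary, and then read off the conclusion.

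First, $\mathcal{O}_{K,S}^{\times}$ is a finitely generated abelian group by Dirichlet's $S$-unit theorem. Second, we need $A[p]\leq B$. The $p$-torsion of $\mathcal{O}_{K,S}^{\times}$ consists of roots of unity of order dividing $p$. Such elements are units of $\mathcal{O}_K$ lying in $K$, so $A[p]=\mu_{p}(K)$, and this is contained in $U$ by hypothesis. Third, $X$ is an $\F_p$-vector space because $\mathcal{O}_K/p\mathcal{O}_K$ is killed by $p$.

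The one point needing care is that $\theta_{K,S,p}$ is a group homomorphism, which the definition asserts. To justify it, use $S\cap S_p(K)=\emptyset$: every $x\in\mathcal{O}_{K,S}^{\times}$ is a unit at every prime above $p$. Hence $x$ gives an element of $(\mathcal{O}_{K,S}/p^2\mathcal{O}_{K,S})^{\times}\cong(\mathcal{O}_K/p^2\mathcal{O}_K)^{\times}$. The factorisation of Remark~\ref{rmk:comp-of-maps-makes-Schirokauer} then goes through verbatim, so $\psi_{K,S,p}$ is a homomorphism. The maps $x\mapsto \ord_{\mathfrak{p}}(x)\bmod p$ are clearly homomorphisms.

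With these checks in place, Corollary~\ref{cor:p-sat}(d) applies. The hypothesis $\dim_{\F_p}(\theta_{K,S,p}(U))=\rank_p(\mathcal{O}_{K,S}^{\times})$ gives directly that $U$ is $p$-saturated and of finite index in $\mathcal{O}_{K,S}^{\times}$. I expect no real obstacle; the only subtle step is the identification $\mathcal{O}_{K,S}/p\mathcal{O}_{K,S}\cong\mathcal{O}_K/p\mathcal{O}_K$ underlying the well-definedness of $\psi_{K,S,p}$, and this follows since $p$ is invertible at no prime of $S$ and localisation at primes away from $p$ does not change the quotient modulo $p$.
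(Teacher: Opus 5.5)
Your proposal is correct and is exactly the paper's proof: apply Corollary~\ref{cor:p-sat}(d) to $\theta_{K,S,p}$ with $A=\mathcal{O}_{K,S}^{\times}$ and $B=U$. Your extra checks (finite generation, $A[p]=\mu_{p}(K)\leq U$, and the homomorphism property) are the hypotheses the paper leaves implicit. One small wording slip: in your last sentence you mean that $p$ is invertible at every prime of $S$ (equivalently, no prime of $S$ lies above $p$), not at none of them.
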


\begin{proof}
Apply Corollary~\ref{cor:p-sat}(d) to $\theta_{K,S,p}$ 
with $A=\mathcal{O}_{K,S}^{\times}$ and $B=U$.    
\end{proof}

We now give a characterisation of $p$-rationality in terms of $S$-units.
Recall from Lemma~\ref{lem:rat-implies-quasi-rat-implies-leopoldt}(c) that under the assumption that $p \nmid h_{K}$, a number field $K$ is $p$-rational if and only 
if it is quasi-$p$-rational. 

\begin{prop}\label{prop:p-rat-criterion-S-units} 
Let $K$ be a number field and let $S$ be a finite set of finite primes of $K$.
Let $p$ be a prime number such that $p \nmid 2d_{K}h_{K}$ and $S \cap S_{p}(K)=\emptyset$.
Then the following assertions are equivalent:
\begin{enumerate}
\item $K$ is $p$-rational.
\item $\dim_{\mathbb{F}_{p}}(\theta_{K,S,p}(\mathcal{O}_{K,S}^{\times})) = \rank_{\Z}(\mathcal{O}_{K,S}^{\times})$.
\item $\ker(\theta_{K,S,p}) = \mathcal{O}_{K,S}^{\times p}$.
\end{enumerate}    
\end{prop}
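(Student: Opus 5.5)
The plan is to reduce to the unit case, Theorem~\ref{thm:new-p-rat-criterion} (equivalently Corollary~\ref{cor:char-quasi-p-rat-Schirokauer} together with Lemma~\ref{lem:rat-implies-quasi-rat-implies-leopoldt}(c)). Since $p \nmid 2d_{K}$, we have $\mu_{p} \not\subset K$. Hence $\mathcal{O}_{K,S}^{\times}$ has no $p$-torsion and $\rank_{p}(\mathcal{O}_{K,S}^{\times}) = \rank_{\Z}(\mathcal{O}_{K,S}^{\times}) = \rank_{\Z}(\mathcal{O}_{K}^{\times}) + |S|$. Corollary~\ref{cor:p-sat}(b), applied with $A=B=\mathcal{O}_{K,S}^{\times}$ and $\psi=\theta_{K,S,p}$, then gives (b)$\Leftrightarrow$(c) at once. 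So the real content is (a)$\Leftrightarrow$(c). Because $p \nmid h_{K}$, statement (a) is equivalent to quasi-$p$-rationality, which by Proposition~\ref{prop:ker-subgroup-of-units}(b) and Corollary~\ref{cor:char-quasi-p-rat-Schirokauer} is equivalent to $\ker(\psi_{K,p}) = \mathcal{O}_{K}^{\times p}$.

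For (c)$\Rightarrow$(a): restrict $\theta_{K,S,p}$ to $\mathcal{O}_{K}^{\times}$. The valuation components vanish there, and $\psi_{K,S,p}$ restricts to $\psi_{K,p}$, since the identification $\mathcal{O}_{K,S}/p\mathcal{O}_{K,S} \cong \mathcal{O}_{K}/p\mathcal{O}_{K}$ is compatible because $S \cap S_{p}(K)=\emptyset$. Hence
\[
\ker(\psi_{K,p}) = \ker(\theta_{K,S,p}) \cap \mathcal{O}_{K}^{\times} = \mathcal{O}_{K,S}^{\times p} \cap \mathcal{O}_{K}^{\times}.
\]
If $x = y^{p}$ is a unit with $y$ an $S$-unit, then $p\,\ord_{\mathfrak{p}}(y)=0$ for all $\mathfrak{p}$, so $y$ is a unit. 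The right-hand side is therefore $\mathcal{O}_{K}^{\times p}$, which gives (a).

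For (a)$\Rightarrow$(c), which I expect to be the main step, one inclusion is clear: $\mathcal{O}_{K,S}^{\times p} \subseteq \ker(\theta_{K,S,p})$ because the target is an $\F_{p}$-vector space. For the other, let $x \in \ker(\theta_{K,S,p})$. Then $\ord_{\mathfrak{p}}(x) = p\,a_{\mathfrak{p}}$ for each $\mathfrak{p} \in S$, so the ideal $x\mathcal{O}_{K} = \mathfrak{a}^{p}$ with $\mathfrak{a} = \prod_{\mathfrak{p}\in S}\mathfrak{p}^{a_{\mathfrak{p}}}$. Since $p \nmid h_{K}$, the class of $\mathfrak{a}$ has order prime to $p$ and is killed by its $p$-th power, so $\mathfrak{a}$ is principal, say $\mathfrak{a} = (\alpha)$ with $\alpha \in \mathcal{O}_{K,S}^{\times}$. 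Then $u = x\alpha^{-p}$ is a global unit. Moreover $\psi_{K,p}(u) = \psi_{K,S,p}(x) - p\,\psi_{K,S,p}(\alpha) = 0$. By (a) and Theorem~\ref{thm:new-p-rat-criterion}, $u \in \mathcal{O}_{K}^{\times p}$, and hence $x \in \mathcal{O}_{K,S}^{\times p}$.

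The only points needing care are the compatibility of $\psi_{K,S,p}$ with $\psi_{K,p}$ and that $\psi_{K,S,p}$ is a homomorphism. Both follow as in Remark~\ref{rmk:comp-of-maps-makes-Schirokauer}, since $p$ is invertible at no prime of $S$ and $S$-units are units at every prime above $p$.
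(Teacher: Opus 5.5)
Your proof is correct and follows essentially the same route as the paper: (b)$\Leftrightarrow$(c) by rank-nullity using $\mu_{p}\not\subset K$, (a)$\Rightarrow$(c) by writing a kernel element as the $p$-th power of an $S$-unit times a global unit (using $p\nmid h_{K}$) and then applying Theorem~\ref{thm:new-p-rat-criterion}, and (c)$\Rightarrow$(a) by restricting to $\mathcal{O}_{K}^{\times}$. One wording slip in your last paragraph: you mean that no prime of $S$ lies above $p$, so $p$ is invertible at \emph{every} prime of $S$, not that $p$ is invertible at no prime of $S$.
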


\begin{proof}
Since $p \nmid 2d_{K}$ we have $\mu_{p} \not \subset K$ and so 
by considering the map on $\mathcal{O}_{K,S}^{\times}/\mathcal{O}_{K,S}^{\times p}$
induced by $\theta_{K,S,p}$ and applying the rank-nullity theorem, we have 
(b) $\Longleftrightarrow$ (c).

Assume that (a) holds. Note that $\mathcal{O}_{K,S}^{\times p} \leq \ker(\theta_{K,S,p})$
since the codomain of $\theta_{K,S,p}$ is a vector space over $\F_{p}$.
Thus, to show (c), it suffices to show the reverse 
containment.
We adapt part of the proof of \cite[Proposition~3.8]{MR1253502}.
Let $v \in \ker(\theta_{K,S,p})$. 
Then $\ord_{\mathfrak{p}}(v) \equiv 0 \bmod{p}$ for all $\mathfrak{p} \in S$.
Moreover,  
$\ord_{\mathfrak{p}}(v) = 0$ for all $\mathfrak{p} \notin S$ since $v \in \mathcal{O}_{K,S}^{\times}$.
Hence $v$ generates the $p$-th power of a fractional ideal of $\mathcal{O}_{K}$.
Since $p \nmid h_{K}$, we conclude that $v$ generates the $p$-th power of a principal fractional ideal 
of $\mathcal{O}_{K}$. Let $x$ be a generator of this ideal. 
Then $x \in \mathcal{O}_{K,S}^{\times}$ and $v = x^{p} u$ for some $u \in \mathcal{O}_{K}^{\times}$.
Thus $\theta_{K,S,p}(u) = \theta_{K,S,p}(v)=0$. 
Hence $\psi_{K,p}(u)=0$ and so $u \in \mathcal{O}_{K}^{\times p}$ by 
Theorem~\ref{thm:new-p-rat-criterion}.
Therefore (c) holds. 

Suppose conversely that (c) holds. Let $u \in \ker(\psi_{K,p})$.
Then $u \in \ker(\theta_{K,S,p})=\mathcal{O}_{K,S}^{\times p}$ and so
$u=v^{p}$ for some $v \in \mathcal{O}_{K,S}^{\times}$. 
But $u \in \mathcal{O}_{K}^{\times}$ and so $v \in \mathcal{O}_{K}^{\times}$.
Thus $u \in \mathcal{O}_{K}^{\times p}$. 
Therefore $\ker(\psi_{K,p}) \leq \mathcal{O}_{K}^{\times p}$.
In fact, this containment is an equality because the codomain
of $\psi_{K,p}$ is a vector space over $\F_{p}$.
Hence (a) holds by Theorem~\ref{thm:new-p-rat-criterion}.
\end{proof}

\begin{corollary}
    Let $K$ be a number field and let $S$ be a finite set of finite primes of $K$.
Let $U$ be a subgroup of $\mathcal{O}_{K,S}^{\times}$.
Let $p$ be a prime number and suppose that $U$ is $p$-saturated and of finite index in $\mathcal{O}_{K,S}^{\times}$.
If $K$ is $p$-rational, $S \cap S_{p}(K)=\emptyset$ and $p \nmid 2d_{K}h_{K}$
then $\dim_{\F_p}(\theta_{K,S,p}(U))=\rank_{\Z}(\mathcal{O}_{K,S}^{\times})$.
\end{corollary}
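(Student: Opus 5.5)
The plan is to combine Proposition~\ref{prop:p-rat-criterion-S-units} with the observation that a $p$-saturated subgroup of finite index has the same image as the whole group under any homomorphism into an $\F_{p}$-vector space. By hypothesis, $K$ is $p$-rational, $S \cap S_{p}(K)=\emptyset$ and $p \nmid 2d_{K}h_{K}$. Proposition~\ref{prop:p-rat-criterion-S-units}, (a)$\Rightarrow$(b), therefore gives
\[
\dim_{\F_{p}}(\theta_{K,S,p}(\mathcal{O}_{K,S}^{\times}))=\rank_{\Z}(\mathcal{O}_{K,S}^{\times}).
\]
It then suffices to show that $\theta_{K,S,p}(U)=\theta_{K,S,p}(\mathcal{O}_{K,S}^{\times})$.

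For this I will show that $U\mathcal{O}_{K,S}^{\times p}=\mathcal{O}_{K,S}^{\times}$. Since the codomain of $\theta_{K,S,p}$ is an $\F_{p}$-vector space, $\mathcal{O}_{K,S}^{\times p}$ lies in the kernel, and the equality of images follows.

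\begin{itemize}
\item The quotient $Q=\mathcal{O}_{K,S}^{\times}/U$ is finite, because $U$ has finite index.
\item $Q$ has no element of order $p$, because $U$ is $p$-saturated. Hence $p \nmid |Q|$.
\item The $p$-th power map is therefore an automorphism of $Q$. So every $x \in \mathcal{O}_{K,S}^{\times}$ satisfies $x \equiv y^{p} \bmod U$ for some $y \in \mathcal{O}_{K,S}^{\times}$, that is, $x \in U\mathcal{O}_{K,S}^{\times p}$.
\end{itemize}

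This is exactly the argument used implicitly for $\psi_{K,p}$ in Corollary~\ref{cor:quasi-p-rat-subgroup-of-units-schirokauer}, item (iii). Combining the two displays gives $\dim_{\F_{p}}(\theta_{K,S,p}(U))=\rank_{\Z}(\mathcal{O}_{K,S}^{\times})$.

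There is no substantial obstacle here; the only point needing care is the image-equality step, which is elementary group theory. Note also that the hypotheses give $\mu_{p}\not\subset K$ (as $p\nmid 2d_{K}$), so $\rank_{\Z}$ and $\rank_{p}$ of $\mathcal{O}_{K,S}^{\times}$ agree. This is consistent with, but not needed for, the argument above.
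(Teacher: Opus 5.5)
Your proof is correct and follows the paper's argument: the paper also applies Proposition~\ref{prop:p-rat-criterion-S-units} (a)$\Rightarrow$(b) and then uses $\theta_{K,S,p}(U)=\theta_{K,S,p}(\mathcal{O}_{K,S}^{\times})$ for a $p$-saturated subgroup of finite index. The only difference is that you justify this image equality explicitly via $U\mathcal{O}_{K,S}^{\times p}=\mathcal{O}_{K,S}^{\times}$, whereas the paper simply asserts it.
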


\begin{proof}
By Proposition~\ref{prop:p-rat-criterion-S-units} we have
$\dim_{\mathbb{F}_{p}}(\theta_{K,S,p}(\mathcal{O}_{K,S}^{\times})) = \rank_{\Z}(\mathcal{O}_{K,S}^{\times})$. 
Since $U$ is $p$-saturated and of finite index in $\mathcal{O}_{K,S}^{\times}$
we have $\theta_{K,S,p}(U)=\theta_{K,S,p}(\mathcal{O}_{K,S}^{\times})$.
Combining these two equalities gives the desired result. 
\end{proof}

\begin{corollary}\label{cor:ker-subgroup-of-S-units}
Let $K$ be a number field and let $S$ be a finite set of finite primes of $K$.
Let $U$ be a subgroup of $\mathcal{O}_{K,S}^{\times}$ and let $p$ be a prime number.
If $K$ is $p$-rational, $p \nmid 2d_{K}h_{K}$ and $S \cap S_{p}(K)=\emptyset$
then $\ker(\theta_{K,S,p} \vert_{U}) = U \cap \mathcal{O}_{K,S}^{\times p}$.
\end{corollary}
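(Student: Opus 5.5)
The plan is to deduce this directly from Proposition~\ref{prop:p-rat-criterion-S-units}, in the same way that Proposition~\ref{prop:ker-subgroup-of-units} was deduced from Proposition~\ref{prop:quasi-p-rat-criteria}. Under the stated hypotheses ($K$ is $p$-rational, $p \nmid 2d_{K}h_{K}$ and $S \cap S_{p}(K)=\emptyset$), the implication (a)~$\Rightarrow$~(c) of Proposition~\ref{prop:p-rat-criterion-S-units} gives
\[
\ker(\theta_{K,S,p}) = \mathcal{O}_{K,S}^{\times p}.
\]
The claimed equality is then simply the intersection of both sides with $U$.

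In detail, we have $\ker(\theta_{K,S,p}\vert_{U}) = U \cap \ker(\theta_{K,S,p})$ by the definition of restriction. Substituting the displayed equality yields $U \cap \mathcal{O}_{K,S}^{\times p}$. Both inclusions are thus immediate. For the reader's reassurance, I would also note separately the trivial inclusion $U \cap \mathcal{O}_{K,S}^{\times p} \subseteq \ker(\theta_{K,S,p}\vert_{U})$. It holds because the codomain of $\theta_{K,S,p}$ is an $\F_{p}$-vector space, so $p$-th powers map to zero. The substantive content is the reverse inclusion, which is exactly the nontrivial direction established in the proof of Proposition~\ref{prop:p-rat-criterion-S-units}.

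There is no real obstacle. The only point to check is that the hypotheses match those of Proposition~\ref{prop:p-rat-criterion-S-units}, and they do verbatim. In particular, no assumption on $U$ is needed: no finite index, no $p$-saturation, and no containment of $\mu_{p}(K)$. In any case $\mu_{p}(K)$ is trivial here, since $p \nmid 2d_{K}$.
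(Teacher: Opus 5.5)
Your proposal is correct and is the paper's own argument. The paper also deduces the corollary immediately from the implication (a)~$\Longrightarrow$~(c) of Proposition~\ref{prop:p-rat-criterion-S-units}, that is, by intersecting $\ker(\theta_{K,S,p}) = \mathcal{O}_{K,S}^{\times p}$ with $U$.
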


\begin{proof}
This follows immediately from the implication (a) $\Longrightarrow$ (c)
of Proposition~\ref{prop:p-rat-criterion-S-units}.
\end{proof}

\section{Conventions on complexity and runtime}\label{sec:coventions-comp-runtime}

We briefly recall the conventions used for the complexity analysis of our algorithms;
see Lenstra~\cite{MR1129315} or Cohen~\cite[\S{}1.1]{MR1228206} for further details.
We use bit complexity throughout.
Thus the size of an input is the total bit length of its chosen encoding, using the encoding conventions specified below.
We write $\poly(x_{1}, \ldots, x_{r})$ for an 
unspecified polynomial with non-negative coefficients in the indicated variables;
the polynomial may vary between occurrences.

We follow the encoding conventions of \cite{MR1129315}.
In particular, the ring of integers $\mathcal{O}_K$ of a number field $K$ of degree $d$ is represented by an 
integral basis $\omega_1,\dotsc,\omega_d \in \mathcal{O}_K$
and the corresponding structure constants $a_{ijk} \in \Z$ satisfying $\omega_i \omega_j = \sum_{k=1}^{d} a_{ijk} \omega_k$.
By~\cite[\S 2.10]{MR1129315}, the basis may be chosen 
so that this encoding has size 
$O(\poly(d, \log\lvert d_K\rvert))$.
An element 
$x = x_{1} \omega_{1} + \dotsb + x_{d} \omega_{d} \in \mathcal{O}_{K}$ is encoded by the list 
$x_1,\dotsc,x_d \in \Z$. 
For convenience, we define $l(x) = d \max_i \log (2 + \lvert x_i\rvert)$, which is an upper bound,
up to a constant factor, for the size of $x$.
A subgroup $U \leq \mathcal{O}_K^{\times}$ will be represented by a chosen non-empty finite generating set. 
If $U = \langle u_1,\dots,u_s\rangle$, we set $l(U) = s \max_{i} l(u_i)$, which is an upper bound on the size of $U$, again up to a constant factor. 
Thus $l(U)$ refers to the size of the supplied generating set, rather than an intrinsic invariant of $U$.

All probabilistic algorithms in this article are understood to be Las Vegas algorithms: they always return a correct answer and runtime bounds are expected runtime bounds. In particular, whenever we describe an algorithm as probabilistic polynomial-time, its expected runtime is polynomial in the size of the input.

\section{Computations involving $\psi_{K,p}$ and $\eta_{K,p}$}\label{sec:comp-maps}

\subsection{Computations involving $\psi_{K,p}$}\label{subsec:compute-psi}
For the convenience of the reader, 
we state and prove the following folklore complexity result 
(see \cite[\S 6.2.3]{zbMATH07854947}, for example).

\begin{lemma}\label{lem:schirokauer-element-runtime}
There exists a probabilistic algorithm that, given a number field $K$ of degree $d$, 
its ring of integers $\mathcal{O}_{K}$, 
a unit $u \in \mathcal{O}_{K}^{\times}$, and a prime number $p$,
determines the image $\psi_{K,p}(u)$ with runtime in 
\[ O(\poly(d, \log\lvert  d_K \rvert, l(u), \log(p))). \] 
\end{lemma}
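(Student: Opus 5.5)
The plan is to evaluate $\psi_{K,p}(u)$ directly from Definition~\ref{def:Schirokauer}. We never form $u^{\epsilon_{p}}$ in $\mathcal{O}_{K}$; instead we raise $u$ to the power $\epsilon_{p}$ in the finite ring $\mathcal{O}_{K}/p^{2}\mathcal{O}_{K}$, following the factorisation \eqref{eq:comp-of-maps-makes-Schirokauer} of Remark~\ref{rmk:comp-of-maps-makes-Schirokauer}. The algorithm has three steps.

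\textbf{Step 1: compute $\epsilon_{p}$, or a suitable multiple.} Using a probabilistic polynomial-time factorisation of polynomials over $\F_{p}$, we decompose $p\mathcal{O}_{K}=\prod_{\mathfrak{p}}\mathfrak{p}^{e_{\mathfrak{p}}}$ with residue degrees $f_{\mathfrak{p}}$. One can use the Buchmann--Lenstra approach, or work with a $p$-maximal order, which is just $\mathcal{O}_{K}$ since it is given. The group $(\mathcal{O}_{K}/\mathfrak{p}^{e})^{\times}$ is the product of $(\mathcal{O}_{K}/\mathfrak{p})^{\times}$ with a $p$-group of exponent $p^{t}$, where $p^{t}\ge e$ suffices, so $t=O(\log d)$. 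Hence
\[
\epsilon_{p}=\mathrm{lcm}_{\mathfrak{p}}\bigl(p^{f_{\mathfrak{p}}}-1\bigr)\cdot p^{t}
\]
for an explicitly computable $t$ bounded in terms of $\max e_{\mathfrak{p}}\le d$.

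The exact value requires care. The first factor is exact. For the $p$-power factor we compute the exponent of $1+\mathfrak{p}$ modulo $\mathfrak{p}^{e}$ by repeated $p$-th powering of an $\F_{p}$-basis of $1+\mathfrak{p}$, which is polynomial. Alternatively, we note that any common multiple $N$ of the exponent with $N/\epsilon_{p}$ prime to $p$ yields $\psi$ up to the automorphism given by multiplication by $N/\epsilon_{p}$. The exact $\epsilon_{p}$ is what the definition demands, so we compute it exactly.

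In either case $\log\epsilon_{p}=O(d\log p)$. Computing the lcm of at most $d$ integers of size $O(d\log p)$ costs $\poly(d,\log p)$.

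\textbf{Step 2: modular exponentiation.} We reduce the coordinates $u_{i}$ of $u$ modulo $p^{2}$, at cost $\poly(l(u),\log p)$. We then compute $u^{\epsilon_{p}}$ in $\mathcal{O}_{K}/p^{2}\mathcal{O}_{K}\cong(\Z/p^{2}\Z)^{d}$ by square-and-multiply, using the structure constants $a_{ijk}$, themselves reduced modulo $p^{2}$. Each multiplication costs $O(d^{3})$ operations on integers of size $O(\log p)$, and reducing the $a_{ijk}$ costs $\poly(d,\log|d_{K}|,\log p)$ since the encoding of $\mathcal{O}_{K}$ has size $\poly(d,\log|d_{K}|)$. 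The number of multiplications is $O(\log\epsilon_{p})=O(d\log p)$. Because $u$ is a unit, the result $w$ lies in $1+p\mathcal{O}_{K}$ modulo $p^{2}$. Subtracting $1$ and dividing each coordinate by $p$ then gives $\psi_{K,p}(u)\in(\Z/p\Z)^{d}=\mathcal{O}_{K}/p\mathcal{O}_{K}$.

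\textbf{Main obstacle.} The only delicate point is Step 1. We must determine $\epsilon_{p}$ exactly, in particular its $p$-part when $p$ ramifies, without factoring the potentially huge integers $p^{f}-1$; no such factorisation is needed, since only an lcm is taken. This rests on the probabilistic prime decomposition of $p$, which makes the algorithm Las Vegas. Everything else is deterministic polynomial-time arithmetic. Summing the costs gives the bound $O(\poly(d,\log|d_{K}|,l(u),\log p))$.
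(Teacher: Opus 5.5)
Your proposal is correct and follows essentially the paper's own proof. You decompose $p\mathcal{O}_K$ in probabilistic polynomial time to read off $\epsilon_p$, using $\log \epsilon_p \le d\log p$. You then compute $u^{\epsilon_p}$ modulo $p^2\mathcal{O}_K$ by binary exponentiation and return $(v-1)/p \bmod p\mathcal{O}_K$ coordinatewise.

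The extra care in your Step 1 is not needed. The minimal $t$ with $p^t \ge \max_{\mathfrak{p}} e_{\mathfrak{p}}$ already gives the exact $p$-part of $\epsilon_p$, because for a uniformiser $\pi_{\mathfrak{p}}$ the element $1+\pi_{\mathfrak{p}}$ has order exactly $p^{\lceil \log_p e_{\mathfrak{p}} \rceil}$ modulo $1+\mathfrak{p}^{e_{\mathfrak{p}}}$.
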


\begin{proof}
The exponent $\epsilon_{p}$ can be determined from the decomposition $p\mathcal{O}_{K} = \mathfrak{p}_{1}^{e_1} \dotsm \mathfrak{p}_{g}^{e_g}$ of $p$ into coprime prime ideal powers 
as well as the inertia degree of each $\mathfrak{p}_{i}$.
This can be computed in probabilistic polynomial time by~\cite[Theorem~4.9]{MR1129315}.
Using binary exponentiation, $v \in \mathcal{O}_{K}$ with 
$v \equiv u^{\epsilon_{p}} \bmod p^{2} \mathcal{O}_{K}$ can be determined
with the claimed runtime since $\log(\epsilon_{p}) \leq [K:\Q] \cdot \log(p)$.
We then return $(v - 1)/p \bmod p\mathcal{O}_{K}$, which is computable coordinatewise in polynomial time.
\end{proof}

\begin{prop}\label{prop:alg-schiro-image}
There exists a probabilistic algorithm that, given a number field $K$ of degree $d$, its ring of integers $\mathcal{O}_{K}$, 
a finitely generated subgroup $U$ of $\mathcal{O}_{K}^{\times}$, and a prime number $p$,
determines an $\F_{p}$-basis of $\psi_{K,p}(U)$ with runtime in
\[ 
O(\poly(d, \log \lvert  d_K \rvert, l(U), \log(p))). 
\] 
\end{prop}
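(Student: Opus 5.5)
The plan is to reduce to Lemma~\ref{lem:schirokauer-element-runtime} together with linear algebra over $\F_{p}$. Write $U = \langle u_{1}, \ldots, u_{s} \rangle$ for the supplied generating set, so that $s \leq l(U)$ and $l(u_{i}) \leq l(U)$ for each $i$. Since $\psi_{K,p}$ is a group homomorphism into an $\F_{p}$-vector space (Remark~\ref{rmk:comp-of-maps-makes-Schirokauer}), we have $\psi_{K,p}(U) = \Span_{\F_{p}}\{\psi_{K,p}(u_{1}), \ldots, \psi_{K,p}(u_{s})\}$. Thus it suffices to compute each image $\psi_{K,p}(u_{i})$ and then extract a basis of their span.

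First, compute the factorisation data of $p\mathcal{O}_{K}$, and hence $\epsilon_{p}$, a single time; by \cite[Theorem~4.9]{MR1129315} this takes probabilistic polynomial time. Then, for each $i$, apply Lemma~\ref{lem:schirokauer-element-runtime} to $u_{i}$. This costs $O(\poly(d, \log\lvert d_K\rvert, l(u_{i}), \log(p)))$ per generator, and hence $O(\poly(d, \log\lvert d_K\rvert, l(U), \log(p)))$ in total, because there are $s \leq l(U)$ generators. Each image is returned as a coordinate vector in $\F_{p}^{d}$ with respect to the basis of $\mathcal{O}_{K}/p\mathcal{O}_{K}$ induced by the integral basis $\omega_{1}, \ldots, \omega_{d}$.

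Next, form the $s \times d$ matrix over $\F_{p}$ whose rows are these vectors, and compute its row echelon form by Gaussian elimination. This uses $O(s d \min(s,d))$ field operations, each of cost $\poly(\log p)$. The non-zero rows of the echelon form give an $\F_{p}$-basis of $\psi_{K,p}(U)$. If one wants the basis to consist of images of elements of $U$, one can instead record which original rows are pivot rows; this does not change the bound.

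The only point requiring care is making sure that everything is measured in the input size, in particular that $\log \epsilon_{p} \leq d\log p$ and that the number of generators is absorbed into $l(U)$. Both are immediate from the conventions of \S\ref{sec:coventions-comp-runtime} and from the proof of Lemma~\ref{lem:schirokauer-element-runtime}. The algorithm is Las Vegas only because of the prime decomposition step; every other step is deterministic.
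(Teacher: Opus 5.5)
Your proposal is correct and takes essentially the same approach as the paper: compute $\psi_{K,p}(u_{i})$ for each supplied generator via Lemma~\ref{lem:schirokauer-element-runtime}, assemble these coordinate vectors into an $s \times d$ matrix over $\F_{p}$, and extract a basis of its row space by Gaussian elimination. Your extra remarks (computing $\epsilon_{p}$ only once, and absorbing $s$ and each $l(u_{i})$ into $l(U)$) are harmless refinements of the same argument.
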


\begin{proof}
Let $\omega_1,\dotsc,\omega_d$ be the given $\Z$-basis of $\mathcal{O}_{K}$
and note that the images $\overline \omega_1,\dotsc,\overline \omega_d$ 
form an $\F_{p}$-basis of $\mathcal{O}_{K}/p\mathcal{O}_{K}$.
Let $u_{1},\dotsc,u_{s} \in U$ be the given generators of $U$.
It follows from Lemma~\ref{lem:schirokauer-element-runtime} that we can find a matrix $M = (m_{ij})_{ij} \in \operatorname{M}_{s \times d}(\F_{p})$ such that $\psi_{K,p}(u_{i}) = \sum_{j=1}^{d} m_{ij} \overline{\omega}_j$ for $1 \leq i \leq s$ in probabilistic polynomial time in $\log(p)$ and the size of $U$.
Note that a basis of the row space of $M$ yields a basis of $\psi_{K, p}(U)$.
This can be determined using Gaussian elimination, which can be performed with runtime 
in $\poly(s,d,\log(p))$ (see~\cite[Chapter~2]{storjohann}).
\end{proof}

\begin{corollary}\label{cor:alg-schiro-kernel}
There exists a probabilistic algorithm that, given a number field $K$ of degree $d$, its ring of integers $\mathcal{O}_{K}$, 
a finitely generated subgroup $U$ of $\mathcal{O}_{K}^{\times}$, and a prime number $p$,
determines an $\F_p$-basis of $\ker(\overline{\psi_{K,p}|_U})$ with runtime in
\[ 
O(\poly(d, \log \lvert  d_K \rvert, l(U), \log(p))). 
\]     
\end{corollary}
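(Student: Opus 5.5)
We reuse the matrix from the proof of Proposition~\ref{prop:alg-schiro-image}. Let $u_{1},\dotsc,u_{s}$ be the given generators of $U$. By Lemma~\ref{lem:schirokauer-element-runtime} we compute $M=(m_{ij}) \in \operatorname{M}_{s\times d}(\F_{p})$ with $\psi_{K,p}(u_{i})=\sum_{j} m_{ij}\overline{\omega}_{j}$ in the stated time. The issue is that $\ker(\overline{\psi_{K,p}|_U})$ lives in $U/U^{p}$, and the $u_i$ need not give an $\F_p$-basis of $U/U^{p}$. The generators may be dependent, and $U$ may contain torsion, in particular $\mu_{p}(K)$. So the left kernel of $M$ is a subspace of $\F_{p}^{s}$, and it maps onto $\ker(\overline{\psi_{K,p}|_U})$ under the surjection $\pi\colon \F_{p}^{s}\to U/U^{p}$, $e_{i}\mapsto u_{i}U^{p}$. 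We therefore need to describe $\ker(\pi)$, that is, the relations among the $u_i$ modulo $p$-th powers.

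The plan has three steps.

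First, compute the relation lattice $\Lambda=\{a\in\Z^{s}:\prod u_{i}^{a_{i}}=1\}$, so that $U\cong \Z^{s}/\Lambda$. This gives $U/U^{p}\cong \F_{p}^{s}/\overline{\Lambda}$, where $\overline{\Lambda}$ is the image of $\Lambda$ modulo $p$; hence $\ker(\pi)=\overline{\Lambda}$. To get $\Lambda$, use logarithmic embeddings to find the relations modulo torsion, and then handle the torsion part using the finite group $\mu(K)$, whose order is bounded polynomially in $d$.

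Second, compute a basis of the left kernel $N=\{x\in\F_{p}^{s}: xM=0\}$ by Gaussian elimination in time $\poly(s,d,\log p)$. Note that $\overline{\Lambda}\subseteq N$, since relations map to $0$.

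Third, compute a complement of $\overline{\Lambda}$ inside $N$, again by Gaussian elimination. The images in $U/U^{p}$ of the resulting vectors, i.e.\ the elements $\prod u_i^{x_i}$, form an $\F_p$-basis of $\ker(\overline{\psi_{K,p}|_U})$, because this kernel equals $\pi(N)\cong N/\overline{\Lambda}$.

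The main obstacle is the first step: computing $\Lambda$ in time polynomial in $l(U)$, with no dependence on a regulator. For the torsion-free part I would use the known polynomial-time algorithm for multiplicative relations among algebraic numbers given in compact form (in the style of Ge's algorithm, or via LLL on logarithmic embeddings computed to a precision polynomial in $l(U)$), and I would appeal to it as a black box. If one only wants the result up to this computation, a simpler route is available when $U$ is given by a basis: if the $u_i$ are multiplicatively independent modulo torsion and $U$ contains no $p$-torsion, then $\pi$ is an isomorphism and $N$ is already the answer.
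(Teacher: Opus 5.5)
Your proposal is correct and follows essentially the paper's route: both compute the relation lattice of the generators with Ge's algorithm, use $\F_p^s/\overline{\Lambda}\cong U/U^p$, and finish with Gaussian elimination. The only difference is the order of operations. The paper first extracts a basis $v_1,\dotsc,v_r$ of $U/U^p$ (a complement of $\overline{\Lambda}$ in $\F_p^s$) and then takes the left kernel of the resulting $r\times d$ matrix, whereas you take the left kernel $N$ of the $s\times d$ matrix first and then a complement of $\overline{\Lambda}$ inside $N$; the two are equivalent.
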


\begin{proof}
Suppose that $v_1,\dotsc,v_r\in U$ are such that
$\overline{v}_1,\dotsc,\overline{v}_r$ form an $\mathbb{F}_p$-basis
of $U/U^p$. Computing the images $\psi_{K,p}(v_i)$ as in the proof of
Proposition~11.2 gives a matrix whose left kernel is naturally
isomorphic to
  $\ker(\overline{\psi_{K,p}|_U})$.
A basis of this kernel can therefore be determined by Gaussian
elimination.

It remains to compute such elements $v_1,\dotsc,v_r$ from the supplied
generators $u_1,\dotsc,u_s$ of $U$.
Let $L$ be the kernel of the map $\mu \colon \Z^s \to U$, $e_i \mapsto u_i$.
A basis of $L$ can be computed in polynomial time by
\cite[Theorem~1.2]{ge}. Let $\overline{L}$ denote the image of $L$
in $\mathbb{F}_p^s$. Since
  $\mu^{-1}(U^p)=L+p\mathbb{Z}^s$,
the map $\mu$ induces an isomorphism
  $\mathbb{F}_p^s/\overline{L} \rightarrow U/U^p$.
Thus, an $\F_p$-basis of this quotient can again be obtained using Gaussian elimination.
All the computations involved have
runtime polynomial in
$d$, $\log|d_K|$, $l(U)$ and $\log(p)$, as required.
\end{proof}

\begin{remark}
In our computational model, elements of $\mathcal{O}_K$, and thus of $\mathcal{O}_K^\times$, are represented via coordinates with respect to a fixed integral basis. In the context of unit group computations, it is necessary to represent elements of $\mathcal{O}_K^\times$ in the form $v_1^{e_1} \dotsm v_n^{e_n}$ with $v_i \in K^\times$ and $e_i \in \Z$; see \cite[\S 5.4]{MR3207410}. It is straightforward to show that the runtime statements of Proposition~\ref{prop:alg-schiro-image} and Corollary~\ref{cor:alg-schiro-kernel} remain true using this alternative representation (see~\cite[Remark~4.19]{MR4440537}).
\end{remark}

\subsection{Improvements in the Galois case}\label{subsec:Schirokauer-improvement-Galois}
Suppose that $K/\Q$ is a finite Galois extension and let $G=\Gal(K/\Q)$.
Let $r=\rank_{\Z}(\mathcal{O}_{K}^{\times})$.
If $K$ is real then let $\sigma_{1}, \ldots, \sigma_{r+1}$ be the elements of $G$. 
If $K$ is complex then let 
$\sigma_{1}, \ldots, \sigma_{r+1}, \tau \circ \sigma_{1}, \ldots, 
\tau \circ \sigma_{r+1}$ be the elements of $G$, where $\tau$ is some fixed
choice of complex conjugation restricted to $K$. 
Let $U$ be a fixed $G$-stable subgroup of $\mathcal{O}_{K}^{\times}$ of finite index.

Assume that we are given an explicit element $u \in U$ such that 
the subgroup $V$ generated by $(\sigma_{i}(u))_{1 \leq i \leq r}$ has finite index in $U$,
or equivalently, finite index in $\mathcal{O}_{K}^{\times}$.
Such an element $u$ is called a Minkowski unit and always exists
(see \cite[Lemma~5.27]{MR1421575}, for example). 
In practice, such an element can usually be found quickly by testing random combinations of a generating set of $U$.
In certain circumstances, such an element $u$ and the index $[U:V]$ are already known explicitly
(see Remark~\ref{rmk:cyclo-Galois-action}).

For this paragraph, fix a prime number $p$ and henceforth abbreviate $\psi_{K,p}$ to $\psi$.
Assume that $p \nmid [U:V]$. 
Then $\psi(U)=\psi(V) = \Span_{\F_{p}}\{\psi(\sigma_{1}(u)), \ldots, \psi(\sigma_{r}(u)) \}$. 
The following improvements can be made to the algorithm of 
Proposition~\ref{prop:alg-schiro-image} in this situation.
\begin{enumerate}
    \item 
    In practice, the most time-consuming step is the determination of $x^{\epsilon} \bmod p^{2} \mathcal{O}_{K}$ during the evaluation of $\psi(x)$.
    Any element $\sigma \in \Gal(K/\Q)$ induces an automorphism of 
    $\mathcal{O}_{K}/p\mathcal{O}_{K}$ which we denote by $\overline{\sigma}$.
    Together with the definition of $\psi$ it follows that $\psi \circ \sigma = \overline{\sigma} \circ \psi$.
    Thus in the present situation it suffices to determine $\psi(u)$
    and $\overline{\sigma_{1}}, \ldots, \overline{\sigma_{r}}$,
    and then use that $\psi(\sigma_{i}(u)) 
    = \overline{\sigma}_{i}(\psi(u))$ for $1 \leq i \leq r$.
    \item
    In the case that $p$ is inert in $K/\Q$, the quotient $\mathcal{O}_{K}/p\mathcal{O}_{K}$ is isomorphic to $\F_{q}$ where $q=p^{[K:\Q]}$. 
    Moreover, $\Gal(K/\Q)$ is isomorphic to $\Gal(\F_{q}/\F_{p})$,
    which is generated by the Frobenius automorphism $x \mapsto x^{p}$.
    Hence, for each $i$, the automorphism $\overline{\sigma}_{i}$ is a power of Frobenius.
    Thus, after computing $\psi(u)$, each 
    $\psi(\sigma_{i}(u)) = \overline{\sigma}_{i}(\psi(u))$ can be obtained
    by iterated application of Frobenius.
\end{enumerate}

Now suppose that we wish to compute 
$\dim_{\mathbb{F}_{p}}(\psi_{K,p}(U))$ for many prime numbers $p$.
Let $u \in U$ be an explicit element such that 
the subgroup $V$ generated by $(\sigma_{i}(u))_{1 \leq i \leq r}$ has finite index in $U$.
Note that $u$ may already be known or computed as described above.
If the index $[U:V]$ is already known, then for 
$p \nmid [U:V]$ proceed as above, and for $p \mid [U:V]$, 
revert to Proposition~\ref{prop:alg-schiro-image}. 
If the index $[U:V]$ is not already known, then it may be expensive to compute.
Under the additional assumption that $p \nmid 2d_{K}$, we may proceed as follows.
Compute $\dim_{\F_{p}}(\psi_{K,p}(V))$ as above. 
We have 
\[
\dim_{\mathbb{F}_{p}}(\psi_{K,p}(V))
\leq
\dim_{\mathbb{F}_{p}}(\psi_{K,p}(U))
\leq 
\rank_{\Z}(\mathcal{O}_{K}^{\times}),
\]
so if the outer two terms are equal then we have determined the middle term;
note that in this case both $V$ and $U$ must be $p$-saturated in $\mathcal{O}_{K}^{\times}$
by Proposition~\ref{prop:psi-eta-give-p-sat}(a) and so $p \nmid [U:V]$.
If $\dim_{\F_{p}}(\psi_{K,p}(V)) < \rank_{\Z}(\mathcal{O}_{K}^{\times})$ then we may revert to 
Proposition~\ref{prop:alg-schiro-image}. 
By Corollary~\ref{cor:quasi-p-rat-subgroup-of-units-schirokauer}, this situation 
can only occur if either $V$ is not $p$-saturated in $\mathcal{O}_{K}^{\times}$ 
or $K$ is not quasi-$p$-rational.

\subsection{Computations involving $\eta_{K, p}$}\label{subsec:compute-eta}
Recall from \S \ref{subsec:nec-suff-p-rat} that $\eta_{K, p}$ is the diagonal map
\[ 
\textstyle{\mathcal{O}_{K}^{\times} 
\longrightarrow \prod_{\mathfrak{p} \in S_{p}(K)} 
U_{\mathfrak{p}}/U_{\mathfrak{p}}^{p}}. 
\]
Fix $\mathfrak{p} \in S_p(K)$ and let $m  = \dim_{\F_{p}} (U_{\mathfrak{p}}/U_{\mathfrak{p}}^{p})$.
Algorithms for computing an $\F_{p}$-basis of $U_{\mathfrak{p}}/U_{\mathfrak{p}}^p$ and images under the induced map $U_{\mathfrak{p}}/U_{\mathfrak{p}}^p \to \F_{p}^{m}$ have been described in~\cite{MR5022766}.
Although the runtime of these algorithms was not analysed in loc.\ cit., 
it seems plausible that these tasks can be 
performed in polynomial time based on the following observation.
The naive algorithm~\cite[Algorithm~1]{MR5022766} exploits the fact that $U_{\mathfrak{p}}/U_{\mathfrak{p}}^{p} \simeq Q^{\times}/Q^{\times p}$, where $Q = \mathcal{O}_{K}/\mathfrak{p}^{l}$ is an appropriate residue ring.
The techniques from~\cite[\S 3]{MR3952015} show that the computation of $Q^{\times}/Q^{\times p }$ can be performed without finding primitive elements of the residue field $\mathcal{O}_{K}/\mathfrak{p}$ or solving discrete logarithms in it.

\section{A $p$-rationality algorithm for real cyclotomic fields}\label{sec:alg-real-cyclo}

\subsection{Determining a basis for $C_{n}^{+}$}\label{subsec:basis-for-cyclo-units}
Gold and Kim \cite{MR1008802} have given an explicit basis (a minimal set of generators) of $C_{n}$, and this can easily be turned into an algorithm.
Define a map $\phi_{n} \colon C_{n} \to \langle \pm \zeta_{n} \rangle$ by 
$u \mapsto u/\overline{u}$, where $\overline{u}$ denotes the complex conjugate of $u$. 
This is a well-defined group homomorphism. 
Note that $C_{n}^{+}$ is the subset of $C_{n}$ consisting of elements fixed by complex conjugation.
Hence $C_{n}^{+} = \ker(\phi_{n})$. 
Therefore we can determine a basis of $C_{n}^{+}$ using \cite[Algorithm~4.1.11]{MR1728313}.

\subsection{An algorithm for testing the $p$-rationality of $\Q(\zeta_{n})^{+}$}
\label{subesc:p-rat-Qzeta_n+}
The key advantage of the following algorithm is that 
it does not require the (very expensive) computation of the 
class number $h_{n}^{+}$ or full unit group $E_{n}^{+}$.
Recall that $\eta_{\Q(\zeta_{n})^{+},p}$ denotes the map of 
Definition~\ref{def:various-maps}(b)
and $\psi_{\Q(\zeta_{n})^{+},p}$ denotes the Schirokauer map of 
Definition~\ref{def:Schirokauer}.

\begin{algorithm}\label{alg:p-rat-cyclo}
Let $n \in \Z_{\geq 3}$ with $n \not \equiv 2 \bmod 4$.
Given $n$ and a prime number $p$, the following algorithm returns 
\ensuremath{\mathsf{true}} if $\Q(\zeta_{n})^{+}$ is $p$-rational 
and returns \ensuremath{\mathsf{false}} otherwise. 
\renewcommand{\labelenumi}{(\arabic{enumi})}
\begin{enumerate}
\item Compute a basis for the cyclotomic units $C_{n}^{+}$.
\item\label{step:p-nmid-n} If $p \nmid 2n$ then
\begin{enumerate}
    \item Compute $\dim_{\mathbb{F}_{p}}(\psi_{\Q(\zeta_{n})^{+},p}(C_{n}^{+}))$. 
    \item If $\dim_{\mathbb{F}_{p}}(\psi_{\Q(\zeta_{n})^{+},p}(C_{n}^{+}))=\frac{1}{2}\varphi(n)-1$ 
    then return \ensuremath{\mathsf{true}};
    otherwise return \ensuremath{\mathsf{false}}.
\end{enumerate}
\item\label{step:p=2}  If $p=2$ then
\begin{enumerate}
    \item If there is more than one prime of $\Q(\zeta_{n})^{+}$ above $2$
    then return \ensuremath{\mathsf{false}}.
    \item Compute $\dim_{\mathbb{F}_{2}}(\eta_{\Q(\zeta_{n})^{+},2}(C_{n}^{+}))$. 
    \item If $\dim_{\mathbb{F}_{2}}(\eta_{\Q(\zeta_{n})^{+},2}(C_{n}^{+}))=\frac{1}{2}\varphi(n)$ 
    then return \ensuremath{\mathsf{true}};
    otherwise return \ensuremath{\mathsf{false}}.
\end{enumerate}
\item\label{step:p-odd-mid-n}  If $p>2$ and $p \mid n$ then
\begin{enumerate}
    \item If there exists a prime of $\Q(\zeta_{n})^{+}$ above $p$ that splits in $\Q(\zeta_{n})/\Q(\zeta_{n})^{+}$ then return \ensuremath{\mathsf{false}}.
    \item Compute $\dim_{\mathbb{F}_{p}}(\eta_{\Q(\zeta_{n})^{+},p}(C_{n}^{+}))$. 
    \item If $\dim_{\mathbb{F}_{p}}(\eta_{\Q(\zeta_{n})^{+},p}(C_{n}^{+}))=\frac{1}{2}\varphi(n)-1$ 
    then return \ensuremath{\mathsf{true}};
    otherwise return \ensuremath{\mathsf{false}}.
\end{enumerate}
\end{enumerate}
\end{algorithm}

\begin{proof}[Proof of correctness]
The correctness of the output in Steps (2), (3)
and (4) follows from Theorems~\ref{thm:p-rat-cyclo-Schirokauer},
\ref{thm:2-rat-cyclo} and \ref{thm:p-odd-rat-cyclo}, respectively.
\end{proof}

\begin{proof}[Further details on each step]
Step~(1) can be performed as described in \S \ref{subsec:basis-for-cyclo-units}.
In Step~(2), $\psi_{\Q(\zeta_{n})^{+},p}(C_{n}^{+})$ can be computed 
as described in \S \ref{subsec:compute-psi}.
In Steps (3) and (4), $\eta_{\Q(\zeta_{n})^{+},p}(C_{n}^{+})$ can be computed 
as described in \S \ref{subsec:compute-eta}.
Since  the ring of integers of $\Q(\zeta_{n})^{+}$ is $\Z[\zeta_{n}+\zeta_{n}^{-1}]$,
the prime decompositions required in Steps (3)(a) and (4)(a)
can be computed using \cite[Theorem~4.8.13]{MR1228206}. 
\end{proof}

\begin{remark}\label{rmk:alg-cyclo-p-rat-complexity}
In the case $p \nmid 2n$, that is, in Step (2), 
$\psi_{\Q(\zeta_{n})^{+},p}(C_{n}^{+})$ can be computed in time $O(\poly(\log(p)))$
(for fixed $n$) by Proposition~\ref{prop:alg-schiro-image}.
In principle, one can instead compute
$\eta_{\Q(\zeta_{n})^{+},p}(C_{n}^{+})$ (see Theorem 
\ref{thm:p-odd-rat-cyclo}), but this is significantly slower.

In the case $p \mid 2n$, that is, in Steps (3) and (4), 
a more efficient but more complicated procedure 
can be given that uses Lemma~\ref{lem:2-rat} and the results of 
\S \ref{subsec:real-cyclo-aux}. 
This modified version has been implemented in \textsc{Hecke}~\cite{MR3703682}.
\end{remark}

\begin{remark}
Recall that Lemma~\ref{lem:real-cyclo-p-rat-class-number} says that if $\Q(\zeta_{n})^{+}$
is $p$-rational then $p \nmid h_{n}^{+}$ (the converse is false).
Thus in many cases Algorithm~\ref{alg:p-rat-cyclo} can be used to show that 
$p \nmid h_{n}^{+}$.
\end{remark}

\begin{remark}\label{rmk:cyclo-Galois-action}
Step (2)(a) may be sped up using the method described in 
\S \ref{subsec:Schirokauer-improvement-Galois}.
In the case that $n$ is an odd prime power, we may use the fact that 
there is an explicit element of $C_{n}^{+}$ that generates $C_{n}^{+} / \{\pm 1\}$ 
as a module over $\Z[\Gal(\Q(\zeta_{n})^{+}/\Q)]$ (see \cite[Proposition~8.11]{MR1421575}).
\end{remark}

\begin{remark}
There is an algorithm analogous to 
Algorithm~\ref{alg:p-rat-cyclo} to test the $p$-rationality 
of arbitrary real abelian number fields that uses the results of \S \ref{sec:real-abelian-number-fields}, though certain prime numbers must be excluded. 
However, this has not been implemented, partly because the degree of $\Q(\zeta_{n})^{+}$
can be much larger than that of a real abelian field $K$ of conductor $n$, making the computation of $C_{K}$ difficult in practice. 
Of course, in one direction one can use the fact that 
the $p$-rationality of $\Q(\zeta_{n})^{+}$
implies the $p$-rationality of $K$ by Lemma~\ref{lem:p-rat-subfields}, 
but the same issue may still be an obstacle.
\end{remark}

\begin{remark}\label{rmk:Lim-alg}
Based on work of Lim 
\cite[Proposition~4.1]{MR4330938},
for a real cyclic field $F$ of odd prime degree, there exists an algorithm 
to determine whether $F$ is $p$-rational, provided that $p$ does not divide
the conductor of $F$ (see also \S\ref{subsec:Lim-criterion}).
However, this requires the computation of a sum involving at least $pf$ terms, 
where $f$ is the conductor of $F$, and so is asymptotically slower
than Algorithm~\ref{alg:p-rat-cyclo}, at least in the case that $p \nmid 2n$
(see Remark~\ref{rmk:alg-cyclo-p-rat-complexity}).
\end{remark}

\begin{remark}\label{rmk:p-rat-full-cyclo}
The relative class number $h_{n}^{-}=h_{\Q(\zeta_{n})}^{-}$ can be determined algorithmically;
see \cite[Tables, \S 3]{MR1421575} for these values for $1 \leq \varphi(n) \leq 256$ and $n \not \equiv 2 \bmod{4}$, and the references given therein
for details of the algorithms. 
By Corollary~\ref{cor:CM-tot-real-p-rat} we have that if
$p \nmid 2nh_{n}^{-}$ then $\Q(\zeta_{n})$ is $p$-rational if and only if
$\Q(\zeta_{n})^{+}$ is $p$-rational. Thus by combining 
Algorithm~\ref{alg:p-rat-cyclo} with
the algorithms for determining $h_{n}^{-}$ 
(or using the aforementioned table), the $p$-rationality of $\Q(\zeta_{n})$
can be determined in many cases. See Example~\ref{ex:cyclic-p-rat} for an illustration.
\end{remark}

\subsection{Computational results}\label{subsec:comp-results-real-cyclo}

We have applied Algorithm~\ref{alg:p-rat-cyclo} to determine 
the $p$-rationality of $\Q(\zeta_{n})^{+}$ for all $5 \leq n \leq 1000$, $n \not\equiv 2 \bmod 4$ and all primes $p < 10^{7}$. 
Since the full data set is too large to reproduce here, we provide the following condensed version.\footnote{The full data set is available at \url{github.com/thofma/p-rationality/}.}
For a conductor $n \in \Z_{\geq 5}$ with $n \not \equiv 2 \bmod{4}$, let
\[ 
c(n) = \lvert \{ p \text{ prime} \mid p < 10^7 \text{ and } \Q(\zeta_{n})^{+} \text{ not $p$-rational} \}\vert 
\]
and for $c \in \Z_{\geq 0}$ let
\[ 
N(c) = \{ 5 \leq n \leq 1000 \mid n \not\equiv 2\bmod 4 \text{ and } c(n) = c\} 
\]
be the set of conductors $n$ 
such that $\Q(\zeta_n)^{+}$ is not $p$-rational for exactly $c$ primes $p < 10^{7}$.
The distribution of $|N(c)|$ is plotted in Figure~\ref{fig:cyclo-result}.

\begin{figure}[ht!]
    \includegraphics[scale=0.88]{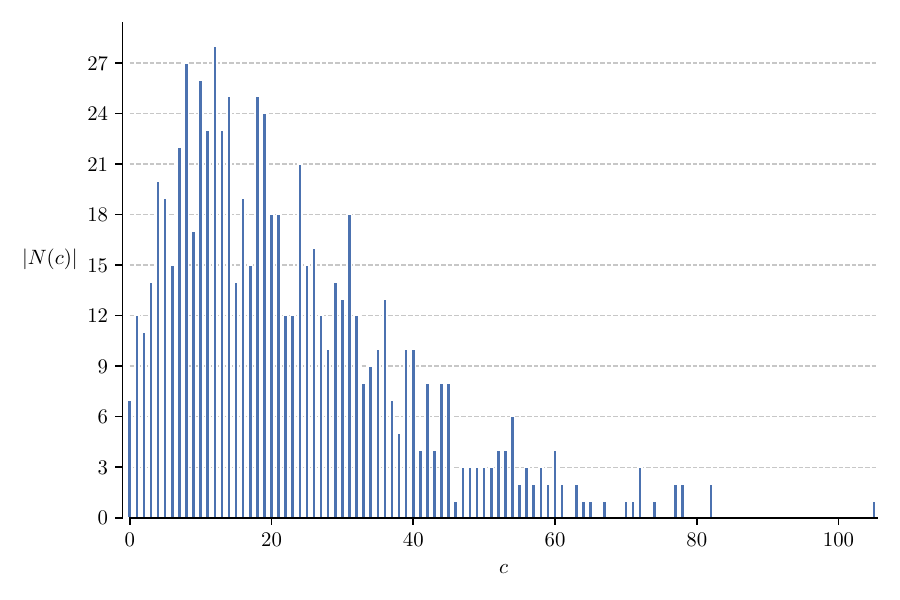}
    \vspace{-1.5em}
    \caption{Number $|N(c)|$ of conductors $n\not\equiv 2 \bmod{4}$ with $5 \leq n \leq 1000$ such that $\Q(\zeta_{n})^{+}$
    is not $p$-rational for exactly $c$ primes $p < 10^7$.}
    \label{fig:cyclo-result}
\end{figure}

\begin{remark}
By \cite[discussion after (3.9)]{MR4129146}, the field
$\Q(\zeta_{5})^{+}=\Q(\sqrt{5})$ is $p$-rational for all primes $p < 6.7 \times 10^{15}$
(see also \cite[\S 4.4]{MR3512524}).
\end{remark}

\begin{remark}
The maximum value of $c(n)$ for $n \leq 1000$ is $105$, which occurs for $n=819$.
Since $\pi(10^{7})=664579$, even in this worst case, $\Q(\zeta_{n})^{+}$
fails to be $p$-rational for fewer than one in every $6329$ prime numbers $p < 10^{7}$.
Thus, whether Gras's Conjecture~\ref{conj:Gras} is true or not, 
this supports the notion that failure of $p$-rationality is `rare' in practice.
\end{remark}

\begin{remark}
For $n \in \{ 5, 179, 227, 479, 503, 719, 983 \}$, 
the field $\Q(\zeta_{n})^{+}$ is $p$-rational for every prime number $p < 10^{7}$.
An interesting observation is that all these values of $n$ are safe primes.
This is perhaps partially explained by the fact that
in this case $[\Q(\zeta_{n})^{+}:\Q]$ is prime and so failure of $p$-rationality
cannot be inherited from proper subfields. Indeed, whenever $n<1000$ is a safe
prime, there are at most four prime numbers $p<10^{7}$ such that $\Q(\zeta_{n})^{+}$
is not $p$-rational. Of course, the full data set is also consistent with 
Theorem~\ref{thm:Lim-safe} due to Lim \cite[Theorem~4.6]{MR4330938}.
\end{remark}

\section{An algorithm for the $p$-saturation of units}\label{sec:algs-p-sat-units}

The following algorithm incorporates a modification of the algorithm of
\cite[\S 6]{zbMATH07854947}, as well as \cite[Algorithm~4.9]{MR4440537}, 
which itself builds on the results of \cite{MR498486, MR1273458,MR3207410}.
It can be generalised to deal with the case of $S$-units thanks to the results
of \S \ref{sec:maps-on-S-units-p-rat}, but we choose not to give the details here
for ease of exposition.

\begin{algorithm}\label{alg:check-saturated-or-extract-roots}
Let $K$ be a number field and let $U$ be a subgroup of $\mathcal{O}_{K}^{\times}$
of finite index containing $\mu(K)$.
Let $p$ be a prime number.
\begin{itemize}
    \item Let $\overline{\psi_{K,p}\vert_{U}} : U/U^{p} \longrightarrow \mathcal{O}_{K}/p\mathcal{O}_{K}$ be the map induced by $\psi_{K,p}\vert_{U}$,
    \item Let $\overline{\eta_{K,p}\vert_{U}} : U/U^{p} \longrightarrow \prod_{\mathfrak{p} \in S_{p}(K)} U_{\mathfrak{p}}/ U_{\mathfrak{p}}^{p}$ be the map induced by $\eta_{K,p}\vert_{U}$.
    \item For a prime ideal $\mathfrak{q}$ of $\mathcal{O}_{K}$ with $\mathfrak{q} \notin S_{p}(K)$, let $k_{\mathfrak{q}} = \mathcal{O}_{K}/\mathfrak{q}$ be the residue field, and 
    let $\overline{\chi_{\mathfrak{q}} \vert_{U}} : U/U^{p} \longrightarrow k_{\mathfrak{q}}^{\times} / k_{\mathfrak{q}}^{\times p}$ be the canonical map.
\end{itemize}
Given $\mathcal{O}_{K}$, $U$, $d_{K}$ and $p$, the following algorithm either returns that 
$U$ is $p$-saturated in $\mathcal{O}_{K}^{\times}$ or returns 
an element $\alpha \in \mathcal{O}_{K}^{\times}$ such that 
$[\langle U, \alpha \rangle : U] = p$.
\renewcommand{\theenumi}{\arabic{enumi}}
\renewcommand{\labelenumi}{(\theenumi)}
\begin{enumerate}
    \item Choose parameters $c,t \in \Z_{\geq 1}$ with $t \geq c$ 
    (see Remark~\ref{rmk:parameters}).
    \item If $p \nmid 2d_{K}$ compute $M:=\ker(\overline{\psi_{K,p}\vert_{U}})$;
    otherwise compute $M:=\ker(\overline{\eta_{K,p}\vert_{U}})$.
    \item If $\dim_{\F_{p}}(M)=0$ return that $U$ is $p$-saturated in $\mathcal{O}_{K}^{\times}$.
    \item \textup{[Optional]} Choose any non-trivial element $\overline{\beta} \in M$,
    choose a representative $\beta \in U$ and test whether $\beta$ is a $p$-th power
    in $\mathcal{O}_{K}^{\times}$. If there exists $\alpha$ with $\alpha^{p} = \beta$, then return $\alpha$.
    \item Compute
    $T_{c} := \{ \text{prime ideals }\mathfrak{q} \text{ of }\mathcal{O}_{K} \text{ with }
    \mathfrak{q} \notin S_{p}(K) \text{ and }
    \Norm(\mathfrak{q}) \leq c\}$. 
    \item Compute $N := M \cap \bigcap_{\mathfrak{q} \in T_{c}} 
    \ker(\overline{\chi_{\mathfrak{q}}\vert_{U}})$.
    \item If $\dim_{\F_{p}}(N)=0$ return that $U$ is $p$-saturated in $\mathcal{O}_{K}^{\times}$.
    \item If $c < t$ then replace $c$ by $2c$ and go to Step \textup{(5)}.
    \item Choose any non-trivial element $\overline{\beta} \in N$,
    choose a representative $\beta \in U$ and test whether $\beta$ is a $p$-th power
    in $\mathcal{O}_{K}^{\times}$. If there exists $\alpha$ with $\alpha^{p} = \beta$, then return $\alpha$.
    \item Replace $c$ by $2c$ and go to Step \textup{(5)}.
\end{enumerate}
\end{algorithm}

\begin{proof}[Proof of correctness]
The correctness of the outputs in Steps (3) and (7) follows from Proposition~\ref{prop:psi-eta-give-p-sat} and Corollary~\ref{cor:p-sat}(c), respectively.
The correctness of the outputs in Steps (4) and (9) follows from Corollary~\ref{cor:p-sat}(e).
The algorithm must terminate by \cite[Proposition~4.8]{MR4440537}; 
this is ultimately a consequence of the Grunwald--Wang theorem (see \cite[Theorem~4.4]{MR4440537}, for example).
\end{proof}

\begin{proof}[Further details on each step]
The computation of $M$ in Step~(2) is described in Corollary~\ref{cor:alg-schiro-kernel} and \S\ref{subsec:compute-eta}.
For Steps~(4) and~(9), determining whether an element of $K$ is a $p$-th power, and computing a $p$-th root when one exists, reduces to root computations or polynomial factorisation over number fields, for which efficient algorithms exist; see~\cite{MR774816, MR2168610, MR2537701, zbMATH07854947}.
The remaining steps involve computations with residue fields, for which the requisite algorithms are described in~\cite{MR1228206}.
\end{proof}

\begin{remark}\label{rmk:parameters}
The parameters of Algorithm~\ref{alg:check-saturated-or-extract-roots} can 
be altered depending on the use case. 
If it is highly likely that $U$ is $p$-saturated in $\mathcal{O}_{K}^{\times}$
(for example, if a GRH-conditional computation predicts that $U=\mathcal{O}_{K}^{\times}$)
then Step (4) should be omitted and $t$ should be chosen to be `large'.
This is because the computation of $p$-th roots is much more expensive
than the other steps. 
If it is plausible that $U$ is not $p$-saturated in $\mathcal{O}_{K}^{\times}$,
then Step (4) should be executed and $t$ should be chosen to be `small'.
\end{remark}

\begin{remark}\label{rem:satalg-complexity}
If $K$ is quasi-$p$-rational and 
$U$ is $p$-saturated in $\mathcal{O}_{K}^{\times}$, 
then Algorithm~\ref{alg:check-saturated-or-extract-roots} terminates at Step (3)
by Proposition~\ref{prop:quasi-p-rat-subgroup-of-units-eta} and Corollary~\ref{cor:quasi-p-rat-subgroup-of-units-schirokauer}.
Moreover, if $K$ is quasi-$p$-rational, but 
$U$ is not $p$-saturated in $\mathcal{O}_{K}^{\times}$, then Proposition~\ref{prop:ker-subgroup-of-units} guarantees that any representative in $U$ of any non-trivial element of $M$
must be a $p$-th power in $\mathcal{O}_{K}^{\times}$, and so 
Algorithm~\ref{alg:check-saturated-or-extract-roots} terminates at Step~(4), assuming
this optional step is performed.
Furthermore, under the assumption of Gras's Conjecture~\ref{conj:Gras}, $K$ is quasi-$p$-rational for all but finitely many primes $p$
(see Remark~\ref{rmk:quasi-p-rat-Gras-conj}). 
These observations constitute the key new contribution of this section, 
and they explain the finding in 
\cite[\S 6.2.3]{zbMATH07854947} that using the Schirokauer map $\psi_{K,p}$
alone is often enough to detect $p$-th powers in practice.
\end{remark}

\begin{remark}\label{rem:satalg-complexity-worst-case}
The worst-case runtime of Algorithm~\ref{alg:check-saturated-or-extract-roots} is the 
same as that of \cite[Algorithm~4.9]{MR4440537}.
Assuming GRH, this can be shown to be in $O(\poly(d, \log\lvert d_K\rvert, l(U), p))$.
Now suppose that we are in the important case that 
$K$ is quasi-$p$-rational, $U$ is $p$-saturated in $\mathcal{O}_{K}^{\times}$
and $p \nmid 2d_{K}$, and do \emph{not} assume GRH.
Then
Corollary~\ref{cor:alg-schiro-kernel} shows that
the runtime of Algorithm~\ref{alg:check-saturated-or-extract-roots}
is in $O(\poly(\log(p)))$ (for fixed $K$ and $U$).
By contrast, \cite[Algorithm~4.9]{MR4440537} will,
via the computation of $\ker(\overline{\chi_{\mathfrak{q}}\vert_{U}})$,
require solving discrete logarithms in a group of order $p$, for which generic algorithms have runtime in  $O(\sqrt{p})$ (see \cite{MR1745660} for an overview). This latter analysis
does not even take into account the number of prime ideals $\mathfrak{q}$ for which 
$\ker(\overline{\chi_{\mathfrak{q}}\vert_{U}})$ will need to be computed.
\end{remark}

\section{Unconditional unit group computations}\label{sec:unconditional-unit-group-comp}

\subsection{An algorithm for unconditional verification of a unit group}
We now apply the results of \S \ref{sec:algs-p-sat-units} to the 
unconditional computation of unit groups of number fields. 
Let $K$ be a number field with ring of integers $\mathcal{O}_{K}$ whose unit group should be determined.
A common strategy for computing a system of fundamental units, both in early algorithms of Pohst--Zassenhaus~\cite{MR498486} and modern variants of Buchmann's algorithm~\cite{MR1104698}, is to determine a tentative unit group 
$U \leq \mathcal{O}_{K}^{\times}$ of finite index as a first step.
Assuming additional hypotheses, based on ad hoc arguments or the truth of standard number theoretic conjectures such as GRH,
one can often conclude that $U$ is indeed the full unit group.
In order to remove these hypotheses and obtain an unconditional result, the main task then becomes the verification that $U = \mathcal{O}_{K}^{\times}$, 
which is equivalent to verifying that $U$ is 
$p$-saturated in $\mathcal{O}_{K}^{\times}$ for all prime numbers $p$.
Using a lower bound for the regulator $\mathrm{Reg}_{K}$, 
computed using~\cite{MR604833} or~\cite{MR2441075}, for example,
one can compute an explicit bound $B$ such that the equality 
$U = \mathcal{O}_{K}^{\times}$ is equivalent to the 
assertion that $U$ is $p$-saturated in $\mathcal{O}_{K}^{\times}$
for all prime numbers $p \leq B$. This describes the idea behind the following algorithm.

\begin{algorithm}\label{alg:unit-group-verification}
Let $K$ be a number field and let $U$ be a subgroup of
$\mathcal{O}_{K}^{\times}$ of finite index. 
Given $\mathcal{O}_{K}$, $U$, and a lower bound $b>0$ for the regulator
$\mathrm{Reg}_{K}$, the following algorithm returns 
\ensuremath{\mathsf{true}} if $U=\mathcal{O}_{K}^{\times}$,
and returns \ensuremath{\mathsf{false}} otherwise. 
\renewcommand{\theenumi}{\arabic{enumi}}
\renewcommand{\labelenumi}{(\theenumi)}
\begin{enumerate}
\item Determine whether $\mu(K) \subset U$; if not return \ensuremath{\mathsf{false}}.
\item Compute $B := \lfloor \mathrm{Reg}(U)/b \rfloor$.
\item For each prime number $p \leq B$, 
use Algorithm~\ref{alg:check-saturated-or-extract-roots}
to test whether $p$ divides $[\mathcal{O}_{K}^{\times} : U]$, and if so, 
return \ensuremath{\mathsf{false}}.
\item Return \ensuremath{\mathsf{true}}. 
\end{enumerate}
\end{algorithm}

\begin{proof}[Proof of correctness]
The correctness of the outputs in Steps (1) and (3) is clear.
It remains to verify the correctness of the output of Step (4).
Let $\pi : \mathcal{O}_{K}^{\times} \longrightarrow \mathcal{O}_{K}^{\times}/\mu(K)$
be the canonical map. 
Then
\[
[\mathcal{O}_{K}^{\times} : U] 
= [\pi(\mathcal{O}_{K}^{\times}) : \pi(U)]
= \mathrm{Reg}(U) / \mathrm{Reg}_{K}
\leq B,
\]
where the first equality holds since $\mu(K) \subset U$ as verified in Step (1),
the second equality follows from \cite[Lemma~4.15]{MR1421575}, and the inequality
follows from the definition of $B$ and the assumption that $0<b\leq \mathrm{Reg}_{K}$.
Hence $p \nmid [\mathcal{O}_{K}^{\times}:U]$ for every prime number $p > B$.
But the completion of Step (3) without returning \ensuremath{\mathsf{false}}
ensures that $p \nmid [\mathcal{O}_{K}^{\times}:U]$ for every prime number $p \leq B$.
Therefore $\mathcal{O}_{K}^{\times}=U$.
\end{proof}

\begin{proof}[Further details on each step]
For Step (1), first compute the order of $\mu(K)$ using \cite[Algorithm~4.9.9]{MR1228206}
and then use the algorithm of~\cite{ge}; see also~\cite[Theorem~1.11]{MR3749417}.
Step (2) can be performed by using the definitions. 
\end{proof}

\begin{remark}\label{rmk:previous-unconditional-unit-verification}
Previous algorithms for unconditional unit group verification, as described in 
\cite[\S 4]{MR1273458} or
\cite[\S 6]{10.1145/3815436.3815477}, for example, follow the structure of 
Algorithm~\ref{alg:unit-group-verification}, but carry out Step (3) using a 
version of \cite[Algorithm~4.9]{MR4440537}.
The latter is essentially Algorithm~\ref{alg:check-saturated-or-extract-roots} without
Steps (2)--(4).
\end{remark}

\begin{remark}
In the case that $K/\Q$ is Galois, the computation of the image of $U$
under the Schirokauer map $\psi_{K,p}$ for multiple primes $p$ can be sped up as outlined
in \S \ref{subsec:Schirokauer-improvement-Galois}. 
This results in a significant speed-up of
Algorithm~\ref{alg:check-saturated-or-extract-roots}
and thus in Step (3) of Algorithm~\ref{alg:unit-group-verification}.    
\end{remark}

\begin{remark}
Recently, Yee \cite{yee2025unconditional} has given a new 
approach to unconditional unit group verification, part of 
which involves checking that $p \nmid [\mathcal{O}_{K}^{\times} : U]$ for all prime numbers $p$
up to a certain bound; the method for this part is similar to
\cite[Algorithm~4.9]{MR4440537} and thus can again be improved significantly by using Algorithm~\ref{alg:check-saturated-or-extract-roots}.
\end{remark}

\begin{theorem}
Let $K$ be a number field and let $U$ be a subgroup of $\mathcal{O}_{K}^{\times}$
of finite index. 
Let $B$ be as in Algorithm~\ref{alg:unit-group-verification} and 
let $p_{1},\dotsc,p_{k}$ be the prime numbers $\leq B$ such that either $p_{i} \mid 2d_{K}$ or $K$ is not quasi-$p_{i}$-rational. 
Then assuming GRH and that in fact $U=\mathcal{O}_{K}^{\times}$,
the runtime of Step (3) of Algorithm~\ref{alg:unit-group-verification} is in
  \[ 
  O\left(B\poly(d, \log\lvert d_K\rvert, l(U), \log(B)) + \sum_{i=1}^{k} \poly(d, \log\lvert d_K\rvert, l(U), p_i )\right).
  \]
  In particular, if $p_i \leq \log(B)$ for $1 \leq i \leq k$, then assuming GRH 
  and that in fact $U=\mathcal{O}_{K}^{\times}$, the runtime is in 
  \[ 
  O(B\poly(d, \log\lvert d_K\rvert, l(U), \log(B))).
  \]
\end{theorem}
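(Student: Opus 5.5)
The plan is to split the primes $p \leq B$ into two classes and bound the cost of Algorithm~\ref{alg:check-saturated-or-extract-roots} separately on each. Since $U=\mathcal{O}_{K}^{\times}$ by hypothesis, $U$ is $p$-saturated in $\mathcal{O}_{K}^{\times}$ for every $p$. So the algorithm never reaches Steps (4) or (9) with a genuine $p$-th power, and it always terminates by returning ``$p$-saturated''. Step (3) of Algorithm~\ref{alg:unit-group-verification} makes at most $\pi(B)\leq B$ calls, and the total cost is the sum of the costs of these calls. I would also note that each call has the same fixed input size $\poly(d,\log\lvert d_K\rvert, l(U))$ together with $\log(p)\leq\log(B)$.

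\emph{Generic primes.} Take $p \leq B$ with $p\nmid 2d_K$ and $K$ quasi-$p$-rational. By Corollary~\ref{cor:quasi-p-rat-subgroup-of-units-schirokauer}, $\overline{\psi_{K,p}\vert_U}$ is injective, so $M=0$ in Step (2). The algorithm then returns at Step (3), as in Remark~\ref{rem:satalg-complexity}. The cost of computing $M$ is given by Corollary~\ref{cor:alg-schiro-kernel} as $O(\poly(d,\log\lvert d_K\rvert,l(U),\log(p)))$. I would take care to make this unconditional and uniform, in particular that the polynomial does not depend on $p$, so that it can be bounded by $\poly(d,\log\lvert d_K\rvert,l(U),\log(B))$. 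Summing over at most $B$ such primes gives the first term.

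\emph{Exceptional primes $p_1,\dots,p_k$.} For these primes I would invoke Remark~\ref{rem:satalg-complexity-worst-case}: assuming GRH, the worst-case runtime of Algorithm~\ref{alg:check-saturated-or-extract-roots} coincides with that of \cite[Algorithm~4.9]{MR4440537}, which is $O(\poly(d,\log\lvert d_K\rvert,l(U),p_i))$. Two details need checking here. First, the extra work in Steps (2)--(3) is at most polynomial in $\log p_i$; when $p_i\mid 2d_K$ this includes computing $\eta$ via \S\ref{subsec:compute-eta}. Second, the loop over $c$ terminates with the doubling of $c$ bounded polynomially under GRH, using the effective Grunwald--Wang/Chebotarev bound of \cite{MR4440537}. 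This gives the second sum.

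The main obstacle is the exceptional-prime step: it requires importing the GRH-dependent bound on the size of $c$ needed before $N=0$, and making sure that Step (4) (if performed) does not blow up the cost. Since $U$ is saturated, Step (4) is a single failed $p$-th root test. That test is polynomial in $p_i$ and the input size by the cited root-finding algorithms, which I would state explicitly. For the final claim, if every $p_i\leq\log(B)$, then each term $\poly(\ldots,p_i)$ is $\poly(\ldots,\log B)$ and $k\leq B$, so the second sum is absorbed into the first.
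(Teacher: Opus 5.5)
Your proposal is correct and follows essentially the same route as the paper. You split the primes $p \leq B$ into two classes and sum the resulting bounds:
\begin{itemize}
\item primes with $p \nmid 2d_K$ and $K$ quasi-$p$-rational, where Corollary~\ref{cor:quasi-p-rat-subgroup-of-units-schirokauer} forces $M=0$, so the call returns at Step~(3) with cost bounded by Corollary~\ref{cor:alg-schiro-kernel};
\item the exceptional primes $p_i$, bounded via Remark~\ref{rem:satalg-complexity-worst-case}.
\end{itemize}
Your explicit use of $p \nmid 2d_K$ in the Schirokauer branch and your absorption argument when every $p_i \leq \log(B)$ only spell out what the paper's short proof leaves implicit.
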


\begin{proof}
Since $U=\mathcal{O}_{K}^{\times}$, we have that $U$ is $p$-saturated in $\mathcal{O}_{K}^{\times}$ for every prime number $p$.
In particular, for the prime numbers $p \leq B$ for which $K$ is quasi-$p$-rational, Algorithm~\ref{alg:check-saturated-or-extract-roots} terminates at Step (3) 
by Remark~\ref{rem:satalg-complexity},
and its runtime is in
$O(\poly(d, \log\lvert  d_K\rvert, l(U), \log(B)))$ by Corollary~\ref{cor:alg-schiro-kernel}.
For each of the exceptional primes $p_{i}$, Remark~\ref{rem:satalg-complexity-worst-case}
gives a runtime in $O(\poly(d, \log |d_{K}|, l(U), p_{i}))$.
Summing these contributions gives the desired result. 
\end{proof}

\subsection{Examples}\label{subsec:example:uncond}
We now give some explicit examples to illustrate the improvements 
of Algorithm~\ref{alg:unit-group-verification} 
over previous methods, which are described in Remark~\ref{rmk:previous-unconditional-unit-verification}.
All of these algorithms have been implemented in
\textsc{Hecke}~\cite{MR3703682} (also available through~\textsc{Oscar}~\cite{OSCAR-book}).
We consider the number fields defined by the polynomials in Table~\ref{tab:polys-for-fields} in Appendix~\ref{appendix:number-fields}\footnote{The fields are also available at \url{github.com/thofma/p-rationality/}.}.
Each of these number fields is a Galois extension of $\Q$.

In Table~\ref{tab:unit-stats}, we present numerical invariants of the fields 
in Table~\ref{tab:polys-for-fields}
and runtimes of the different algorithms. More precisely, for a number field $K$ we record:
\begin{description}
\item[$G$ ] the isomorphism type of the Galois group of $K/\Q$, where $C_{n}$ denotes the cyclic group of order $n$ and $Q_{4n}$ denotes the generalised quaternion group of order $4n$.
\item[$\lceil \mathrm{Reg}(U) \rceil$ ] the ceiling of the regulator of a tentative unit group $U$, where $U=\mathcal{O}_{K}^{\times}$ if GRH holds.
\item[$b$ ] a lower bound for $\mathrm{Reg}_K$ from the algorithm of 
Fieker--Pohst~\cite{MR2441075}, as implemented in Magma~\cite{MR1484478},
\item[$B$ ] the quantity $\lfloor \mathrm{Reg}(U)/b\rfloor$,
\item[$t_{1}$ ] the runtime of an earlier algorithm described in 
Remark~\ref{rmk:previous-unconditional-unit-verification},
\item[$t_{2}$ ] the runtime of Algorithm~\ref{alg:unit-group-verification}.
\end{description}

All timings are for computations performed using 
a single core of  a server with an Intel Xeon 6226R 2.90GHz CPU and 384GB RAM.
The software used was \textsc{Julia} version 1.12.6 and \textsc{Hecke} version 0.39.21.

\begin{table}[ht!]
\caption{Runtimes for unconditional unit-group verification.}
\label{tab:unit-stats}
\begin{tabular}{crrrrccc}
\toprule %
Field & $G$ & $\lceil \mathrm{Reg}(U) \rceil$ & $b$ & $B$ & $t_{1}$ & $t_{2}$ & $t_{1}/t_{2}$ \\
\midrule
1 & $C_{5}$ & $46436465006$  & $1373.3$  & $33812575$  & 1h\,6m\,4s & 2m\,49s & $23.4$ \\
2 & $C_{5}$ & $572711843539$  & $3597.2$  & $159208684$  & 8h\,54m\,2s & 12m\,53s & $41.4$ \\
3 & $C_{5}$ & $361611954907$  & $1663.5$  & $217377582$  & 13h\,15m\,18s & 18m\,27s & $43.1$ \\
\midrule
4 & $C_{6}$ & $15224167250$  & $11578.7$  & $1314838$  & 1m\,44s & 9s & $11.5$ \\
5 & $C_{6}$ & $87928183170$  & $14958.4$  & $5878197$  & 9m\,53s & 35s & $16.9$ \\
6 & $C_{6}$ & $113173213389$  & $11110.1$  & $10186526$  & 19m\,23s & 1m\,1s & $19.0$ \\
\midrule
7 & $Q_8$ & $96677308322$  & $19126.4$  & $5054651$  & 12m\,56s & 37s & $20.9$ \\
8 & $Q_8$ & $614071514334$  & $23485.6$  & $26146692$  & 1h\,38m\,3s & 2m\,59s & $32.8$ \\
9 & $Q_8$ & $1278699105843$  & $23779.3$  & $53773668$  & 4h\,10m\,55s & 6m\,2s & $41.5$ \\
\midrule
10 & $C_{11}$ & $155801246971$  & $69693.5$  & $2235520$  & 8m\,6s & 57s & $8.5$ \\
11 & $C_{11}$ & $2083946923677$  & $254423.4$  & $8190861$  & 42m\,14s & 3m\,31s & $12.0$ \\
12 & $C_{11}$ & $2276941413144$  & $41736.4$  & $54555262$  & 7h\,17m\,13s & 22m\,12s & $19.6$ \\
\midrule
13 & $Q_{12}$ & $82138349503$  & $135762.7$  & $605015$  & 2m\,17s & 11s & $12.4$ \\
14 & $Q_{12}$ & $7197258073734$  & $153764.8$  & $46806942$  & 7h\,7m\,10s & 10m\,40s & $40.0$ \\
15 & $Q_{12}$ & $8117033381733$  & $77492.3$  & $104746309$  & 21h\,49m\,12s & 22m\,32s & $58.1$ \\
\midrule
16 & $C_{13}$ & $2733056591$  & $15421.3$  & $177226$  & 42s & 7s & $6.0$ \\
17 & $C_{13}$ & $1689788670897$  & $178321.3$  & $9476090$  & 1h\,6m\,13s & 5m\,37s & $11.7$ \\
18 & $C_{13}$ & $927977436617$  & $54513.1$  & $17023018$  & 2h\,13m\,25s & 10m\,12s & $13.0$ \\
\midrule
19 & $Q_{16}$ & $1725366588$  & $37136.2$  & $46461$  & 17s & 3s & $5.6$ \\
20 & $Q_{16}$ & $11079174313222$  & $2167892.5$  & $5110574$  & 44m\,34s & 2m\,17s & $19.5$ \\
21 & $Q_{16}$ & $12626699814824$  & $380019.7$  & $33226439$  & 7h\,22m\,9s & 11m\,46s & $37.5$ \\
\midrule
22 & $C_{17}$ & $30768108228$  & $26599.1$  & $1156737$  & 8m\,23s & 1m\,18s & $6.4$ \\
23 & $C_{17}$ & $559957546560$  & $66487.7$  & $8421970$  & 1h\,32m\,59s & 9m\,32s & $9.7$ \\
24 & $C_{17}$ & $24055588816326$  & $333320.3$  & $72169582$  & 27h\,58m\,30s & 1h\,18m\,28s & $21.3$ \\
\midrule
25 & $Q_{20}$ & $547793455064$  & $1247669.3$  & $439054$  & 4m\,9s & 19s & $13.1$ \\
26 & $Q_{20}$ & $17715817733239$  & $2165711.5$  & $8180138$  & 2h\,2m & 5m\,9s & $23.6$ \\
27 & $Q_{20}$ & $10186030781008$  & $843731.4$  & $12072600$  & 3h\,6m\,15s & 7m\,39s & $24.3$ \\
\bottomrule 
\end{tabular}
\end{table}

\begin{remark}
For specific Galois groups, Brauer relations can be used to determine the regulator of $K$ 
and thus verify the unit group unconditionally, using unit groups of proper 
subfields that have been computed unconditionally 
(see~\S\ref{subsec:unconditional-invs}). This approach does not apply to our 
examples, which are either cyclic and hence have no non-trivial Brauer relations, 
or lack so-called useful Brauer relations (see \cite[Theorem~2.9]{MR4440537}).
\end{remark}

\subsection{$S$-unit group computations}\label{subsec:extend-to-S-units}
The (unconditional) computation of unit groups also has applications to determining 
$S$-unit groups.
For a set of prime ideals $S$ of $\mathcal{O}_{K}$, 
the computation of a minimal generating set of the $S$-unit group
$\mathcal{O}_{K, S}^{\times}$ can be split up into the computation of a basis of $\mathcal{O}_{K, S}^{\times}/\mathcal{O}_{K}^{\times}$ 
and information regarding the class group $\Cl(K)$; 
see \cite[Algorithm~6.1]{MR1898758}, for example.
Hence any improvements in unit group computations directly benefit $S$-unit group computations.

\section{(Quasi)-$p$-rationality algorithms for arbitrary number fields}
\label{sec:quasi-p-rat-algs}

Let $K$ be a number field and let $p$ be a prime number.
Pitoun and Varescon \cite{MR3266966} have given an algorithm that
checks whether $K$ satisfies 
Leopoldt's conjecture at $p$ and then computes $T_{S_{p}}(K)$ (see 
\S \ref{subsec:p-rational-fields}). In particular, this can be used
to verify whether $K$ is $p$-rational.
Gras \cite{MR4491965} has given an algorithm to test $K$ for $p$-rationality 
based on Theorem~\ref{thm:Gras-ray-p-rat} along with an implementation in 
PARI/GP \cite{PARI2}.
Both of these approaches require computation of certain
ray class groups $\Cl_{K}(p^{k})$, or at least their Sylow-$p$ subgroups, and 
hence in particular require information about the Sylow-$p$ subgroup of the 
class group $\Cl_{K}$.
Barbulescu and Ray \cite{MR4158582} have proposed improvements to 
the method of Pitoun and Varescon for certain families of number fields.

The following algorithm determines whether $K$ is quasi-$p$-rational. 
In contrast to the aforementioned algorithms that verify whether $K$ is $p$-rational,
this does not require any computation of class groups or ray class groups.
Moreover, it does not require 
the full group of units $\mathcal{O}_{K}^{\times}$ as an input.
By Lemma~\ref{lem:rat-implies-quasi-rat-implies-leopoldt}(b),
it can also be used to verify that Leopoldt's conjecture holds for $K$ at $p$
in many cases. Indeed, assuming Gras's Conjecture~\ref{conj:Gras}, 
for fixed $K$ the algorithm verifies Leopoldt's conjecture
except for at most finitely many primes $p$ (see Remark~\ref{rmk:quasi-p-rat-Gras-conj}).
Other algorithms for verifying Leopoldt's conjecture include that 
of Buchmann and Sands \cite{MR904010}, 
the aforementioned algorithm of Pitoun and Varescon \cite[\S 3]{MR3266966}, 
and the algorithm of Nickel and the present authors \cite[Appendix~A.1]{MR4111943}.

\begin{algorithm}\label{alg:quasi-p-rat}
Let $K$ be a number field and let $U$ be a subgroup of
$\mathcal{O}_{K}^{\times}$ of finite index containing $\mu(K)$.
Given $\mathcal{O}_{K}$, $U$, $d_{K}$
and a prime number $p$, the following algorithm returns 
\ensuremath{\mathsf{true}} if $K$ is quasi-$p$-rational, 
and returns \ensuremath{\mathsf{false}} otherwise. 
\renewcommand{\labelenumi}{(\arabic{enumi})}
\begin{enumerate}
\item If $p \nmid 2d_{K}$ then 
\begin{enumerate}
    \item If $\dim_{\mathbb{F}_{p}}(\psi_{K,p}(U)) = \rank_{\Z}(\mathcal{O}_{K}^{\times})$ then return \ensuremath{\mathsf{true}}.
    \item Compute a $p$-saturated subgroup $V$ of $\mathcal{O}_{K}^{\times}$ 
such that $U \leq V \leq \mathcal{O}_{K}^{\times}$.
    \item If $\dim_{\mathbb{F}_{p}}(\psi_{K,p}(V)) = \rank_{\Z}(\mathcal{O}_{K}^{\times})$ then return \ensuremath{\mathsf{true}};
    otherwise return \ensuremath{\mathsf{false}}.
\end{enumerate}
\item If $p=2$ then 
\begin{enumerate}
    \item If $\lvert S_{2}(K) \rvert >1$ then return \ensuremath{\mathsf{false}}.
    \item If
    $\dim_{\mathbb{F}_{2}}(\eta_{K,2}(U)) = \rank_{\Z}(\mathcal{O}_{K}^{\times})+1$ then return \ensuremath{\mathsf{true}}.
    \item Compute a $2$-saturated subgroup $V$ of $\mathcal{O}_{K}^{\times}$ 
such that $U \leq V \leq \mathcal{O}_{K}^{\times}$.
    \item If $\dim_{\mathbb{F}_{2}}(\eta_{K,2}(V)) = \rank_{\Z}(\mathcal{O}_{K}^{\times})+1$ 
    then return \ensuremath{\mathsf{true}}; otherwise return \ensuremath{\mathsf{false}}.
\end{enumerate}
\item If $p>2$ and $p \mid d_{K}$ then 
\begin{enumerate}
    \item If $p \leq [K:\Q]+1$ then 
    \begin{enumerate}
    \item If $\mu_{p} \subset K$ and $\lvert S_{p}(K) \rvert >1$
    then return \ensuremath{\mathsf{false}}.
    \item If $\mu_{p} \not \subset K$ and there exists some $\mathfrak{p} \in S_{p}(K)$ 
    such that $(p-1) \mid e_{\mathfrak{p}}$ and $\mathfrak{p}$ splits completely in 
    $K(\zeta_{p})/K$ then return \ensuremath{\mathsf{false}}.
    \end{enumerate} 
    \item If $\dim_{\mathbb{F}_{p}}(\eta_{K,p}(U)) = \rank_{p}(\mathcal{O}_{K}^{\times})$ then return \ensuremath{\mathsf{true}}.
    \item Compute a $p$-saturated subgroup $V$ of $\mathcal{O}_{K}^{\times}$ 
such that $U \leq V \leq \mathcal{O}_{K}^{\times}$.
    \item If $\dim_{\mathbb{F}_{p}}(\eta_{K,p}(V)) = \rank_{p}(\mathcal{O}_{K}^{\times})$ then return \ensuremath{\mathsf{true}};
    otherwise return \ensuremath{\mathsf{false}}.
\end{enumerate}
\end{enumerate}
\end{algorithm}

\begin{proof}[Proof of correctness]
In Step (1), the correctness of the outputs follows from 
Proposition~\ref{prop:psi-eta-give-p-sat}(a) and Corollary~\ref{cor:quasi-p-rat-subgroup-of-units-schirokauer}.
The correctness of the outputs in Steps (2) and (3) 
follows from Propositions~\ref{prop:psi-eta-give-p-sat}(b) and \ref{prop:quasi-p-rat-subgroup-of-units-eta}, and Remark~\ref{rmk:roots-of-unity-condition}.
\end{proof}

\begin{proof}[Further details on each step]
In Step (1), $\psi_{K,p}(U)$ and $\psi_{K,p}(V)$ can be computed 
as described in \S \ref{subsec:compute-psi}.
In Steps (2) and (3), $\eta_{K,p}(U)$ and $\eta_{K,p}(V)$ can be computed 
as described in \S \ref{subsec:compute-eta}.  
Computation of the subgroup $V$ as required in Steps (1)(b), (2)(c) and (3)(c)
can be performed by iterated calls to Algorithm~\ref{alg:check-saturated-or-extract-roots}.
Computation of prime decomposition as required in Steps (2)(a) and (3)(a)
can be performed using \cite[Algorithm~6.2.9]{MR1228206}.
In Step (3)(a), $\mu_{p} \subset K$ if and only if $p$ divides $|\mu(K)|$,
which can be computed using \cite[Algorithm~4.9.9]{MR1228206}.
\end{proof}

\begin{remark}\label{rmk:skip-steps-when-unit-group-known}
If the full unit group $\mathcal{O}_{K}^{\times}$ is supplied as input then one may 
skip Steps (1)(a)(b), (2)(b)(c) and (3)(b)(c) and take $V=\mathcal{O}_{K}^{\times}$ in Steps (1)(c), (2)(d) and (3)(d).
\end{remark}

\begin{remark}\label{rmk:Galois-Shirokauer-many-primes}
If $K/\Q$ is Galois and one wishes to determine whether $K$ is quasi-$p$-rational 
for many primes $p$, then Step (1)(a) can be sped up as described in \S \ref{subsec:Schirokauer-improvement-Galois}.
\end{remark}

\begin{remark}
In principle, Step (3)(a)(ii) can be implemented using \cite[Algorithm~2]{MR5022766},
which decides whether $\mu_{p} \subset K_{\mathfrak{p}}$ holds for a given prime ideal $\mathfrak{p} \in S_p(K)$ (see Remark~\ref{rmk:roots-of-unity-condition}).
\end{remark}

The following algorithm determines whether $K$ is $p$-rational. 
In a case covered in the last two steps, it reverts to the 
aforementioned algorithm of Gras \cite{MR4491965}; 
when $K$ is totally real this can only occur when $p$ divides $h_{K}$
and some prime of $K$ above $p$ is wildly ramified in $K/\Q$.
Aside from this case, it is generally significantly faster than the 
method of Gras, especially when the field is fixed and one 
wishes to test $p$-rationality for many primes.
Note that a much more efficient algorithm for real cyclotomic fields
is given in \S \ref{sec:alg-real-cyclo}; this can be extended
to cyclotomic fields in many cases (see Remark~\ref{rmk:p-rat-full-cyclo}).

\begin{algorithm}\label{alg:p-rat}
Let $K$ be a number field and let $U$ be a subgroup of
$\mathcal{O}_{K}^{\times}$ of finite index containing $\mu(K)$.
Given $\mathcal{O}_{K}$, $U$, $h_{K}$, $d_{K}$
and a prime number $p$, the following algorithm returns 
\ensuremath{\mathsf{true}} if $K$ is $p$-rational, 
and returns \ensuremath{\mathsf{false}} otherwise. 
\renewcommand{\labelenumi}{(\arabic{enumi})}
\begin{enumerate} 
\item Determine whether $K$ is quasi-$p$-rational; if not, return \ensuremath{\mathsf{false}}.
\item If $p \nmid h_{K}$ then return \ensuremath{\mathsf{true}}.
\item If $c_{K}=0$ and every prime in $S_{p}(K)$ is at most tamely ramified in $K/\Q$
then return \ensuremath{\mathsf{false}}.
\item Compute $\rank_{p}(\Cl_{K}(p^{k}))$, where $k=3$ if $p=2$ and $k=2$ if $p>2$.
\item If $\rank_{p}(\Cl_{K}(p^{k})) = c_{K}+1$ then return \ensuremath{\mathsf{true}}; else return \ensuremath{\mathsf{false}}.
\end{enumerate}
\end{algorithm}

\begin{proof}[Proof of correctness]
The correctness of the outputs in Steps (1) and (2) follows from 
Lemma~\ref{lem:rat-implies-quasi-rat-implies-leopoldt}. 
In Step (3), it has already been established that $p \mid h_{K}$, and
so the correctness of the output follows from Corollary~\ref{cor:tot-real-p-rat-tame}. The correctness of the output of Step (5) follows from Theorem~\ref{thm:Gras-ray-p-rat}.
\end{proof}

\begin{proof}[Further details on each step]
For Step (1), use Algorithm~\ref{alg:quasi-p-rat}.
In Step (3), the computation of $c_{K}$ can be performed using
\cite[Algorithm~4.1.11]{MR1228206} and the
computation of ramification indices can be performed
using \cite[Algorithm~6.2.9]{MR1228206}.
In Step (4), the computation of ray class groups can be performed using
\cite[Algorithm~4.3.1]{MR1728313}. 
In fact, as we only need to compute  $\rank_{p}(\Cl_{K}(p^{k}))$, it suffices to determine $\Cl_{K}(p^{k})/\Cl_{K}(p^{k})^{p}$, for which improved algorithms are described in~\cite[\S 3]{MR3952015}.
\end{proof}

\begin{remark}
In certain situations, the computational effort required to determine whether 
a number field $K$ is $p$-rational may be reduced by applying 
Algorithm~\ref{alg:p-rat} to certain proper subfields of $K$ rather than
to $K$ directly. 
\begin{enumerate}
    \item To show that $K$ is not $p$-rational, it suffices to show that there exists a proper subfield of $K$ that is not $p$-rational (see Lemma~\ref{lem:p-rat-subfields}).
    \item If $K$ is a CM-field then, under certain hypotheses on $p$, one can apply
    either Proposition~\ref{prop:CM-tot-real-p-rat} or Corollary~\ref{cor:CM-tot-real-p-rat} to reduce the problem to determining the $p$-rationality of the maximal totally real subfield $K^{+}$.
    \item If $K/\Q$ is Galois and $p$ does not divide $[K:\Q]$, then 
    it may be possible to deduce that $K$ is $p$-rational from the $p$-rationality of proper subfields (see \cite[\S 3.3]{MR3512524}). 
    \item It may be possible to use a `going-up' result as described in Remark~\ref{rmk:going-up}.
\end{enumerate}
\end{remark}

\section{Galois representations with open image}\label{sec:Galois-reps}

Greenberg \cite{MR3512524} proved a number of results in which he
constructed certain continuous representations 
$\Gal(\overline{\Q}/\Q) \longrightarrow \GL_{n}(\Z_{p})$
with open image from totally complex $p$-rational number fields.
Here we recall a selection of his results and show
that the $p$-rationality hypothesis can be verified using our algorithms.

\subsection{Galois representations from totally complex $A_{5}$ and $S_{5}$-extensions}
Let $A_{5}$ and $S_{5}$ denote 
the alternating and symmetric groups on five letters, respectively.

\begin{prop}\label{prop:Galois-reps-A5-S5}
Let $K/\Q$ be a finite Galois extension and let $G = \Gal(K/\Q)$.
Suppose that $K$ is totally complex, that $G \simeq A_{5}$ or $S_{5}$,
and that $K$ is $p$-rational for some $p \geq 7$.
\begin{enumerate}
    \item If $G \simeq A_{5}$ then let $n \in \{ 3,4,5 \}$, and if $n=3$, further suppose that $p$ splits in $\Q(\sqrt{5})$, that is, that $p \equiv \pm 1 \bmod{5}$.
    \item If $G \simeq S_{5}$ then let $n \in \{ 4,5,6 \}$, and if $n=4$, further 
    suppose that the unique quadratic subfield of $K$ is imaginary.
\end{enumerate}
Let $K_{S_{p}}(p)$ be the 
maximal pro-$p$-extension of $K$ unramified outside primes above $p$
and let $\omega$ be an irreducible representation of $G$ over $\Q_{p}$ of degree $n$. 
Then there exists a continuous representation
\[
\rho : \Gal(K_{S_{p}}(p)/\Q) \longrightarrow \mathrm{GL}_{n}(\Z_{p})
\]
with open image such that the residual representations $\overline{\rho}$ 
and $\overline{\omega}$ over $\F_{p}$ are isomorphic.
\end{prop}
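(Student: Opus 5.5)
This is essentially a recollection of Greenberg's results in \cite{MR3512524}, so the plan is to check that the hypotheses here match those of his theorems and then cite him. Greenberg's general criterion is as follows. Let $K/\Q$ be Galois with group $G$, let $K$ be $p$-rational, and suppose $p\nmid |G|$. Then $\Gal(K_{S_{p}}(p)/K)$ is a free pro-$p$ group, and the analysis of the abelianised pro-$p$ group as a $\Z_{p}[G]$-module determines which representations lift.

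Concretely, I take $\omega$ to be realised over $\Z_{p}$ via a $G$-stable lattice, reduced mod $p$ to give $\overline{\omega}$. Since $p\geq 7$ and $|G|$ divides $120$, $p\nmid |G|$, so $\F_{p}[G]$ is semisimple and $\overline{\omega}$ is absolutely irreducible whenever $\omega$ is. The main criterion is Greenberg's \cite[Proposition~6.1]{MR3512524}, or its analogue in his \S 6. It says: if $K$ is totally complex and $p$-rational, $p\nmid|G|$, and the $\F_{p}[G]$-module $\mathrm{ad}^{0}(\overline{\omega})$ (or the relevant Hom-module) occurs in $X_{S_{p}}(K)/p \simeq \F_{p}[G]^{\,c_{K}+1}$-type structure with suitable multiplicity, then $\overline{\omega}$ lifts to some $\rho$ with open image.

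The key steps, in order, are the following.

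\begin{enumerate}
\item Use $p$-rationality and total complexity (so $c_{K}=[K:\Q]/2$) to identify $X_{S_{p}}(K)\otimes\Q_{p}\simeq \Q_{p}\oplus\Ind_{\langle\tau\rangle}^{G}\Q_{p}$ as a $G$-module, where $\tau$ is complex conjugation. Equivalently, the $\omega$-isotypic multiplicity is $\dim\omega^{\tau=1}$.
\item Check Greenberg's representation-theoretic condition for each listed $n$: that $\omega$ does not occur in the relevant part of $X_{S_p}(K)$ with problematic multiplicity, equivalently that the $\tau$-invariants of $\mathrm{ad}(\omega)$ have the correct dimension. This verification uses character tables of $A_{5}$ and $S_{5}$ and the cycle type of $\tau$ (a product of two transpositions, or an odd element in the $S_{5}$ imaginary-quadratic case).
\item Deduce the open image from the fact that $\overline{\rho}=\overline{\omega}$ has image $G$, the deformation is unobstructed (the pro-$p$ group is free), and the lift is chosen with Zariski-dense image in the relevant $\GL_{n}$ or $\mathrm{SO}_{n}$.
\end{enumerate}

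The side conditions come from rationality of characters. For $A_{5}$ with $n=3$, the two $3$-dimensional characters are defined over $\Q(\sqrt5)$, so realising $\omega$ over $\Q_{p}$ requires $p\equiv\pm1\bmod 5$. For $S_{5}$ with $n=4$, the condition on $\tau$ requires that it act by an odd permutation, which is exactly the statement that the quadratic subfield is imaginary.

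The main obstacle is step 2: matching precisely which of Greenberg's propositions, namely those in \cite[\S\S 6--7]{MR3512524} treating $A_{5}$ and $S_{5}$, cover each $n$, and confirming that the multiplicity conditions hold for each listed degree. I would first consult Greenberg's explicit $A_{5}$/$S_{5}$ examples and cite them directly. I would fall back on the character computation only if some case is not stated there verbatim.
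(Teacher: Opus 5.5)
Your strategy is the paper's. The proof there is a pure citation of \cite[Proposition~7.1]{MR3512524}, with the $A_{5}$ case verified in \cite[7.4.2]{MR3512524} and the $S_{5}$ case in \cite[7.4.1]{MR3512524}. So the reference you need is Proposition~7.1 in Greenberg's \S 7, not Proposition~6.1 or anything in \S 6; this paper uses Greenberg's \S 6 only for the cyclic case, Proposition~6.6, where the residual image is triangular. Once you cite 7.1 together with 7.4.1--7.4.2, nothing further is required.

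The sketch you wrap around the citation contains errors that would derail your fallback computation.

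\emph{Step~1.} The $G$-module structure is wrong. One has $\prod_{\mathfrak{p}\in S_{p}(K)}U_{\mathfrak{p},1}\otimes_{\Z_{p}}\Q_{p}\cong K\otimes_{\Q}\Q_{p}\cong\Q_{p}[G]$ and $(\mathcal{O}_{K}^{\times}\otimes\Q_{p})\oplus\Q_{p}\cong\Ind_{\langle\tau\rangle}^{G}\Q_{p}$. Leopoldt and class field theory then give $X_{S_{p}}(K)\otimes\Q_{p}\cong\Q_{p}\oplus\Ind_{\langle\tau\rangle}^{G}\chi_{-}$, where $\chi_{-}$ is the non-trivial character of $\langle\tau\rangle$. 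Hence a non-trivial irreducible $\sigma$ occurs with multiplicity $\dim\sigma^{\tau=-1}$, not $\dim\sigma^{\tau=1}$. Your version contains the trivial character twice, which already fails for an imaginary quadratic field: its anticyclotomic $\Z_{p}$-extension has $\tau$ acting by $-1$. Correspondingly $X_{S_{p}}(K)/p\cong\F_{p}\oplus\Ind_{\langle\tau\rangle}^{G}\overline{\chi}_{-}$, of dimension $c_{K}+1$, not an object of ``$\F_{p}[G]^{c_{K}+1}$-type''. The representation-theoretic input concerns $\mathrm{ad}^{0}(\overline{\omega})$, the Frattini quotient of the congruence kernel where $\rho$ of the free part must land, not the occurrence of $\omega$ itself.

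\emph{Step~3.} Zariski density in $\mathrm{SO}_{n}$ would only give an open subgroup of an orthogonal group, not of $\GL_{n}(\Z_{p})$. The mechanism is explicit rather than an abstract unobstructedness argument. Since $p\nmid|G|$, one has $\Gal(K_{S_{p}}(p)/\Q)\cong\mathcal{F}\rtimes G$ with $\mathcal{F}$ free pro-$p$ on $c_{K}+1$ generators. Then $\rho$ is $\omega$ on $G$, extended by a $G$-equivariant homomorphism from $\mathcal{F}$ into the kernel of reduction modulo $p$ with open image; this is also why $\overline{\rho}\cong\overline{\omega}$.

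\emph{Side conditions.} The $S_{5}$, $n=4$ condition is not a rationality-of-characters phenomenon, since all characters of $S_{5}$ are rational. It comes out of Greenberg's analysis of $\tau$ in 7.4.1, and your text only restates it.

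None of this affects a proof by citation, but the sketch should not be offered as the justification.
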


\begin{proof}
This is an application of \cite[Proposition~7.1]{MR3512524}.
For cases (a) and (b) the details are given in \cite[7.4.2 and 7.4.1]{MR3512524}, respectively.
\end{proof}

Greenberg \cite[\S 1]{MR3512524} noted the following obstacle to applying this result:
\begin{quote}
    \textit{The main result is Proposition 7.1. 
    Unfortunately, part of the hypothesis is that some number field $K$
    of large degree is $p$-rational, 
    something which would be difficult to verify in practice.}
\end{quote}
We remedy this situation as follows.
Let $K/\Q$ be one of two totally complex Galois extensions, with
$\Gal(K/\Q) \simeq A_{5}$ in one case and $\Gal(K/\Q) \simeq S_{5}$ in the other.
Since the unconditional computation of $h_{K}$ and $\mathcal{O}_{K}^{\times}$
for such fields is impractical using classical algorithmic methods,
we begin by describing an alternative approach in \S \ref{subsec:unconditional-invs}.
We next apply Algorithm~\ref{alg:quasi-p-rat}
to determine the (quasi)-$p$-rationality of $K$
for all primes $7 \le p < 10^{8}$, and then invoke
Proposition~\ref{prop:Galois-reps-A5-S5} to obtain the desired Galois representations.

\subsection{Unconditional computations using Brauer relations}\label{subsec:unconditional-invs}
Applications of Brauer relations to computational problems in number fields have been discussed
in~\cite{MR4440537}.
Note that in contrast to loc.\ cit., our main focus is on unconditional results, that is, results that do not depend on GRH.
Assume that $K/\Q$ is a finite Galois extension and that there exists a
Brauer relation of the form
\[ 
a \operatorname{Ind}_{1}^G(\mathbf{1}_1) = \sum_{1 \lneq H \leq G} a_H \operatorname{Ind}_H^G(\mathbf{1}_H), 
\]
where $a \in \Z_{>0}$, $a_H \in \Z$, and $G=\Gal(K/\Q)$. 
The Artin formalism for $L$-functions (see \cite[(10.4)~Proposition]{MR1697859}) and 
the behaviour of Dedekind zeta functions at $s=0$ 
(see~\cite[Chapitre~2, 2.2~Corollaire]{MR782485}) imply that
\begin{align}\label{eq:brauer}
\left(\frac{h_K R_K}{w_K}\right)^{a} 
= 
\prod_{1 \lneq H \leq G} \left(\frac{h_{K^H} R_{K^H}}{w_{K^H}}\right)^{a_H}, 
\end{align}
where $w_{K^H}$ denotes the number of roots of unity of $K^{H}$.
This reduces the computation of $h_{K}$ to that of $R_{K}$ as well as $h_{K^H}$ and $R_{K^H}$ for $1 \lneq H \leq G$.
This can be improved further by also taking the 
relations between the unit groups into account.
We summarise the approach in the following steps:
\begin{enumerate}
    \item 
    For each $H$ with $a_H \neq 0$ use classical algorithms (see~\cite[Section~6.5]{MR1228206}) to unconditionally determine $h_{K^H}$ and a system of fundamental units of $\mathcal{O}_{K^H}^\times$.
    For the verification of the unit group, use the improvements outlined
    in \S \ref{sec:unconditional-unit-group-comp}.
    \item 
    Consider the subgroup 
    $V = \langle \mathcal{O}_{K^{H^g}}^\times \mid H \leq G, g \in G, a_H \neq 0 \rangle$ of $\mathcal{O}_{K}^{\times}$.
    Determine a supergroup $V \leq U \leq \mathcal{O}_{K}^{\times}$ 
    such that $U$ is $p$-saturated in $\mathcal{O}_{K}^{\times}$ 
    for all $p$ dividing $\lvert G \rvert$. 
    Then $U = \mathcal{O}_{K}^{\times}$ by~\cite[Corollary~3.4]{MR4440537} and from this we can determine the regulator $R_{K}$.
    \item
    Use \eqref{eq:brauer} to determine $h_{K}$.
\end{enumerate}

\subsection{A totally complex $A_5$-extension}\label{subsec:A5}
We consider the polynomial 
\[
x^5 - x^4 + 2x^2 - 2x + 2 \in \Q[x]    
\]
from~\cite{MR1901356}, whose splitting field $K$ is an $A_{5}$-extension of $\Q$ with discriminant $2^{90} 17^{40}$.
First note that $A_{5}$ admits the Brauer relation  
\[ 
\Ind_{1}^{A_{5}}(\mathbf{1}_1) 
= 3 \Ind_{H_5}^{A_{5}}(\mathbf{1}_{H_5}) 
+ 2 \Ind_{H_{10}}^{A_{5}}(\mathbf{1}_{H_{10}})
+ 4 \Ind_{H_{12}}^{A_{5}}(\mathbf{1}_{H_{12}}) 
- 8 \Ind_{A_{5}}^{A_{5}}(\mathbf{1}_{A_{5}}), 
\]
where each $H_i$ is a certain subgroup of $A_{5}$ of order $i$ (see Table~\ref{tab:subfieldsa5}).
We fix an isomorphism $\Gal(K/\Q) \simeq A_{5}$ and denote by $K_{i} = K^{H_{i}}$ the subfield of $K$ fixed by $H_{i}$ under this identification.
For each subfield $K_{i}$, we determine unconditionally the class number, a set of fundamental units and the regulator.
The results are presented in Table~\ref{tab:subfieldsa5}.

\begin{table}[h!]
\caption{Arithmetic invariants of the fixed subfields of $K$.}
\label{tab:subfieldsa5}
\begin{tabular}{cccccrc}
\toprule
$i$ & $H_{i}\simeq$ & $[K_{i} : \Q]$ & $d_{K_{i}}$ & $h_{K_{i}}$ & $R_{K_{i}}$ & $w_{K_{i}}$ \\\midrule
12 & $A_{4}$          & $5$ & $2^6 \cdot 17^2 $ & $1$   &$3.0253...$  & $2$ \\\addlinespace[0.5em]
10 & $D_{10}$ & $6$ & $2^6 \cdot 17^4$ & $1$ & $15.6591...$ & $2$ \\\addlinespace[0.5em]
5 & $C_{5}$ & $12$ & $2^{18} \cdot 17^8$ & $5$   & $291.2664...$ & $2$ \\
\bottomrule
\end{tabular}
\end{table}
\noindent
We determine the subgroup $V \leq \mathcal{O}_{K}^{\times}$ as described above, 
saturate at the primes $2, 3, 5$ and obtain
\[ 
R_K = 63444609174857.6599... 
\]
Finally, we determine the class number of $K$ to be 
\[ 
h_K = \frac{2}{R_K} \left(\frac{h_{K_5}R_{K_5}}{2}\right)^3 \left(\frac{h_{K_{10}}R_{K_{10}}}{2}\right)^2\left(\frac{h_{K_{12}}R_{K_{12}}}{2}\right)^4 \cdot \left(\frac{1}{2}\right)^{-8} = 1. 
\]

\begin{theorem}\label{thm:a5}
Let $K$ be the splitting field of $x^5 - x^4 + 2x^2 - 2x + 2 \in \Q[x]$, an 
$A_{5}$-extension of $\Q$.
Then $K$ is $p$-rational for all $5 \leq p < 10^{8}$ and is not
quasi-$p$-rational for $p=2,3$.
\end{theorem}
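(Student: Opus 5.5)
The proof is computational and rests on two facts already established in \S\ref{subsec:A5}. First, $h_K=1$, obtained unconditionally from the Brauer relation \eqref{eq:brauer}. Second, the subgroup $U$ built from the units of the subfields $K_5$, $K_{10}$ and $K_{12}$ and their conjugates, saturated at $2,3,5$, equals $\mathcal{O}_K^{\times}$ by \cite[Corollary~3.4]{MR4440537}. Since $p\nmid h_K$ for every $p$, Lemma~\ref{lem:rat-implies-quasi-rat-implies-leopoldt}(c) shows that for each prime $p$, $K$ is $p$-rational if and only if it is quasi-$p$-rational. The theorem therefore reduces to running Algorithm~\ref{alg:quasi-p-rat} with the full unit group as input, which by Remark~\ref{rmk:skip-steps-when-unit-group-known} lets us take $V=\mathcal{O}_K^{\times}$ directly. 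Here $K$ is totally complex of degree $60$, so $\rank_{\Z}(\mathcal{O}_K^{\times})=29$.

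For the primes $7\le p<10^8$ we have $p\nmid 2d_K=2^{91}17^{40}$, except that $p=17$ must be treated separately. For all other such $p$, Corollary~\ref{cor:char-quasi-p-rat-Schirokauer} reduces the claim to checking that $\dim_{\F_p}\psi_{K,p}(\mathcal{O}_K^{\times})=29$. To make this feasible for roughly $5.7\times 10^6$ primes, I would use the Galois speed-up of \S\ref{subsec:Schirokauer-improvement-Galois}: find a Minkowski unit $u$ whose conjugates generate a subgroup $V'$ of finite index, compute $\psi(u)$ once per prime, and obtain the other images by applying the induced automorphisms $\overline{\sigma}_i$ of $\mathcal{O}_K/p\mathcal{O}_K$. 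Whenever this already gives dimension $29$, the prime is done. For the few primes where it does not, I would fall back on the generators of $\mathcal{O}_K^{\times}$ via Proposition~\ref{prop:alg-schiro-image}.

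The remaining primes $p=2,3,5,17$ are handled with the map $\eta$ instead.
\begin{itemize}
\item For $p=5$ and $p=17$, I would check that $\dim\eta_{K,p}(\mathcal{O}_K^{\times})$ equals $\rank_p(\mathcal{O}_K^{\times})=29$. For $p=5$, $\mu_5\not\subset K$ because $5\nmid 2d_K$, so condition (a) of Proposition~\ref{prop:quasi-p-rat-criteria} holds by Remark~\ref{rmk:roots-of-unity-condition}, and in fact $\psi$ can be used here as well. For $p=17$, the same remark gives condition (a) as soon as $16\nmid e_{\mathfrak{p}}$ for the primes above $17$, or otherwise after an explicit splitting check in $K(\zeta_{17})/K$.
\item For $p=2$, $\rank_2(\mathcal{O}_K^{\times})=30$. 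I expect failure to come either from $|S_2(K)|>1$ in Step (2)(a) of Algorithm~\ref{alg:quasi-p-rat} or from $\dim\eta_{K,2}<30$.
\item For $p=3$, I expect either $\dim\eta_{K,3}<29$, or $\zeta_3$ lying in some completion at a prime above $3$ (possible, since $3$ is unramified in $K$ and the residue degree can be even). A cheaper route is Lemma~\ref{lem:p-rat-subfields} applied to a small subfield such as $K_5$ or $K_{12}$. Since Lemma~\ref{lem:p-rat-subfields} concerns $p$-rationality rather than quasi-$p$-rationality, this is valid here only because $h_K=1$.
\end{itemize}

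I expect the main obstacle to be the sheer scale of the computation for $p$ up to $10^8$ in degree $60$, where each $\psi$ evaluation needs arithmetic modulo $p^2$ and an exponentiation by $\epsilon_p$, together with keeping the unit representation compact. I would first try the Frobenius and Galois reduction, with units stored in factored form as in \cite[Remark~4.19]{MR4440537}.
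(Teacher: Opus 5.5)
Your proposal is correct and is essentially the paper's proof: $h_K=1$ together with Lemma~\ref{lem:rat-implies-quasi-rat-implies-leopoldt}(c) reduces everything to quasi-$p$-rationality, which is then decided by Algorithm~\ref{alg:quasi-p-rat} with the full unit group as input (Step~(3) is only reached for $p=17$, where $16\nmid e_{\mathfrak p}$ holds automatically since $e_{\mathfrak p}\mid 60$). One correction: since $3\nmid 2d_K$, the prime $p=3$ (like $p=5$) is handled by the Schirokauer map in Step~(1), and $\zeta_3$ cannot lie in any completion of $K$ above $3$, because $\Q_3(\zeta_3)/\Q_3$ is ramified while $3$ is unramified in $K$ (Remark~\ref{rmk:roots-of-unity-condition}); so the failure at $p=3$ must come from $\dim_{\F_3}\psi_{K,3}(\mathcal{O}_K^{\times})<29$.
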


\begin{proof}
Observe that for every prime number $p$, the field
$K$ is $p$-rational if and only if it is quasi-$p$-rational by
Lemma~\ref{lem:rat-implies-quasi-rat-implies-leopoldt}(c) and
the fact that $h_{K}=1$.
Hence the desired result follows from an application of Algorithm~\ref{alg:quasi-p-rat},
with the full unit group $\mathcal{O}_{K}^{\times}$ given as an input (see Remark~\ref{rmk:skip-steps-when-unit-group-known}).
Step (3) is only executed in the case $p=17$.
\end{proof}

\begin{corollary}
Let $K$ be as above and let $G=\Gal(K/\Q)$.
Let $7 \leq p < 10^{8}$ be a prime number. 
Let $n \in \{ 3,4,5 \}$, and if $n=3$, further suppose that $p \equiv \pm 1 \bmod 5$.
Let $\omega$ be an irreducible representation of $G$ over 
$\Q_{p}$ of degree $n$. 
Then there exists a continuous representation
\[
\rho : \Gal(K_{S_{p}}(p)/\Q) \longrightarrow \mathrm{GL}_{n}(\Z_{p})
\]
with open image such that the residual representations $\overline{\rho}$ 
and $\overline{\omega}$ over $\F_{p}$ are isomorphic.
\end{corollary}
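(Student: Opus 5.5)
This corollary follows by combining Theorem~\ref{thm:a5} with Proposition~\ref{prop:Galois-reps-A5-S5}(a). The plan is to check, one by one, the hypotheses of Proposition~\ref{prop:Galois-reps-A5-S5}(a) for the field $K$ and the prime $p$.

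\textbf{Galois group.} By construction $K/\Q$ is Galois with $G=\Gal(K/\Q)\simeq A_{5}$, so we are in case (a) of Proposition~\ref{prop:Galois-reps-A5-S5}.

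\textbf{$p$-rationality.} For $7 \leq p < 10^{8}$, Theorem~\ref{thm:a5} shows that $K$ is $p$-rational. This also gives the requirement $p \geq 7$.

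\textbf{Constraints on $n$.} The condition $n \in \{3,4,5\}$, together with $p \equiv \pm 1 \bmod 5$ when $n=3$, is precisely the extra hypothesis in case (a). So this hypothesis is assumed rather than proved.

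\textbf{Totally complex.} This is the only hypothesis that needs an actual argument. I would take complex conjugation $c \in G$ and show it is nontrivial. Then $c$ has order $2$, and since every involution in $A_{5}$ is conjugate to every other, no real embedding of $K$ exists, which is exactly total complexity.

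To see that $c$ is nontrivial, it suffices that the quintic $f=x^5-x^4+2x^2-2x+2$ has a non-real root. I would show $f$ has only one real root, for instance by a Sturm sequence argument or via the sign of the discriminant. The discriminant of a quintic is positive exactly when it has $1$ or $5$ real roots, so here it is positive, consistent with $G\subseteq A_5$. One then checks that $f$ does not have $5$ real roots, for example because $f'$ has few real zeros. Hence $f$ has exactly one real root and complex roots exist.

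Alternatively, I could simply appeal to the cited source \cite{MR1901356}, where the field was chosen to be totally complex. Of all the steps, establishing total complexity is the main, though minor, obstacle.

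\textbf{Conclusion.} With all hypotheses verified, Proposition~\ref{prop:Galois-reps-A5-S5}(a) yields the representation $\rho$ with open image and residual representation isomorphic to $\overline{\omega}$.
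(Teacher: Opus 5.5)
Your proposal is correct and follows the paper's proof, which simply combines Theorem~\ref{thm:a5} with Proposition~\ref{prop:Galois-reps-A5-S5}(a). The only difference is that the paper takes total complexity of $K$ as given in its setup, since the field is introduced as a totally complex $A_5$-extension. You instead verify it by showing that $f$ has exactly one real root, which is a valid extra check; since $K/\Q$ is Galois, all embeddings have the same image, so the conjugacy of involutions in $A_5$ is not even needed.
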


\begin{proof}
This is the combination of 
Theorem~\ref{thm:a5} and Proposition~\ref{prop:Galois-reps-A5-S5}(a).
\end{proof}

\subsection{A totally complex $S_{5}$-extension}\label{subsec:S5}
We consider the splitting field $K$ of the polynomial 
\[
x^5 - x^3 - x^2 - x + 1 \in \Q[x],
\]
taken from~\cite{MR1901356}. The field $K$ is an $S_5$-extension with discriminant $(79 \cdot 89)^{60}$.
To determine the class number and fundamental units unconditionally, we make use of the following Brauer relation:
\begin{align*} 
\Ind_{1}^{S_{5}}(\mathbf{1}_1) 
= 6 \Ind_{H_6}^{S_{5}}(\mathbf{1}_{H_6}) 
&+ 4 \Ind_{H_{12}^{(1)}}^{S_{5}}(\mathbf{1}_{H_{12}^{(1)}}) 
- \Ind_{H_{12}^{(2)}}^{S_{5}}(\mathbf{1}_{H_{12}^{(2)}}) 
+ 5 \Ind_{H_{20}}^{S_{5}}(\mathbf{1}_{H_{20}}) \\ 
&- 11 \Ind_{H_{24}}^{S_{5}}(\mathbf{1}_{H_{24}})
- 3\Ind_{H_{60}}^{S_{5}}(\mathbf{1}_{A_5}) 
+ \Ind_{S_{5}}^{S_{5}}(\mathbf{1}_{S_{5}}), 
\end{align*}
where the $H_i$ and $H_i^{(j)}$, respectively, are certain subgroups of $S_{5}$ of order $i$ (see  Table~\ref{tab:subfieldss5}). 
We fix an isomorphism $\Gal(K/\Q) \simeq S_{5}$ and for $H \leq S_{5}$
denote by $K^{H}$ the subfield of $K$ fixed by $H$ under this identification.
The invariants of the fixed fields are given in Table~\ref{tab:subfieldss5}.

\begin{table}[h!]
\caption{Arithmetic invariants of the fixed subfields of $K$.}
\label{tab:subfieldss5}
\begin{tabular}{ccccrc}
\toprule
$H$ & $[K^H : \Q]$ & $d_{K^H}$ & $h_{K^H}$ & $R_{K^H}$ & $w_{K^H}$ \\\midrule
$H_{6} \simeq S_3$          & $20$ & $-79^7 \cdot 89^7$ & $1$   & $544287.7624...$      & $2$ \\\addlinespace[0.5em]
$H_{12}^{(1)} \simeq A_4$ & $10$ & $-79^5 \cdot 89^5$ & $2^{2} \cdot 3^{4}$ & $882.8378...$         & $2$ \\\addlinespace[0.5em]
$H_{12}^{(2)} \simeq D_{12}$ & $10$ & $-79^3 \cdot 89^3$ & $1$   & $89.1767...$          & $2$ \\\addlinespace[0.5em]
$H_{20} \simeq F_{5}$       & $6$  & $-79^3 \cdot 89^3$ & $2^{4}$  & $470.0499...$         & $2$ \\\addlinespace[0.5em]
$H_{24} \simeq S_{4}$       & $5$  & $-79^1 \cdot 89^1$ & $1$   & $0.8883...$           & $2$ \\\addlinespace[0.5em]
$H_{60} = A_{5}$       & $2$  & $-79^1 \cdot 89^1$ & $2^{2} \cdot 3^{3}$ & $1\phantom{.0000...}$ & $2$ \\
\bottomrule
\end{tabular}
\end{table}

Determining $V$ and saturating at $2$, $3$ and $5$ yields
\[ 
R_{K} = 5075956383097420432006255018096635804231732104073502336905869521.05... 
\]
so that using the Brauer relation we obtain $h_K = 27000 = 2^3 \cdot 3^3 \cdot 5^3$.

\begin{theorem}\label{thm:s5}
    Let $K$ be the splitting field of $x^5 - x^3 - x^2 - x + 1 \in \Q[x]$, 
    an $S_{5}$-extension of $\Q$.
    Then $K$ is $p$-rational for all $7 \leq p < 10^{8}$.
\end{theorem}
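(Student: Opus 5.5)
Since $h_K = 27000 = 2^3\cdot 3^3\cdot 5^3$, every prime $p\geq 7$ satisfies $p\nmid h_K$. By Lemma~\ref{lem:rat-implies-quasi-rat-implies-leopoldt}(c), $K$ is then $p$-rational exactly when it is quasi-$p$-rational. The plan is therefore to run Algorithm~\ref{alg:quasi-p-rat} for every prime $7\le p<10^8$. The full unit group $\mathcal{O}_K^\times$ is available from \S\ref{subsec:unconditional-invs}: it is the group obtained after saturating $V$ at $2,3,5$, which equals $\mathcal{O}_K^\times$ by \cite[Corollary~3.4]{MR4440537}. By Remark~\ref{rmk:skip-steps-when-unit-group-known}, we may therefore skip the saturation steps and take $V=\mathcal{O}_K^\times$ directly. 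The value $h_K$ itself is unconditional, because it comes from the Brauer relation \eqref{eq:brauer} applied to unconditionally computed invariants of the proper subfields together with $R_K$.

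Next I split the range of primes. We have $d_K=(79\cdot 89)^{60}$, so $p\nmid 2d_K$ for every $p\ge 7$ except $p=79$ and $p=89$. For all other primes, Step~(1) applies, and by Corollary~\ref{cor:char-quasi-p-rat-Schirokauer} it suffices to verify
\[
\dim_{\F_p}(\psi_{K,p}(\mathcal{O}_K^\times))=\rank_\Z(\mathcal{O}_K^\times)=59 .
\]
Here $K$ is totally complex of degree $120$, so $c_K=60$ and the unit rank is $59$. This rank condition is checked with Proposition~\ref{prop:alg-schiro-image}.

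For $p\in\{79,89\}$, Step~(3) applies. Since $p>[K:\Q]+1=121$ fails, I first have to check the roots-of-unity condition of Remark~\ref{rmk:roots-of-unity-condition}. Because $w_K=2$, we have $\mu_p\not\subset K$. The ramification index of each prime above $p$ equals $e$, the order of tame inertia, which divides $|S_5|$ and is small. As $p-1\in\{78,88\}$ has no divisor equal to such an $e$ (inertia in $S_5$ is cyclic of order at most $6$), the condition $(p-1)\mid e_{\mathfrak p}$ never holds, so condition (a) is automatic. It then remains to verify $\dim_{\F_p}\eta_{K,p}(\mathcal{O}_K^\times)=\rank_p(\mathcal{O}_K^\times)=59$ by computing the local unit quotients as in \S\ref{subsec:compute-eta}.

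The main obstacle is computational rather than conceptual. We must evaluate the Schirokauer map on $59$ fundamental units of a degree-$120$ field for roughly $5.7\times 10^6$ primes. I would exploit the Galois structure as in \S\ref{subsec:Schirokauer-improvement-Galois}. Concretely, I would find a Minkowski unit $u$ whose $59$ conjugates $\sigma_i(u)$ generate a subgroup $W$ of finite index. For each $p$, I compute only $\psi(u)$ and apply the induced automorphisms $\overline{\sigma}_i$ of $\mathcal{O}_K/p\mathcal{O}_K$, using the sandwich
\[
\dim\psi(W)\le\dim\psi(\mathcal{O}_K^\times)\le 59 .
\]
For the few primes where $\dim\psi(W)<59$, I revert to the full basis of $\mathcal{O}_K^\times$. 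If some prime in the range gave dimension less than $59$, the statement would be false for that prime. So the proof consists of the report that the computation returns full rank for every $p$ in the range.
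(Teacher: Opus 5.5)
Your proposal is correct and follows essentially the same route as the paper. Since $p\nmid h_K$ for $p\ge 7$, $p$-rationality reduces to quasi-$p$-rationality, which is then checked by Algorithm~\ref{alg:quasi-p-rat} with the full unit group as input, using Step (1) for $p\nmid 2d_K$ and Step (3) only for $p=79,89$. One small slip: in Step (3) the reason $(p-1)\nmid e_{\mathfrak p}$ is simply $e_{\mathfrak p}\le 6<p-1$. In fact $e_{\mathfrak p}=2$, which does divide $78$ and $88$, so your phrase ``has no divisor equal to such an $e$'' is false as written, although your conclusion is unaffected.
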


\begin{proof}
Observe that for every prime $p \geq 7$, the field
$K$ is $p$-rational if and only if it is quasi-$p$-rational by
Lemma~\ref{lem:rat-implies-quasi-rat-implies-leopoldt}(c) and
the fact that $p \nmid h_{K}$.
Hence the desired result follows from an application of Algorithm~\ref{alg:quasi-p-rat},
with the full unit group $\mathcal{O}_{K}^{\times}$ given as an input (see Remark~\ref{rmk:skip-steps-when-unit-group-known}).
Step (2) is not executed since $p$ is odd.
Step (3) is only executed in the cases $p=79,89$.
\end{proof}

\begin{corollary}
Let $K$ be as above and let $G=\Gal(K/\Q)$.
Let $7 \leq p < 10^{8}$ be a prime number. 
Let $n \in \{4,5,6\}$ and let $\omega$ be an irreducible representation of $G$ over 
$\Q_{p}$ of degree $n$.
Then there exists a continuous representation
\[
\rho : \Gal(K_{S_{p}}(p)/\Q) \longrightarrow \mathrm{GL}_{n}(\Z_{p})
\]
with open image such that the residual representations $\overline{\rho}$ 
and $\overline{\omega}$ over $\F_{p}$ are isomorphic.
\end{corollary}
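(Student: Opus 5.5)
The plan mirrors the proof of the $A_{5}$ corollary: combine Theorem~\ref{thm:s5} with Proposition~\ref{prop:Galois-reps-A5-S5}(b). The task is to check that every hypothesis of Proposition~\ref{prop:Galois-reps-A5-S5}(b) holds for this particular $K$ and for each prime $7 \leq p < 10^{8}$.

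First, $K/\Q$ is Galois with $G \simeq S_{5}$ by construction, since it is the splitting field of an $S_{5}$-quintic. Second, $K$ is $p$-rational for every prime $7 \leq p < 10^{8}$ by Theorem~\ref{thm:s5}, and such $p$ satisfy $p \geq 7$. Third, $K$ must be totally complex. The quadratic subfield $K^{A_{5}}$ has discriminant $-79 \cdot 89 < 0$ by Table~\ref{tab:subfieldss5}, so it is imaginary. Hence complex conjugation restricts nontrivially to it, so complex conjugation is a nontrivial element of $G$. Since $K/\Q$ is Galois, all its complex conjugations are conjugate in $G$, so none of them is trivial, and $K$ has no real embeddings. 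So $K$ is totally complex.

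For $n \in \{5,6\}$ no further hypothesis is needed. For $n=4$, Proposition~\ref{prop:Galois-reps-A5-S5}(b) additionally requires that the unique quadratic subfield of $K$ be imaginary. This subfield is $K^{A_{5}}$, because $A_{5}$ is the unique index-$2$ subgroup of $S_{5}$, and we have just seen that it is imaginary. The conclusion then follows directly from Proposition~\ref{prop:Galois-reps-A5-S5}.

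There is essentially no obstacle here. The only point needing care is total complexity, which is not stated explicitly in \S\ref{subsec:S5}, and I would deduce it from the negative discriminant of the quadratic subfield as above.
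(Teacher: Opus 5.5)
Your proposal is correct and matches the paper's proof, which likewise combines Theorem~\ref{thm:s5} with Proposition~\ref{prop:Galois-reps-A5-S5}(b) after noting that the unique quadratic subfield $\Q(\sqrt{-79 \cdot 89})$ is imaginary. Your deduction of total complexity from this imaginary quadratic subfield is valid; the paper does not argue this point and simply asserts that $K$ is totally complex when introducing it.
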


\begin{proof}
The unique quadratic subfield of $K$ is $\Q(\sqrt{-79 \cdot 89})$, which is imaginary.
Therefore the result follows from Theorem~\ref{thm:s5} and 
Proposition~\ref{prop:Galois-reps-A5-S5}(b).
\end{proof}

\subsection{Galois representations from totally complex cyclic extensions}
We first set up some notation. Let $p$ be an odd prime number. 
Let $\chi_{\mathrm{cyc}} : \Gal(\overline{\Q}/\Q) \longrightarrow \Z_{p}^{\times}$ be the $p$-adic cyclotomic character and let $\kappa$ denote the composition of $\chi_{\mathrm{cyc}}$ with the projection onto the second factor of the canonical decomposition $\Z_{p}^{\times} = \mu_{p-1} \times (1+p\Z_{p})$.
If $K/\Q$ is a finite Galois extension then $K_{S_{p}}(p)/\Q$ is Galois and 
$\kappa$ factors through $\Gal(K_{S_{p}}(p)/\Q)$.
For $n \geq 2$, let $S_{n}^{0}(\Z_{p})$ be the explicit Sylow pro-$p$ subgroup
of $\mathrm{SL}_{n}(\Z_{p})$ defined in \cite[\S 5.4]{MR3512524}.

\begin{prop}\label{prop:tot-complex-cyclic}
Let $K/\Q$ be a totally complex finite cyclic extension of $\Q$.
Let $n \geq 2$ be such that $[K:\Q] \geq 4 \lfloor n/2 \rfloor$.
Let $p$ be an odd prime number such that $[K:\Q]$ divides $p-1$ and $K$ is $p$-rational. 
Then there exists a continuous representation
\[
\rho_{0} : \Gal(K_{S_{p}}(p)/\Q) \longrightarrow \mathrm{GL}_{n}(\Z_{p})
\]
such that $\rho_{0}(\Gal(K_{S_{p}}(p)/K)) = S_{n}^{0}(\Z_{p})$.
Furthermore, $\rho = \rho_{0} \otimes \kappa$ is a continuous representation from
$\Gal(K_{S_{p}}(p)/\Q)$ to $\mathrm{GL}_{n}(\Z_{p})$ with open image. 
\end{prop}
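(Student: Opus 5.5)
This is essentially a direct application of Greenberg's work, so the plan is to reduce to his results rather than reprove them. Greenberg treats totally complex abelian (in particular cyclic) $p$-rational fields in \cite[\S 6]{MR3512524}, and the proof will invoke that result with the stated hypotheses. Concretely, since $K$ is $p$-rational and totally complex, Proposition~\ref{prop:def-char-p-rational} gives that $G_{S_{p}}(K)=\Gal(K_{S_{p}}(p)/K)$ is a free pro-$p$ group on $c_{K}+1=[K:\Q]/2+1$ generators. The extension $1\to G_{S_p}(K)\to \Gal(K_{S_p}(p)/\Q)\to \Gal(K/\Q)\to 1$ splits, because $p\nmid [K:\Q]$ (as $[K:\Q]\mid p-1$), by Schur--Zassenhaus in the profinite setting. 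Hence $\Gal(K_{S_p}(p)/\Q)\simeq G_{S_p}(K)\rtimes \Delta$ with $\Delta=\Gal(K/\Q)$ cyclic.

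Next, analyse the action of $\Delta$ on the Frattini quotient $G_{S_p}(K)^{\mathrm{ab}}\otimes\F_p$. Since $[K:\Q]\mid p-1$, all characters of $\Delta$ take values in $\mu_{p-1}\subset\Z_p^\times$, so this module decomposes over $\F_p$ (and liftably over $\Z_p$) into one-dimensional character spaces. Via class field theory, $X_{S_p}(K)\otimes\Q_p$ is $\mathbf{1}\oplus$ the sum of the odd characters of $\Delta$ (each with multiplicity one), coming from $\prod_{\mathfrak p\mid p}U_{\mathfrak p}$ modulo global units. Then choose free generators of $G_{S_p}(K)$ that are eigenvectors for $\Delta$, and define $\rho_0$ on the semidirect product. $\Delta$ maps to a diagonal torus whose characters are powers of the Teichmüller character. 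Each generator is sent to an element of $S_{n}^{0}(\Z_{p})$, namely a root-group element in the entry $(i,j)$ where the torus acts by the matching character; freeness makes any such assignment a homomorphism. The numerical condition $[K:\Q]\ge 4\lfloor n/2\rfloor$ is exactly what is needed to have enough distinct odd characters to hit the simple root entries $(i,i+1)$ and the extra generator of $S_n^0(\Z_p)$ (the entry $(n,1)$ times $p$) with compatible eigencharacters. One then shows the image of $G_{S_p}(K)$ is all of $S_n^0(\Z_p)$ using the fact that a closed subgroup of a pro-$p$ group surjecting onto its Frattini quotient is everything.

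Finally, $\rho_0(\Gal(K_{S_p}(p)/K))=S^0_n(\Z_p)$ is open in $\mathrm{SL}_n(\Z_p)$. Twisting by $\kappa$, which has infinite image in $1+p\Z_p$ and is central in effect on determinants, gives $\det\rho=\det\rho_0\cdot\kappa^n$ with open image in $\Z_p^\times$, while $\rho$ restricted to the kernel of $\kappa$ still contains an open subgroup of $S^0_n$. Therefore the image of $\rho$ is open in $\mathrm{GL}_n(\Z_p)$.

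The main obstacle is the character bookkeeping in the second step: matching the eigencharacters available in $G_{S_p}(K)^{\mathrm{ab}}$ (only odd characters plus the trivial one) with those demanded by the root entries of $S_n^0$ under the chosen torus. This is where the inequality on $[K:\Q]$ enters. In practice I would cite Greenberg's explicit construction \cite[\S 6]{MR3512524} for this, and only verify that his hypotheses coincide with ours.
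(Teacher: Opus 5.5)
Your proposal is correct and takes the paper's route: the paper's proof is simply the citation \cite[Proposition~6.6]{MR3512524}, and your sketch is a faithful outline of Greenberg's construction. One small correction to the sketch, if you ever write it out: for odd $n$ the exponents of the characters on the entries $(1,2),\dots,(n-1,n),(n,1)$ sum to $0$ modulo $[K:\Q]$, so one of the $n$ generators must be matched with the trivial character rather than an odd one, and this is exactly why the bound is $4\lfloor n/2\rfloor$ rather than $2n$.
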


\begin{proof}
This is \cite[Proposition~6.6]{MR3512524}.    
\end{proof}

\begin{example}\label{ex:cyclic-p-rat}
By using the computational results for real cyclotomic fields 
described in \S \ref{subsec:comp-results-real-cyclo} 
and applying Remark~\ref{rmk:p-rat-full-cyclo}, 
it is easy to give many triples $(K,n,p)$ that satisfy the hypotheses of
Proposition~\ref{prop:tot-complex-cyclic}.
For instance, $K=\Q(\zeta_{101})$ is cyclic and of degree $100$ over $\Q$, 
so $p$ must satisfy $p \equiv 1 \bmod{100}$.
By the results of \S \ref{subsec:comp-results-real-cyclo}, the
field $\Q(\zeta_{101})^{+}$ is $p$-rational for all such $p$ with $p<10^{7}$, except
for $p \in \{ 101, 401, 5501, 19301\}$.
The entry for conductor $101$ in \cite[Tables, \S 3]{MR1421575} shows that
we may need to exclude $p \in \{ 601, 18701 \}$ as well.
In other words, $K=\Q(\zeta_{101})$ is $p$-rational for all primes
$p$ satisfying $p \equiv 1 \bmod{100}$ and $p <10^{7}$, except for 
$p \in \{ 101, 401, 5501, 19301\}$, and possibly also $p \in \{ 601, 18701 \}$.
Moreover, we may choose any $n$ in the range $2 \leq n \leq 51$. 
\end{example}

\appendix

\section{Number fields for unit group computation}\label{appendix:number-fields}

\renewcommand{\arraystretch}{1.2}
\footnotesize
\begin{xltabular}{\linewidth}{c X}
\caption{Defining polynomials for the number fields considered in \S \ref{subsec:example:uncond}}
\label{tab:polys-for-fields} \\
\toprule 
Field & Defining polynomial \\
\midrule
\endfirsthead
\toprule
Field & Defining polynomial \\
\midrule
\endhead
\endfoot
\endlastfoot
1 & $x^{5} - 68410 x^{3} - 136820 x^{2} + 1120453185 x - 876769924$ \\
2 & $x^{5} - 51632410 x^{3} - 48818443655 x^{2} + 439631951438910 x + 74547141886471529$ \\
3 & $x^{5} - x^{4} - 126480 x^{3} - 7563528 x^{2} + 2211380784 x + 52157866032$ \\
\midrule
4 & $x^{6} - 113004 x^{4} - 200896 x^{3} + 3044676189 x^{2} - 17026538688 x - 15145401762606$ \\
5 & $x^{6} - 383534 x^{4} + 10522638824 x^{2} - 72174779693816$ \\
6 & $x^{6} - 88680 x^{4} - 4324628 x^{3} + 1780900581 x^{2} + 162990904692 x + 2233922149646$ \\
\midrule
7 & $x^{8} - 9668 x^{6} + 14526170 x^{4} - 4580040976 x^{2} + 224422007824$ \\
8 & $x^{8} - 13964 x^{6} + 26908628 x^{4} - 5898547204 x^{2} + 178431052921$ \\
9 & $x^{8} - 14284 x^{6} + 58093028 x^{4} - 77583417444 x^{2} + 29501094483081$ \\
\midrule
10 & $x^{11} - x^{10} - 450 x^{9} + 393 x^{8} + 65785 x^{7} - 76121 x^{6} - 3728249 x^{5} + 6483322 x^{4} + 89323866 x^{3} - 205338716 x^{2} - 745717454 x + 2032282741$ \\
11 & $x^{11} - 1265 x^{9} + 10373 x^{8} + 461472 x^{7} - 7026316 x^{6} - 19353235 x^{5} + 853505367 x^{4} - 3771314393 x^{3} - 11665833289 x^{2} + 103006408615 x - 152668956353$ \\
12 & $x^{11} - x^{10} - 310 x^{9} + 395 x^{8} + 26441 x^{7} - 57583 x^{6} - 744575 x^{5} + 2220564 x^{4} + 2568726 x^{3} - 5088204 x^{2} - 1360638 x + 996543$ \\
\midrule
13 & $x^{12} + 4 x^{11} - 379 x^{10} - 1534 x^{9} + 42138 x^{8} + 119516 x^{7} - 1762831 x^{6} - 1608216 x^{5} + 30789468 x^{4} - 19429774 x^{3} - 182930861 x^{2} + 325707704 x - 111088243$ \\
14 & $x^{12} - 3 x^{11} - 187 x^{10} + 485 x^{9} + 11475 x^{8} - 27733 x^{7} - 255001 x^{6} + 623111 x^{5} + 1442720 x^{4} - 3990420 x^{3} + 44128 x^{2} + 2671616 x + 262144$ \\
15 & $x^{12} + 5 x^{11} - 257 x^{10} - 1535 x^{9} + 14705 x^{8} + 73035 x^{7} - 302075 x^{6} - 855725 x^{5} + 1060290 x^{4} + 2216060 x^{3} - 711992 x^{2} - 402720 x + 91904$ \\
\midrule
16 & $x^{13} - 78 x^{11} + 65 x^{10} + 2080 x^{9} - 2457 x^{8} - 24128 x^{7} + 27027 x^{6} + 137683 x^{5} - 110214 x^{4} - 376064 x^{3} + 128206 x^{2} + 363883 x + 12167$ \\
17 & $x^{13} - x^{12} - 276 x^{11} + 1967 x^{10} + 8169 x^{9} - 109375 x^{8} + 114077 x^{7} + 1684091 x^{6} - 4924742 x^{5} - 5465967 x^{4} + 34969245 x^{3} - 20502539 x^{2} - 55304818 x + 57031547$ \\
18 & $x^{13} + x^{12} - 144 x^{11} - 161 x^{10} + 6530 x^{9} + 9620 x^{8} - 109398 x^{7} - 196143 x^{6} + 512628 x^{5} + 917970 x^{4} - 650724 x^{3} - 1134730 x^{2} + 253950 x + 409375$ \\
\midrule
19 & $x^{16} + 4 x^{15} - 58 x^{14} - 168 x^{13} + 1484 x^{12} + 2324 x^{11} - 20300 x^{10} - 6912 x^{9} + 141871 x^{8} - 83952 x^{7} - 400064 x^{6} + 489244 x^{5} + 197204 x^{4} - 441588 x^{3} + 64118 x^{2} + 61592 x - 14171$ \\
20 & $x^{16} - 3 x^{15} - 288 x^{14} + 351 x^{13} + 32164 x^{12} + 6818 x^{11} - 1712220 x^{10} - 2050498 x^{9} + 43855038 x^{8} + 73975139 x^{7} - 504661128 x^{6} - 805577043 x^{5} + 2530948457 x^{4} + 2475352300 x^{3} - 4676812808 x^{2} - 174707192 x + 1104680656$ \\
21 & $x^{16} - 164 x^{14} + 8282 x^{12} - 180400 x^{10} + 1846640 x^{8} - 8606720 x^{6} + 16998272 x^{4} - 12049408 x^{2} + 1721344$ \\
\midrule
22 & $x^{17} - x^{16} - 48 x^{15} + 105 x^{14} + 763 x^{13} - 2579 x^{12} - 3653 x^{11} + 23311 x^{10} - 11031 x^{9} - 74838 x^{8} + 107759 x^{7} + 50288 x^{6} - 198615 x^{5} + 102976 x^{4} + 58507 x^{3} - 75722 x^{2} + 25763 x - 2837$ \\
23 & $x^{17} - x^{16} - 64 x^{15} + 43 x^{14} + 1478 x^{13} - 932 x^{12} - 16008 x^{11} + 12183 x^{10} + 86347 x^{9} - 84507 x^{8} - 213223 x^{7} + 271237 x^{6} + 152800 x^{5} - 314540 x^{4} + 100605 x^{3} + 20132 x^{2} - 13981 x + 1681$ \\
24 & $x^{17} - x^{16} - 112 x^{15} + 47 x^{14} + 3976 x^{13} - 4314 x^{12} - 64388 x^{11} + 136247 x^{10} + 422013 x^{9} - 1631073 x^{8} + 411840 x^{7} + 5840196 x^{6} - 11894369 x^{5} + 10635750 x^{4} - 4739804 x^{3} + 938485 x^{2} - 54850 x + 619$ \\
\midrule
25 & $x^{20} - 90 x^{18} - 165 x^{17} + 2655 x^{16} + 8844 x^{15} - 23405 x^{14} - 124135 x^{13} + 25635 x^{12} + 705485 x^{11} + 462661 x^{10} - 1893595 x^{9} - 2174625 x^{8} + 2297075 x^{7} + 3851885 x^{6} - 700216 x^{5} - 2796615 x^{4} - 556105 x^{3} + 551050 x^{2} + 127380 x - 33329$ \\
26 & $x^{20} - 5 x^{19} - 95 x^{18} + 465 x^{17} + 3575 x^{16} - 16976 x^{15} - 69870 x^{14} + 318920 x^{13} + 774865 x^{12} - 3377945 x^{11} - 4973199 x^{10} + 20746175 x^{9} + 18042200 x^{8} - 73300815 x^{7} - 35081690 x^{6} + 141973034 x^{5} + 35360815 x^{4} - 134710590 x^{3} - 22696440 x^{2} + 46079055 x + 13060431$ \\
27 & $x^{20} - 3 x^{19} - 119 x^{18} + 177 x^{17} + 5889 x^{16} - 1381 x^{15} - 152812 x^{14} - 117981 x^{13} + 2164979 x^{12} + 3491841 x^{11} - 15663089 x^{10} - 38883595 x^{9} + 42546574 x^{8} + 187263905 x^{7} + 52059719 x^{6} - 314665312 x^{5} - 318011368 x^{4} + 4965810 x^{3} + 106222904 x^{2} + 21505161 x - 6072119$ \\
\bottomrule
\end{xltabular}
\renewcommand{\arraystretch}{1.0}

\bibliography{SchirokauerBib}

\newcommand{\etalchar}[1]{$^{#1}$}
\providecommand{\bysame}{\leavevmode\hbox to3em{\hrulefill}\thinspace}
\providecommand{\MR}{\relax\ifhmode\unskip\space\fi MR }
\providecommand{\MRhref}[2]{%
  \href{http://www.ams.org/mathscinet-getitem?mr=#1}{#2}
}
\providecommand{\href}[2]{#2}
\begin{thebibliography}{BvHKS09}

\bibitem[Adl91]{10.1145/103418.103432}
L.~M. Adleman, \emph{Factoring numbers using singular integers}, Proceedings of
  the Twenty-Third Annual ACM Symposium on Theory of Computing (New York, NY,
  USA), STOC '91, Association for Computing Machinery, 1991, pp.~64--71.

\bibitem[Are91]{MR1151862}
B.~Arenz, \emph{Computing fundamental units from independent units},
  Computational number theory ({D}ebrecen, 1989), de Gruyter, Berlin, 1991,
  pp.~163--171. \MR{1151862}

\bibitem[Ass95]{MR1324685}
J.~Assim, \emph{Codescente en {$K$}-th\'eorie \'etale et corps de nombres},
  Manuscripta Math. \textbf{86} (1995), no.~4, 499--518. \MR{1324685}

\bibitem[BCP97]{MR1484478}
W.~Bosma, J.~Cannon, and C.~Playoust, \emph{The {M}agma algebra system. {I}.
  {T}he user language}, J. Symbolic Comput. \textbf{24} (1997), no.~3-4,
  235--265, Computational algebra and number theory (London, 1993).
  \MR{1484478}

\bibitem[BF13]{MR3207410}
J.-F. Biasse and C.~Fieker, \emph{Improved techniques for computing the ideal
  class group and a system of fundamental units in number fields}, A{NTS}
  {X}---{P}roceedings of the {T}enth {A}lgorithmic {N}umber {T}heory
  {S}ymposium, Open Book Ser., vol.~1, Math. Sci. Publ., Berkeley, CA, 2013,
  pp.~113--133. \MR{3207410}

\bibitem[BFHP22]{MR4440537}
J.-F. Biasse, C.~Fieker, T.~Hofmann, and A.~Page, \emph{Norm relations and
  computational problems in number fields}, J. Lond. Math. Soc. (2)
  \textbf{105} (2022), no.~4, 2373--2414. \MR{4440537}

\bibitem[BFL24]{zbMATH07854947}
O.~Bernard, P.-A. Fouque, and A.~Lesavourey, \emph{Computing {{\(e\)}}-th roots
  in number fields}, 2024 Proceedings of the Symposium on Algorithm Engineering
  and Experiments (ALENEX), Society for Industrial {and} Applied Mathematics,
  2024, pp.~207--219.

\bibitem[BL25]{MR4973192}
Z.~Bouazzaoui and D.~Lim, \emph{On the {G}alois structure of units in totally
  real {$p$}-rational number fields}, New York J. Math. \textbf{31} (2025),
  1439--1481. \MR{4973192}

\bibitem[BM21]{MR4192837}
Y.~Benmerieme and A.~Movahhedi, \emph{Multi-quadratic {$p$}-rational number
  fields}, J. Pure Appl. Algebra \textbf{225} (2021), no.~9, Paper No. 106657,
  17. \MR{4192837}

\bibitem[Bou20]{MR4129146}
Z.~Bouazzaoui, \emph{Fibonacci numbers and real quadratic {$p$}-rational
  fields}, Period. Math. Hungar. \textbf{81} (2020), no.~1, 123--133.
  \MR{4129146}

\bibitem[BR20]{MR4158582}
R.~Barbulescu and J.~Ray, \emph{Numerical verification of the
  {C}ohen-{L}enstra-{M}artinet heuristics and of {G}reenberg's
  {$p$}-rationality conjecture}, J. Th\'eor. Nombres Bordeaux \textbf{32}
  (2020), no.~1, 159--177. \MR{4158582}

\bibitem[BS87]{MR904010}
J.~Buchmann and J.~W. Sands, \emph{An algorithm for testing {L}eopoldt's
  conjecture}, J. Number Theory \textbf{27} (1987), no.~1, 92--105. \MR{904010}

\bibitem[Buc90]{MR1104698}
J.~Buchmann, \emph{A subexponential algorithm for the determination of class
  groups and regulators of algebraic number fields}, S\'{e}minaire de
  {T}h\'{e}orie des {N}ombres, {P}aris 1988--1989, Progr. Math., vol.~91,
  Birkh\"{a}user Boston, Boston, MA, 1990, pp.~27--41. \MR{1104698}

\bibitem[BvHKS09]{MR2537701}
K.~Belabas, M.~van Hoeij, J.~Kl\"uners, and A.~Steel, \emph{Factoring
  polynomials over global fields}, J. Th\'eor. Nombres Bordeaux \textbf{21}
  (2009), no.~1, 15--39. \MR{2537701}

\bibitem[Coa77]{MR460282}
J.~Coates, \emph{{$p$}-adic {$L$}-functions and {I}wasawa's theory}, Algebraic
  number fields: {$L$}-functions and {G}alois properties ({P}roc. {S}ympos.,
  {U}niv. {D}urham, {D}urham, 1975), Academic Press, London-New York, 1977,
  pp.~269--353. \MR{460282}

\bibitem[Coh93]{MR1228206}
H.~Cohen, \emph{A course in computational algebraic number theory}, Graduate
  Texts in Mathematics, vol. 138, Springer-Verlag, Berlin, 1993.

\bibitem[Coh00]{MR1728313}
\bysame, \emph{Advanced topics in computational number theory}, Graduate Texts
  in Mathematics, vol. 193, Springer-Verlag, New York, 2000. \MR{1728313}

\bibitem[DEF{\etalchar{+}}25]{OSCAR-book}
W.~Decker, C.~Eder, C.~Fieker, M.~Horn, and M.~Joswig (eds.), \emph{The
  {C}omputer {A}lgebra {S}ystem {OSCAR}: {A}lgorithms and {E}xamples},
  Algorithms and {C}omputation in {M}athematics, vol.~32, Springer, 2025.

\bibitem[DP20]{MR4083092}
R.~Davis and R.~Pries, \emph{Cohomology groups of {F}ermat curves via ray class
  fields of cyclotomic fields}, J. Algebra \textbf{554} (2020), 78--105.
  \MR{4083092}

\bibitem[EV26]{10.1145/3815436.3815477}
A.-S. Elsenhans and J.~Voight, \emph{Computing class groups and unit groups in
  {M}agma}, Proceedings of the 2026 International Symposium on Symbolic and
  Algebraic Computation (New York, NY, USA), ISSAC '26, Association for
  Computing Machinery, 2026, pp.~162--171.

\bibitem[FHHJ17]{MR3703682}
C.~Fieker, W.~Hart, T.~Hofmann, and F.~Johansson, \emph{Nemo/{H}ecke: computer
  algebra and number theory packages for the {J}ulia programming language},
  I{SSAC}'17---{P}roceedings of the 2017 {ACM} {I}nternational {S}ymposium on
  {S}ymbolic and {A}lgebraic {C}omputation, ACM, New York, 2017, pp.~157--164.
  \MR{3703682}

\bibitem[FHS19]{MR3952015}
C.~Fieker, T.~Hofmann, and C.~Sircana, \emph{On the construction of class
  fields}, Proceedings of the {T}hirteenth {A}lgorithmic {N}umber {T}heory
  {S}ymposium, Open Book Ser., vol.~2, Math. Sci. Publ., Berkeley, CA, 2019,
  pp.~239--255. \MR{3952015}

\bibitem[FP08]{MR2441075}
C.~Fieker and M.~E. Pohst, \emph{A lower regulator bound for number fields}, J.
  Number Theory \textbf{128} (2008), no.~10, 2767--2775. \MR{2441075}

\bibitem[Ge93]{ge}
G.~Ge, \emph{Algorithms related to multiplicative representations of algebraic
  numbers}, Ph.D. thesis, University of California, Berkeley, 1993.

\bibitem[GJ89]{MR1017575}
G.~Gras and J.-F. Jaulent, \emph{Sur les corps de nombres r\'eguliers}, Math.
  Z. \textbf{202} (1989), no.~3, 343--365. \MR{1017575}

\bibitem[GK89]{MR1008802}
R.~Gold and J.~M. Kim, \emph{Bases for cyclotomic units}, Compositio Math.
  \textbf{71} (1989), no.~1, 13--27. \MR{1008802}

\bibitem[Gor01]{MR1826497}
E.~Z. Goren, \emph{Hasse invariants for {H}ilbert modular varieties}, Israel J.
  Math. \textbf{122} (2001), 157--174. \MR{1826497}

\bibitem[Gra03]{MR1941965}
G.~Gras, \emph{Class field theory}, Springer Monographs in Mathematics,
  Springer-Verlag, Berlin, 2003, From theory to practice, Translated from the
  French manuscript by Henri Cohen. \MR{1941965}

\bibitem[Gra14]{MR3320496}
\bysame, \emph{On the structure of the {G}alois group of the {A}belian closure
  of a number field}, J. Th\'eor. Nombres Bordeaux \textbf{26} (2014), no.~3,
  635--655. \MR{3320496}

\bibitem[Gra16]{MR3492629}
\bysame, \emph{Les {$\theta$}-r\'{e}gulateurs locaux d'un nombre
  alg\'{e}brique: conjectures {$p$}-adiques}, Canad. J. Math. \textbf{68}
  (2016), no.~3, 571--624. \MR{3492629}

\bibitem[Gra19]{MR4491965}
\bysame, \emph{On {$p$}-rationality of number fields.
  {A}pplications---{PARI}/{GP} programs}, Publications math\'ematiques de
  {B}esan\c con. {A}lg\`ebre et th\'eorie des nombres. 2019/2, Publ. Math.
  Besan\c con Alg\`ebre Th\'eorie Nr., vol. 2019/2, Presses Univ.
  Franche-Comt\'e, Besan\c con, 2019, pp.~29--51. \MR{4491965}

\bibitem[Gre16]{MR3512524}
R.~Greenberg, \emph{Galois representations with open image}, Ann. Math. Qu\'e.
  \textbf{40} (2016), no.~1, 83--119. \MR{3512524}

\bibitem[HM19]{MR3933912}
F.~Hajir and C.~Maire, \emph{Prime decomposition and the {I}wasawa
  {MU}-invariant}, Math. Proc. Cambridge Philos. Soc. \textbf{166} (2019),
  no.~3, 599--617. \MR{3933912}

\bibitem[HMR21]{MR4308183}
F.~Hajir, C.~Maire, and R.~Ramakrishna, \emph{Cutting towers of number fields},
  Ann. Math. Qu\'e. \textbf{45} (2021), no.~2, 321--345. \MR{4308183}

\bibitem[Ich25]{MR5078208}
H.~Ichimura, \emph{On divisibility of the class number of a real abelian field
  and {F}ermat quotients}, Kyushu J. Math. \textbf{79} (2025), no.~2, 257--267.
  \MR{5078208}

\bibitem[JN20]{MR4111943}
H.~Johnston and A.~Nickel, \emph{On the {$p$}-adic {S}tark conjecture at {$s =
  1$} and applications}, J. Lond. Math. Soc. (2) \textbf{101} (2020), no.~3,
  1320--1354, With an appendix by T. Hofmann, H. Johnston and A. Nickel.
  \MR{4111943}

\bibitem[JNQD93]{MR1265910}
J.-F. Jaulent and T.~Nguyen Quang~Do, \emph{Corps {$p$}-rationnels, corps
  {$p$}-r\'eguliers, et ramification restreinte}, J. Th\'eor. Nombres Bordeaux
  \textbf{5} (1993), no.~2, 343--363. \MR{1265910}

\bibitem[KM01]{MR1901356}
J.~Kl\"uners and G.~Malle, \emph{A database for field extensions of the
  rationals}, LMS J. Comput. Math. \textbf{4} (2001), 182--196. \MR{1901356}

\bibitem[KN25]{MR5022766}
P.~Koprowski and J.~Novacoski, \emph{Computing the local group of prime-power
  classes}, I{SSAC}'25---{P}roceedings of the 2025 {I}nternational {S}ymposium
  on {S}ymbolic and {A}lgebraic {C}omputation, ACM, New York, 2025, pp.~34--41.
  \MR{5022766}

\bibitem[Ku{\v{c}}04]{MR2183100}
R.~Ku{\v{c}}era, \emph{Circular units and class groups of abelian fields}, Ann.
  Sci. Math. Qu\'ebec \textbf{28} (2004), no.~1-2, 121--136. \MR{2183100}

\bibitem[Len83]{MR774816}
A.~K. Lenstra, \emph{Factoring polynomials over algebraic number fields},
  Computer algebra ({L}ondon, 1983), Lecture Notes in Comput. Sci., vol. 162,
  Springer, Berlin, 1983, pp.~245--254. \MR{774816}

\bibitem[Len92]{MR1129315}
H.~W. Lenstra, Jr., \emph{Algorithms in algebraic number theory}, Bull. Amer.
  Math. Soc. (N.S.) \textbf{26} (1992), no.~2, 211--244. \MR{1129315}

\bibitem[Let90]{MR1057325}
G.~Lettl, \emph{A note on {T}haine's circular units}, J. Number Theory
  \textbf{35} (1990), no.~2, 224--226. \MR{1057325}

\bibitem[Lim22]{MR4330938}
D.~Lim, \emph{On {$p$}-rationality of {$\Bbb Q(\zeta_{2l+1})^+$} for {S}ophie
  {G}ermain primes {$l$}}, J. Number Theory \textbf{231} (2022), 378--400.
  \MR{4330938}

\bibitem[LS18]{MR3749417}
H.~W. Lenstra, Jr. and A.~Silverberg, \emph{Algorithms for commutative algebras
  over the rational numbers}, Found. Comput. Math. \textbf{18} (2018), no.~1,
  159--180. \MR{3749417}

\bibitem[MNQD90]{MR1042770}
A.~Movahhedi and T.~Nguyen Quang~Do, \emph{Sur l'arithm\'etique des corps de
  nombres {$p$}-rationnels}, S\'eminaire de {T}h\'eorie des {N}ombres, {P}aris
  1987--88, Progr. Math., vol.~81, Birkh\"auser Boston, Boston, MA, 1990,
  pp.~155--200. \MR{1042770}

\bibitem[Mov88]{MovahhediPhD}
A.~Movahhedi, \emph{Sur les $p$-extensions des corps $p$-rationnels}, Ph.D.
  thesis, Paris VII, 1988.

\bibitem[Mov90]{MR1124802}
\bysame, \emph{Sur les {$p$}-extensions des corps {$p$}-rationnels}, Math.
  Nachr. \textbf{149} (1990), 163--176. \MR{1124802}

\bibitem[Neu99]{MR1697859}
J.~Neukirch, \emph{Algebraic number theory}, Grundlehren der mathematischen
  Wissenschaften, vol. 322, Springer-Verlag, Berlin, 1999, Translated from the
  1992 German original and with a note by Norbert Schappacher, With a foreword
  by G. Harder. \MR{1697859}

\bibitem[NQD25]{do2025greenbergsgeneralizedconjecturefamilies}
T.~Nguyen Quang~Do, \emph{On {G}reenberg's generalized conjecture for families
  of number fields}, 2025, arXiv:2505.07529.

\bibitem[NSW08]{MR2392026}
J.~Neukirch, A.~Schmidt, and K.~Wingberg, \emph{Cohomology of number fields},
  second ed., Grundlehren der mathematischen Wissenschaften, vol. 323,
  Springer-Verlag, Berlin, 2008. \MR{2392026}

\bibitem[Poh93]{MR1243639}
M.~Pohst, \emph{Computational algebraic number theory}, DMV Seminar, vol.~21,
  Birkh\"{a}user Verlag, Basel, 1993. \MR{1243639}

\bibitem[Poh94]{MR1273458}
\bysame, \emph{On computing fundamental units}, J. Number Theory \textbf{47}
  (1994), no.~1, 93--105. \MR{1273458}

\bibitem[Poh05]{MR2168610}
\bysame, \emph{Factoring polynomials over global fields. {I}}, J. Symbolic
  Comput. \textbf{39} (2005), no.~6, 617--630. \MR{2168610}

\bibitem[PV15]{MR3266966}
F.~Pitoun and F.~Varescon, \emph{Computing the torsion of the {$p$}-ramified
  module of a number field}, Math. Comp. \textbf{84} (2015), no.~291, 371--383.
  \MR{3266966}

\bibitem[PZ77]{MR498486}
M.~Pohst and H.~Zassenhaus, \emph{An effective number geometric method of
  computing the fundamental units of an algebraic number field}, Math. Comp.
  \textbf{31} (1977), no.~139, 754--770. \MR{498486}

\bibitem[PZ97]{MR1483321}
\bysame, \emph{Algorithmic algebraic number theory}, Encyclopedia of
  Mathematics and its Applications, vol.~30, Cambridge University Press,
  Cambridge, 1997, Revised reprint of the 1989 original. \MR{1483321}

\bibitem[Sch93]{MR1253502}
O.~Schirokauer, \emph{Discrete logarithms and local units}, Philos. Trans. Roy.
  Soc. London Ser. A \textbf{345} (1993), no.~1676, 409--423. \MR{1253502}

\bibitem[Ser79]{MR554237}
J.-P. Serre, \emph{Local fields}, Graduate Texts in Mathematics, vol.~67,
  Springer-Verlag, New York-Berlin, 1979, Translated from the French by Marvin
  Jay Greenberg. \MR{554237}

\bibitem[Shp99]{MR1745660}
I.~E. Shparlinski, \emph{Finite fields: theory and computation}, Mathematics
  and its Applications, vol. 477, Kluwer Academic Publishers, Dordrecht, 1999,
  The meeting point of number theory, computer science, coding theory and
  cryptography. \MR{1745660}

\bibitem[Sim02]{MR1898758}
D.~Simon, \emph{Solving norm equations in relative number fields using
  {$S$}-units}, Math. Comp. \textbf{71} (2002), no.~239, 1287--1305.
  \MR{1898758}

\bibitem[Sin78]{MR485778}
W.~Sinnott, \emph{On the {S}tickelberger ideal and the circular units of a
  cyclotomic field}, Ann. of Math. (2) \textbf{108} (1978), no.~1, 107--134.
  \MR{485778}

\bibitem[Sin81]{MR595586}
\bysame, \emph{On the {S}tickelberger ideal and the circular units of an
  abelian field}, Invent. Math. \textbf{62} (1980/81), no.~2, 181--234.
  \MR{595586}

\bibitem[Sto00]{storjohann}
A.~Storjohann, \emph{{Algorithms for matrix canonical forms}}, Ph.{D}. thesis,
  ETH Z\"urich, 2000.

\bibitem[Tat84]{MR782485}
J.~Tate, \emph{Les conjectures de {S}tark sur les fonctions {$L$} d'{A}rtin en
  {$s=0$}}, Progress in Mathematics, vol.~47, Birkh\"auser Boston, Inc.,
  Boston, MA, 1984. \MR{782485}

\bibitem[{The}26]{PARI2}
{The PARI~Group}, Univ. Bordeaux, \emph{{PARI/GP version \texttt{2.17.4}}},
  2026, available from \url{https://pari.math.u-bordeaux.fr/}.

\bibitem[Was97]{MR1421575}
L.~C. Washington, \emph{Introduction to cyclotomic fields}, second ed.,
  Graduate Texts in Mathematics, vol.~83, Springer-Verlag, New York, 1997.
  \MR{1421575}

\bibitem[Yee25]{yee2025unconditional}
R.~K.-M. Yee, \emph{Unconditional verification of unit groups of number
  fields}, Ph.D. thesis, University of Calgary, 2025.

\bibitem[Zim81]{MR604833}
R.~Zimmert, \emph{Ideale kleiner {N}orm in {I}dealklassen und eine
  {R}egulatorabsch\"{a}tzung}, Invent. Math. \textbf{62} (1981), no.~3,
  367--380. \MR{604833}

\end{thebibliography}
\bibliographystyle{amsalpha}

\end{document}